\theoremstyle{plain}
 \newtheorem{thm}{Theorem}[subsection]
 \newtheorem{prop}[thm]{Proposition}
 \newtheorem{lem}[thm]{Lemma}
 \newtheorem{cor}[thm]{Corollary}
 \newtheorem{lem'}{``Lemma''}
 \newtheorem*{claim}{Claim}
\theoremstyle{definition}
 \newtheorem{ex}[thm]{Example}
 \newtheorem{defn}[thm]{Definition}
\theoremstyle{remark}
 \newtheorem{rmk}[thm]{Remark}
 \newtheorem{constr}[thm]{Construction}
 \numberwithin{equation}{section}
\newcommand{\N}{{\mathbb N}}
\newcommand{\Q}{{\mathbb Q}}
\newcommand{\Z}{{\mathbb Z}}
\newcommand{\F}{{\mathbb F}}
\newcommand{\prism}{{\mathbb{\Delta}}}
\newcommand{\MF}{\mathcal{M}\mathcal{F}}
\newcommand{\MFa}{\mathcal{M}\mathcal{F}_{\mr{[0, a]}}}
\newcommand{\mr}{\mathrm}
\newcommand{\DPa}{DivCR^{\wedge, \phi}_{[0, a]}}
\newcommand{\overbar}[1]{\mkern 3.0mu\overline{\mkern-3.0mu#1\mkern-2.0mu}\mkern 2.0mu}
\author{\bfseries Matti W\"urthen}
\title{Divided prismatic Frobenius crystals of small height and the category $\mathcal{M}\mathcal{F}$}
\begin{document}
\maketitle
\begin{abstract}
Let $\mathcal{X}$ be a smooth $p$-adic formal scheme over a mixed characteristic complete discrete valuation ring $\mathcal{O}_{K}$ with perfect residue field. We introduce a general category $\MF_{\mr{[0, p-2]}}^{\mr{tor-free}}(\mathcal{X})$ of $p$-torsion free crystalline coefficient objects and show that this category is equivalent to the category of completed prismatic Frobenius crystals of height $p-2$, recently introduced by Du-Liu-Moon-Shimizu. In particular this shows that the category $\MF^{\mr{tor-free}}_{\mr{[0, p-2]}}(\mathcal{X})$ is equivalent to the category of crystalline $\Z_p$-local systems on $\mathcal{X}$ with Hodge-Tate weights in $\{0,\ldots , p-2\}$, which generalizes the crystalline part of a theorem of Breuil-Liu to higher dimensions. 
\end{abstract}

\tableofcontents
\section{Introduction}
Let $K$ be a discretely valued $p$-adic field with perfect residue field. Recently, in \cite{BS-pris-cris}, Bhatt and Scholze proved that $\Z_p$-lattices in crystalline $\Q_p$-representation of the absolute Galois group $G_{K}$ correspond to locally free Frobenius crystals on the absolute prismatic site of $Spf(\mathcal{O}_{K})$. This improves Kisin's classification from \cite{Kis-cris} via so-called Breuil-Kisin modules.\\
This correspondence has now been extended to higher dimensions by the work of Du-Liu-Moon-Shimizu (\cite{DLMS}) and independently by Guo-Reinecke (\cite{GR}). More precisely, let $\mathcal{X}$ be a smooth $p$-adic formal scheme over $\mathcal{O}_{K}$. In \cite{DLMS} it is then shown that lattices in crystalline local systems with positive Hodge-Tate weights of the generic fiber $\mathcal{X}_{K}$ correspond to so called completed prismatic Frobenius crystals of finite height \footnote{We remark already here that we use a different convention than \cite{DLMS} for the \'etale realization functor. In particular we use the covariant version of the \'etale realization functor.}. Recall from \cite{DLMS} that a completed prismatic Frobenius crystal of height $a\geq 0$ is given by a pair $(\mathcal{M}, \Phi_{\mathcal{M}})$, consisting of a crystal in complete modules over $\mathcal{O}_{\prism}$, the structure sheaf of the absolute prismatic site of $\mathcal{X}$, together with a Frobenius map, satisfying the following condition: For any small affine open $Spf(R)\subset \mathcal{X}$ the evaluation at $\mathfrak{S}(R)$, a chosen versal deformation of $R$ along $\mathfrak{S}\to \mathcal{O}_{K}$, is a saturated torsion-free Breuil-Kisin module of height $a$. In particular, this means that the Frobenius is injective with $E^{a}$-torsion cokernel (see \cref{completed-crystals-pris-site} for the precise definition).\\
For representations with Hodge-Tate weights in $\{0, \ldots, p-2\}$, there exists another line of work, which uses coefficient objects that are closer to crystals on the crystalline site. Historically, coefficient objects for lattices in crystalline representations were first studied by Fontaine-Laffaille (\cite{Font-Laff}) - this work was then extended to higher dimensions by Faltings in \cite{Fal-cris}. In both cases one was still restricted to the case where $K$ is totally unramified. The ramified case was then treated by Faltings in \cite{Fal-very} and in particular for the zero-dimensional case by Breuil in a series of works (see \cite{Breuil-constr}, \cite{Breuil-mod-fort} and \cite{Breuil-grp})\footnote{In fact, Breuil's work more generally treats the case of lattices in semistable representations}. Breuil conjectured that lattices in crystalline (or more generally semistable) representations with small Hodge-Tate weights should correspond to so called (crystalline) Breuil-modules. This conjecture was proven by Liu in \cite{liu-conj}, using the work of Kisin. One of the main goals of this paper is to prove a higher dimensional version of (the crystalline part of) this theorem. For any smooth $p$-adic formal scheme $\mathcal{X}$, we introduce a category of higher dimensional analogues of crystalline Breuil-modules $\MFa^{\mr{tor-free}}(\mathcal{X})$, where now we assume $a\leq p-2$, whose objects in the local case $\mathcal{X}=Spf(R)$ are given by quadruples $(M, Fil^{a}M, \nabla, \Phi^{a}_{M})$ over $D$, which is the PD-envelope of $ker(\mathfrak{S}(R)\to R)$ in the versal deformation $\mathfrak{S}(R)$. Here $M$ is a finitely generated torsion-free module, which is equipped with a flat connection, a filtration $Fil^{a}M\subset M$ and a divided Frobenius map $\Phi^{a}_{M}:Fil^{a}M\to M$.\\
A completed prismatic Frobenius crystal of height $a$ in the sense of Du-Liu-Moon-Shimizu is filtered by the so called Nygaard filtration, defined by $\mathcal{N}_{\mathcal{M}}^{i}:=\Phi^{-1}_{\mathcal{M}}(\mathcal{I}_{\prism}^{i}\mathcal{M})$.
We use the $a$-th step of this Nygaard filtration to construct an equivalence between completed prismatic Frobenius crystals and a category of filtered crystals equipped with a divided Frobenius (see \cref{defn-divcrys}), which we denote by $DivCR^{\wedge, \phi, \mr{tor-free}}_{\mr{[0, a]}}(\mathcal{X})$. It is this latter category, which from our point of view is most naturally associated with the category $\MF$. Our main theorem is then the following. 
\begin{thm}[\cref{main-thm}]
There is a canonical equivalence of categories
\[DivCR^{\wedge, \phi, \mr{tor-free}}_{\mr{[0, a]}}(\mathcal{X})\to \MFa^{\mr{tor-free}}(\mathcal{X}).\]
\end{thm}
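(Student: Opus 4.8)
The plan is to establish the equivalence étale-locally on $\mathcal{X}$ and then descend. Both sides are defined by conditions that are local on $\mathcal{X}$ for the small affine étale topology, so it suffices to construct, for every small affine $Spf(R)\subseteq \mathcal{X}$, a functorial equivalence over $Spf(R)$ compatible with restriction maps, and then glue; the gluing needs the routine fact that a morphism in either category which becomes an isomorphism after an étale cover is already an isomorphism, which follows from faithfully flat descent together with torsion-freeness. So fix $Spf(R)$, a versal prism $(\mathfrak{S}(R),E)$ over $(\mathfrak{S},E)$, and the PD-envelope $D$ of $\ker(\mathfrak{S}(R)\to R)$ inside $\mathfrak{S}(R)$, with its structure map $\iota\colon \mathfrak{S}(R)\to D$.

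For the functor $DivCR^{\wedge, \phi, \mr{tor-free}}_{\mr{[0, a]}}(Spf(R))\to \MFa^{\mr{tor-free}}(Spf(R))$: given $(\mathcal{M},\Phi_{\mathcal{M}})$ with its Nygaard filtration $\mathcal{N}^{a}_{\mathcal{M}}$ and divided Frobenius, I evaluate $\mathcal{M}$ at the versal prism to obtain the saturated torsion-free Breuil--Kisin module $\mathfrak{M}$ of height $a$, and set $M:=\mathfrak{M}\,\widehat{\otimes}_{\mathfrak{S}(R),\iota}D$. The crystal structure of $\mathcal{M}$ produces a stratification on $M$ upon evaluation on the prismatic/PD-thickening of the diagonal $\mathfrak{S}(R)\,\widehat{\otimes}\,\mathfrak{S}(R)\to D$, equivalently a topologically quasi-nilpotent flat connection $\nabla$ on $M$. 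I let $Fil^{a}M$ be induced by the Nygaard filtration (equivalently $\{x\in M:\Phi(x)\in E^{a}M\}$), and I let $\Phi^{a}_{M}\colon Fil^{a}M\to M$ be induced by $\varphi/E^{a}$; this is well defined because $\Phi$ carries $\mathcal{N}^{a}_{\mathcal{M}}$ into $\mathcal{I}^{a}_{\prism}\mathcal{M}=E^{a}\mathcal{M}$ and $E$ is a non-zero-divisor on the torsion-free $\mathcal{M}$, the passage to $D$ being compatible with the divided powers of $E$ — it is here, as in the work of Breuil and Liu, that the bound $a\leq p-2$ enters. One then checks term by term that $(M,Fil^{a}M,\nabla,\Phi^{a}_{M})$ lies in $\MFa^{\mr{tor-free}}(Spf(R))$: finite generation and torsion-freeness are inherited from $\mathfrak{M}$, flatness of $\nabla$ from the cocycle condition for the stratification, and the compatibilities among $\nabla$, $Fil^{a}M$ and $\Phi^{a}_{M}$ are a direct translation of the corresponding relations on $\mathcal{M}$.

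For the quasi-inverse: given $(M,Fil^{a}M,\nabla,\Phi^{a}_{M})$ over $D$, the connection $\nabla$ already upgrades $M$ to a crystal on the crystalline site of $R/\mathcal{O}_{K}$; the substance is to descend this, using the divided Frobenius, first to a Breuil--Kisin module over $\mathfrak{S}(R)$ and then to a prismatic crystal. This is the relative analogue of the Breuil--Kisin--Liu reconstruction: one recovers $\mathfrak{M}$ as the unique $\varphi$-stable $\mathfrak{S}(R)$-lattice in $M$ cut out by the divided Frobenius, proves it is a saturated Breuil--Kisin module of height $a$ with $\mathfrak{M}\,\widehat{\otimes}_{\mathfrak{S}(R)}D\cong M$ compatibly with all structures, and then reconstructs the value of the resulting crystal on an arbitrary prism of $Spf(R)$ by the usual unfolding — a prismatic crystal on a small affine being equivalent to a Breuil--Kisin module over $\mathfrak{S}(R)$ together with descent data, and the descent data being packaged by $\nabla$ and $\Phi$. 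Finally I check that the two constructions are mutually quasi-inverse via the canonical comparison maps $\mathfrak{M}\,\widehat{\otimes}\,D\xrightarrow{\ \sim\ }M$ and its counterpart, and that everything is canonically independent of the auxiliary choices ($\mathfrak{S}(R)$ and the versal deformation), so that the local equivalences glue to the asserted equivalence over $\mathcal{X}$.

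The main obstacle is the reconstruction step in the quasi-inverse: showing that the Breuil-module datum over $D$ determines, uniquely and functorially, a Breuil--Kisin module over $\mathfrak{S}(R)$, i.e.\ that $\MFa^{\mr{tor-free}}(\mathcal{X})\to DivCR^{\wedge, \phi, \mr{tor-free}}_{\mr{[0, a]}}(\mathcal{X})$ is fully faithful and essentially surjective. This is precisely where the restriction $a\leq p-2$ is indispensable: in this range the divided Frobenius pins down the lattice $\mathfrak{M}\subseteq M$ tightly enough for the descent from the PD-envelope to the prism to be possible and unique. A secondary nuisance is maintaining the $p$-adically completed, torsion-free bookkeeping consistently throughout and verifying that the connection extracted from the crystal agrees on the nose with the one appearing in the definition of $\MF$.
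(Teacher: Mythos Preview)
Your proposal takes a route the paper deliberately avoids, and the main step you flag as ``the obstacle'' is in fact a genuine gap rather than a routine verification.

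The paper does \emph{not} attempt to descend directly from $D$ to $\mathfrak{S}(R)$. Instead it passes to a perfectoid cover $R\to R_\infty$ and identifies both categories with categories of descent data over the cosimplicial rings $\prism_{R_\infty}^{(\bullet)}$ and $\mathbb{A}_{\mr{cris}}(R_\infty/p)^{(\bullet)}$ respectively. The equivalence then reduces to comparing divided Frobenius modules over $A_{\mr{inf}}$-algebras with $\MF$-objects over the corresponding $A_{\mr{cris}}$-algebras; this is done by an explicit inverse-system construction (building $M_{n+1}=\phi^*Fil^aM_n$ iteratively) that produces the $A_{\mr{inf}}$-object from the $A_{\mr{cris}}$-object. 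The restriction $a\le p-2$ enters precisely in showing this limit has no $\xi$-torsion.

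Your quasi-inverse, by contrast, asserts the existence of a ``unique $\varphi$-stable $\mathfrak{S}(R)$-lattice $\mathfrak{M}\subset M$'' recovering the Breuil--Kisin module. In dimension zero this is Liu's theorem, proved using Kisin's $(\varphi,\hat G)$-module machinery and not at all formal; you give no argument for why it holds in the relative case, and indeed no such direct lattice-extraction argument appears in the literature for higher-dimensional $R$. Saying ``this is the relative analogue of Breuil--Kisin--Liu'' is restating the theorem, not proving it.

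There are also smaller issues in your forward functor. The connection on $M=\mathfrak{M}\hat\otimes_{\mathfrak{S}(R)}D$ does not arise from a ``prismatic/PD-thickening of the diagonal $\mathfrak{S}(R)\hat\otimes\mathfrak{S}(R)\to D$'': one must compare the prismatic self-products $\mathfrak{S}(R)^{(\bullet)}$ with the crystalline self-products $D^{(\bullet)}$, and the paper routes this comparison through the qrsp cover. Likewise, over $D$ the divided Frobenius is $\Phi/p^a$ (using that $\phi(E)/p$ is a unit), not $\Phi/E^a$, and the correct filtration on the $A_{\mr{cris}}$-side is $\mathrm{Im}(Fil^aM\otimes A_{\mr{cris}})+Fil^pA_{\mr{cris}}\cdot M_{\mr{cris}}$, which requires a separate check (Lemma~5.1.2 in the paper) to match the intrinsic $\MF$-filtration.
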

The proof is given by passing to the local case and then comparing the descent data on both sides with respect to a perfectoid covering of $Spf(R)$. There the proof boils down to a general comparison theorem between divided Frobenius modules over $A_{inf}$ and $A_{cris}$ respectively, which generalizes various similar results in the literature (see for example \cite[Prop. 1.1.11]{Kisin-moduli}, \cite[Thm. 2.2.1]{CarLiu}, \cite[\S 2]{Fal-MF-st} and also \cite[2.3.2]{Cais-Lau} and \cite[Prop. 9.3]{lau-perf} for the case of Dieudonn\'e-modules).\\
As a corollary, using the work \cite{DLMS}, we then obtain that $\MFa^{\mr{tor-free}}(\mathcal{X})$ is equivalent to crystalline $\Z_p$-local systems of small Hodge-Tate weights.
\begin{cor}
Let $a\leq p-2$. There is an equivalence of categories
\[\MFa^{\mr{tor-free}}(\mathcal{X})\to \Z_p-Loc_{\mr{[0, a]}}^{\mr{cris}}(\mathcal{X}_{K}),\]
where $\Z_p-Loc_{\mr{[0, a]}}^{\mr{cris}}(\mathcal{X}_{K})$ denotes the category of crystalline $\Z_p$-local systems with Hodge-Tate weights in $\{0, \ldots , a\}$.
\end{cor}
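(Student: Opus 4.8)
The plan is to deduce the corollary by composing the equivalence of \cref{main-thm} with two further equivalences that are already available. First, recall that (as constructed in the body of the paper, cf.\ the discussion surrounding \cref{defn-divcrys}) the assignment sending a completed prismatic Frobenius crystal $(\mc{M},\Phi_{\mc{M}})$ of height $a$ on $\mc{X}$ to the pair formed by the $a$-th step $\mc{N}^{a}_{\mc{M}}=\Phi_{\mc{M}}^{-1}(\mc{I}_{\prism}^{a}\mc{M})$ of its Nygaard filtration together with the divided Frobenius it carries is an equivalence from the category of completed prismatic Frobenius crystals of height $a$ onto $DivCR^{\wedge,\phi,\mr{tor-free}}_{[0,a]}(\mc{X})$. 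Composing this with the equivalence $DivCR^{\wedge,\phi,\mr{tor-free}}_{[0,a]}(\mc{X})\xrightarrow{\sim}\MFa^{\mr{tor-free}}(\mc{X})$ of \cref{main-thm} yields an equivalence between completed prismatic Frobenius crystals of height $a$ and $\MFa^{\mr{tor-free}}(\mc{X})$.

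Second, I would invoke the work of Du--Liu--Moon--Shimizu: by \cite{DLMS}, for $\mc{X}$ smooth over $\mathcal{O}_{K}$ the \'etale realization functor identifies the category of completed prismatic Frobenius crystals of height $a$ on $\mc{X}$ with the category of $\Z_p$-local systems on the generic fiber $\mc{X}_{K}$ that are crystalline with Hodge-Tate weights bounded by $a$. Here I would take care to match normalizations: since this paper uses the covariant version of the \'etale realization functor (see the footnote in the introduction), the height bound $a$ on the prismatic side corresponds precisely to Hodge-Tate weights lying in $\{0,\ldots,a\}$, i.e.\ to the category $\Z_p\text{-}Loc^{\mr{cris}}_{[0,a]}(\mc{X}_{K})$ as defined in the statement. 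Chaining the two equivalences then produces the desired equivalence $\MFa^{\mr{tor-free}}(\mc{X})\xrightarrow{\sim}\Z_p\text{-}Loc^{\mr{cris}}_{[0,a]}(\mc{X}_{K})$.

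The final composition is purely formal, so the main obstacle lies entirely in making sure each link in the chain is valid in the stated range. The hypothesis $a\le p-2$ is exactly what is needed for the categories $\MFa^{\mr{tor-free}}(\mc{X})$ and $DivCR^{\wedge,\phi,\mr{tor-free}}_{[0,a]}(\mc{X})$ to be well behaved --- it is the range in which the divided powers entering the definition of $D$ (the PD-envelope of $\ker(\mathfrak{S}(R)\to R)$) interact correctly with the divided Frobenius --- whereas the cited results of \cite{DLMS} hold for all finite heights; so the genuine care is in the convention-matching of the previous paragraph and in checking that the Nygaard-filtration equivalence preserves the property of being of height bounded by $a$. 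These points being settled by the analysis carried out earlier in the paper (together with the comparison results over $A_{\mr{inf}}$ and $A_{\mr{cris}}$ cited after \cref{main-thm}), the corollary follows at once.
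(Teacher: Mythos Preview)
Your proposal is correct and follows essentially the same route as the paper: chain the main theorem $DivCR^{\wedge,\phi,\mr{tor-free}}_{[0,a]}(\mathcal{X})\simeq\MFa^{\mr{tor-free}}(\mathcal{X})$ with the equivalence $CR^{\wedge,\phi,\mr{tor-free}}_{[0,a]}(\mathcal{X})\simeq DivCR^{\wedge,\phi,\mr{tor-free}}_{[0,a]}(\mathcal{X})$ and then invoke \cite{DLMS}. One small inaccuracy: the equivalence $CR\to DivCR$ in the paper is not literally $(\mathcal{M},\Phi_{\mathcal{M}})\mapsto(\mathcal{M},\mathcal{N}^{a}_{\mathcal{M}},\ldots)$ but rather involves a Breuil--Kisin twist, sending $(\mathcal{M},\Phi_{\mathcal{M}})$ to $(\mathcal{M}\{a\},\mathcal{N}^{a}_{\mathcal{M}}\{a\},\Phi_{\mathcal{M}\{a\}})$ (see \cref{equ-div-crys}); this twist is exactly what accounts for the shift $T_{\acute{e}t}(F_a(\mathcal{M}))=T_{\acute{e}t}(\mathcal{M})(a)$ and hence for the convention-matching you allude to.
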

We remark here that the works of Fontaine-Laffaille, Faltings and Breuil study a contravariant \'etale realization functor, whereas our functor is covariant.\\
The construction of the category $\MF$ is given in $\S 3$. In section $4$ we then first study the Nygaard filtration of completed prismatic Frobenius crystals. Building on this, we then introduce the category $DivCR^{\wedge, \phi, \mr{tor-free}}(\mathcal{X})$, which corresponds to crystalline local systems with positive Hodge-Tate weights. Section $5$ then contains the main results.
\section*{Acknowledgements}
I thank Johannes Ansch\"utz and H\'el\`ene Esnault for discussions.\\
This work was supported by Deutsche Forschungsgemeinschaft (DFG, German Research Foundation), TRR 326 \textit{Geometry and Arithmetic of Uniformized Structures} , project number 444845124, and by Deutsche Forschungsgemeinschaft (DFG, German Research Foundation), Sachbeihilfe \textit{Vector bundles and local systems on non-Archimedean analytic spaces}, project number 446443754.

\section{Preliminaries}
\subsection{Notation}
We fix a prime number $p > 2$ and denote by $\mathcal{O}_{K}$ a complete discrete valuation ring of mixed characteristic $(0, p)$ with perfect residue field $k$ and fraction field $K$. We also fix a uniformizer $\pi\in \mathcal{O}_{K}$ and denote by $(\mathfrak{S}=W(k)[[u]], E(u))$ the Breuil-Kisin prism associated to $\pi$. We denote by $\mathcal{S}$ the associated Breuil ring, i.e. the $p$-adically completed PD-envelope of the ideal $(E(u))\subset \mathfrak{S}(R)$ (which is compatible with the PD-structure on $p\mathfrak{S}$). We also define $c:=\frac{\phi(E)}{p}$, which is a unit in $\mathcal{S}$.\\
Moreover, we denote by $C:=\hat{\bar{K}}$ the $p$-adic completion of a fixed algebraic closure of $K$. We then have Fontaine's ring $A_{\mr{inf}}=W(\mathcal{O}_{C}^{\flat})=\prism_{\mathcal{O}_{C}}$ and define $\tilde{\xi}=\phi(\xi)$, where $\xi$ is a generator of the kernel of $\theta:A_{\mr{inf}}\to \mathcal{O}_{C}$. We will follow the convention to use $(A_{\mr{inf}}, \tilde{\xi})$ as the universal prism over $\mathcal{O}_{C}$. Moreover, we recall the ring $A_{cris}=\prism_{\mathcal{O}_{C}/p}$, which is identified as the $p$-adic completion of the PD-envelope of $(\xi)\subset A_{inf}$. There exists a faithfully flat map of prisms $\mathfrak{S}\to A_{inf}$, which extends to a map $\mathcal{S}\to A_{cris}$.\\
Lastly, by smooth $p$-adic formal scheme over $\mathcal{O}_{K}$ we mean a formal scheme $\mathcal{X}$, which is locally isomorphic to $Spf(R)$, where $R$ is the $p$-adic completion of a smooth $\mathcal{O}_{K}$-algebra. We write $\mathcal{X}_{K}$ for the adic space generic fiber of $\mathcal{X}$.

\subsection{Preliminaries on the prismatic site}\label{sec-prel-pris}
Let $\mathcal{X}$ be a smooth $p$-adic formal scheme over $\mathcal{O}_{K}$. In this section we wish to compare crystals on the absolute prismatic site with (prismatic) crystals on the quasisyntomic site of $\mathcal{X}$, generalizing \cite[Prop. 4.4]{ALB}. For the general terminology and main results about the prismatic site, we refer the reader to the foundational works \cite{BS-pris}, \cite{BS-pris-cris} and to \cite{ALB}. Since we wish to connect our work to the paper \cite{DLMS}, we will use the convention from loc. cit. and denote by $\mathcal{X}_{\prism}$ the site of bounded prisms (see \cite[Def. 3.2]{DLMS}). We then recall the following definition.
\begin{defn}\cite[Def. 3.11]{DLMS}
A completed crystal in $\mathcal{O}_{\prism}$-modules is a sheaf of $\mathcal{O}_{\prism}$-modules $\mathcal{M}$, such that, for any $(B, I)\in \mathcal{X}_{\prism}$, $\mathcal{M}(B)$ is a finitely generated, $(p, I)$-complete $B$-module and for any map $(B, I)\to (C, IC)$ in $\mathcal{X}_{\prism}$, the canonical map $\mathcal{M}(B)\hat{\otimes} C\to \mathcal{M}(C)$ is an isomorphism. We denote the category of completed prismatic crystals by $Crys(\mathcal{X}_{\prism})$.
\end{defn}
Following \cite{ALB}, we will mostly work with the quasi-syntomic site, which will turn out to be more convenient for us. Let $\mathcal{X}_{\mr{qsyn}}$ be the site consisting of $p$-complete rings $A$, equipped with a quasi-syntomic map $Spf(A)\to \mathcal{X}$ and where coverings are $p$-completely faithfully flat maps. Recall from \cite[\S 4]{ALB} that there is a morphism of ringed  topoi $\mu:\tilde{\mathcal{X}}_{\prism}\to \tilde{\mathcal{X}}_{\mr{qsyn}}$, where $\tilde{\mathcal{X}}_{\mr{qsyn}}$ is endowed with the sheaf of rings $\mathcal{O}^{\mr{pris}}:=\mu_{*}\mathcal{O}_{\prism}$. Moreover, we have the sheaf of ideals $\mathcal{I}=\mu_{*}\mathcal{I}_{\prism}$. The sheaf $\mathcal{O}^{pris}$ is endowed with the Nygaard filtration, which is an exhaustive and decreasing filtration, defined by 
\begin{center}
$\mathcal{N}^{i}_{\prism}:=\phi^{-1}(\mathcal{I}^{i})$, for any $i\geq 0$.
\end{center}
The quasi-syntomic site has a basis, given by the objects $Spf(B)\to \mathcal{X}$, for which $B$ is a $p$-torsion free quasi-regular semiperfectoid (qrsp) $\Z_p$-algebra. We refer the reader to \cite[\S 4.4]{BMS-hoch} and to \cite[\S 3.3 and \S 3.4]{ALB} for the definition and basic properties of such rings. The prismatic site of $B$ has an initial object, given by prismatic cohomology $\prism_{B}$ and we have $\mathcal{O}^{\mr{pris}}(B)=\prism_{B}$. Moreover, for any $n\geq 1$, the rings $B^{\otimes n}=\underbrace{B\hat{\otimes}_{R}\cdots \hat{\otimes}_{R}B}_{n-times}$ are qrsp as well (\cite[Lemma 4.27]{BMS-hoch}).
\begin{defn}
A crystal in $\mathcal{O}^{\mr{pris}}$-modules is a sheaf of $\mathcal{O}^{\mr{pris}}$-module 
$\mathcal{M}$ on $\mathcal{X}_{\mr{qsyn}}$, such that for any qrsp object $Spf(B)\in \mathcal{X}_{\mr{qsyn}}$ the $\prism_{B}$-module $\mathcal{M}(B)$ is finitely generated and classically $(I, p)$-complete and for any map $Spf(C)\to Spf(B)$ of qrsp objects in $\mathcal{X}_{\mr{qsyn}}$, the canonical map $\mathcal{M}(B)\hat{\otimes}_{\prism_{B}}\prism_{C}\to \mathcal{M}(C)$ is an isomorphism.\\
We denote the category of $\mathcal{O}^{\mr{pris}}$-crystals by $Crys(\mathcal{X}_{\mr{qsyn}})$.
\end{defn}
Using the map of ringed topoi $\mu:\tilde{\mathcal{X}}_{\prism}\to \tilde{\mathcal{X}}_{\mr{qsyn}}$, we get a functor $\mu_{*}:Crys(\mathcal{X}_{\prism})\to Crys(\mathcal{X}_{\mr{qsyn}})$.
\begin{prop}\label{equiv-qsyn}
The functor $\mu_{*}:Crys(\mathcal{X}_{\prism})\to Crys(\mathcal{X}_{\mr{qsyn}})$ is an equivalence of categories.
\end{prop}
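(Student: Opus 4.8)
The plan is to construct an explicit quasi-inverse and check it is inverse to $\mu_*$ on both sides, using the fact that $p$-torsion free qrsp algebras form a basis of $\mathcal{X}_{\mr{qsyn}}$ and that for such $B$ the prismatic site of $\mathrm{Spf}(B)$ has the initial object $\prism_B$. First I would recall the general formalism: since $\mu\colon \tilde{\mathcal{X}}_{\prism}\to\tilde{\mathcal{X}}_{\mr{qsyn}}$ is a morphism of ringed topoi with $\mathcal{O}^{\mr{pris}}=\mu_*\mathcal{O}_{\prism}$, the pullback $\mu^*$ is defined on $\mathcal{O}^{\mr{pris}}$-modules, and there is the unit $\mathcal{N}\to\mu_*\mu^*\mathcal{N}$ and counit $\mu^*\mu_*\mathcal{M}\to\mathcal{M}$. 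The candidate quasi-inverse is (a completed, crystalline variant of) $\mu^*$. The key input is that, for a $p$-torsion free qrsp algebra $B$ with a quasi-syntomic map $\mathrm{Spf}(B)\to\mathcal{X}$, the prismatic site $(\mathrm{Spf} B)_{\prism}$ has initial object $\prism_B$, so a completed prismatic crystal $\mathcal{M}$ is determined on objects over $B$ by the single module $\mathcal{M}(\prism_B)$ together with its crystal structure; this is exactly the content of \cite[Prop. 4.4]{ALB} in the absolute/relative setting, and the same argument applies here.

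Concretely, I would argue as follows. Let $\mathcal{M}\in Crys(\mathcal{X}_{\mr{qsyn}})$. For any prism $(B,I)\in\mathcal{X}_{\prism}$, the formula $A\mapsto \mathrm{Spf}(\overline{A})$ with $\overline{A}=A/I$ a qrsp-type object need not land in a basis, so instead I would use quasi-syntomic descent: locally on $\mathcal{X}$ and on $(B,I)$, one reduces to the case where $\overline B := B/I$ admits a quasi-syntomic cover by a $p$-torsion free qrsp algebra $S$ over $\mathcal{X}$, whose Čech nerve $S^{\otimes \bullet}$ is again qrsp by \cite[Lemma 4.27]{BMS-hoch}, and where $\prism_{S^{\otimes\bullet}}$ computes (a completion of) $B$ via the crystal property. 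Then define $(\mu^*\mathcal{M})(B)$ as the equalizer of $\mathcal{M}(S)\hat\otimes_{\prism_S} B \rightrightarrows \mathcal{M}(S^{\otimes 2})\hat\otimes_{\prism_{S^{\otimes 2}}} B$; faithfully flat descent for $(p,I)$-complete finitely generated modules (using that the relevant maps are $(p,I)$-completely faithfully flat and that completed tensor products commute appropriately) shows this is independent of the chosen cover, finitely generated, and $(p,I)$-complete, and that base change along maps of prisms holds. This produces a functor $Crys(\mathcal{X}_{\mr{qsyn}})\to Crys(\mathcal{X}_{\prism})$.

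It then remains to check the unit and counit are isomorphisms. On objects of the basis this is immediate: for $B$ a $p$-torsion free qrsp algebra, $\prism_B$ is initial in $(\mathrm{Spf} B)_{\prism}$, so $\mu_*\mathcal{N}$ evaluated at $B$ is $\mathcal{N}(B)$, and $\mu^*\mu_*\mathcal{M}$ evaluated at $\prism_B$ recovers $\mathcal{M}(\prism_B)$ since $\prism_B$ is the terminal cover of itself; both identifications are compatible with the crystal/descent structure. Since $p$-torsion free qrsp algebras form a basis of $\mathcal{X}_{\mr{qsyn}}$ and the prisms $\prism_B$ for such $B$ are cofinal (in the appropriate sense) among bounded prisms in $\mathcal{X}_{\prism}$ — again by quasi-syntomic descent — being an isomorphism on this basis suffices to conclude the unit and counit are isomorphisms in general. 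The main obstacle I anticipate is the careful bookkeeping with $(p,I)$-completions: one must verify that completed base change is exact enough for descent to go through for finitely generated complete modules (the boundedness hypothesis on the prisms in $\mathcal{X}_{\prism}$ is exactly what makes this work, as in \cite{DLMS}), and that the cohomological descent along the Čech nerve of a qrsp cover degenerates to the equalizer in degree $\leq 1$ for modules coming from crystals. Everything else is a formal consequence of the initiality of $\prism_B$ and the basis property, as in the proof of \cite[Prop. 4.4]{ALB}.
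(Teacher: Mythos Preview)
Your overall strategy---reducing to qrsp covers, using initiality of $\prism_B$ for qrsp $B$, and invoking $(p,I)$-completely faithfully flat descent---is the same as the paper's. But your explicit construction of the quasi-inverse has a gap: the equalizer formula $\mathcal{M}(S)\hat\otimes_{\prism_S} B \rightrightarrows \mathcal{M}(S^{\otimes 2})\hat\otimes_{\prism_{S^{\otimes 2}}} B$ presupposes a ring map $\prism_S \to B$, yet if $S$ is a qrsp cover of $\overline B=B/I$ there is no such map in general, and the assertion that ``$\prism_{S^{\otimes\bullet}}$ computes (a completion of) $B$'' is simply false for an arbitrary bounded prism $(B,I)$---it would hold only if $(B,I)$ were itself the initial prism $\prism_{\overline B}$ of a qrsp $\overline B$. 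Your invocation of \cite[Lemma 4.27]{BMS-hoch} for the \v{C}ech nerve also requires the base of the tensor product to be qrsp, which $\overline B$ need not be.

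The paper avoids this entirely by not attempting to evaluate a quasi-inverse at an arbitrary prism. Instead, after Zariski-localizing to $\mathcal{X}=\mathrm{Spf}(R)$, it fixes a single qrsp cover $R\to\tilde R$, observes that $\prism_{\tilde R}$ is \emph{weakly initial} in $\mathcal{X}_\prism$ (so it maps to a faithfully flat cover of every prism), and shows that both $Crys(\mathcal{X}_\prism)$ and $Crys(\mathcal{X}_{\mathrm{qsyn}})$ are equivalent, via evaluation, to the same category $Strat(\prism_{\tilde R}^{(\bullet)})$ of finitely generated complete $\prism_{\tilde R}$-modules with a cocycle over $\prism_{\tilde R}^{(1)}$. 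Since the triangle of evaluation functors commutes, $\mu_*$ is an equivalence. This packaging makes the quasi-inverse implicit (it is ``glue the descent datum back'') and sidesteps both the ill-posed tensor product and the completion bookkeeping you anticipated as an obstacle.
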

The proof of the proposition will follow from the fact that Zariski-locally on $\mathcal{X}$ both types of crystals are equivalent to the same type of descent data. Assume that $\mathcal{X}=Spf(R)$ is affine. Let $R\to \tilde{R}$ be a $p$-completely faithfully flat covering by a qrsp $\Z_p$-algebra. Then by \cite[Lemma 7.11]{BS-pris}, $\prism_{\tilde{R}}$ is a weakly initial object of $\mathcal{X}_{\prism}$. For $n\geq 1$, we define $\prism_{\tilde{R}}^{(n)}:=\prism_{\tilde{R}^{\otimes n+1}}$ and we denote by 
$\xymatrix@C-=0.5cm{
\prism_{\tilde{R}}\ar@<.5ex>[r]^{p_{1}}\ar@<-.5ex>[r]_{p_{2}}  & \prism_{\tilde{R}}^{(1)}}$ the morphisms induced by sending $\tilde{R}$ to either the first or to the second factor. 
\begin{defn}
The category $Strat(\prism_{\tilde{R}}^{(\bullet)})$ consists of pairs $(M, \epsilon)$, where $M$ is a finitely generated classically $(p, I)$-complete $\prism_{\tilde{R}}$-module and $\epsilon:p_{1}^{*}M\xrightarrow{\cong}p_{2}^{*}M$ is an isomorphism, which satisfies the obvious cocycle condition over $\prism_{\tilde{R}}^{(2)}$.
\end{defn}
The crystal property ensures that we have evaluation functors $Crys(\mathcal{X}_{\prism})\to Strat(\prism_{\tilde{R}}^{(\bullet)})$ and $Crys(\mathcal{X}_{\mr{qsyn}})\to Strat(\prism_{\tilde{R}}^{(\bullet)})$ that fit into a commutative diagram
\begin{center}
$\xymatrix{
Crys(\mathcal{X}_{\prism})\ar[dr]\ar[rr]^{\mu_{*}} && Crys(\mathcal{X}_{\mr{qsyn}})\ar[dl]\\
& Strat(\prism_{\tilde{R}}^{(\bullet)}) &
}$
\end{center}

For the proof of the following proposition, note that one has effective descent for finitely generated modules along $(I, p)$-completely faithfully flat maps.
\begin{prop}
Both evaluation functors $Crys(\mathcal{X}_{\prism})\to Strat(\prism_{\tilde{R}}^{(\bullet)})$ and $Crys(\mathcal{X}_{\mr{qsyn}})\to Strat(\prism_{\tilde{R}}^{(\bullet)})$ are equivalences of categories.
\end{prop}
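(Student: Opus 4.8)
The plan is to deduce both equivalences from faithfully flat descent along the \v{C}ech nerve of $\prism_{\tilde R}$, treating $Crys(\mathcal{X}_{\prism})$ first and then transporting the argument to $Crys(\mathcal{X}_{\mr{qsyn}})$ along the basis of $p$-torsion free qrsp objects. I would first record that $\prism_{\tilde R}$ is weakly initial in $\mathcal{X}_{\prism}$ (by \cite[Lemma 7.11]{BS-pris}) and that $\prism_{\tilde R}^{(n)}=\prism_{\tilde R^{\otimes n+1}}$ is the $n$-fold coproduct of $\prism_{\tilde R}$ in $\mathcal{X}_{\prism}$, so that $\prism_{\tilde R}^{(\bullet)}$ is the \v{C}ech nerve of $\prism_{\tilde R}$ regarded as a cover of the final object. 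For a crystal $\mathcal{M}$, the crystal condition applied to the two structure maps $\prism_{\tilde R}\rightrightarrows\prism_{\tilde R}^{(1)}$ produces $\epsilon\colon p_1^{*}\mathcal{M}(\prism_{\tilde R})\xrightarrow{\sim}p_2^{*}\mathcal{M}(\prism_{\tilde R})$ (both sides being canonically $\mathcal{M}(\prism_{\tilde R}^{(1)})$), and the same condition over $\prism_{\tilde R}^{(2)}$ yields the cocycle identity; finite generation and $(p,I)$-completeness of $\mathcal{M}(\prism_{\tilde R})$ are built into the definition, so the evaluation functor does land in $Strat(\prism_{\tilde R}^{(\bullet)})$.

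For full faithfulness, I would use that for any $(B,I_B)$ a choice of map $g\colon\prism_{\tilde R}\to B$ (weak initiality) together with the crystal condition yields a canonical isomorphism $\mathcal{M}(B)\cong\mathcal{M}(\prism_{\tilde R})\hat\otimes_{\prism_{\tilde R},g}B$, and that two choices $g_1,g_2$ factor through one map out of the coproduct $\prism_{\tilde R}^{(1)}$, so the two identifications differ exactly by $\epsilon$. Consequently a morphism of crystals is the same datum as a morphism of the underlying $\prism_{\tilde R}$-modules commuting with $\epsilon$: injectivity of $\mathrm{Hom}$ is immediate, and a morphism in $Strat$ is promoted to one of crystals by base change along chosen maps $g$, the $\epsilon$-compatibility guaranteeing independence of the choice and naturality in $B$. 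For essential surjectivity, given $(M,\epsilon)$ I would define a presheaf of $\mathcal{O}_{\prism}$-modules by $\mathcal{M}(B):=M\hat\otimes_{\prism_{\tilde R},g}B$, with $\epsilon$ furnishing the transition isomorphisms between different choices of $g$ (satisfying the cocycle condition), so that $\mathcal{M}$ is a well-defined presheaf satisfying the crystal condition by construction; that it is moreover a sheaf is exactly effective $(p,I)$-completely faithfully flat descent for finitely generated complete modules, since every covering in $\mathcal{X}_{\prism}$ can be compared --- via weak initiality of $\prism_{\tilde R}$ and the identity $\prism_{\tilde R}^{(n)}=\prism_{\tilde R^{\otimes n+1}}$ --- with the descent datum $(M,\epsilon)$ along $\prism_{\tilde R}^{(\bullet)}$, which is effective by hypothesis. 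Evaluating the resulting crystal at $\prism_{\tilde R}^{(\bullet)}$ returns $(M,\epsilon)$.

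Finally I would transport everything to $\mathcal{X}_{\mr{qsyn}}$: by definition an $\mathcal{O}^{\mr{pris}}$-crystal is a sheaf determined by its restriction to the basis of $p$-torsion free qrsp objects, on which $\mathcal{O}^{\mr{pris}}(\tilde R)=\prism_{\tilde R}$, the rings $\tilde R^{\otimes n+1}$ are again qrsp (\cite[Lemma 4.27]{BMS-hoch}), and $\tilde R$ is a cover of the final object with every qrsp object refined by its $p$-completed base change along $R\to\tilde R$ (again qrsp). The three arguments above then adapt to this basis with $\tilde R$ playing the role of $\prism_{\tilde R}$ above, and extend to all of $\mathcal{X}_{\mr{qsyn}}$ by restriction and sheafification. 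The only genuinely substantive input is the effectivity of faithfully flat descent for finitely generated complete modules, which is granted; the expected source of friction is purely organizational --- in the prismatic case keeping track of the coproducts $\prism_{\tilde R}^{(n)}$ and of the chosen maps out of the weakly initial object, and in the quasisyntomic case checking that passage to the qrsp basis loses no information and that the relevant $p$-completed tensor products of qrsp rings stay qrsp, so that $\tilde R$ behaves as a covering object compatibly with the simplicial structure.
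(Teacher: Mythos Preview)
Your proposal is correct and follows the same approach as the paper, which gives no detailed proof and only remarks that effective descent for finitely generated complete modules along $(I,p)$-completely faithfully flat maps suffices; you have spelled out the standard argument. One small imprecision: weak initiality of $\prism_{\tilde R}$ does not guarantee a map $\prism_{\tilde R}\to B$ for every $(B,I_B)$, only after passing to a faithfully flat cover of $B$, so both full faithfulness and essential surjectivity should be phrased via such a cover and then descended, but you already invoke descent in the essential surjectivity step, so this is only a matter of wording.
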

\begin{constr}\label{coverings}
We assume that $\mathcal{X}=Spf(R)$ is small affine. We will recall the construction of two different important coverings that will be used throughout the paper. The first construction is similar to \cite[Constr. 4.17]{BS-pris}. Let $P_{R}$ be the derived $(p, \xi)$-completed polynomial algebra over $A_{\mr{inf}}$ on the set $R_{\mathcal{O}_{C}}=R\hat{\otimes}_{\mathcal{O}_{K}}\mathcal{O}_{C}$. Let $B_{R}$ be the $(p, \xi)$-completion of the free $\delta$-$A_{\mr{inf}}$-algebra on the set $P_{R}$ and let $B_{R}^{\infty}=(colim_{\phi}B_{R})^{\wedge}$ be its derived $(p, \xi)$-completed perfection. Then $\tilde{R}:=B_{R}^{\infty}\hat{\otimes}_{P_{R}}R_{\mathcal{O}_{C}}$ is quasi-regular semiperfectoid and $R\to \tilde{R}$ is quasi-syntomic. The covering $\tilde{R}$ is functorial in $R$.\\
As a second covering, we recall the usual perfectoid covering, obtained by extracting $p$-th power roots of \'etale coordinates. Let $\mathcal{O}_{K}\{t_{1}^{\pm}, \ldots , t_{n}^{\pm}\}\to R$ be \'etale. We then define $R_{\infty}:=R\hat{\otimes}_{\mathcal{O}_{K}\{t_{1}^{\pm}, \ldots , t_{n}^{\pm}\}} \mathcal{O}_{C}\{t_{1}^{\pm p^{\frac{1}{\infty}}}, \ldots , t_{n}^{\pm p^{\frac{1}{\infty}}}\}$. Then $R_{\infty}$ is a perfectoid ring and $R\to R_{\infty}$ is quasi-syntomic.\\
Sometimes it will be necessary to use smaller weakly initial objects. Recall from \cite[\S 2]{DLMS} that there is a small $W(k)$-algebra $R_{0}$, such that $R=R_{0}\hat{\otimes}_{W(k)}\mathcal{O}_{K}$. Then we define the $\mathfrak{S}$-algebra $\mathfrak{S}(R):=R_{0}[[u]]$. It carries a Frobenius lift, compatible with the one on $\mathfrak{S}$, which is obtained by raising the \'etale coordinates to their $p$-th power. $(\mathfrak{S}(R), E(u))$ is then a prism, such that $\mathfrak{S}(R)/E(u)=R$. It is a weakly initial object of $\mathcal{X}_{\prism}$. There is a faithfully flat morphism of prisms $\mathfrak{S}(R)\to A_{\mr{inf}}(R_{\infty})$, compatible with the map $\mathfrak{S}\to A_{\mr{inf}}$ that takes $E(u)$ to $\tilde{\xi}$. It is defined by sending a coordinate $t_{i}$ to $[t_{i}^{\flat}]$, the Teichm\"uller lift of the element $t_{i}^{\flat}\in R_{\infty}^{\flat}$ that corresponds to a fixed sequence of $p$-th power roots of $t_{i}\in R_{\infty}$.
\end{constr}

\subsection{Preliminaries on the crystalline site}\label{sec-prel-cris}
For general results on the crystalline site, we refer to \cite{berthelot} - see also \cite[App. F]{BL}. 
\begin{defn}
Let $Y$ be a scheme on which $p$ is nilpotent. Then $(Y/\Z_p)_{\mr{cris}}$ denotes the site whose objects are given by triples $(A, J, f)$, where $A$ is a classically $p$-adically complete $\Z_p$-algebra, $J\subset A$ is an ideal, equipped with a divided power structure that is compatible with the PD-structure on $pA$ and $f:Spec(A/J)\to Y$ is a morphism.\\
The topology of $(Y/\Z_p)_{\mr{cris}}$ is generated by maps $(A, J, f)\to (B, JB, g)$, where $A\to B$ is $p$-completely faithfully flat.\\
Let $\mathcal{O}_{Y_{\mr{cris}}}$ be the sheaf associated to $(A, J, f)\mapsto A$. Then a complete $\mathcal{O}_{Y_{\mr{cris}}}$-module $\mathcal{M}$ is called a crystal in $\mathcal{O}_{Y_{\mr{cris}}}$-modules, if for any map $(A, J_{A}, f_{A})\to (B, J_{B}, f_{B})$ in $(Y/\Z_p)_{\mr{cris}}$ the canonical map $\mathcal{M}(A)\hat{\otimes}_{A}B\to \mathcal{M}(B)$ is an isomorphism.
\end{defn}
Let $\mathcal{X}=Spf(R)$ be a small smooth affine $p$-adic formal scheme over $\mathcal{O}_{K}$. As above, we fix a uniformizer $\pi\in \mathcal{O}_{K}$ and let again $(\mathfrak{S}, E)$ be the Breuil-Kisin prism associated to $\pi$ and consider the object $\mathfrak{S}(R)$ from \cref{coverings}. We denote by $D$ the $p$-adic completion of the PD-envelope (compatible with the PD-structure on $p\mathfrak{S}(R)$) of the kernel of the map $\mathfrak{S}(R)\to R$ and denote by $J_{R}$ the kernel of $D\to R$.\\
We write $\mathcal{X}_{n}:=Spec(R/p^{n})$. For any object $(A, J, f)\in (Spec(R/p^{n})/\Z_p)_{\mr{cris}}$, the map $R/p^{n}\to A/J$ lifts to a PD-map $D\to A$: namely, first note that $A$ is a $W(k)$-algebra, by deformation theory (since $A/J$ is such). Then $R/p^{n}\to A/J$ may be lifted to a map $\mathfrak{S}(R)\to A$, which then extends to the PD-hull $D$. This means that $(D\to R/p^{n}, (J_{R}, p^{n}))$ is weakly initial.\\
We then also have the following description of self-products in $(Spec(R/p^{n})/\Z_p)_{\mr{cris}}$. For $n\geq 1$, denote by $D(n)$ the $p$-adically completed PD-hull of the kernel of 
\[\underbrace{\mathfrak{S}(R)\otimes_{W(k)} \cdots \otimes_{W(k)} \mathfrak{S}(R)}_{n+1-times}\to \mathfrak{S}(R)\to R.\] 
This gives rise to a cosimplicial ring with PD-ideal $(D^{(\bullet)}, J_{R}(\bullet))$. We also denote by $p_{1}$ (resp. by $p_{2}$) the map $D\to D^{(1)}$ induced by $\mathfrak{S}(R)\to \mathfrak{S}(R)\otimes \mathfrak{S}(R)$ mapping to the first (resp. to the second) factor. \\
We will also again make use of weakly initial objects that are obtained by passing to coverings by qrsp rings. Let $R\to \tilde{R}$ be the functorial quasi-syntomic covering constructed in \cref{coverings}.

For $n\geq 0$, we define $\mathbb{A}_{\mr{cris}}(\tilde{R}/p)^{(n)}:=\prism_{\tilde{R}^{\otimes (n+1)}/p}$, where $\tilde{R}^{\otimes (n+1)}:=\underbrace{\tilde{R}\hat{\otimes}_{R}\cdots \hat{\otimes}_{R}\tilde{R}}_{(n+1)-times}$. This is the $p$-adically completed PD-envelope of $W(((\tilde{R}/p)^{\otimes (n+1})^{\flat})\twoheadrightarrow (\tilde{R}/p)^{\otimes (n+1)}$, where $((\tilde{R}/p)^{\otimes (n+1})^{\flat}$ denotes the inverse limit perfection of $(\tilde{R}/p)^{\otimes (n+1)}$ (see \cite[App. F]{BL}). Since $p\tilde{R}^{\otimes (n+1)}\subset \tilde{R}^{\otimes (n+1)}$ carries the canonical PD-structure (relative to $\Z_p$), there is a unique surjective morphism $\mathbb{A}_{\mr{cris}}(\tilde{R}/p)^{(n)}\to \tilde{R}^{\otimes (n+1)}$ compatible with the projection to $\tilde{R}^{\otimes (n+1)}/p$. In this way we get a cosimplicial algebra with PD-ideal $(\mathbb{A}_{\mr{cris}}(\tilde{R}/p)^{(\bullet)}, J_{\tilde{R}^{\bullet}})$.\\  Moreover, $(\mathbb{A}_{\mr{cris}}(\tilde{R}/p), (J_{\tilde{R}}, p^{n}))$ is a weakly inital object of $((\mathcal{X}/p^{n})/\Z_p)_{\mr{cris}}$, i.e. for any object $(A, I)\in ((\mathcal{X}/p^{n})/\Z_p)_{\mr{cris}}$ there exists a $p$-completely faithfully flat map $(A, I)\to (C, IC)$ such that there is a morphism $(\mathbb{A}_{\mr{cris}}(\tilde{R}/p), J_{\tilde{R}})\to (C, IC)$. To see this, first note that $A$ carries the structure of an $\mathcal{S}$-algebra, compatible with the $\mathcal{O}_{K}$-algebra structure of $R$. Now, the ring $B_{R}$ from \cref{coverings} is the $(p, \xi)$-completion of a polynomial ring over $A_{inf}$. Therefore, there is a map $B_{R}\to A\hat{\otimes}_{\mathcal{S}}A_{inf}$, lifting $R_{\mathcal{O}_{C}}\to A/I\otimes \mathcal{O}_{C}$. Then $(A_{A_{inf}}\hat{\otimes}_{B_{R}}B_{R}^{\infty}, I(A_{A_{inf}}\hat{\otimes}_{B_{R}}B_{R}^{\infty}))$ is a $\Z_p$-algebra with PD-ideal (completely faithfully flat over $A$), such that there is a map $\tilde{R}\to A_{A_{inf}}\hat{\otimes}_{B_{R}}B_{R}^{\infty}/I(A_{A_{inf}}\hat{\otimes}_{B_{R}}B_{R}^{\infty})$. It then follows from the universality of $\mathbb{A}_{\mr{cris}}(\tilde{R}/p)$, that there is a map $\mathbb{A}_{\mr{cris}}(\tilde{R}/p)\to A_{A_{inf}}\hat{\otimes}_{B_{R}}B_{R}^{\infty}$, lifting the map $R\to \tilde{R}\to A_{A_{inf}}/I\hat{\otimes}_{B_{R}}B_{R}^{\infty}$.\\
We also remark here already that 
\[\mathbb{A}_{\mr{cris}}(\tilde{R}/p)^{(n)}=\prism_{\tilde{R}^{\otimes n+1}/p}=\prism_{\tilde{R}^{\otimes n+1}}\hat{\otimes}_{A_{\mr{inf}}}A_{\mr{cris}},\]
which follows from the compatibility of prismatic cohomology with base-change. \\[0.5cm]

Let now $R_{\infty}$ be the perfectoid cover introduced in \cref{coverings}. We again obtain a cosimplicial object $\mathbb{A}_{\mr{cris}}(R_{\infty}/p)^{(\bullet)}$, where again $\mathbb{A}_{\mr{cris}}(R_{\infty}/p)$ is weakly initial. There is a $p$-completely faithfully flat map $D^{(\bullet)}\to \mathbb{A}_{\mr{cris}}(R_{\infty}/p)^{(\bullet)}$ (namely, it is obtained by base extending the map $\mathfrak{S}(R)^{(\bullet)}\to \prism_{R_{\infty}}^{(\bullet)}$ from \cref{coverings} along $A_{\mr{inf}}\to A_{\mr{cris}}$). This is compatible with the Frobenius lifts.

The following is a standard result, which says that crystals may be described by modules equipped with an HPD-stratification for either one of the coverings discussed above.
\begin{prop}
Let $A^{(\bullet)}$ be either $D^{(\bullet)}$, $\mathbb{A}_{\mr{cris}}(R_{\infty}/p)$ or $\mathbb{A}_{\mr{cris}}(\tilde{R}/p)^{(\bullet)}$. Then the category $Crys(\mathcal{X}_{n}/\Z_p)$ is equivalent to the category of $p$-adically complete $A$-modules $M$, equipped with an isomorphism $\epsilon:p_{1}^{*}M\xrightarrow{\cong} p_{2}^{*}M$ over $A^{(1)}$, satisfying the obvious cocycle condition over $A^{(2)}$.
\end{prop}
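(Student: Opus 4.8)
The plan is to reduce the statement to the standard descent formalism for the crystalline site --- a crystal is determined by its value on a weakly initial object together with descent data along the \v{C}ech nerve of that object --- and then to recognize each of the three cosimplicial rings $A^{(\bullet)}$ as exactly such a \v{C}ech nerve, uniformly in the three cases.

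First, fix one of the families and write $A := A^{(0)}$ with PD-ideal $J$; by the discussion preceding the proposition, $(A \to R/p^n, J)$ is weakly initial in $(\Spec(R/p^n)/\Z_p)_{\mr{cris}}$. From a crystal $\mathcal{M}$ one extracts the $p$-complete $A$-module $\mathcal{M}(A)$ together with the canonical isomorphisms $p_i^*\mathcal{M}(A) \xrightarrow{\cong} \mathcal{M}(A^{(1)})$ ($i=1,2$) furnished by the crystal property along the two coface maps $p_1, p_2 \colon A \to A^{(1)}$; their composite is a descent datum $\epsilon$, and the cocycle condition over $A^{(2)}$ holds because the three pullbacks of $\mathcal{M}(A)$ to $A^{(2)}$ are all canonically identified with $\mathcal{M}(A^{(2)})$. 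For the quasi-inverse one proceeds as usual: given $(M,\epsilon)$ and an arbitrary object $(B, J_B)$ of the crystalline site, weak initiality provides a $p$-completely faithfully flat cover $(B,J_B) \to (C, J_C)$ receiving a PD-map from $(A,J)$; one puts $\mathcal{M}(C) := M \hat\otimes_A C$, transports $\epsilon$ to a descent datum for $\mathcal{M}(C)$ along $B \to C$ (using that the relevant maps $A \to C \hat\otimes_B C$ factor through $A^{(1)}$), and invokes faithfully flat descent of $p$-complete modules along $p$-completely faithfully flat maps (cf.\ \cite[\S 4]{ALB}, \cite[App. F]{BL}) to recover $\mathcal{M}(B)$; one then checks independence of the choices, cartesianness, and that the two constructions are mutually inverse. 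This is the classical argument of \cite{berthelot}.

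Second, one verifies that for each family $A^{(n)}$ represents the $(n+1)$-fold self-product of $A^{(0)}$ in $(\Spec(R/p^n)/\Z_p)_{\mr{cris}}$ with the evident coface maps, i.e.\ that $A^{(\bullet)}$ is the \v{C}ech nerve of $A^{(0)}$. For $D^{(\bullet)}$ this is built into the definition: $D^{(n)}$ is the $p$-completed PD-hull of $\ker(\mathfrak{S}(R)^{\otimes_{W(k)}(n+1)} \to R)$, $\mathfrak{S}(R)^{\otimes_{W(k)}(n+1)}$ is the $(n+1)$-fold coproduct of $\mathfrak{S}(R)$ over $W(k)$, and the crystalline site has finite products computed precisely by PD-envelopes of such tensor products. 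For $\mathbb{A}_{\mr{cris}}(\tilde{R}/p)^{(\bullet)}$ and $\mathbb{A}_{\mr{cris}}(R_{\infty}/p)^{(\bullet)}$, the $n$-th term is $\prism_{\tilde{R}^{\otimes(n+1)}/p}$, resp.\ $\prism_{R_{\infty}^{\otimes(n+1)}/p}$; here $\tilde{R}^{\otimes(n+1)}$ (resp.\ $R_{\infty}^{\otimes(n+1)}$) is the $(n+1)$-fold coproduct over $R$ of the qrsp ring $\tilde{R}$ (resp.\ of the perfectoid ring $R_{\infty}$), and one checks that $\mathbb{A}_{\mr{cris}}(-/p) = \prism_{(-)/p}$ carries this coproduct to the self-product of the weakly initial object in the crystalline site, using the base-change compatibility of prismatic cohomology recorded in \cref{coverings} together with the comparisons of \cite[App. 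F]{BL}, \cite[\S 4]{ALB}; weak initiality of $\mathbb{A}_{\mr{cris}}(\tilde{R}/p)$ and of $\mathbb{A}_{\mr{cris}}(R_{\infty}/p)$ has already been recalled.

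Combining the two steps identifies $Crys(\mathcal{X}_n/\Z_p)$ with the category of $p$-complete $A$-modules equipped with a descent datum over $A^{(\bullet)}$, which is the target category. The main obstacle I expect is the second step: checking that the (derived $p$-completed) tensor powers which compute self-products on the prismatic / quasi-syntomic side are carried, under the PD-envelope construction, to the genuine self-products of the weakly initial object inside the crystalline topos. Once this compatibility is settled the remainder is the standard crystalline descent formalism.
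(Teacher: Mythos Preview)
The paper does not give a proof of this proposition at all; it is introduced with ``The following is a standard result'' and left without argument. Your sketch is exactly the standard argument being invoked: crystals are determined by their value on a weakly initial object together with descent data along the associated \v{C}ech nerve, and each of the three cosimplicial rings plays the role of that nerve.

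One small caution about your second step: for the qrsp cases, $\mathbb{A}_{\mr{cris}}(\tilde{R}/p)^{(1)}=\prism_{(\tilde{R}\hat{\otimes}_{R}\tilde{R})/p}$ is not literally the categorical self-product of $\mathbb{A}_{\mr{cris}}(\tilde{R}/p)$ in the crystalline site (that self-product would be a PD-envelope of a tensor product of copies of $\mathbb{A}_{\mr{cris}}(\tilde{R}/p)$ itself). What is true, and what suffices, is that for any object $(C,J_{C})$ receiving two crystalline-site maps from $\mathbb{A}_{\mr{cris}}(\tilde{R}/p)$, there is---after a further $p$-completely faithfully flat cover---a map from $\mathbb{A}_{\mr{cris}}(\tilde{R}/p)^{(1)}$ compatible with both; this follows from the universal property of $\prism_{(-)/p}$ as the initial PD-thickening of a qrsp ring modulo $p$. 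In other words, $A^{(\bullet)}$ is a hypercover of the final object rather than the strict \v{C}ech nerve, but that is enough for the descent formalism you describe. You already flagged this as the delicate point, and once it is phrased this way the verification is routine.
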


\section{The category $\mathcal{M}\mathcal{F}$}\label{sec-MF}
We will now introduce our category of crystalline coefficient objects. The following definition may be seen as a higher dimensional version of the categories introduced by Breuil in \cite{Breuil-constr} in the $0$-dimensional case, except that we only consider crystalline (instead of semistable) objects. It also generalizes the notion of the category $\MF$ introduced by Faltings in \cite{Fal-very}. In particular, we also consider objects which are not locally free.\\ 
Let $a\leq p-2$. Let $R$ be the $p$-adic completion of a small smooth $\mathcal{O}_{K}$-algebra, with \'etale coordinates $t_{1},\ldots , t_{n}$ and let $D$ be the ring introduced in the previous section - then $D$ is an $\mathcal{S}$-algebra, where $\mathcal{S}$ denotes the Breuil-ring. Let $J_{R}$ be the kernel of $D\to R$. The Frobenius $\phi_{D}$ satisfies $\phi_{D}(J_{R}^{[s]})\subset p^{i}D$ for $s \leq p-1$, so we have divided Frobenius maps $\phi_{D}^{s}:=\frac{\phi_{D}}{p^{s}}:J_{R}^{[s]}\to D$. Recall that we have the element $c:=\phi_{D}^{1}(E(u))\in D$.\\
We denote by $\hat{\Omega}^{1}_{D/\Z_p}=\Omega_{\mathfrak{S}(R)}^{1}\hat{\otimes}_{\mathfrak{S}(R)} D$ the $p$-adically completed module of PD-differential operators (see \cite[Tag 07HQ]{stacks} and in particular \cite[Tag 07HW]{stacks}). It is freely generated on $\{du, dt_{k}\}$.
\begin{defn}\label{def-MF}
The category $\MFa^{\mr{big}}(\mathcal{X})$ consists of the following data:
\begin{itemize}
\item A finitely generated $p$-torsion free complete $D$-module $M$, endowed with an integrable quasi-nilpotent connection $\nabla:M\to M\otimes_{D} \hat{\Omega}^{1}_{D/\Z_p}$.
\item A closed submodule $Fil^{a}M\subset M$, such that $J_{R}^{[a]}M\subset Fil^{a}M$, and which is Griffith transversal to $\nabla$, i.e. $J_{R}\nabla(Fil^{a}M)\subset Fil^{a}M$.
\item A $\phi_{D}$-linear morphism $\phi_{M}^{a}:Fil^{a}M\to M$, which is horizontal, i.e. the diagram
\begin{center}
$\xymatrix{
Fil^{a}M\ar[rr]^{E(u)\nabla}\ar[d]^{\phi_{M}^{a}} && Fil^{a}M\otimes \hat{\Omega}^{1}_{D/\Z_p}\ar[d]^{\phi_{M}^{a}\otimes \frac{d\phi_{D}}{p}}\\
M\ar[rr]^{c\nabla} && M\otimes \hat{\Omega}^{1}_{D/\Z_p}
}$
\end{center}
commutes and which induces an isomorphism 
\begin{align}\label{cond-frob-iso}
Fil^{a}M/J_{R}Fil^{a}M\otimes_{\phi_{D}}D/p\xrightarrow{\cong} M/pM.
\end{align}
This means in particular that the image of $\Phi^{a}_{M}$ generates $M$. 
\end{itemize}
We then define the  full subcategory $\MFa^{\mr{tor-free}}(\mathcal{X})\subset \MFa^{\mr{big}}(\mathcal{X})$, consisting of objects $(M, Fil^{a}M, \nabla, \Phi^{a}_{M})$, such that $M$ is flat over $\mathcal{S}$ and $M/Fil^{a}M$ is $p$-torsionfree.
\end{defn}
\begin{rmk}
We can define a Frobenius $\Phi^{0}_{M}:M\to M$, defined as $\Phi^{0}_{M}(m)=c^{-a}\Phi^{a}_{M}(E^{a}m)$. Since $M$ is $p$-torsion free, we have that $\Phi_{M}^{a}=\frac{\Phi^{0}_{M}}{p^{r}}$, so that $\Phi^{a}_{M}$ is in fact determined by $\Phi^{0}_{M}$.
\end{rmk}
The following constructions are similar to \cite[\S 3]{CaisLiu} in the case of Breuil-modules. For any object $(M, Fil^{a}M, \nabla, \phi_{M}^{a})\in \MFa(\mathcal{X})$, we may define a full filtration $Fil^{i}_{M, Fil^{a}M}$ by setting
\begin{center}
\begin{equation}\label{filtration}
Fil_{M, Fil^{a}}^{i}:= \begin{cases}
\{m\in M : J_{R}^{[a-i]}m\subset Fil^{a}M\} &\text{if $i\leq a$}\\
J_{R}^{[i-a]}Fil^{a}M&\text{if $i > a$}.
\end{cases}
\end{equation}
\end{center}
Then $Fil_{M, Fil^{a}M}^{\bullet}$ satisfies Griffiths transversality with respect to $\nabla$. We can also define Frobenius maps $\phi_{M}^{i}:Fil_{M, Fil^{a}M}^{i}\to M$, by setting $\phi_{M}^{i}(m)=c^{-(a-i)}\phi_{M}^{a}(E^{a-i}m)$ (this is independent of the distinguished element $E$). One can then check that the diagrams 
\begin{center}
$\xymatrix{
F_{M}^{i}\ar[rr]^{\nabla}\ar[d]^{\phi_{M}^{i}} && F_{M}^{i-1}\otimes \hat{\Omega}^{1}_{D/\Z_p}\ar[d]^{\phi_{M}^{i-1}\otimes \frac{d\phi_{D}}{p}}\\
M\ar[rr]^{\nabla} && M\otimes \hat{\Omega}^{1}_{D/\Z_p}
}$
\end{center}
commute.\\
For an extensive treatment of Griffiths transversality within the context of crystalline cohomology, we refer the reader to Ogus' book \cite{Ogus-griffiths}. We will now give a description of the category $\MF$ in terms of descent data. For this, we first make the following remark: Let $(D, J_{R})\to (B, J)$ be a $p$-completely flat map, where $B$ is a $p$-adically complete ring, equipped with a PD-ideal $J$ and equipped with a Frobenius lift $\phi_{B}$, compatible with the Frobenius on $D$. If we then equip the module $M_{B}=M\hat{\otimes}_{D}B$ with the filtration $Fil^{a}M_{B}$, defined as the closure of the image of $\sum\limits_{r+s=a}Fil^{r}M\otimes J^{[s]}$, then the morphism $\Phi^{a}_{M_{B}}:=\sum\limits_{r+s=a}\frac{\Phi^{r}_{M}\otimes \phi_{B}}{p^{s}}$ is well-defined on $Fil^{a}M_{B}$. This is because $M_{B}$ is $p$-torsion free and $\Phi^{a}_{M_{B}}$ is seen to agree with $\frac{\Phi^{0}_{M}\otimes \phi_{B}}{p^{a}}$. By abuse of notation we will also write $\Phi^{a}_{M_{B}}=\sum\limits_{r+s=a}\Phi^{r}_{M}\otimes \phi^{s}_{B}$.\\[0.5cm]
Let $(D^{(\bullet)}, J_{D^{(\bullet)}})$ be the cosimplicial object introduced in \cref{sec-prel-cris}, obtained by taking crystalline self-products of $D$. We also again denote by $p_{1}$ (resp. by $p_{2}$) the map $D\to D^{(1)}$ induced by $\mathfrak{S}(R)\to \mathfrak{S}(R)\otimes \mathfrak{S}(R)$ mapping to the first (resp. to the second) factor.
\begin{defn}\label{def-strat}
Let $(A, J, \phi_{A})$ be a $p$-torsion free $\Z_p$-algebra, equipped with a PD-ideal $J$ and a lift $\phi_{A}$ of the absolute Frobenius. We denote by $\MFa^{\mr{ big}}(A)$ the category of triples $(M, Fil^{a}M, \Phi^{a}_{M})$, such that
\begin{itemize}
\item $M$ is a torsion free $A$-module and $Fil^{a}M\subset M$ is a submodule, such that $J^{[a]}M\subset Fil^{a}M$.
\item $\Phi^{a}_{M}:Fil^{a}M\to M$ is a $\phi_{A}$-linear map, which satisfies the conditions analogous to \cref{def-MF}.
\end{itemize}
In analogy to \cref{def-MF}, we also have the full subcategory $\MFa^{\mr{tor-free}}(A)$, where we ask that the quotient $M/Fil^{a}M$ is $p$-torsion free.\\
We denote by $\MFa^{\mr{big}}(D^{(\bullet)})$ the category consisting of the following data
\begin{itemize}
\item A triple $(M, Fil^{a}M, \phi_{M})\in \MFa^{\mr{big}}(D)$. 
\item An isomorphism $\epsilon:p_{1}^{*}M\xrightarrow{\cong} p_{2}^{*}M$, satisfying the ususal cocycle condition over $D^{(2)}$. 
\item The isomorphism satisfies $\epsilon(F^{a}(p_{1}^{*}M))=F^{a}(p_{2}^{*}M)$ and $\epsilon\circ \Phi_{p_{1}^{*}M}^{a}=\Phi_{p_{2}^{*}M}^{a}\circ \epsilon$, where for $i=1,2$, we define $F^{a}(p_{i}^{*}M):=\sum\limits_{r+s=a}F^{r}_{M}\otimes J_{D^{(1)}}^{[s]}$, and the Frobenius $\Phi_{p_{i}^{*}M}^{a}:F^{a}(p_{i}^{*}M)\to p_{i}^{*}M$ by $\Phi_{p_{i}^{*}M}^{a}=\sum\limits_{r+s=a}\Phi_{M}^{r}\otimes \phi^{s}_{D^{(1)}}$. 
\end{itemize}
Again, we also have the full subcategory $\MFa^{\mr{tor-free}}(D^{(\bullet)})$.
\end{defn}
\begin{prop}\label{prop-cris-strat}
The category $\MFa^{\mr{big}}(\mathcal{X})$ is naturally equivalent to the category $\MFa^{\mr{big}}(D^{(\bullet)})$. This equivalence restricts to an equivalence between $\MFa^{\mr{tor-free}}(\mathcal{X})$ and $\MFa^{\mr{tor-free}}(D^{(\bullet)})$.
\end{prop}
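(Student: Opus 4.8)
The plan is to use the classical correspondence between crystals on the crystalline site and modules with a compatible HPD-stratification, and then check that the extra data (the filtration and the divided Frobenius) descends along the cover in the expected way. Concretely, recall from \cref{sec-prel-cris} that $(D\to R/p^{n}, (J_{R}, p^{n}))$ is weakly initial in $(Spec(R/p^{n})/\Z_p)_{\mr{cris}}$ and that $D^{(\bullet)}$ computes self-products; in particular, for any object $(M, Fil^{a}M, \nabla, \phi_{M}^{a}) \in \MFa^{\mr{big}}(\mathcal{X})$, evaluating the underlying crystal-with-connection at $D$ and at $D^{(\bullet)}$ produces a torsion-free $D$-module $M$ together with an isomorphism $\epsilon\colon p_{1}^{*}M \xrightarrow{\cong} p_{2}^{*}M$ satisfying the cocycle condition, where $\epsilon$ is the Taylor-series isomorphism built from $\nabla$ (this is the content of the last Proposition of \cref{sec-prel-cris}, applied with $A^{(\bullet)} = D^{(\bullet)}$). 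The functor $\MFa^{\mr{big}}(\mathcal{X}) \to \MFa^{\mr{big}}(D^{(\bullet)})$ then sends $(M, Fil^{a}M, \nabla, \phi_{M}^{a})$ to $(M, Fil^{a}M, \phi_{M}^{a}, \epsilon)$, and I must check that $\epsilon$ respects $Fil^{a}$ and $\Phi^{a}$ in the sense of \cref{def-strat}, and conversely that this data recovers $\nabla$, $Fil^{a}M$, $\phi_{M}^{a}$ on $\mathcal{X}$.

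First I would construct the functor in the forward direction. The underlying equivalence of crystals-with-connection and stratified modules is standard, so the only thing to verify is the two compatibilities $\epsilon(F^{a}(p_{1}^{*}M)) = F^{a}(p_{2}^{*}M)$ and $\epsilon \circ \Phi^{a}_{p_{1}^{*}M} = \Phi^{a}_{p_{2}^{*}M} \circ \epsilon$. For the filtration compatibility, note that $p_{i}^{*}M = M \hat{\otimes}_{D} D^{(1)}$ along $p_i$, and $F^{a}(p_i^*M) = \sum_{r+s=a} Fil^r_{M,Fil^aM} \otimes J_{D^{(1)}}^{[s]}$ is exactly the filtration on the base change described in the remark preceding \cref{def-strat}; the statement that $\epsilon$ carries one to the other is precisely the assertion that the connection-induced isomorphism is filtered, which follows from Griffiths transversality of $Fil^{\bullet}_{M, Fil^a M}$ with respect to $\nabla$ (Taylor expansion of an element of $Fil^i$ lands in $\sum_{r+s=i} Fil^r \otimes J^{[s]}$). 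For the Frobenius compatibility, since everything is $p$-torsion free it suffices to check $\epsilon \circ (\frac{\Phi^0_M \otimes \phi}{p^a}) = (\frac{\Phi^0_M \otimes \phi}{p^a}) \circ \epsilon$ after inverting $p$, i.e. that $\epsilon$ commutes with $\Phi^0_M \otimes \phi_{D^{(1)}}$; this is the horizontality condition (the commuting square with $c\nabla$ and $E(u)\nabla$) rephrased on self-products, and it is a formal consequence of horizontality of $\phi_M^a$ together with the fact that $\epsilon$ intertwines $\nabla$ on the two factors.

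For the inverse functor, given $(M, Fil^aM, \phi_M^a, \epsilon) \in \MFa^{\mr{big}}(D^{(\bullet)})$, the classical equivalence recovers from $\epsilon$ an integrable quasi-nilpotent connection $\nabla$ on $M$ (take the linear part of $\epsilon$ modulo $J_{D^{(1)}}^{[2]}$), and the descent of finitely generated modules along the $(p)$-completely faithfully flat cover $D \to D^{(1)}$ shows nothing is lost. One then checks that the filtration compatibility of $\epsilon$ forces $Fil^aM$ to be Griffiths transversal for this $\nabla$, and the Frobenius compatibility of $\epsilon$ forces the horizontality square for $\phi_M^a$; both are obtained by expanding $\epsilon$ to first order in $J_{D^{(1)}}$ and reading off the coefficient of $du$ and $dt_k$. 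The conditions in \cref{def-MF} that are purely about $(M, Fil^aM, \phi_M^a)$ over $D$ (that $J_R^{[a]}M \subset Fil^aM$, that $Fil^aM$ is closed, that \cref{cond-frob-iso} holds) are part of the data of $\MFa^{\mr{big}}(D)$ and transport directly. Finally, the restriction to the torsion-free subcategories is immediate: flatness over $\mathcal{S}$ and $p$-torsion-freeness of $M/Fil^aM$ are conditions on $(M, Fil^aM)$ over $D$ alone, unaffected by the stratification, and they match up with the defining conditions of $\MFa^{\mr{tor-free}}(D^{(\bullet)})$ by definition.

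The main obstacle I expect is bookkeeping around the divided Frobenius: the maps $\Phi^a$ are only defined on a submodule and involve division by $p^a$, so all the compatibility checks have to be done carefully by first passing to $M[1/p]$ (where $\Phi^0_M/p^a$ makes sense as an honest $\phi$-semilinear map), verifying the identities there, and then checking they descend to integral statements — using $p$-torsion-freeness of $M$, $M_B$, and the self-product rings $D^{(n)}$ throughout. A secondary technical point is confirming that the filtration $F^a(p_i^*M) = \sum_{r+s=a} Fil^r \otimes J^{[s]}$ appearing in \cref{def-strat} genuinely coincides with the "closure of the image" filtration used in the remark before it, which needs the flatness of $D \to D^{(1)}$ together with the finite generation and closedness hypotheses; once this identification is in hand the rest is a routine (if lengthy) verification.
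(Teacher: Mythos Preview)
Your strategy is correct and your argument would go through, but it differs from the paper's in a notable way at the Frobenius step. You reduce the compatibility $\epsilon\circ\Phi^{a}_{p_{1}^{*}M}=\Phi^{a}_{p_{2}^{*}M}\circ\epsilon$ to the undivided case by multiplying through by $p^{a}$ and using $p$-torsion-freeness, so that it suffices to check $\epsilon$ commutes with $\Phi^{0}_{M}\otimes\phi_{D^{(1)}}$; you then invoke the standard equivalence between horizontal morphisms of crystals and stratification-compatible maps, together with the observation that the face maps $p_{i}$ commute with Frobenius so that the stratification on $\phi^{*}\mathcal{M}$ is simply $\phi_{D^{(1)}}^{*}\epsilon$. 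This is clean and conceptual. The paper instead carries out the verification by hand: it writes out the Taylor isomorphism, applies the higher chain rule (Fa\`a di Bruno, with partial Bell polynomials $B_{e_{k},l}$) to iterate the first-order horizontality identity for the $\Phi^{i}_{M}$, and matches terms via an induction on the coordinates. The explicit route is longer but self-contained, and the formulas it produces are reused verbatim later in the paper (in the discussion of independence of the Frobenius lift and in the construction of the base-change Frobenius via \cref{p-taylor}); your route is shorter but imports the classical crystal--stratification dictionary as a black box and still requires you to extract the horizontality of $\Phi^{0}_{M}$ from that of $\Phi^{a}_{M}$, which you gloss over but which is the $i=0$ case of the paper's preliminary identity. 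The filtration step and the converse direction are handled the same way in both.
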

\begin{proof}
First let $(M, Fil^{a}M, \nabla, \phi_{M}^{a})\in \MFa^{\mr{big}}(\mathcal{X})$. The integrable connection on $M$ corresponds to an HPD-stratification $\epsilon:p_{1}^{*}M\xrightarrow{\cong} p_{2}^{*}M$ which is given by the Taylor formula
\begin{center}
$m\otimes 1\mapsto \sum\limits_{E}(\prod\nabla_{\partial_{t_{k}}}^{e_{k}})(m)\otimes \prod\gamma_{e_{k}}(p_{1}(t_{k})-p_{2}(t_{k}))$.
\end{center}
Here $t_{1}, \ldots , t_{n}$ are \'etale coordinates and $t_{0}:=u$ and $E=(e_{k})_{k\in \{0, \ldots , n\}}$ runs over all multiindices. Using Griffiths transversality, one can then show that $\epsilon(F^{a}(p_{1}^{*}M))=F^{a}(p_{2}^{*}M)$ (see \cite[Lemma 3.1.3]{Ogus-griffiths}).\\
What is left to prove, is that $\epsilon\phi_{p_{1}^{*}M}^{a}=\phi_{p_{2}^{*}M}^{a}\epsilon$, if and only if the $\Phi^{i}_{M}$ are horizontal.\\
This is an exercise in calculus: The $\Phi_{M}^{i}$ are horizontal iff 
\begin{equation*}
\nabla_{\partial_{t_{k}}}(\Phi_{M}^{i}(m))= \begin{cases}
\Phi_{M}^{i-1}(\nabla_{\partial_{t_{k}}}(m))(\frac{\partial_{t_{k}}\phi_{D}(t_{k})}{p}) &\text{if $i > 0$}\\
\Phi_{M}^{i}(\nabla_{\partial_{t_{k}}}(m))(\partial_{t_{k}}\phi_{D}(t_{k})) &\text{if $i = 0$}
\end{cases}
\end{equation*}
More generally, using the higher chain rules, we may write 
\begin{equation}\label{eq1}
\nabla^{e_{k}}_{\partial_{t_{k}}}(\Phi_{M}^{i}(m))=\sum\limits_{l=1}^{e_{k}}\frac{1}{p^{min\{l, i\}}}\Phi_{M}^{i-l}(\nabla_{\partial_{t_{k}}}^{l}(m))B_{e_{k}, l}(\partial_{t_{k}}\phi_{D}(t_{k}),\ldots , \partial^{e_{k}-l+1}_{t_{k}}\phi_{D}(t_{k})),
\end{equation}
where we define $\Phi^{i}_{M}:=\Phi_{M}^{0}$, for $i < 0$. Also, by $B_{e_{k}, l}$ we denote the partial Bell polynomials. All terms are well-defined.\\
Now let $m\otimes x\in F^{r}_{M}\otimes I_{D^{(1)}}^{[s]}\subset  F^{a}(p_{1}^{*}M)$, i.e. $r+s=a$. Then $\Phi_{p_{1}^{*}M}^{a}(m\otimes x)=\Phi^{r}_{M}(m)\otimes \phi_{D^{(1)}}^{s}(x)$ and we have
\begin{center}
$\epsilon\Phi_{p_{1}^{*}M}^{a}(m\otimes x)=\sum\limits_{E=(e_{k})}(\prod\nabla_{\partial_{t_{k}}}^{e_{k}})(\Phi^{k}_{M}(m))\otimes \phi_{D^{(1)}}^{s}(x)\prod\gamma_{e_{k}}(p_{1}(t_{k})-p_{2}(t_{k}))$.
\end{center}
We now claim that 
\begin{align*}
& \sum\limits_{E=(e_{k})}(\prod\nabla_{\partial_{t_{k}}}^{e_{k}})(\Phi^{r}_{M}(m))\otimes \prod\gamma_{e_{k}}(p_{1}(t_{k})-p_{2}(t_{k}))\\
= & \sum\limits_{E}\Phi_{M}^{k-\abs{E}}(\prod\nabla^{e_{k}}_{\partial_{t_{k}}})(m)\otimes \frac{1}{p^{min\{\abs{E}, k\}}}\prod\gamma_{e_{k}}(p_{1}(\phi_{D}(t_{k}))-p_{2}(\phi_{D}(t_{k}))).
\end{align*}
We show this using induction. To shorten formulas, we write $\delta_{e_{k}}(t_{k}))=\gamma_{e_{k}}(p_{1}(t_{k})-p_{2}(t_{k}))$ and $\delta_{e_{k}}(\phi(t_{k})))=\gamma_{e_{k}}(p_{1}(\phi_{D}(t_{r}))-p_{2}(\phi_{D}(t_{r}))$. Fix an element $r_{0}\in R$. For any multiindex $E=(e_{k})$, we write $E'=(e_{k})_{k\in \{0, \ldots , n\}}$. By induction, we then have
\begin{align*}
& \sum\limits_{E=(e_{k})}(\prod\nabla_{\partial_{t_{k}}}^{e_{k}})(\Phi^{k}_{M}(m))\otimes \prod\delta_{e_{k}}(t_{k}))\\
= & \sum\limits_{e_{n}}\left[\nabla^{e_{n}}_{\partial_{t_{k}}}\sum\limits_{E'}(\prod\nabla_{\partial_{t_{k}}}^{e_{k}})(\Phi^{k}_{M}(m))\otimes \prod\delta_{e_{k}}(t_{k}))\right]\delta_{e_{n}}(t_{n}))\\
= & \sum\limits_{e_{n}}\left[\nabla^{e_{n}}_{\partial_{t_{n}}}\sum\limits_{E'}\Phi_{M}^{k-\abs{E'}}(\prod\nabla^{e_{k}}_{\partial_{t_{k}}})(m)\otimes \frac{1}{p^{min\{\abs{E'}, k\}}}\prod\delta_{e_{k}}(\phi(t_{k}))\right]\delta_{e_{n}}(t_{n}))
\end{align*}
Using \cref{eq1}, and setting $B_{n, l}=B_{n, l}(\partial_{t_{n}}\phi_{D}(t_{n}),\ldots , \partial^{n-l+1}_{t_{n}}\phi_{D}(t_{n}))$, we may rewrite this as 
\begin{align*}
= & \sum\limits_{e_{n}}\left[\sum\limits_{l=1}^{e_{n}}\sum\limits_{E'}\Phi_{M}^{k-\abs{E'}-l}(\prod\nabla^{e_{k}}_{\partial_{t_{k}}})\nabla^{l}_{\partial_{t_{n}}}(m)\otimes \frac{1}{p^{min\{\abs{E'}+l, k\}}}\prod\delta_{e_{k}}(\phi(t_{k}))p_{2}(B_{e_{n}, l})\right]\delta_{e_{n}}(t_{n}))\\
= & \sum\limits_{l=1}^{\infty}\left[\sum\limits_{E'}\Phi_{M}^{k-\abs{E'}-l}(\prod\nabla^{e_{k}}_{\partial_{t_{k}}})\nabla^{l}_{\partial_{t_{n}}}(m)\otimes \frac{1}{p^{min\{\abs{E'}+l, k\}}}\prod\delta_{e_{k}}(\phi(t_{k}))\right]\sum\limits_{e_{n}=l}^{\infty}p_{2}(B_{e_{n}, l})\delta_{e_{n}}(t_{n})\\
= & \sum\limits_{l=1}^{\infty}\left[\sum\limits_{E'}\Phi_{M}^{k-\abs{E'}-l}(\prod\nabla^{e_{k}}_{\partial_{t_{k}}})\nabla^{l}_{\partial_{t_{n}}}(m)\otimes \frac{1}{p^{min\{\abs{E'}+l, k\}}}\prod\delta_{e_{k}}(\phi(t_{k}))\right]\delta_{l}(\phi(t_{n}))\\
= & \sum\limits_{E}\Phi_{M}^{k-\abs{E}}(\prod\nabla^{e_{k}}_{\partial_{t_{k}}})(m)\otimes \frac{1}{p^{min\{\abs{E}, k\}}}\prod\delta_{e_{k}}(\phi(t_{k})))\\
= & \sum\limits_{E}\Phi_{M}^{k-\abs{E}}(\prod\nabla^{e_{k}}_{\partial_{t_{k}}})(m)\otimes \phi^{min\{\abs{E}, k \}}_{D^{(1)}}(\prod\delta_{e_{k}}(t_{r}))).
\end{align*}
Above we used that 
$\sum\limits_{e_{n}=l}^{\infty}p_{2}(B_{e_{n}, l})\delta_{e_{n}}(t_{n})=\delta_{l}(\phi(t_{n}))$, which follows from Taylor's theorem. We thus see that
\begin{align*}
\epsilon\Phi_{p_{1}^{*}M}^{a}(m\otimes x) & = & \sum\limits_{E=(e_{k})}(\prod\nabla_{\partial_{t_{k}}}^{e_{k}})(\Phi^{k}_{M}(m))\otimes \phi_{D^{(1)}}^{s}(x)\prod\delta_{e_{k}}(t_{r})\\
& = & \sum\limits_{E}\Phi_{M}^{k-\abs{E}}(\prod\nabla^{e_{k}}_{\partial_{t_{k}}})(m)\otimes \phi^{min\{\abs{E}, k \}}_{D^{(1)}}(\prod\delta_{e_{k}}(t_{k})))\phi_{D^{(1)}}^{s}(x)\\
& = & \Phi^{a}_{p_{2}^{*}M}\epsilon(m\otimes x).
\end{align*}
Conversely, one also easily sees that $\epsilon\Phi_{p_{1}^{*}M}^{a}=\Phi^{a}_{p_{2}^{*}M}\epsilon$ implies that the $\Phi_{M}^{i}$ are horizontal with respect to $\nabla$.
\end{proof}
\begin{rmk}
The stratification $\epsilon$ in the proposition above in fact fixes the full filtration of $p_{1}^{*}M$, which is defined as the product of the filtration $Fil^{\bullet}_{M, Fil^{a}M}$ with the PD-filtration $J_{R^{(1)}}^{[\bullet]}$ (this again follows from Griffiths transversality and \cite[Lemma 3.1.3]{Ogus-griffiths}).
\end{rmk}
We now want to globalize the construction of the category $\MFa$. We first record the following.
\begin{prop}\label{MF-to-crystals}
Let $n\geq 1$ and define $\mathcal{X}_{n}=\mathcal{X}/p^{n}$. Let $(M, Fil^{a}M, \nabla, \phi_{M}^{a})\in \MFa^{\mr{big}}(\mathcal{X})$. Then $(M/p^{n}, Fil^{a}M/p^{n}, \nabla/p^{n}, \Phi^{a}_{M}/p^{n})$ defines a pair $(\mathcal{M}_{n}, Fil^{a}\mathcal{M}_{n})$, where $\mathcal{M}_{n}$ is a crystal in $\mathcal{O}_{\mathcal{X}_{n, cris}}$-modules over $(\mathcal{X}_{n}/\mathbb{Z}_{p})_{\mr{cris}}$, and $\overbar{\mathcal{M}}\subset \mathcal{M}$ is a subsheaf, satisfying $\mathcal{I}_{\mathcal{X}_{n}}^{[a]}\mathcal{M}_{n}\subset Fil^{a}\mathcal{M}_{n}$. Here $Fil^{a}M/p^{n}$ denotes the image of $Fil^{a}M$ in $M/p^{n}$.
\end{prop}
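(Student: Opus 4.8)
The plan is to obtain both $\mathcal{M}_n$ and $Fil^a\mathcal{M}_n$ by descent from the stratified data over $D^{(\bullet)}$, using the description of crystals recalled at the end of \cref{sec-prel-cris}. First I would produce the crystal $\mathcal{M}_n$. Since $(D,(J_R,p^n))$ is weakly initial in $(\mathcal{X}_n/\Z_p)_{\mr{cris}}$ and the cosimplicial ring $D^{(\bullet)}$ computes the self-products, the final proposition of \cref{sec-prel-cris} identifies $Crys(\mathcal{X}_n/\Z_p)$ with $p$-adically complete $D$-modules carrying an HPD-stratification over $D^{(1)}$ satisfying the cocycle condition over $D^{(2)}$. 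The integrable quasi-nilpotent connection $\nabla$ on $M$ gives, via the Taylor formula exactly as in the proof of \cref{prop-cris-strat}, such a stratification $\epsilon:p_1^*M\xrightarrow{\cong}p_2^*M$; reducing modulo $p^n$ (note $M/p^n$ is trivially $p$-adically complete, being $p^n$-torsion, and the reduced connection is again integrable and quasi-nilpotent) yields a stratified $D/p^n$-module, hence a crystal $\mathcal{M}_n$ characterised by $\mathcal{M}_n(D)=M/p^n$ with transition maps induced by $\epsilon$; concretely, for any object $(A,J,f)$ and any PD-map $g:D\to A$ lifting $R/p^n\to A/J$ one has $\mathcal{M}_n(A)=(M/p^n)\hat{\otimes}_{D,g}A$.

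Next, for the filtered piece, the one substantive point is that $\epsilon$ is compatible with the submodules $F^a(p_i^*M)=\sum_{r+s=a}F^r_M\otimes J_{D^{(1)}}^{[s]}$: Griffiths transversality of $Fil^\bullet_{M,Fil^aM}$ with respect to $\nabla$ gives $\epsilon(F^a(p_1^*M))=F^a(p_2^*M)$ by \cite[Lemma 3.1.3]{Ogus-griffiths}, as already recorded in the proof of \cref{prop-cris-strat}. Hence $(Fil^aM\subset M,\epsilon)$ is a sub-descent-datum, which after reduction modulo $p^n$ descends to a subsheaf $Fil^a\mathcal{M}_n\subset\mathcal{M}_n$ with $Fil^a\mathcal{M}_n(D)=Fil^aM/p^n$ (the image of $Fil^aM$) and, on a general object $(A,J,f)$, $Fil^a\mathcal{M}_n(A)$ equal to the closure of the image of $\sum_{r+s=a}Fil^rM\otimes J^{[s]}$ inside $\mathcal{M}_n(A)$. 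I would emphasise here that one only claims that $Fil^a\mathcal{M}_n$ is a \emph{subsheaf}, not a crystal, since the base-change maps $Fil^a\mathcal{M}_n(A)\hat{\otimes}_AB\to Fil^a\mathcal{M}_n(B)$ need not be isomorphisms.

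Finally, the inclusion $\mathcal{I}_{\mathcal{X}_n}^{[a]}\mathcal{M}_n\subset Fil^a\mathcal{M}_n$ is checked on the basis of $(\mathcal{X}_n/\Z_p)_{\mr{cris}}$ given by the weakly initial objects and their self-products. On $(D,(J_R,p^n))$ it is the reduction modulo $p^n$ of the axiom $J_R^{[a]}M\subset Fil^aM$ from \cref{def-MF}; on a general $(A,J,f)$ the structure map $D\to A$ is a PD-morphism, so the term $Fil^0M\otimes J^{[a]}=M\otimes J^{[a]}$ maps into $Fil^a\mathcal{M}_n(A)$ by the description above, whence $J^{[a]}\mathcal{M}_n(A)\subset Fil^a\mathcal{M}_n(A)$, and this suffices since it holds on a basis. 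I expect the argument to be essentially formal once \cref{prop-cris-strat} is available; the only place needing care is that reduction modulo $p^n$ and passage to the subobject $Fil^a$ are compatible with descent, which is immediate because $Fil^a\mathcal{M}_n$ is defined through images of $Fil^\bullet M$ under (the reductions of) the transition maps rather than as an intrinsic subsheaf.
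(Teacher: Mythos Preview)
Your proposal is correct and follows essentially the same route as the paper's proof: both use the weak initiality of $D$, build $\mathcal{M}_n$ from the HPD-stratification $\epsilon$ attached to $\nabla$, and define $Fil^a\mathcal{M}_n(B)$ as the image of $\sum_{r+s=a}Fil^rM\otimes J_B^{[s]}$, with well-definedness coming from the fact (via Griffiths transversality, \cite[Lemma 3.1.3]{Ogus-griffiths}) that $\epsilon$ respects the pullback filtrations. Your write-up is somewhat more detailed than the paper's, but the argument is the same.
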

\begin{proof}
This is standard. By \cref{sec-prel-cris} $D$ covers the final object of $(\mathcal{X}_{n}/\mathbb{Z}_{p})_{\mr{cris}}$. More precisely, for any $(B, I)\in (\mathcal{X}_{n}/\mathbb{Z}_{p})_{\mr{cris}}$ there exists a morphism $\iota:(D, J_{R})\to (B, I)$, and we define $\mathcal{M}_{n}(B):=M/p^{n}\otimes_{D, \iota} B$. Now consider the full filtration $Fil^{\bullet}_{M, Fil^{a}M}$ of $M$, defined by \cref{filtration}. We then define $Fil^{a}\mathcal{M}_{n}(B):=Im(\sum\limits_{r+s=a} Fil^{r}M/p^{n}\otimes I^{[s]}\to M/p^{n}\otimes B$. Since the HPD-stratification $\epsilon$ from \cref{prop-cris-strat} respects the pull-back filtrations, this is independent of $\iota$ and one checks that this defines a subsheaf of $\mathcal{M}_{n}$.
\end{proof}
Note that the construction of the subsheaves $Fil^{a}\mathcal{M}_{n}$ depends a priori on the full filtration $Fil^{\bullet}_{M, Fil^{a}M}$, which in turn depends on the chosen weakly initial object $D$. The following result however shows that this inconvenience is not so serious.
\begin{lem}\label{cris-maximality}
Let $P$ be a smooth $\mathfrak{S}$-algebra with a Frobenius lift $\phi$, equipped with a surjection $P\to R$, and let $P^{pd}\to R$ be the $p$-adically completed pd-envelope of the kernel $J$. Let further $(M, Fil^{a}M, \Phi^{a})\in \MFa(P^{pd})$. Then $Fil^{a}M$ is maximal in the following sense: There exists no triple $(M, \tilde{Fil}^{a}M, \Psi^{a})\in \MFa(P^{pd})$ with $Fil^{a}M\subsetneq \tilde{Fil}^{a}M$ and such that $\Psi^{a}$ extends $\Phi^{a}$.
\end{lem}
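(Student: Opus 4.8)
The plan is to derive a contradiction from a strict enlargement, by playing the ``Frobenius generates $M$'' isomorphism \cref{cond-frob-iso} for the two objects against each other and then doing some commutative algebra on the divided power envelope. One may assume $a\geq 1$, as $Fil^{0}M=M$ makes the statement vacuous. So suppose $(M,\tilde{Fil}^{a}M,\Psi^{a})\in\MFa(P^{pd})$ with $Fil^{a}M\subseteq\tilde{Fil}^{a}M$ and $\Psi^{a}|_{Fil^{a}M}=\Phi^{a}$; we want $\tilde{Fil}^{a}M=Fil^{a}M$. Set $R=P^{pd}/J_{R}$. The inclusion induces an $R$-linear map $\iota\colon V:=Fil^{a}M/J_{R}Fil^{a}M\to\tilde V:=\tilde{Fil}^{a}M/J_{R}\tilde{Fil}^{a}M$. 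Since $\phi(J_{R})\subseteq pP^{pd}$, reduction mod $p$ of $\Phi^{a}$ and $\Psi^{a}$ kills the respective $J_{R}$-multiples and is semilinear over the induced Frobenius $\bar\phi\colon R\to P^{pd}/p$; the resulting linearizations $V\otimes_{P^{pd},\phi}P^{pd}/p\xrightarrow{\sim}M/pM$ and $\tilde V\otimes_{P^{pd},\phi}P^{pd}/p\xrightarrow{\sim}M/pM$ are exactly the isomorphisms demanded by \cref{cond-frob-iso} (in the form of \cref{def-strat} for $A=P^{pd}$). Because $\Psi^{a}$ extends $\Phi^{a}$, the second one precomposed with $\iota\otimes\mathrm{id}$ equals the first; hence $\iota\otimes_{P^{pd},\phi}P^{pd}/p$ is an isomorphism, and in particular $\operatorname{coker}(\iota)\otimes_{P^{pd},\phi}P^{pd}/p=0$.

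Next I would prove $\operatorname{coker}(\iota)$ is finitely generated and vanishes. It is killed by $J_{R}$ (being a quotient of $\tilde V$), hence an $R$-module; and it is a quotient of $\tilde{Fil}^{a}M/Fil^{a}M$, which --- using $J_{R}^{[a]}M\subseteq Fil^{a}M\subseteq\tilde{Fil}^{a}M$ --- is a subquotient of $M/J_{R}^{[a]}M$, so $\operatorname{coker}(\iota)$ is a module over $P^{pd}/J_{R}^{[a]}$. This ring is module-finite over $P/J^{a}$, where $J=\ker(P\to R)$: the PD-envelope modulo its $a$-th divided power ideal is spanned over $P$ by the finitely many divided-power monomials $\prod_{i}\gamma_{n_{i}}(y_{i})$ with $\sum_{i}n_{i}<a$, $y_{i}$ a finite generating set of $J$. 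As $P$ is noetherian, so is $P^{pd}/J_{R}^{[a]}$, whence $\operatorname{coker}(\iota)$ --- a subquotient of the finitely generated $P^{pd}/J_{R}^{[a]}$-module $M/J_{R}^{[a]}M$ --- is finitely generated over $R$ as well. Now $\bar\phi$ kills $p$, so factors through $R/p$, and the composite $R/p\xrightarrow{\bar\phi}P^{pd}/p\twoheadrightarrow R/p$ is the absolute Frobenius of $R/p$; as Frobenius is a universal homeomorphism, $\Spec(P^{pd}/p)\to\Spec(R/p)$ is surjective. Since $\operatorname{coker}(\iota)$ is an $R$-module and $\bar\phi$ factors through $R/p$, $0=\operatorname{coker}(\iota)\otimes_{P^{pd},\phi}P^{pd}/p=(\operatorname{coker}(\iota)/p)\otimes_{R/p}P^{pd}/p$; the surjectivity above then forces the finitely generated $R/p$-module $\operatorname{coker}(\iota)/p$ to vanish, so $\operatorname{coker}(\iota)=p\,\operatorname{coker}(\iota)$, and Nakayama over the $p$-adically complete ring $P^{pd}$ gives $\operatorname{coker}(\iota)=0$, i.e.\ $\tilde{Fil}^{a}M=Fil^{a}M+J_{R}\tilde{Fil}^{a}M$.

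To conclude, put $Q=\tilde{Fil}^{a}M/Fil^{a}M$; the previous identity reads $Q=J_{R}Q$, and $Q$ is finitely generated over the noetherian ring $P^{pd}/J_{R}^{[a]}$. Since $J_{R}^{a}\subseteq J_{R}^{[a]}$, the image of $J_{R}$ in $P^{pd}/J_{R}^{[a]}$ is nilpotent, hence lies in the Jacobson radical, and Nakayama yields $Q=0$. Thus $\tilde{Fil}^{a}M=Fil^{a}M$, contradicting the strict inclusion, and the lemma follows.

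The main obstacle is that $P^{pd}$ is not noetherian, so one cannot argue directly that submodules of $M$ like $\tilde{Fil}^{a}M$ are finitely generated; I get around this by only ever working modulo $J_{R}^{[a]}$, over which the PD-envelope is finite --- hence noetherian --- over $P/J^{a}$. The other nontrivial point, surjectivity of $\Spec(P^{pd}/p)\to\Spec(R/p)$, I reduce to the universal homeomorphism property of Frobenius; alternatively one could extract what is needed from Breuil's faithful flatness of $\phi$ on $\mathcal{S}$ and a relative version over $P^{pd}$, but the argument above uses no flatness.
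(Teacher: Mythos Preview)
Your proof is correct and follows the same overall arc as the paper's: both use the isomorphism \eqref{cond-frob-iso} for the two triples to see that $\iota\otimes_{\phi}P^{pd}/p$ is an isomorphism, then descend this along the Frobenius to conclude that the relevant cokernel vanishes mod $p$, and finish with Nakayama after noting that $Q=\tilde{Fil}^{a}M/Fil^{a}M$ is killed by a divided power of $J_R$ and hence lives over a noetherian quotient of $P^{pd}$ (you use $J_R^{[a]}$, the paper uses $J_R^{[p]}$, but the mechanism is identical).

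The one genuine difference is in the descent step. The paper first identifies $P^{pd}/(p,J_R^{[p]})$ with $P/(p,\phi_P(J_R))$ and then invokes \emph{flatness} of the Frobenius on the smooth ring $P$ to cancel the base change, obtaining $Fil^{a}M/(J_R,p)=\tilde{Fil}^{a}M/(J_R,p)$ directly. You instead observe that the composite $R/p\xrightarrow{\bar\phi}P^{pd}/p\twoheadrightarrow R/p$ is the absolute Frobenius, so that $\Spec(P^{pd}/p)\to\Spec(R/p)$ is surjective, and this alone forces a finitely generated module to vanish once its base change does. Your route is slightly more elementary --- it needs neither the explicit description of $P^{pd}/(p,J_R^{[p]})$ nor flatness of Frobenius --- at the price of the small extra check that Spec-surjectivity detects nonzero finitely generated modules (which it does, by passing to residue fields). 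The paper's route, on the other hand, gives the marginally sharper intermediate statement that the two filtrations already agree over $R/p$ before any Nakayama is applied.
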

\begin{proof}
Assume that we have a triple $(M, \tilde{Fil}^{a}M, \Psi^{a})$, extending $(M, Fil^{a}M, \Phi^{a})$. Let $Q$ be the cokernel of $\tilde{Fil}^{a}M\hookrightarrow Fil^{a}M$. The Frobenius map $\Phi^{a}$ induces an isomorphism $Fil^{a}M/JFil^{a}M\otimes_{\phi}P^{pd}/p\cong M/p$ and likewise $\Psi^{a}$ induces an isomorphism $\tilde{Fil}^{a}M/J\tilde{Fil}^{a}M\otimes_{\phi}P^{pd}/p\cong M/p$. Thus, we have $Fil^{a}M/JFil^{a}M\otimes_{\phi}P^{pd}/p \xrightarrow{\cong} \tilde{Fil}^{a}M/J\tilde{Fil}^{a}M\otimes_{\phi}P^{pd}/p$. Modding out by $J_{R}^{[p]}$, it follows that $Fil^{a}M/JFil^{a}M\otimes_{\phi}P/p \xrightarrow{\cong} \tilde{Fil}^{a}M/J\tilde{Fil}^{a}M\otimes_{\phi}P/p$. Note here that as $P$ is $p$-torsion free, $P^{pd}$ is obtained by first adjoining the elements $\frac{x^{p^{n}}}{p^{n}}$ to $P$ and then taking the $p$-adic completion. This means in particular that $P^{pd}/(p, J_{R}^{[p]})=P/(p, \phi_{P}(J_{R}))$. Moreover, the Frobenius map is flat on $P$, so we then get that $Fil^{a}M/(J_{R}, p)=\tilde{Fil}^{a}M/(J_{R}, p)$. By Nakayama's lemma we then also get that $Fil^{a}M/(J_{R}^{[p]}, p)=\tilde{Fil}^{a}M/(J_{R}^{[p]}, p)$. But now $Q$ is $J_{R}^{[p]}$-torsion (as both $Fil^{a}M$ and $\tilde{Fil}^{a}M$ contain $J_{R}^{[a]}M$). Hence, we obtain $Q/p=0$. But then a further application of Nakayama's lemma yields $Q=0$.
\end{proof}
The maximality of $Fil^{a}M$ will later turn out to be crucial.\\

We will now discuss the independence of the Frobenius lift. We recall the following construction (see \cite[\S 1.4.]{Font-Laff} and \cite[\S II (b)]{Fal-cris}).
\begin{constr}
Let $(M, Fil^{a}M, \nabla, \Phi_{M}^{a})\in \MFa(\mathcal{X})$ and denote again by $Fil^{i}_{M, Fil^{a}M}\subset M$ the filtration defined by \cref{filtration}. We then denote by $\tilde{M}$ the $D$-module obtained by taking the quotient of the direct sum $\bigoplus\limits_{i\leq a}Fil^{i}_{M, Fil^{a}M}$ modulo the equivalence relation, which identifies $pFil^{i+1}_{M, Fil^{a}M}$ with $Fil^{i}_{M, Fil^{a}M}$. The module $\tilde{M}$ is endowed with an integrable $p$-connection and $\tilde{M}\otimes_{\phi} D$ then carries an integrable flat connection (see \cite[II (d)]{Fal-cris} - this is in fact an instance of a non-abelian Hodge correspondence in mixed characteristics, see \cite{shiho}). The Frobenius maps on $Fil^{i}M$ then induce a horizontal map 
\begin{center}
$\tilde{M}\otimes_{\phi} D\to M$
\end{center}
and $\Phi^{a}_{M}$ is simply the map $Fil^{a}M\to \tilde{M}\otimes_{\phi} D\to M$.\\

Let $(B, J, \phi_{B}, f)$ be a quadruple, where $B$ is $p$-complete and $p$-torsion free, $J$ is a PD-ideal (compatible with the PD-structure on $pB$), $\phi_{B}$ is a lift of the absolute Frobenius on $B/p$ and $f:Spf(B/J)\to \mathcal{X}$ is a morphism. Assume that there is a $p$-completely flat morphism $\iota:D\to B$, which lifts $R\to B/J$. Then consider $M\hat{\otimes}_{D}B$ with filtration the closure of $\sum\limits_{r+s=a}Fil^{r}M\hat{\otimes} J^{[s]}$ in $M_{B}$. We may define a Frobenius $\Phi^{a}_{M_{B}}$. Namely, by \cite[Theorem 2.3]{Fal-cris}, the connection induces a canonical isomorphism 
\begin{center}
$\alpha:\tilde{M}\otimes_{\phi_{D}}D\hat{\otimes}_{D}B\xrightarrow{\cong} \tilde{M}\hat{\otimes}_{D}B\otimes_{\phi_{B}}B$,
\end{center}
using that the composite maps $D\xrightarrow{\phi_{D}}D\to B$ and $D\to B\xrightarrow{\phi_{B}}B$ agree modulo $p$. The isomorphism is given by
\begin{equation}\label{p-taylor}
m\otimes 1\mapsto \sum\limits_{E=(e_{k})}\prod\nabla(\partial_{t_{k}})^{e_{k}}(m)\otimes \frac{(\iota(\phi_{D}(t_{k}))-\phi_{B}(\iota(t_{k}))^{e_{k}}}{p^{min\{i, \abs{E}\}}e_{k}!}
\end{equation}
if $m$ is a residue class of an element in $Fil^{i}$.\\

Let $r, s\geq 0$, such that $r+s=a$. On the image of $Fil^{r}M\otimes J_{B}^{[s]}$, we may then define maps $"\Phi^{r}_{M}\otimes \phi^{s}_{B}"$ by first defining $"\Phi^{r}_{M}\otimes \phi"$ as the morphism
\begin{equation}\label{Frob-mf}
Fil^{r}M\otimes B\to \tilde{M}\hat{\otimes}_{D}B\otimes_{\phi_{B}}B\xrightarrow[\cong]{\alpha} \tilde{M}\otimes_{\phi_{D}}D\hat{\otimes}_{D}B\xrightarrow{\tilde{\Phi}\otimes id}M\hat{\otimes}_{D}B,
\end{equation}
and then setting $"\Phi^{r}_{M}\otimes \phi^{s}_{B}"=\frac{"\Phi^{r}_{M}\otimes \phi"}{p^{s}}$, which is well defined by $p$-torsion freeness of $M_{B}$. The Frobenius $\Phi^{a}_{M_{B}}:Fil^{a}M_{B}\to M_{B}$ is then defined by $\Phi^{a}_{M_{B}}:=\sum\limits_{r+s=a}"\Phi^{r}_{M}\otimes \phi^{s}_{B}"$, which is easily checked to be well-defined, as it agrees with $\frac{"\Phi^{0}_{M}\otimes \phi"}{p^{a}}$.

\end{constr}
For a general smooth $p$-adic formal scheme, the category $\MF$ is defined by glueing. That this is indeed possible, follows from the following proposition.
\begin{prop}\label{indep-mf}
Let $\mathcal{O}_{K}\{t_{1}, \ldots, t_{n}\}\to R$ be another framing of $R$, i.e. an \'etale map and let $\mathfrak{S}\{t_{1}, \ldots, t_{n}\}\to \tilde{P}$ be its \'etale lift along $\mathfrak{S}\to \mathcal{O}_{K}$ endowed with the usual Frobenius lift $t_{i}\mapsto t_{i}^{p}$. Let $\tilde{D}$ be the $p$-adically completed pd-envelope of the kernel and denote by $\MFa(Spf(R))^{\tilde{D}}$ the category of quadruples $(M, Fil^{a}M, \nabla, \Phi^{a}_{M})$ over $\tilde{D}$ satisfying the conditions from \cref{def-MF}. Then there is a canonical equivalence of categories $\MFa(Spf(R))\xrightarrow{\cong}\MFa(Spf(R))^{\tilde{D}}$.
\end{prop}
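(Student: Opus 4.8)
The plan is to compare the two framings through a common cosimplicial PD-thickening carrying a Frobenius lift compatible with both. Write $(\mathfrak{S}(R),\phi_{\mathfrak{S}(R)})$ for the smooth $\mathfrak{S}$-algebra with Frobenius lift attached to the original framing and $(\tilde{P},\phi_{\tilde{P}})$ for the one attached to the new framing $\mathcal{O}_{K}\{t_{1},\dots,t_{n}\}\to R$. For $m\geq 0$ let ${D'}^{(m)}$ be the $p$-adically completed PD-envelope, compatible with the PD-structure on $p$-multiples, of the kernel of the map $(\mathfrak{S}(R)\otimes_{W(k)}\cdots\otimes_{W(k)}\mathfrak{S}(R))\otimes_{W(k)}(\tilde{P}\otimes_{W(k)}\cdots\otimes_{W(k)}\tilde{P})\to R$ with $m+1$ copies of $\mathfrak{S}(R)$, $m+1$ copies of $\tilde{P}$, and all factors mapping to $R$; this is the \v{C}ech nerve of the PD-thickening $D':={D'}^{(0)}$, so one obtains a cosimplicial PD-ring ${D'}^{(\bullet)}$. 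Exactly as for the crystalline self-products $D^{(\bullet)}$ of \cref{sec-prel-cris}, each ${D'}^{(m)}$ is $p$-torsion free and covers the final object of $(\mathcal{X}_{n}/\Z_p)_{\mr{cris}}$ for every $n$, and all the structure maps — in particular the canonical maps $D^{(\bullet)}\to{D'}^{(\bullet)}$ and $\tilde{D}^{(\bullet)}\to{D'}^{(\bullet)}$, where $\tilde{D}^{(\bullet)}$ denotes the crystalline self-products of $\tilde{D}$ — are $p$-completely faithfully flat. Moreover each ${D'}^{(m)}$ carries the product Frobenius lift, and since the inclusions $\mathfrak{S}(R)\hookrightarrow\mathfrak{S}(R)\otimes_{W(k)}\tilde{P}$ and $\tilde{P}\hookrightarrow\mathfrak{S}(R)\otimes_{W(k)}\tilde{P}$ are Frobenius-equivariant, the maps $D^{(\bullet)}\to{D'}^{(\bullet)}$ and $\tilde{D}^{(\bullet)}\to{D'}^{(\bullet)}$ are Frobenius-equivariant on the nose. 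It is precisely because there is in general no Frobenius-equivariant map $D\to\tilde{D}$ directly that one has to pass through the larger ring $D'$.

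By \cref{prop-cris-strat}, $\MFa^{\mr{big}}(Spf(R))$ is equivalent to $\MFa^{\mr{big}}(D^{(\bullet)})$ in the sense of \cref{def-strat}, and the same proof — the Taylor formula is equally available for the coordinates $u,t_{1},\dots,t_{n}$ of the new framing — gives $\MFa^{\mr{big}}(Spf(R))^{\tilde{D}}\cong\MFa^{\mr{big}}(\tilde{D}^{(\bullet)})$, and likewise for the tor-free versions. I will show that the base-change functors $F\colon\MFa^{\mr{big}}(D^{(\bullet)})\to\MFa^{\mr{big}}({D'}^{(\bullet)})$ and $\tilde{F}\colon\MFa^{\mr{big}}(\tilde{D}^{(\bullet)})\to\MFa^{\mr{big}}({D'}^{(\bullet)})$ are equivalences of categories; composing $F$ with a quasi-inverse of $\tilde{F}$ then yields the desired canonical equivalence $\MFa(Spf(R))\xrightarrow{\cong}\MFa(Spf(R))^{\tilde{D}}$, and the construction restricts to the tor-free subcategories because $p$-completely faithfully flat maps both preserve and reflect $\mathcal{S}$-flatness and $p$-torsion freeness of $M/Fil^{a}M$.

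It suffices to treat $F$; the argument for $\tilde{F}$ is identical. Forgetting the filtration and the Frobenius, $F$ is the statement that finitely generated, classically complete modules with stratification descend along the $p$-completely faithfully flat refinement of \v{C}ech nerves $D^{(\bullet)}\to{D'}^{(\bullet)}$, which is fpqc descent applied in each cosimplicial degree. The Frobenius descends formally: since $D^{(\bullet)}\to{D'}^{(\bullet)}$ is Frobenius-equivariant, a $\phi$-semilinear map over ${D'}^{(\bullet)}$ compatible with the stratification $\epsilon$ in the sense of \cref{def-strat} corresponds to one over $D^{(\bullet)}$ — concretely, if $(N,Fil^{a}N,\Phi_{N})$ lies over ${D'}^{(\bullet)}$ and $M\subset N$ is the descended module, then $\Phi_{N}(M)\subset M$ because $\Phi_{N}$ commutes with $\epsilon$ and $\phi_{D'}$ restricts to $\phi_{D}$ on $D$. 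The filtration then descends too, once one knows — and this is the main point, taken up below — that the filtration on $M\hat{\otimes}_{D}D'$ still satisfies the Frobenius-isomorphism condition \eqref{cond-frob-iso}: granting that, \cref{cris-maximality} identifies $Fil^{a}$, over each of these rings, as the unique submodule admitting a divided Frobenius with property \eqref{cond-frob-iso} (so it is determined by the module together with $\Phi^{0}_{M}$), and faithfully flat descent does the rest. Full faithfulness of $F$ follows by the same descent: a morphism over ${D'}^{(\bullet)}$ respecting stratification, filtration and Frobenius descends along the faithfully flat $D^{(\bullet)}\to{D'}^{(\bullet)}$ and its descent respects all structures. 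The quotient $M/Fil^{a}M$ stays $p$-torsion free by faithful flatness, so $F$ respects the tor-free subcategory, and the same holds for $\tilde{F}$.

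The one step requiring genuine work, which I expect to be the main obstacle, is to check that the package obtained over $D'$ by base change from $D$ (or $\tilde{D}$) again satisfies \eqref{cond-frob-iso} — equivalently, that adjoining the new divided-power variables in passing from $D$ to $D'$ preserves the maximality of $Fil^{a}$. This is not a formal base change, since the filtration on $M\hat{\otimes}_{D}D'$ is the closure of $\sum_{r+s=a}Fil^{r}M\otimes J_{D'}^{[s]}$, which is strictly larger than $Fil^{a}M\hat{\otimes}_{D}D'$; the verification follows the explicit reduction modulo $(p,J_{R}^{[p]})$ from the proof of \cref{cris-maximality}, using that the PD-envelope becomes, modulo $(p,J_{R}^{[p]})$, a quotient of the underlying smooth $\mathfrak{S}$-algebra on which Frobenius is flat, followed by Nakayama and the maximality of \cref{cris-maximality}. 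I also record that the legitimacy of transporting the divided Frobenius along $D\to D'$ — handled in the base-change construction preceding this proposition through the canonical isomorphism of \cite[Theorem 2.3]{Fal-cris}, but here simply the naive base change since $\phi_{D'}$ extends $\phi_{D}$ exactly — together with the routine fact that $\Phi^{a}_{M}=\Phi^{0}_{M}/p^{a}$ remains well defined after base change by $p$-torsion freeness, is what makes $F$ and $\tilde{F}$ land in and cover $\MFa^{\mr{big}}({D'}^{(\bullet)})$. Granting these points, $F$ and $\tilde{F}$ are equivalences and the proposition follows.
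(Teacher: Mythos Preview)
Your strategy --- comparing both framings through a common PD-envelope $D'$ carrying Frobenius-equivariant maps from $D$ and $\tilde D$ --- is genuinely different from the paper's and has real conceptual appeal: it makes the Frobenius transport trivial. The paper instead chooses a (non-Frobenius-equivariant) flat map $D\to\tilde D$, moves the divided Frobenius across it using the Taylor isomorphism $\tilde M\otimes_{\phi_D}D\hat\otimes_D\tilde D\cong\tilde M\hat\otimes_D\tilde D\otimes_{\phi_{\tilde D}}\tilde D$ from \cite[Thm.~2.3]{Fal-cris} (this is the formula \eqref{Frob-mf}), constructs the inverse by choosing a map in the other direction, and invokes \cref{cris-maximality} only at the very end to see that the two composites are the identity on the filtration. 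Your route trades that explicit Frobenius-transport computation for a descent step.

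There is, however, a gap in your essential surjectivity of $F$ (and $\tilde F$). You argue that since $Fil^a$ is the unique maximal submodule carrying a divided Frobenius with property \eqref{cond-frob-iso}, it is determined by $(M,\Phi^0_M)$, and ``faithfully flat descent does the rest.'' But \cref{cris-maximality} gives only uniqueness, not existence: knowing that $Fil^aN$ over $D'$ is maximal does not produce a submodule $Fil^aM\subset M$ over $D$ satisfying \eqref{cond-frob-iso}. Descending $Fil^aN\subset N$ through the crystalline cover yields \emph{some} submodule of $M$, but you must still verify that $J_R^{[a]}M$ lies in it, that it satisfies \eqref{cond-frob-iso} over $D$, and that its filtered base change back to $D'$ recovers $Fil^aN$ --- none of which follows from maximality. (Note also that the stratification over ${D'}^{(1)}$ is not literally a descent datum along $D\to D'$; it is the crystal interpretation that does the work, and for the filtration this needs a genuine filtered-descent argument, as in the Rees-algebra step of \cref{left-inverse}.) Your final paragraph addresses the forward direction --- that filtered base change $D\to D'$ lands in $\MFa$ --- but not this backward step. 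The paper avoids this entirely by never descending from a larger ring: it builds the object over $\tilde D$ directly and only uses maximality to match filtrations after the round trip.
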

\begin{proof}
The functor has essentially already been described above. Choose a map $D\to \tilde{D}$ compatible with the maps to $R$. This is flat by the fiberwise flatness criterion. Then the filtration $Fil^{a}M_{\tilde{D}}$ on $M\hat{\otimes}_{D}\tilde{D}$ is defined as the closure of $\sum_{r+s=a}Fil^{r}_{M}\otimes J_{\tilde{D}}^{[s]}$ in $M_{\tilde{D}}$ and the Frobenius map $\Phi^{a}_{M_{\tilde{D}}}$ over $\tilde{D}$ is defined by \cref{Frob-mf}, which is well-defined, since $M\hat{\otimes}_{D}\tilde{D}$ is $p$-torsion free, by the flatness of $D\to \tilde{D}$. One now sees that this also induces an isomorphism $Fil^{a}M_{\tilde{D}}/J_{\tilde{D}}Fil^{a}M_{\tilde{D}}\otimes_{\phi}\tilde{D}/p\tilde{D}\xrightarrow{\cong} M_{\tilde{D}}/pM_{\tilde{D}}$, by noting that this map is just the pushout of $Fil^{a}M/J_{D}Fil^{a}M\otimes_{\phi_{D}}\tilde{D}/p\tilde{D}\xrightarrow{\cong} M/pM$ up to an automorphism induced by \cref{p-taylor}. We thus see that we indeed obtain an object of $\MFa(Spf(R))^{\tilde{D}}$.\\
A quasi-inverse is obtained by choosing a map $\tilde{D}\to D$ and then applying the same construction as above. To see that these constructions are quasi-inverse to each other, denote the chosen maps by $f:D\to \tilde{D}$ and $g:\tilde{D}\to D$. Let $(M, Fil^{a}M, \nabla, \Phi^{a}_{M})\in \MFa(Spf(R))$ and let $(M_{\tilde{D}}, Fil^{a}M_{\tilde{D}}, \nabla_{\tilde{D}}, \Phi^{a}_{\tilde{D}})$ be the induced object in $\MFa(Spf(R))^{\tilde{D}}$. Now applying the functor in the other direction, one gets $M\hat{\otimes}_{gf}D$ with filtration $F^{a}=\sum Fil^{r}_{M_{\tilde{D}}}\hat{\otimes} J_{D}^{[s]}$ (where again $Fil^{r}_{M_{\tilde{D}}}$ is defined by \cref{filtration}). The connection $\nabla$ then induces an isomorphism $M\xrightarrow{\cong} M\otimes_{gf}D$, which takes $Fil^{a}M$ into $F^{a}$ and one checks that the Frobenius on $F^{a}$ restricts to the Frobenius on $Fil^{a}M$. But by \cref{cris-maximality}, $Fil^{a}M$ is maximal, so we actually see that $Fil^{a}M=F^{a}$.
\end{proof}
The proof above in particular also shows that for all $n\geq 1$, for the construction of the subsheaves $Fil^{a}\mathcal{M}_{n}\subset \mathcal{M}_{n}$, one may use the full filtration of $\mathcal{M}_{n}(\tilde{D})$ over any $\tilde{D}$ as above.\\
\cref{indep-mf} now implies the following corollary, which ensures that we also get a category $\MFa(\mathcal{X})$ for arbitrary smooth $p$-adic formal schemes by glueing.
\begin{cor}\label{mf-descent}
The association $\mathcal{X}\mapsto \MFa^{\mr{tor-free}}$ satisfies effective \'etale descent.
\end{cor}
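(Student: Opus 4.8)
The plan is to deduce this from $p$-completely faithfully flat descent along the map from $D$ to the PD-envelopes attached to an étale cover. First I would reduce to the affine case: given a smooth $p$-adic formal scheme together with an étale cover, refine it to a \emph{finite} étale cover $\{\mathrm{Spf}\,R_{i}\to \mathrm{Spf}\,R\}$ of a small affine $\mathcal{X}=\mathrm{Spf}\,R$. Since each $R_{i}$ is étale over the small algebra $R$ it is again small, and likewise every iterated fibre product $R_{i}\hat{\otimes}_{R}\cdots\hat{\otimes}_{R}R_{j}$ is small, so by \cref{indep-mf} the category $\MFa^{\mr{tor-free}}$ of each of these is independent (up to canonical equivalence) of the chosen framing; hence the category of descent data for the cover is well-defined and it suffices to prove that the natural functor from $\MFa^{\mr{tor-free}}(\mathcal{X})$ to it is an equivalence. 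The functor is the one already implicit in the construction preceding \cref{indep-mf}: choose étale lifts $\mathfrak{S}(R)\to\mathfrak{S}(R_{i})$ with their coordinate Frobenius lifts, let $D_{R_{i}}$ be the corresponding PD-envelopes (so $D=D_{R}$), and send $(M, Fil^{a}M, \nabla, \Phi^{a}_{M})$ to the base changes $(M_{R_{i}}, Fil^{a}M_{R_{i}}, \nabla_{R_{i}}, \Phi^{a}_{M_{R_{i}}})$ built via \cref{Frob-mf}, equipped with the canonical HPD-stratifications over the $D_{R_{i}\hat{\otimes}_{R}R_{j}}$ given by the Taylor formula; by the computation in the proof of \cref{prop-cris-strat} these stratifications respect the filtrations and the divided Frobenii, and the cocycle condition is automatic.

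The key technical input is that $D_{R}\to\prod_{i}D_{R_{i}}$ is $p$-completely faithfully flat. Flatness of each $D_{R}\to D_{R_{i}}$ is the fibrewise flatness criterion as in \cref{sec-prel-cris}; joint surjectivity on $p$-adic formal spectra holds because $D_{R}/\bigl((p)+J_{R}\bigr)=R/p$ and $J_{R}$ carries divided powers, so every element of $(p,J_{R})$ is topologically nilpotent in the $p$-complete ring $D_{R}$, whence $\{\mathrm{Spf}\,D_{R_{i}}\to\mathrm{Spf}\,D_{R}\}$ is jointly surjective as $\{\Spec R_{i}/p\to\Spec R/p\}$ is. All modules in sight are finitely generated and $p$-adically complete, so the effective descent for finitely generated modules along $p$-completely faithfully flat maps recalled in \cref{sec-prel-pris} applies throughout. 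For \emph{full faithfulness} one then argues as follows: a morphism of descent data is a family of $D_{R_{i}}$-linear maps $M_{R_{i}}\to N_{R_{i}}$ compatible with the stratifications; by flat descent of morphisms it descends to a $D_{R}$-linear map $f\colon M\to N$, and one checks after applying $-\hat{\otimes}_{D_{R}}D_{R_{i}}$ that $f$ is horizontal for $\nabla$, that $f(Fil^{a}M)\subset Fil^{a}N$, and that $f\circ\Phi^{a}_{M}=\Phi^{a}_{N}\circ f$. For the filtration one uses the identity $Fil^{a}M_{R_{i}}=Fil^{a}M\hat{\otimes}_{D_{R}}D_{R_{i}}$ (flatness of $D_{R}\to D_{R_{i}}$ and the construction of $Fil^{a}M_{R_{i}}$) together with the fact that $N/Fil^{a}N$, being $p$-torsion free, embeds into the $p$-separated $\prod_{i}N_{R_{i}}/Fil^{a}N_{R_{i}}$, so a map into it that dies after base change dies; for the Frobenius one uses that $\Phi^{a}_{M_{R_{i}}}$ restricted to the image of $Fil^{a}M$ is $\Phi^{a}_{M}\otimes 1$ by construction, so $f\circ\Phi^{a}_{M}$ and $\Phi^{a}_{N}\circ f$ already agree after base change.

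For \emph{effectivity}, start from a descent datum. Applying the last Proposition of \cref{sec-prel-cris} to the underlying filtered crystals (compatibly in $n$) and using étale descent of crystals on the crystalline site, the $M_{R_{i}}$ glue to a crystal on $(\mathcal{X}_{n}/\Z_{p})_{\mr{cris}}$, whose value at $D_{R}$ is (in the limit over $n$) a finitely generated $p$-complete $D_{R}$-module $M$ with integrable quasi-nilpotent connection; $M$ is $p$-torsion free and flat over $\mathcal{S}$ since both properties descend along $\prod_{i}D_{R_{i}}$ and hold over each $D_{R_{i}}$. The compatible submodules $Fil^{a}M_{R_{i}}$ descend to $Fil^{a}M\subset M$ with $Fil^{a}M\hat{\otimes}_{D_{R}}D_{R_{i}}=Fil^{a}M_{R_{i}}$; it contains $J_{R}^{[a]}M$, is Griffiths transversal to $\nabla$, has $p$-torsion free quotient, and is closed (because $M/Fil^{a}M$ embeds in the $p$-separated $\prod_{i}M_{R_{i}}/Fil^{a}M_{R_{i}}$) — all checked after base change. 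The $\phi_{D_{R_{i}}}$-semilinear maps $\Phi^{a}_{M_{R_{i}}}$ glue, using compatibility of the Frobenius lifts with the cover and the cocycle condition, to a $\phi_{D_{R}}$-semilinear $\Phi^{a}_{M}\colon Fil^{a}M\to M$; its horizontality is checked after base change to $D_{R_{i}}$, and for \cref{cond-frob-iso} one notes that $\phi_{D_{R}}(J_{R})\subset pD_{R}$, so $Fil^{a}M/J_{R}Fil^{a}M\otimes_{\phi_{D_{R}}}D_{R}/p=Fil^{a}M\otimes_{\phi_{D_{R}}}D_{R}/p$, which after $-\otimes_{D_{R}/p}D_{R_{i}}/p$ becomes $Fil^{a}M_{R_{i}}\otimes_{\phi_{D_{R_{i}}}}D_{R_{i}}/p$ (compatibility of the Frobenii with $D_{R}\to D_{R_{i}}$), i.e. the corresponding object over $R_{i}$; since $\prod_{i}D_{R_{i}}/p$ is faithfully flat over $D_{R}/p$ and the map is an isomorphism over each $R_{i}$, it is an isomorphism. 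A direct check shows that the functor applied to $(M, Fil^{a}M, \nabla, \Phi^{a}_{M})$ recovers the original datum: the canonical stratifications match the given gluing data because the latter are horizontal and hence determined by the connection via the Taylor formula, while \cref{cris-maximality} pins down $Fil^{a}M$. This establishes the stack property on small affines, which is exactly what lets one glue to define, and to descend, $\MFa^{\mr{tor-free}}(\mathcal{X})$ for arbitrary smooth $p$-adic formal $\mathcal{X}$.

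The step I expect to be the main obstacle is the careful bookkeeping of the filtration and the divided Frobenius under faithfully flat base change: unlike a bare crystal, these are not visibly functorial along the covering maps (the Frobenius needs a lift, the filtration is extra structure), so one has to invoke repeatedly the $p$-torsion-freeness and $\mathcal{S}$-flatness hypotheses, the identity $Fil^{a}M_{R_{i}}=Fil^{a}M\hat{\otimes}_{D_{R}}D_{R_{i}}$, and the relation $\phi(J_{R})\subset pD_{R}$, in order to reduce each defining condition of \cref{def-MF} to a statement over the $R_{i}$. Everything else — flat descent of modules, morphisms and submodules, and functoriality of the crystalline site — is standard.
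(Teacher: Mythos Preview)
Your argument is correct and is in fact considerably more detailed than what the paper provides: in the paper, \cref{mf-descent} is stated as an immediate consequence of \cref{indep-mf} with no further proof. The paper's implicit reasoning is simply that \cref{indep-mf} makes the assignment $\mathrm{Spf}(R)\mapsto\MFa^{\mr{tor-free}}(\mathrm{Spf}(R))$ canonical (independent of framing) together with canonical base-change functors, and then étale descent for each constituent piece of the data (the underlying crystal with connection, the submodule, and the semilinear Frobenius map) is treated as standard.

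Your write-up spells this out explicitly via $p$-completely faithfully flat descent along $D_{R}\to\prod_{i}D_{R_{i}}$, which is the natural way to make the paper's claim rigorous. The key technical observations you isolate --- that $D_{R}\to D_{R_{i}}$ is flat by the fibrewise criterion (exactly as in the proof of \cref{indep-mf}), that $Fil^{a}M_{R_{i}}=Fil^{a}M\hat{\otimes}_{D_{R}}D_{R_{i}}$ because the framing on $R_{i}$ can be chosen compatibly with that of $R$ so the map $D_{R}\to D_{R_{i}}$ is a PD-morphism with $J_{R}D_{R_{i}}=J_{R_{i}}$, and that the Frobenius lifts can be taken compatibly so that $\Phi^{a}_{M_{R_{i}}}$ restricted to the image of $Fil^{a}M$ is just $\Phi^{a}_{M}\otimes 1$ --- are the correct ones. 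One small point: rather than refining to a ``finite étale cover'' you only need a Zariski refinement by small affines each admitting a finite affine étale cover; your argument goes through verbatim with that adjustment. The invocation of \cref{cris-maximality} at the end to pin down the filtration is also in the spirit of how the paper uses it in the proof of \cref{indep-mf}.
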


Finally, we need a description of the category $\MF$ within the realm of quasi-regular semiperfectoid rings. Here we again assume that $\mathcal{X}=Spf(R)$ is small affine. Recall from \cref{sec-prel-cris} the weakly initial objects $(\mathbb{A}_{\mr{cris}}(\tilde{R}/p), J_{\tilde{R}})$ and $(\mathbb{A}_{\mr{cris}}(R_{\infty}/p), J_{R_{\infty}})$. Let $A$ be either one of the two. We now define categories 
\begin{center}
$\MFa^{\mr{big}}(\mathbb{A}_{\mr{cris}}(A/p)^{(\bullet)})^{\mr{full}}$ and $\MFa^{\mr{tor-free}}(\mathbb{A}_{\mr{cris}}(A/p)^{(\bullet)})^{\mr{full}}$.
\end{center}
analogously to \cref{def-strat}.
\begin{defn}
We denote by $\MFa^{\mr{big}}(\mathbb{A}_{\mr{cris}}(A/p)^{(\bullet)})^{\mr{full}}$ the category consisting of the following data
\begin{itemize}
\item A triple $(M, Fil^{a}M, \phi_{M})\in \MFa^{\mr{big}}(\mathbb{A}_{\mr{cris}}(A/p))$. 
\item An isomorphism $\epsilon:p_{1}^{*}M\xrightarrow{\cong} p_{2}^{*}M$, satisfying the ususal cocycle condition over $\mathbb{A}_{\mr{cris}}(A/p)^{(2)}$. 
\item Consider the full filtration $Fil^{\bullet}_{M, Fil^{a}M}$, defined analogously to \cref{filtration}. Then the isomorphism satisfies $\epsilon(F^{k}(p_{1}^{*}M))=F^{k}(p_{2}^{*}M)$, for all $0\leq k \leq a$ and $\epsilon\circ \Phi_{p_{1}^{*}M}^{a}=\Phi_{p_{2}^{*}M}^{a}\circ \epsilon$. Here for $i=1,2$, we define $F^{k}(p_{i}^{*}M):=\sum\limits_{r+s=k}F^{r}_{M}\hat{\otimes} J_{\mathbb{A}_{\mr{cris}}(A/p)^{(1)}}^{[s]}$, and the Frobenius $\Phi_{p_{i}^{*}M}^{a}:F^{a}(p_{i}^{*}M)\to p_{i}^{*}M$ by $\Phi_{p_{i}^{*}M}^{a}=\sum\limits_{r+s=a}\Phi_{M}^{r}\otimes \phi^{s}_{\mathbb{A}_{\mr{cris}}(A/p)^{(1}}$. 
\end{itemize}
Again, we also have the full subcategory $\MFa^{\mr{tor-free}}(\mathbb{A}_{\mr{cris}}(A/p)^{(\bullet)})$.
\end{defn}
\begin{rmk}
In contrast to \cref{def-strat}, we had to add the condition that the stratification fixes the full filtration, which is automatic for objects over $D^{(\bullet)}$.\\
Note that we will see later (see \cref{sec-constr}) that this extra condition is also automatically satisfied for $A=R_{\infty}$.
\end{rmk}
The filtration of objects in $\MFa(\mathbb{A}_{\mr{cris}}(R_{\infty}/p)^{(\bullet)})$ is again maximal.
\begin{lem}\label{Acris-maximality}
Let $A$ be a perfectoid $\mathcal{O}_{C}$-algebra. Then for any $(M, Fil^{a}M, \epsilon, \Phi^{a}_{M})\in \MFa^{big}(\mathbb{A}_{\mr{cris}}(A/p))$, $Fil^{a}M$ is maximal in the sense of \cref{cris-maximality}.
\end{lem}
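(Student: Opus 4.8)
The plan is to adapt the proof of \cref{cris-maximality} essentially word for word, the only real work being to recover, for $\mathbb{A}_{\mr{cris}}(A/p)$ in place of $P^{pd}$, the structural facts that drive that argument. For $A$ a perfectoid $\mathcal{O}_{C}$-algebra, $\mathbb{A}_{\mr{cris}}(A/p)$ is the $p$-adically completed PD-envelope of the principal ideal $\ker(\theta\colon A_{\mr{inf}}(A)\to A)=(\xi)$ inside the $p$-torsion free ring $A_{\mr{inf}}(A)=W(A^{\flat})$, with $\xi$ a non-zero-divisor; it is again $p$-torsion free; and $\phi$ on $A_{\mr{inf}}(A)$ is faithfully flat, in fact an isomorphism, being the Witt-vector functor applied to the Frobenius of the perfect ring $A^{\flat}$. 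These are exactly the properties of $P$ and $P^{pd}$ that enter \cref{cris-maximality}, so the outline of that proof transfers, and in several places it even simplifies because $J_{A}$ is principal and $\phi$ is an isomorphism on $A_{\mr{inf}}(A)$.

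So, given $(M,\tilde{Fil}^{a}M,\Psi^{a})\in\MFa^{\mr{big}}(\mathbb{A}_{\mr{cris}}(A/p))$ with $Fil^{a}M\subseteq\tilde{Fil}^{a}M$ and $\Psi^{a}$ extending $\Phi^{a}$, I would set $Q:=\tilde{Fil}^{a}M/Fil^{a}M$ and first note that $J_{A}^{[a]}M\subseteq Fil^{a}M\subseteq\tilde{Fil}^{a}M$ together with $a\leq p-2$ (so $J_{A}^{[p]}\subseteq J_{A}^{[a]}$) force $Q$ to be annihilated by $J_{A}^{[p]}$. Next I would use the isomorphism condition \cref{cond-frob-iso} for $\Phi^{a}$ and for $\Psi^{a}$, namely $Fil^{a}M/J_{A}Fil^{a}M\otimes_{\mathbb{A}_{\mr{cris}}(A/p),\phi}\mathbb{A}_{\mr{cris}}(A/p)/p\xrightarrow{\cong}M/pM$ and the analogue for $\tilde{Fil}^{a}M$; since $\Psi^{a}$ restricts to $\Phi^{a}$ these identifications are compatible, so the inclusion induces an isomorphism $Fil^{a}M/J_{A}Fil^{a}M\xrightarrow{\cong}\tilde{Fil}^{a}M/J_{A}\tilde{Fil}^{a}M$ after $\otimes_{\phi}\mathbb{A}_{\mr{cris}}(A/p)/p$. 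Reducing the target of $\otimes_{\phi}$ modulo $J_{A}^{[p]}$, one uses — exactly as in \cref{cris-maximality}, and here more simply since $J_{A}$ is principal — that $\mathbb{A}_{\mr{cris}}(A/p)/(p,J_{A}^{[p]})=A_{\mr{inf}}(A)/(p,\phi(J_{A}))=A^{\flat}/(\xi^{p})$, a quotient of $A^{\flat}=A_{\mr{inf}}(A)/p$; flatness of $\phi$ then yields $Fil^{a}M/(J_{A},p)=\tilde{Fil}^{a}M/(J_{A},p)$, i.e. $Q=J_{A}Q+pQ$. Iterating gives $Q=J_{A}^{n}Q+pQ$ for all $n$, hence $Q=J_{A}^{[p]}Q+pQ=pQ$, since $J_{A}^{p}\subseteq J_{A}^{[p]}$ annihilates $Q$.

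The last step, from $Q=pQ$ to $Q=0$, is the one I expect to be the main obstacle: unlike in \cref{cris-maximality}, where one could pass to the Noetherian ring $P/(p,\phi(J_{R}))$ and invoke Nakayama, the ring $\mathbb{A}_{\mr{cris}}(A/p)$ and its quotient $A^{\flat}/(\xi^{p})$ are not Noetherian. Instead I would argue $p$-adically: $Q=pQ$ means $\tilde{Fil}^{a}M=Fil^{a}M+p^{n}\tilde{Fil}^{a}M$ for all $n$, so every $x\in\tilde{Fil}^{a}M$ can be written as a series $x=\sum_{i\geq 0}p^{i}y_{i}$ with $y_{i}\in Fil^{a}M$ converging in the $p$-adically complete module $M$; hence $\tilde{Fil}^{a}M$ lies in the $p$-adic closure of $Fil^{a}M$ in $M$. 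Since $Fil^{a}M$ is a closed submodule of $M$ (as in \cref{def-MF}, and correspondingly for the objects over $\mathbb{A}_{\mr{cris}}(A/p)$ of \cref{def-strat}), this closure equals $Fil^{a}M$, so $\tilde{Fil}^{a}M=Fil^{a}M$ and $\Psi^{a}=\Phi^{a}$. The two points I would be most careful about are the iso-reflecting behaviour of $-\otimes_{\phi}(-)$ over the non-Noetherian base $A^{\flat}/(\xi^{p})$, and that the PD-identity $\mathbb{A}_{\mr{cris}}(A/p)/(p,J_{A}^{[p]})=A^{\flat}/(\xi^{p})$ really does follow from the description of $\mathbb{A}_{\mr{cris}}(A/p)$ as $A_{\mr{inf}}(A)$ with the divided powers $\xi^{p^{n}}/p^{a_{n}}$ adjoined, just as the corresponding identity in \cref{cris-maximality}.
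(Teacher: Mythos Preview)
Your proposal is correct and follows exactly the approach the paper intends: the paper's own proof consists of the single sentence ``The proof of \cref{cris-maximality} works also in this case,'' and what you have written is precisely a careful unpacking of that assertion, identifying the relevant inputs ($\phi$ on $A_{\mr{inf}}(A)$ is an isomorphism, $\mathbb{A}_{\mr{cris}}(A/p)/(p,J_{A}^{[p]})=A^{\flat}/(\xi^{p})$). Your replacement of the final Nakayama step by a $p$-adic closure argument is a sensible way to handle the non-Noetherian setting; the paper does not spell this out, so if anything you are being more careful than the source.
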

\begin{proof}
The proof of \cref{cris-maximality} works also in this case.
\end{proof}

\begin{prop}\label{left-inverse}
Let $A$ denote either $\tilde{R}$ or $R_{\infty}$. Then there is a canonical functor $\mathbb{D}_{A}:\MFa^{big}(\mathbb{A}_{\mr{cris}}(A/p)^{(\bullet)})^{\mr{full}}\to \MFa^{big}(Spf(R))$.
\end{prop}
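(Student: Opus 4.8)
The plan is to construct the functor $\mathbb{D}_{A}$ by descent along the $p$-completely faithfully flat cosimplicial map $D^{(\bullet)}\to \mathbb{A}_{\mr{cris}}(A/p)^{(\bullet)}$ recalled in \cref{sec-prel-cris}. Given an object $(M, Fil^{a}M, \epsilon, \Phi^{a}_{M})\in \MFa^{\mr{big}}(\mathbb{A}_{\mr{cris}}(A/p)^{(\bullet)})^{\mr{full}}$, the isomorphism $\epsilon$ over $\mathbb{A}_{\mr{cris}}(A/p)^{(1)}$ together with the cocycle condition is, by the last proposition of \cref{sec-prel-cris}, exactly the datum of a crystal $\mathcal{M}$ in $\mathcal{O}_{\mathcal{X}_{n}/\Z_p}$-modules for each $n$ (equivalently, after taking the limit, an object over the crystalline site of $\mathcal{X}$). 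Evaluating this crystal on the weakly initial object $D$ produces a finitely generated $p$-complete $D$-module $N := \mathcal{M}(D)$, together with an integrable quasi-nilpotent connection $\nabla$ coming from the standard dictionary between crystals and modules-with-stratification. So the first step is: \emph{apply crystalline descent to the $(M,\epsilon)$ part to get $(N,\nabla)$ over $D$.}

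Next I would descend the filtration and the divided Frobenius. The condition $\epsilon(F^{k}(p_{1}^{*}M)) = F^{k}(p_{2}^{*}M)$ for all $0\le k\le a$ is precisely what is needed so that the submodules $F^{k}(p_i^*M)$ glue to a subsheaf $Fil^{k}\mathcal{M}$ of the crystal (this is the analogue of the argument in \cref{MF-to-crystals}, run in reverse). In particular, evaluating at $D$, the full filtration descends to submodules $Fil^{k}N\subset N$, and setting $Fil^{a}N$ to be the $k=a$ piece gives a closed submodule with $J_R^{[a]}N\subset Fil^a N$. Griffiths transversality of $Fil^\bullet N$ with respect to $\nabla$ follows because it can be checked after the faithfully flat base change to $\mathbb{A}_{\mr{cris}}(A/p)$, where it is built into the compatibility of $\epsilon$ with the filtrations and \cite[Lemma 3.1.3]{Ogus-griffiths}. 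Similarly, the compatibility $\epsilon\circ\Phi^{a}_{p_1^*M} = \Phi^{a}_{p_2^*M}\circ\epsilon$ says exactly that the divided Frobenius maps glue, so evaluating at $D$ yields a $\phi_D$-linear map $\Phi^{a}_{N}: Fil^{a}N\to N$; its horizontality with respect to $\nabla$ is, by the argument of \cref{prop-cris-strat} (read backwards), equivalent to this $\epsilon$-compatibility. One must also check the Frobenius-isomorphism condition \cref{cond-frob-iso} for $(N, Fil^a N, \Phi^a_N)$: the relevant map $Fil^a N/J_R Fil^a N\otimes_{\phi_D}D/p\to N/pN$ becomes an isomorphism after the faithfully flat base change $D/p\to \mathbb{A}_{\mr{cris}}(A/p)/p$ (where it is the given isomorphism for $M$, using the remark that the base-changed filtration is the closure of $\sum_{r+s=a}Fil^rN\otimes J^{[s]}$), hence is an isomorphism to begin with. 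This produces $(N, Fil^a N, \nabla, \Phi^a_N)\in \MFa^{\mr{big}}(Spf(R))$, and the construction clearly preserves the torsion-free conditions.

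Finally I would check that this assignment is functorial and independent of the auxiliary choices in a suitable sense, so that it genuinely defines a functor $\mathbb{D}_A$; functoriality in the object is immediate since every step (crystalline descent, evaluation at $D$, taking the appropriate graded/filtered pieces) is functorial, and independence of the chosen map $D^{(\bullet)}\to \mathbb{A}_{\mr{cris}}(A/p)^{(\bullet)}$ follows from the uniqueness statements for crystalline descent together with \cref{indep-mf}.

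I expect the main obstacle to be verifying that the \emph{descended filtration really is the correct, ``maximal'' filtration on $N$}, rather than something a priori smaller. The subtlety is that the full filtration over $\mathbb{A}_{\mr{cris}}(A/p)$ is part of the data, but when we descend to $D$ we only directly control $\sum_{r+s=k} Fil^r N\otimes J^{[s]}$ inside $M$; a priori there could be extra elements of $M$ mapping into the $k$-th filtration piece that do not descend from $Fil^k N$. This is exactly where the maximality results \cref{cris-maximality} and \cref{Acris-maximality} are used: since $Fil^a M$ is maximal over $\mathbb{A}_{\mr{cris}}(A/p)$ and, by construction, the descended object $(N, Fil^a N, \Phi^a_N)$ base-changes to a subobject of $(M, Fil^a M, \Phi^a_M)$, the base change of $Fil^a N$ must equal $Fil^a M$ by maximality; running the faithfully flat descent backwards then pins down $Fil^a N$ uniquely. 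Handling this comparison carefully — and making sure the $p$-completions and closures behave well under the non-Noetherian base change $D\to \mathbb{A}_{\mr{cris}}(A/p)$ — is the technical heart of the argument.
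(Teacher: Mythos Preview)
Your approach is correct for $A=R_{\infty}$ and coincides with the paper's: there the map $D^{(\bullet)}\to \mathbb{A}_{\mr{cris}}(R_{\infty}/p)^{(\bullet)}$ is indeed $p$-completely faithfully flat and Frobenius-compatible (this is what \cref{sec-prel-cris} records), so straight fpqc descent of module, filtration and Frobenius works as you describe.

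For $A=\tilde{R}$, however, there is a genuine gap. The cosimplicial map $D^{(\bullet)}\to \mathbb{A}_{\mr{cris}}(\tilde{R}/p)^{(\bullet)}$ you invoke is \emph{not} constructed in \cref{sec-prel-cris}; only the $R_{\infty}$-version is. In particular there is no Frobenius-compatible map $\mathfrak{S}(R)\to \prism_{\tilde{R}}$ available (the Teichm\"uller trick $t_i\mapsto [t_i^{\flat}]$ that makes this work for $R_{\infty}$ has no analogue over the merely qrsp ring $\tilde{R}$), so you cannot descend $\Phi^{a}$ directly. The paper handles this in two separate steps: the filtered module is descended by interpreting $(M/p^{n},\epsilon)$ as a crystal on $(\mathcal{X}_{n}/\Z_p)_{\mr{cris}}$ and descending the full filtration via a Rees-algebra/fpqc argument (no map from $D$ is needed, only weak initiality of $\mathbb{A}_{\mr{cris}}(\tilde{R}/p)$); the Frobenius $\Phi^{0}$ is descended by first pushing forward to $\mathbb{A}_{\mr{cris}}(\tilde{R}\hat{\otimes}_{R}R_{\infty}/p)^{(\bullet)}$, then down to $\mathbb{A}_{\mr{cris}}(R_{\infty}/p)^{(\bullet)}$ by faithfully flat descent, and only then along the Frobenius-compatible map $D^{(\bullet)}\to \mathbb{A}_{\mr{cris}}(R_{\infty}/p)^{(\bullet)}$. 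Your argument needs this detour for $\tilde{R}$.

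Two smaller remarks. First, the ``main obstacle'' you anticipate---maximality of the descended filtration---plays no role here: the proposition only asks for \emph{a} functor to $\MFa^{\mr{big}}$, and the paper's proof never invokes \cref{cris-maximality} or \cref{Acris-maximality}. Second, your claim that the construction ``clearly preserves the torsion-free conditions'' is too optimistic for $\tilde{R}$; the paper explicitly notes (just before \cref{left-inverse-perf}) that descending $p$-torsion freeness of $M/Fil^{a}M$ along $\tilde{R}$ seems difficult, and only establishes it for $R_{\infty}$.
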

\begin{proof}
We have to show that any $(M, Fil^{a}M, \epsilon, \Phi^{a}_{M})\in \MFa^{big}(\mathbb{A}_{\mr{cris}}(A/p)^{(\bullet)})^{\mr{full}}$ descends to an object over $D$. For this, we first show that $(M, Fil^{a}M, \epsilon)$ descends, which follows from the following.
\begin{claim}
For any $n\geq 1$, the triple $(M/p^{n}, Fil^{a}M/p^{n}, \epsilon/p^{n})$ defines a pair $(\mathcal{M}_{n}, Fil^{a}\mathcal{M}_{n})$ of a crystal in $\mathcal{O}_{\mathcal{X}_{cris, n}}$-modules, equipped with a sheaf of submodules $Fil^{a}\mathcal{M}_{n}\subset \mathcal{M}_{n}$, such that $\mathcal{I}_{mathcal{X}_{cris, n}}\mathcal{M}_{n}\subset Fil^{a}\mathcal{M}_{n}$.
\end{claim}
To see this, we first recall that $\mathbb{A}_{\mr{cris}}(A/p)/p^{n}$ is a weakly initial object of $(\mathcal{X}_{n}/\Z_p/p^{n})_{\mr{cris}}$. So for any object $(B, J)\in (\mathcal{X}_{n}/\Z_p/p^{n})_{\mr{cris}}$, there exists a faithfully flat covering $(B, J)\to (C, JC)$, such that there exists a morphism $(\mathbb{A}_{\mr{cris}}(A/p)/p^{n}, J_{\tilde{R}})\to (C, JC)$. Now consider the full filtration $Fil^{\bullet}_{M, Fil^{a}M}$ of $M/p^{n}$. Then the stratification $\epsilon$ defines a descent datum of filtered modules over the filtered ring $C$ (equipped with the PD-filtration $(JC)^{[\bullet]}$). The filtered module $(M_{C}, Fil_{M_{C}}^{\bullet})$ over $C$ corresponds to the graded module $\tilde{M}=\bigoplus\limits_{n\geq 0}Fil_{M_{C}}^{n}z^{n}$ over the Rees algebra $R_{C}:=\bigoplus\limits_{n\geq 0}J_{C}^{[n]}Cz^{n}\subset C[z]$. Now, since $J_{C}=JC$, the algebra $R_{C}$ is simply the pushout of the Rees algebra $R_{B}$ along the map $B\to C$, where $R_{B}= \bigoplus\limits_{n\geq 0}J_{B}^{[n]}$. Hence $R_{B}\to R_{C}$ is faithfully flat and the filtered module $(M_{C}, Fil_{M_{C}}^{\bullet})$ descends to a filtered module over $B$, by fpqc descent for graded modules over graded rings.
In particular, after passing to the $p$-adic limit, we obtain a filtered module $(M_{D}, Fil^{\bullet}_{D})$ over the filtered ring $D$ with a stratification $\epsilon_{D}$ over $D^{(1)}$, which fixes the filtration, i.e. $\epsilon_{D}$ takes the filtered pullback $p_{1}^{*}Fil^{k}$ to $p_{2}^{*}Fil^{k}$. This implies that the filtration satisfies Griffiths transversality with respect to the connection $\nabla$, induced by $\epsilon$. So in particular $J_{D}\nabla(Fil^{a})\subset Fil_{D}^{a}$..\\[0.5cm]
We are thus left with proving that the Frobenius descends as well. Note also that it is enough to descend $\Phi^{0}:M\to M$ as we then always get a divided Frobenius by dividing by $p^{r}$. Assume first that $B=\tilde{R}$ and consider the qrsp algebra $\tilde{R}\hat{\otimes}_{R}R_{\infty}$. The morphism of cosimplicial rings $\mathbb{A}_{\mr{cris}}(\tilde{R}/p)^{(\bullet)}\to \mathbb{A}_{\mr{cris}}(\tilde{R}\hat{\otimes}_{R}R_{\infty}/p)^{(\bullet)}$ is $p$-completely faithfully flat and compatible with Frobenius lifts. The same holds for $\mathbb{A}_{\mr{cris}}(R_{\infty}/p)^{(\bullet)}\to \mathbb{A}_{\mr{cris}}(\tilde{R}\hat{\otimes}_{R}R_{\infty}/p)^{(\bullet)}$. We thus first get a Frobenius with descent datum over $\mathbb{A}_{\mr{cris}}(\tilde{R}\hat{\otimes}_{R}R_{\infty}/p)$, which we then can descend to $\mathbb{A}_{\mr{cris}}(R_{\infty}/p)^{(\bullet)}$ by $p$-completely faithfully flat descent. The Frobenius may then be further descended along $D\to \mathbb{A}_{\mr{cris}}(R_{\infty}/p)$, induced by mapping a coordinate $t_{i}$ to $[t_{i}^{\flat}]$. This gives the desired functor $\mathbb{D}_{\tilde{R}}$.\\[0.5cm]
To construct the functor $\mathbb{D}_{R_{\infty}}$, now consider the $p$-completely faithfully flat map $D^{(\bullet)}\to \mathbb{A}_{\mr{cris}}(R_{\infty}/p)^{(\bullet)}$ of cosimiplicial rings, compatible with Frobenius lifts. By descent, we then again get a functor $\MFa^{big}(\mathbb{A}_{\mr{cris}}(R_{\infty}/p)^{(\bullet)})^{\mr{full}}\to \MFa^{big}(D^{(\bullet)})$.
\end{proof}
\begin{rmk}
The functor $\mathbb{D}_{\tilde{R}}$ is indeed canonical. Namely, assume that $\tilde{D}$ is defined like $D$, but using a different set of \'etale coordinates $T\subset R$. Then there is a commutative diagram
\begin{center}
$\xymatrix{
& \MFa^{big}(\mathbb{A}_{\mr{cris}}(\tilde{R}/p)^{(\bullet)})^{\mr{full}}\ar[dl]\ar[dr] & \\
 \MFa^{big}(Spf(R))\ar[rr]^{\cong} && \MFa^{big}(Spf(R))^{\tilde{D}}
}$
\end{center}
where the diagonal functors are obtained by carrying out the construction of the previous proof, using either $R\to R_{\infty}$ or $R\to R_{\infty, T}$, where $R_{\infty, T}$ is now the perfectoid ring, obtained by extracting roots of elements in $T$.\\
To see this, denote by $M=\mathcal{M}(D)$ the descended object over $D$ and by $\mathcal{M}(\tilde{D})$ the one over $\tilde{D}$. One can then show that the Frobenius on $M\hat{\otimes}_{D}\mathbb{A}_{cris}(\tilde{R}\hat{\otimes}R_{\infty}\hat{\otimes} R_{\infty, T})$ is compatible with the Frobenius on $\mathcal{M}(\tilde{D})\hat{\otimes}_{\tilde{D}}\mathbb{A}_{cris}(\tilde{R}\hat{\otimes}R_{\infty}\hat{\otimes} R_{\infty, T})$ under the canonical isomorphism $f:\mathcal{M}(D)\hat{\otimes}_{D}\mathbb{A}_{cris}(\tilde{R}\hat{\otimes}R_{\infty}\hat{\otimes} R_{\infty, T})\cong \mathcal{M}(\tilde{D})\hat{\otimes}_{\tilde{D}}\mathbb{A}_{cris}(\tilde{R}\hat{\otimes}R_{\infty}\hat{\otimes} R_{\infty, T})$.\\
Namely, write $\mathcal{M}(\tilde{D})=M\otimes_{D}\tilde{D}$ and denote the map $D\to \mathbb{A}_{cris}(\tilde{R}\hat{\otimes}R_{\infty}\hat{\otimes} R_{\infty, T})$ by $\alpha$ and the map $D\to \tilde{D}\to \mathbb{A}_{cris}(\tilde{R}\hat{\otimes}R_{\infty}\hat{\otimes} R_{\infty, T})$ by $\beta$.  To simplify the notation, we also write $\mathbb{A}:=\mathbb{A}_{cris}(\tilde{R}\hat{\otimes}R_{\infty}\hat{\otimes} R_{\infty, T})$. Then we claim that there is a commutative diagram
\begin{align}\label{diagram}
\xymatrix{
M\hat{\otimes}_{\alpha} \mathbb{A}\ar[d]_{f}^{\cong}\ar[r]^{\Phi_{M}\otimes \phi} & M\hat{\otimes}_{\alpha} \mathbb{A}\ar[d]_{f}^{\cong} \\
 M\hat{\otimes}_{\beta} \mathbb{A}\ar[r]^{\tilde{\Phi}} & M\hat{\otimes}_{\beta} \mathbb{A}.
}
\end{align}
Here $\tilde{\Phi}$ is defined as
\[M\otimes_{\beta} \mathbb{A}\to \tilde{M}\otimes_{\beta}\mathbb{A}\otimes_{\phi_{\mathbb{A}}}\mathbb{A}\xrightarrow[\cong]{\iota}\tilde{M}\otimes_{\phi_{D}}D\otimes_{\beta}\mathbb{A}\xrightarrow{\Phi_{M}} M\otimes_{\beta}\mathbb{A},\]
where $\iota$ is defined by \cref{p-taylor}. Note that this is the base extension of the Frobenius on $\mathcal{M}(\tilde{D})$ induced by $\Phi_{M}$ via the equivalence $\MFa^{big}(Spf(R))\longleftrightarrow  \MFa^{big}(Spf(R))^{\tilde{D}}$. We will indicate how one shows that \cref{diagram} is indeed commutative. We will stick to the case of a single coordinate, the higher dimensional case follows by induction as in the proof of \cref{prop-cris-strat}.\\
We first note that $\tilde{\Phi}\circ f$ is given by 
\begin{align}
m\otimes 1 & \mapsto  & \sum\limits_{i,j} \Phi_{M}(\nabla_{\partial_{t}}^{i+j}(m
))\otimes \gamma_{i}(\phi_{\mathbb{A}}(\alpha(t))-\phi_{\mathbb{A}}(\beta(t)))\gamma_{j}(\phi_{\mathbb{A}}(\beta(t))-\beta(\phi_{D}(t)))\\
& = & \sum\limits_{k}\Phi_{M}(\nabla_{\partial_{t}}^{k}(m
))\otimes \gamma_{k}(\phi_{\mathbb{A}}(\alpha(t))-\beta(\phi_{D}(t))),
\end{align}
where we again write $\gamma$ for the divided powers.\\
On the other hand $f\circ \Phi_{M}$ is given as
\begin{align}
m\otimes 1 & \mapsto  & \sum\limits_{k}\nabla_{\partial_{t}}^{k}(\Phi_{M}(m))\otimes \gamma_{k}(\alpha(t)-\beta(t))
\end{align}
But now, using the higher chain rules (similar to the proof of \cref{prop-cris-strat}) one can rewrite this as 
\[\sum\limits_{l=1}^{\infty}\Phi(\nabla^{l}(m))\otimes \sum\limits_{k=l}^{\infty}\beta(B_{k, l})\gamma_{k}(\alpha(t)-\beta(t))=\sum\limits_{l}\Phi_{M}(\nabla^{l}(m))\otimes \gamma_{l}(\alpha(\phi_{D}(t))-\beta(\phi_{D}(t))\]
where again $B_{k, l}:=B_{k, l}(\partial_{t}\phi_{D}(t),\ldots , \partial^{k-l+1}_{t}\phi_{D}(t))$ denote the partial Bell polynomials. Using that $\alpha\phi_{D}=\phi_{\mathbb{A}}\alpha$, we see that this agrees with $(\tilde{\Phi}(f(m\otimes 1))$.

\end{rmk}

We did not find a way to prove that the $p$-torsion freeness of $M/Fil^{a}M$ of an object over $\mathbb{A}_{\mr{cris}}(\tilde{R}/p)^{(\bullet)}$ induces the $p$-torsion freeness of the quotient of the corresponding object over $D$, as it seems difficult to descend this property. This is however easy to prove for the perfectoid cover.
\begin{prop}\label{left-inverse-perf}
The functor $\mathbb{D}_{R_{\infty}}$ restricts to a functor 
\[\mathbb{D}_{R_{\infty}}:\MFa^{\mr{tor-free}}(\mathbb{A}_{\mr{cris}}(R_{\infty}/p)^{(\bullet)})^{\mr{full}}\to \MFa^{\mr{tor-free}}(Spf(R)),\]
which is an equivalence of categories.
\end{prop}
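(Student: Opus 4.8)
The plan is to factor $\mathbb{D}_{R_\infty}$ through equivalences that have already been established, leaving only one step whose quasi-inverse I produce explicitly. By the construction in the proof of \cref{left-inverse}, $\mathbb{D}_{R_\infty}$ is the composite of the equivalence $\MFa^{\mr{big}}(D^{(\bullet)})\xrightarrow{\sim}\MFa^{\mr{big}}(Spf(R))$ of \cref{prop-cris-strat} with the functor
\[
\MFa^{\mr{big}}(\mathbb{A}_{\mr{cris}}(R_\infty/p)^{(\bullet)})^{\mr{full}}\longrightarrow\MFa^{\mr{big}}(D^{(\bullet)})
\]
given by descent along the degreewise $p$-completely faithfully flat, Frobenius-compatible map of cosimplicial rings $D^{(\bullet)}\to\mathbb{A}_{\mr{cris}}(R_\infty/p)^{(\bullet)}$ of \cref{sec-prel-cris}. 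As \cref{prop-cris-strat} restricts to an equivalence on torsion-free objects, it suffices to show that this descent functor restricts to an equivalence onto $\MFa^{\mr{tor-free}}(D^{(\bullet)})$. The candidate quasi-inverse $\mathbb{E}$ is base change along $D^{(\bullet)}\to\mathbb{A}_{\mr{cris}}(R_\infty/p)^{(\bullet)}$, where the filtration on $M_D\hat{\otimes}_D\mathbb{A}_{\mr{cris}}(R_\infty/p)$ is the closure of $\sum_{r+s=a}Fil^r_{M_D}\hat{\otimes}_DJ_{R_\infty}^{[s]}$, the divided Frobenius is the one of \cref{Frob-mf}, and the stratification is the base change of the given one over $D^{(1)}$.

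First I would check that $\mathbb{E}$ lands in $\MFa^{\mr{tor-free}}(\mathbb{A}_{\mr{cris}}(R_\infty/p)^{(\bullet)})^{\mr{full}}$. For the torsion-free conditions I use the identification $\mathbb{A}_{\mr{cris}}(R_\infty/p)=D\hat{\otimes}_{\mathcal{S}}A_{\mr{cris}}$, obtained by combining $A_{\mr{inf}}(R_\infty)=\mathfrak{S}(R)\hat{\otimes}_{\mathfrak{S}}A_{\mr{inf}}$ (\cref{coverings}) and $\mathbb{A}_{\mr{cris}}(R_\infty/p)=A_{\mr{inf}}(R_\infty)\hat{\otimes}_{A_{\mr{inf}}}A_{\mr{cris}}$ with compatibility of completed PD-envelopes with the flat base change $\mathfrak{S}\to\mathfrak{S}(R)$; in particular $D\to\mathbb{A}_{\mr{cris}}(R_\infty/p)$ is faithfully flat. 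Hence $M_D$ flat over $\mathcal{S}$ gives $M_D\hat{\otimes}_D\mathbb{A}_{\mr{cris}}(R_\infty/p)=M_D\hat{\otimes}_{\mathcal{S}}A_{\mr{cris}}$ flat over $A_{\mr{cris}}$, and $p$-torsion-freeness of $M_D/Fil^aM_D$ is inherited by this flat base change. For the fullness condition the key point is that the intrinsic full filtration of \cref{filtration} commutes with the flat base change $D\to\mathbb{A}_{\mr{cris}}(R_\infty/p)$: for $i\le a$, the step $Fil^i_{M,Fil^aM}$ is the kernel of the map $M\to\mathrm{Hom}(J_R^{[a-i]},M/Fil^aM)$, which commutes with flat base change as $J_R^{[a-i]}$ is finitely generated, and $J_{R_\infty}^{[s]}=J_R^{[s]}\,\mathbb{A}_{\mr{cris}}(R_\infty/p)$. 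Therefore $F^k(p_i^*M_D)$ base changes to the corresponding $F^k$ over $\mathbb{A}_{\mr{cris}}(R_\infty/p)^{(1)}$, and the required relations $\epsilon(F^k(p_1^*M))=F^k(p_2^*M)$ and $\epsilon\circ\Phi^a_{p_1^*M}=\Phi^a_{p_2^*M}\circ\epsilon$ follow by base change from their counterparts over $D^{(1)}$, which hold automatically there (the remarks following \cref{prop-cris-strat} and \cref{def-strat}).

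Next one checks that the two composites are naturally isomorphic to the identity. For $\mathbb{E}$ followed by descent: the underlying module-with-stratification is recovered by $p$-completely faithfully flat descent (equivalently by the equivalence $Crys(\mathcal{X}_n/\Z_p)\simeq Strat(A^{(\bullet)})$ of \cref{sec-prel-cris} applied to $A^{(\bullet)}=D^{(\bullet)}$ and to $A^{(\bullet)}=\mathbb{A}_{\mr{cris}}(R_\infty/p)^{(\bullet)}$); the filtration is recovered because $J_R^{[s]}Fil^{a-s}_{M_D}\subseteq Fil^aM_D$ over $D$ forces the closure of $\sum_{r+s=a}Fil^r_{M_D}\hat{\otimes}_DJ_{R_\infty}^{[s]}$ to equal $Fil^aM_D\hat{\otimes}_D\mathbb{A}_{\mr{cris}}(R_\infty/p)$; and the Frobenius is recovered because $D\to\mathbb{A}_{\mr{cris}}(R_\infty/p)$ is Frobenius-compatible, so \cref{Frob-mf} reduces to $\Phi^a_{M_D}\otimes_{\phi}\mathrm{id}$. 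For descent followed by $\mathbb{E}$: the proof of \cref{left-inverse} already descends the filtered module (by flat descent for Rees modules) and the Frobenius, and base-changing back returns the original datum; here one uses that $p$-torsion-freeness of $M/Fil^aM$ descends along the \emph{faithfully} flat $D\to\mathbb{A}_{\mr{cris}}(R_\infty/p)$ and that flatness over $\mathcal{S}$ descends along the faithfully flat $\mathcal{S}\to A_{\mr{cris}}$. Alternatively the descended $Fil^a$ can be pinned down by the maximality argument of \cref{cris-maximality}, exactly as in the proof of \cref{indep-mf}.

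The main obstacle is precisely the descent of $p$-torsion-freeness of $M/Fil^aM$ from $\mathbb{A}_{\mr{cris}}(R_\infty/p)^{(\bullet)}$ down to $D^{(\bullet)}$, which is the reason the statement is restricted to the perfectoid cover: the map $D\to\mathbb{A}_{\mr{cris}}(R_\infty/p)$ is faithfully flat, so $p$-torsion-freeness of the quotient can be tested after this base change, whereas for the cover $\tilde{R}$ there is no comparable faithfully flat map $D\to\mathbb{A}_{\mr{cris}}(\tilde{R}/p)$ and even the descent of the Frobenius must be routed through $R_\infty$. A secondary delicate point is the compatibility of base change along $D\to\mathbb{A}_{\mr{cris}}(R_\infty/p)$ with the intrinsic full filtration of \cref{filtration}, which is what obliges one to carry the fullness condition along; once these are in hand the remaining verifications are formal consequences of \cref{prop-cris-strat}, the equivalence $Crys(\mathcal{X}_n/\Z_p)\simeq Strat(A^{(\bullet)})$ of \cref{sec-prel-cris}, and the construction of the Frobenius on base changes recalled before \cref{indep-mf}.
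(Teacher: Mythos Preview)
Your proposal is correct and follows essentially the same approach as the paper: both construct the filtered base-change functor along $D^{(\bullet)}\to\mathbb{A}_{\mr{cris}}(R_{\infty}/p)^{(\bullet)}$ as a quasi-inverse to $\mathbb{D}_{R_{\infty}}$, with the key observation being that the intrinsic full filtration of \cref{filtration} is compatible with this base change. Your write-up is considerably more detailed than the paper's two-sentence proof, supplying explicit justifications (the identification $\mathbb{A}_{\mr{cris}}(R_{\infty}/p)\cong D\hat{\otimes}_{\mathcal{S}}A_{\mr{cris}}$, the kernel description of $Fil^{i}_{M,Fil^{a}M}$ to show it commutes with flat base change, and the descent of the torsion-free conditions via faithful flatness) that the paper leaves implicit.
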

\begin{proof}
In this case, for any $(M, Fil^{a}M, \epsilon, \Phi^{a}_{M})\in \MFa(D^{(\bullet)})$, the filtered base change to $\mathbb{A}_{\mr{cris}}(R_{\infty}/p)$ of the full filtration $Fil^{\bullet}_{M, Fil^{a}M}$ is simply the normal base change, i.e. the closure of $Fil_{M, Fil^{a}M}^{\bullet}\otimes \mathbb{A}_{\mr{cris}}(R_{\infty}/p)$ and this is seen to agree with the full filtration associated to $Fil^{a}M_{\mathbb{A}_{\mr{cris}}(R_{\infty}/p)}$. Using this, one sees that the filtered base-change functor is a quasi-inverse to $\mathbb{D}_{R_{\infty}}$.
\end{proof}
For the proof of our main theorem, we will have to make use of torsion objects. We therefore make the following definitions.
\begin{defn}\label{MF-tor}
Let $Spf(A)\in \mathcal{X}_{qsyn}$, where $A$ is a $p$-torsionfree qrsp $\mathcal{O}_{C}$-algebra. We then define $\MFa^{\mr{tor}}(\mathbb{A}_{\mr{cris}}(A/p)^{(\bullet)})$ to be the category of direct sums of quadruples $(M, Fil^{a}M, \Phi^{a}_{M}, \epsilon)$, where
\begin{itemize}
\item $M$ is an $\mathbb{A}_{\mr{cris}}(A/p)/p^{n}$-module, which is $A_{\mr{cris}}/p^{n}$-flat, for some $n\geq 1$.
\item $Fil^{a}M\subset M$ is a submodule, such that $J_{A}^{[a]}M\subset Fil^{a}M$ and $M/Fil^{a}M$ is $\Z_p/p^{n}$-flat.
\item $\Phi^{a}_{M}:Fil^{a}M\to M$ is a Frobenius-linear map, which induces an isomorphism
\begin{align}\label{Frob-acris-iso}
Fil^{a}M/J_{A}Fil^{a}M\otimes_{\phi_{\mathbb{A}_{\mr{cris}}(A/p)}}\mathbb{A}_{\mr{cris}}(A/p)/p\xrightarrow{\cong} M/pM.
\end{align}
Moreover, we ask that it satisfies the following condition
\begin{align}\label{cond-frob-2}
\Phi^{a}_{M}(xm)=c_{\xi}^{-a}\phi^{a}_{\mathbb{A}_{\mr{cris}}(\tilde{R}/p)}(x)\Phi^{a}_{M}(\xi^{a}m),
\end{align}
where $c_{\xi}:=\frac{\phi(\xi)}{p}$.
\item $\epsilon:p_{1}^{*}M\cong p_{2}^{*}M$ is an isomorphism, which satisfies $\epsilon(F^{a}(p_{1}^{*}M))=F^{a}(p_{2}^{*}M)$, for all $0\leq k \leq a$ and $\epsilon\circ \Phi_{p_{1}^{*}M}^{a}=\Phi_{p_{2}^{*}M}^{a}\circ \epsilon$, where again $F^{a}(p_{1}^{*}M)$ denotes the filtered-pullback with respect to the full filtration $Fil^{\bullet}_{M, Fil^{a}M}$ associated to $Fil^{a}M$.
\end{itemize}
Finally, we define the category $\MFa(\mathbb{A}_{\mr{cris}}(A/p)^{(\bullet)})$ to be the smallest category containing both $\MFa^{\mr{tor}}(\mathbb{A}_{\mr{cris}}(A/p)^{(\bullet)})$ and $\MFa^{\mr{tor-free}}(\mathbb{A}_{\mr{cris}}(A/p)^{(\bullet)})$ as full subcategories.
\end{defn}  
\begin{rmk}
Condition \ref{cond-frob-2} is always satisfied for objects contained in $\MFa^{\mr{tor-free}}(\mathcal{X})$. Indeed, if $M$ is $p$-torsion free, multiplying $\Phi^{a}(xm)$ with $\phi_{D}(\xi)^{a}$, we get $\phi(E)^{a}\Phi_{M}^{a}(xm)=\Phi^{a}(\xi^{a}xm)=\phi(x)\Phi^{a}(xm)$. Dividing by $p^{a}$ then gives $c_{\xi}^{a}\Phi^{a}(xm)=\phi^{a}_{D}(x)\Phi^{a}_{M}(\xi^{a}m)$.\\
Note also that \ref{cond-frob-2} does not depend on the distinguished element $\xi$ (i.e. if it holds for one, it also holds for any other).
\end{rmk}
\begin{rmk}
It is clear that mod $p^{n}$ reductions of objects in $\MFa^{\mr{tor-free}}$ lie in $\MFa^{\mr{tor}}$. In this paper, the torsion categories will only be used to give the proof of our main results. In general, it seems difficult to us to construct a category $\MFa^{\mr{tor}}(Spf(R))$, which is well-behaved and globalizes. The problem is to construct well-defined base-change functors in general.\\ One can define global categories of torsion objects if one restricts to so called filtered-free objects in the sense of Faltings (see \cref{rmk-examples} below).
\end{rmk}

We finish the section with some remarks on how the here described objects are related to earlier work by various authors.
\begin{rmk}\label{rmk-examples}
\begin{enumerate}
\item One can show that the category $\MFa^{\mr{tor-free}}(\mathcal{O}_{K})$ is equvivalent to the category of $p$-torsion free crystalline Breuil-modules. Recall that a Breuil module is given by a tuple $(M, Fil^{a}M, \phi_{M}, N_{M})$, where $M$ is a module over the Breuil ring $\mathcal{S}$, $Fil^{a}M\subset M$ is a submodule, such that $E(u)^{a}M\subset FilM$ and equipped with a divided Frobenius $\phi_{M}:FilM\to M$ and a monodromy operator $N_{M}:M\to M$, satisfying various conditions, similar to the ones in \cref{def-MF}. The Breuil module is called crystalline if $N_{M}(M)\subset (u\mathcal{S}+Fil^{1}\mathcal{S})M$.
\item In \cite{Fal-very} Faltings considered a more restrictive version of the category $\MFa^{\mr{tor-free}}(\mathcal{X})$ by only considering so called locally filtered-free objects: \\Consider the following category $\mathcal{M}\mathcal{F}^{a, \mr{fil-free}}(\mathcal{X})$, whose objects are given by
\begin{itemize}
\item A finite locally free crystal $\mathcal{E}$ equipped with a decreasing exhaustive filtration $\{Fil^{i}(\mathcal{E})\}_{i\in \mathbb{N}}$, such that $(\mathcal{E}, Fil^{\bullet}(\mathcal{E}))$ is a filtered module over $\mathcal{O}_{\mathcal{X}_{\mr{cris}}}$, and such that there exists locally an isomorphism of filtered modules $\mathcal{E}\cong \bigoplus\limits_{n=0}^{N}\mathcal{O}_{\mathcal{X}_{\mr{cris}}}(q_{n})$, where $0\leq q_{n} \leq a$ and $\mathcal{O}_{\mathcal{X}_{\mr{cris}}}(q_{n})$ denotes the trivial module $\mathcal{O}_{\mathcal{X}_{\mr{cris}}}$, with filtration shifted by $q_{n}$. \\
\item For any $(B, J, \phi_{B}, f)$ as in the previous definition, we have Frobenius maps 
\begin{center}
$\Phi^{i}:\varprojlim_{n}Fil^{i}\mathcal{E}(B/J^{n})\to \varprojlim_{n}\mathcal{E}(B/J^{n})$, 
\end{center}
whose image generates all of $\varprojlim_{n}\mathcal{E}(B/J^{n})$.
\end{itemize}
There is a functor 
\begin{align*}
\mathcal{M}\mathcal{F}^{a, \mr{fil-free}}(\mathcal{X}) & \longrightarrow &  \MFa^{\mr{tor-free}}(\mathcal{X})\\
(\mathcal{E}, Fil^{\bullet}(\mathcal{E}), \{\Phi^{i}\}_{i}) & \longmapsto & (\mathcal{E}, Fil^{a}\mathcal{E}, \Phi^{a}).
\end{align*}
One can show that this functor is fully faithful.
\item Assume that $K$ is totally unramified, i.e. $W(k)=\mathcal{O}_{K}$. There then exists the category of $p$-torsion free Faltings-Fontaine-Laffaille modules (see \cite{Font-Laff}, \cite{Fal-cris}, and in particular \cite[\S 4]{Tsuji-mf}) which we here denote by $\MFa(\mathcal{X})'$, which in the small affine case (i.e. $\mathcal{X}=Spf(R)$) consists of $(M, \nabla, \{F^{i}_{M}\}_{i}, \{\Phi^{i}_{M}\}_{i})$, with $0\leq i \leq a$. Here $M$ is a direct sum $M=R^{n}$ equipped with an integrable connection $\nabla$ and $F^{\bullet}_{M}$ is a decreasing exhaustive filtration of direct factors, which satisfies Griffiths transversality with respect to $\nabla$ and $\Phi^{i}:F^{i}_{M}\to M$ are horizontal divided Frobenius maps (for some chosen Frobenius lift on $R$), satisfying certain properties.\\
Let $D=R\hat{\otimes}_{W(k)} \mathcal{S}$ be the $p$-adically completed pd hull of $R[[u]]\twoheadrightarrow R$. Then by \cref{indep-mf}, $\MFa(Spf(R))$ is canonically equivalent to the category $\MFa(Spf(R))^{D}$. There is a functor $\MFa(Spf(R))'\to \MFa(Spf(R))^{D}$, which takes $(M, \nabla, \{F^{i}_{M}\}_{i}, \{\Phi^{i}_{M}\}_{i})$ to the quadruple $(M_{D}, \nabla_{M_{D}}, Fil^{a}M_{D}, \Phi^{a}_{M_{D}})$ defined by 
\begin{itemize}
\item $M_{D}:=M\otimes_{R} D$.
\item $\nabla_{M_{D}}=\nabla\otimes id + id\otimes d$, where $d:D\to  \hat{\Omega}^{1}_{D/\Z_p}$ is the universal PD-differential.
\item $Fil^{a}M_{D}:=\sum\limits_{r+s=a} F^{r}M\otimes J_{R}^{[s]}$.
\item $\Phi^{a}_{M_{D}}:=\sum\limits_{r+s=a} \Phi^{r}_{M}\otimes \phi^{s}_{D}$.
\end{itemize}
One can again prove that this functor is fully faithful. Moreover, it glues to a functor $\MFa(\mathcal{X})'\to \MFa(\mathcal{X})$ for global $\mathcal{X}$.
\end{enumerate}
\end{rmk}

\section{Divided prismatic F-crystals}
Throughout the section, we fix a smooth $p$-adic formal scheme $\mathcal{X}$ over $\mathcal{O}_{K}$.
\subsection{The Nygaard filtration of completed prismatic F-crystals}
We start out by recalling some concepts from \cite{DLMS}. 
\begin{defn}\cite[Def. 3.16]{DLMS}\label{completed-crystals-pris-site}
A $p$-torsion free completed prismatic $F$-crystal of height $r\geq 0$ on $\mathcal{X}$, is a completed crystal $\mathcal{M}$ in quasi-coherent $\mathcal{O}_{\prism}$-modules, together with a Frobenius-linear map $\Phi_{\mathcal{M}}:\mathcal{M}\to \mathcal{M}$, such that
\begin{itemize}
\item $\mathcal{M}_{(\mathfrak{S}(R), E)}$ is finitely generated, torsion free and saturated, meaning
\begin{center}
$\mathcal{M}(\mathfrak{S}(R))=\mathcal{M}(\mathfrak{S}(R))\left[\frac{1}{p}\right]\cap \mathcal{M}(\mathfrak{S}(R))\left[\frac{1}{E}\right]$.
\end{center}
\item The map $\phi_{\mathfrak{S}(R)}^{*}\mathcal{M}(\mathfrak{S}(R))\to \mathcal{M}(\mathfrak{S}(R))$ is injective and its cokernel is $E^{r}$-torsion.
\end{itemize}
For a general smooth formal $p$-adic schemes $\mathcal{X}$, a prismatic F-crystal is a pair $(\mathcal{M}, \Phi_{\mathcal{M}})$, which satisfies the above conditions Zariski locally.\\
The category of $p$-torsion free completed prismatic F-crystals is denoted by $CR^{[\wedge, \phi, \mr{tor-free}]}_{[0, r]}(\mathcal{X}_{\prism})$.
\end{defn}
\begin{rmk}
\begin{itemize}
\item In \cite{DLMS}, a completed Frobenius crystal is moreover assumed to be projective away from $(p, E)$. This is in fact automatic in our case (see \cref{proj-away}).
\item By \cite[Rmk. 3.16]{DLMS}, $(\mathcal{M}, \Phi_{\mathcal{M}})$ is saturated if and only if $\mathcal{M}(\mathfrak{S}(R))/p$ is $E$-torsion free. In particular this means that $\mathcal{M}(\mathfrak{S}(R))$ is saturated if and only if $\mathcal{M}(\mathfrak{S}(R))$ is flat over $\mathfrak{S}$ which in turn is equivalent to asking that $(p, E)$ (or again equivalently $(E, p)$) is a regular sequence for $\mathcal{M}(\mathfrak{S}(R))$.
\item Let $R\to R_{\infty}$ be the usual perfectoid cover. Then using the faithfully flat map $\mathfrak{S}(R)\to \prism_{R_{\infty}}$, one sees that the conditions in the definition above may also be replaced by asking that $\mathcal{M}(\prism_{R_{\infty}})$ is finitely generated, torsion free and saturated (i.e. $(p, I)$-completely flat over $A_{inf}$) over $\prism_{R_{\infty}}$ and that the linearization of Frobenius over $\prism_{R_{\infty}}$ is injective with $\tilde{\xi}^{a}$-torsion cokernel. 
\end{itemize}
\end{rmk}

\begin{defn}\label{completed-crys}
The category $CR^{[\wedge, \phi, \mr{tor-free}]}_{[0, r]}(\mathcal{X})$ consists of pairs $(\mathcal{M}, \Phi_{\mathcal{M}})$, where $\mathcal{M}$ is a completed crystal of $\mathcal{O}^{\mr{pris}}$-modules on $\mathcal{X}_{\mr{qsyn}}$ and $\Phi_{\mathcal{M}}$ is a Frobenius-linear map, such that for any $Spf(B)\in \mathcal{X}_{\mr{qsyn}}$ with $B$ qrsp,
\begin{itemize}
\item $\mathcal{M}(B)$ is a finitely generated torsion free module over $\prism_{B}$ which is saturated, meaning
\begin{center}
$\mathcal{M}(B)=\mathcal{M}(B)\left[\frac{1}{p}\right]\cap \mathcal{M}(B)\left[\frac{1}{I_{B}}\right]$,
\end{center}
\item the map $\phi_{\prism_{B}}^{*}\mathcal{M}(B)\to \mathcal{M}(B)$ is injective and $I_{B}^{r}$-torsion.
\end{itemize}
\end{defn}
We want to prove that the two categories introduced above are in fact equivalent.
\begin{lem}
Assume that $\mathcal{X}=Spf(R)$ is small and let $(\mathcal{M}, \Phi_{\mathcal{M}})\in CR^{[\wedge, \phi, \mr{tor-free}]}_{[0, r]}(\mathcal{X}_{\prism})$. Let $Spf(B)\in \mathcal{X}_{\mr{qsyn}}$ with $B$ qrsp, and let $\tilde{\xi}$ be a generator of $I\subset \prism_{B}$. Then $\mathcal{M}(\prism_{B})$ is saturated, i.e.
\begin{center}
$\mathcal{M}(B)=\mathcal{M}(B)[\tilde{\xi}^{-1}]\cap \mathcal{M}(B)[p^{-1}]$
\end{center}
 and $(\tilde{\xi}, p)$ is a regular sequence for $\mathcal{M}(\prism_{B})$.
\end{lem}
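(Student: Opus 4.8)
The plan is to reduce the assertion to the already-known behaviour of $\mathcal{M}$ over $\prism_{R_{\infty}}$ by means of a suitable quasi-syntomic cover, and then descend. Recall from \cref{coverings} the perfectoid cover $R\to R_{\infty}$. By the remark following \cref{completed-crystals-pris-site}, membership of $(\mathcal{M},\Phi_{\mathcal{M}})$ in $CR^{[\wedge,\phi,\mr{tor-free}]}_{[0,r]}(\mathcal{X}_{\prism})$ already tells us that $\mathcal{M}(\prism_{R_{\infty}})$ is finitely generated and $(p,I)$-completely flat over $\prism_{R_{\infty}}=A_{\mr{inf}}(R_{\infty})$ --- this is exactly the ``torsion free and saturated'' condition there.

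Given a qrsp $Spf(B)\in\mathcal{X}_{\mr{qsyn}}$, set $C:=B\hat{\otimes}_{R}R_{\infty}$. The same argument as in \cite[Lemma 4.27]{BMS-hoch} shows that $C$ is again a $p$-torsion free qrsp ring. Moreover $B\to C$ is $p$-completely faithfully flat (being the base change of $R\to R_{\infty}$ along $R\to B$) and $R_{\infty}\to C$ is $p$-completely flat (the base change of the quasi-syntomic map $R\to B$ along $R\to R_{\infty}$), hence $\prism_{B}\to \prism_{C}$ is $(p,I)$-completely faithfully flat and $\prism_{R_{\infty}}\to\prism_{C}$ is $(p,I)$-completely flat by flatness of prismatic cohomology along such maps. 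By the crystal property, $\mathcal{M}(\prism_{C})\cong \mathcal{M}(\prism_{R_{\infty}})\hat{\otimes}_{\prism_{R_{\infty}}}\prism_{C}$, so $\mathcal{M}(\prism_{C})$ is $(p,I)$-completely flat over $\prism_{C}$. Since $\prism_{C}$ is a bounded prism with $\prism_{C}/\tilde{\xi}=C$ $p$-torsion free, the sequence $(\tilde{\xi},p)$ is regular on $\prism_{C}$, and flatness transports this to $\mathcal{M}(\prism_{C})$: $\tilde{\xi}$ is a nonzerodivisor on $\mathcal{M}(\prism_{C})$ and $p$ is a nonzerodivisor on $\mathcal{M}(\prism_{C})/\tilde{\xi}=\mathcal{M}(\prism_{C})\otimes_{\prism_{C}}C$. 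Finally, a formal check (clearing denominators against the regular sequence $(\tilde{\xi},p)$) shows that a finitely generated module on which $(\tilde{\xi},p)$ is a regular sequence is automatically saturated, so $\mathcal{M}(\prism_{C})=\mathcal{M}(\prism_{C})[\tilde{\xi}^{-1}]\cap\mathcal{M}(\prism_{C})[p^{-1}]$.

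It remains to descend along the $(p,I)$-completely faithfully flat map $\prism_{B}\to\prism_{C}$. Using the crystal property again, $\mathcal{M}(\prism_{C})\cong\mathcal{M}(\prism_{B})\hat{\otimes}_{\prism_{B}}\prism_{C}$. Faithfully flat base change both preserves and reflects injectivity of a map of finitely generated modules; checking modulo powers of $p$ and $\tilde{\xi}$ and passing to the $(p,I)$-adic limit, this yields that $\tilde{\xi}$ and $p$ are nonzerodivisors on $\mathcal{M}(\prism_{B})$ and that $p$ is a nonzerodivisor on $\mathcal{M}(\prism_{B})/\tilde{\xi}$; i.e.\ $(\tilde{\xi},p)$ is a regular sequence on $\mathcal{M}(\prism_{B})$. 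The saturation statement $\mathcal{M}(\prism_{B})=\mathcal{M}(\prism_{B})[\tilde{\xi}^{-1}]\cap\mathcal{M}(\prism_{B})[p^{-1}]$ then follows as above from the regular sequence.

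The main obstacle is purely of a technical flavour: making sure that prismatic cohomology and the crystal $\mathcal{M}$ are well behaved under the base changes involved --- in particular that $C=B\hat{\otimes}_{R}R_{\infty}$ is still qrsp and that $\prism_{B}\to\prism_{C}$ is $(p,I)$-completely faithfully flat --- and the bookkeeping needed to verify torsion-freeness and regular-sequence statements for $(p,I)$-completed finitely generated modules after a faithfully flat base change. Everything else is formal manipulation with flat modules and regular sequences.
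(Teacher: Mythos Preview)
Your proof is correct and follows essentially the same strategy as the paper: pass to $C=B\hat{\otimes}_{R}R_{\infty}$, establish the regular-sequence property there by flatness, and descend along the $(p,I)$-completely faithfully flat map $\prism_{B}\to\prism_{C}$. The only cosmetic difference is that the paper routes the flatness of $\mathcal{M}(\prism_{C})$ through the noetherian base $\mathfrak{S}(R)$ (using that $\mathfrak{S}(R)\to\prism_{R_{\infty}}\to\prism_{C}$ is a composite of $(p,E)$-completely flat maps and invoking \cite[Lemma~3.19]{DLMS}), whereas you invoke directly the $(p,I)$-complete flatness of $\mathcal{M}(\prism_{R_{\infty}})$ over $\prism_{R_{\infty}}$ coming from the remark after \cref{completed-crystals-pris-site}.
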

\begin{proof}
Let $R\to R_{\infty}$ be the perfectoid ring, obtained by extracting $p$-th power roots of \'etale coordinates and base extending to $\mathcal{O}_{C}$. Assume first that $B$ lies over $R_{\infty}$, i.e. there is $p$-completely faithfully flat map $R_{\infty}\to B$ compatible with the structure maps from $R$. Then $\prism_{B}$ is flat over $\mathfrak{S}(R)$, since $\mathfrak{S}(R)\to \prism_{R_{\infty}}\to \prism_{B}$ is a composition of $(p, E)$-completely flat maps and $\mathfrak{S}(R)$ is noetherian. But then $\mathcal{M}(\prism_{B})=\mathcal{M}(\mathfrak{S}(R))\hat{\otimes}\prism_{B}$ is saturated, by \cite[Lemma 3.19]{DLMS}, so $(\tilde{\xi}, p)$ is a regular sequence for $\mathcal{M}(\prism_{B})$.\\
For a general $B$, we consider $B_{\infty}=B\hat{\otimes}_{R}R_{\infty}$. Then $(\tilde{\xi}, p)$ is a regular sequence for $\mathcal{M}(B_{\infty})=\mathcal{M}(B)\hat{\otimes}_{\prism_{B}}\prism_{B_{\infty}}$, i.e. $\mathcal{M}(B_{\infty})/I=\mathcal{M}(B)/I\hat{\otimes}\prism_{B_{\infty}}/I$ is $p$-torsion free. But then, since $\prism_{B}/I\to \prism_{B_{\infty}}/I$ is $p$-completely faithfully flat, one sees that $\mathcal{M}(B)/I$ is $p$-torsion free as well (namely, by faithful flatness, one deduces that $\mathcal{M}(B)/(I, p^{n})$ is $\Z_p/p^{n}$ flat, for all $n\geq 1$).
\end{proof}

The lemma above implies the following.
\begin{cor}
The functor $\mu_{*}$ (see \cref{equiv-qsyn}) induces a natural equivalence of categories
\begin{center}
$CR^{[\wedge, \phi, \mr{tor-free}]}_{[0, r]}(\mathcal{X}_{\prism})\longleftrightarrow  CR^{[\wedge, \phi, \mr{tor-free}]}_{[0, r]}(\mathcal{X})$.
\end{center}
\end{cor}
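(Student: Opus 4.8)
The plan is to leverage \cref{equiv-qsyn}: the functor $\mu_{*}$ is already an equivalence between the ambient categories $Crys(\mathcal{X}_{\prism})$ and $Crys(\mathcal{X}_{\mr{qsyn}})$, and it is compatible with the Frobenius structures, since $\mathcal{O}^{\mr{pris}}=\mu_{*}\mathcal{O}_{\prism}$ carries the Frobenius $\mu_{*}\phi$ and hence $\mu_{*}$ sends a Frobenius-linear endomorphism of $\mathcal{M}$ to one of $\mu_{*}\mathcal{M}$; the same applies to a quasi-inverse. Consequently $\mu_{*}$ induces an equivalence between the categories of completed crystals equipped with a Frobenius-linear map. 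As both $CR^{[\wedge, \phi, \mr{tor-free}]}_{[0, r]}(\mathcal{X}_{\prism})$ and $CR^{[\wedge, \phi, \mr{tor-free}]}_{[0, r]}(\mathcal{X})$ are full subcategories of the latter, full faithfulness of the restricted functor is automatic; it remains to show that $\mu_{*}$ and its quasi-inverse preserve the respective defining conditions. Since these conditions are Zariski-local, we may assume $\mathcal{X}=Spf(R)$ is small affine.

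For the direction $CR^{[\wedge, \phi, \mr{tor-free}]}_{[0, r]}(\mathcal{X}_{\prism})\to CR^{[\wedge, \phi, \mr{tor-free}]}_{[0, r]}(\mathcal{X})$, let $(\mathcal{M}, \Phi_{\mathcal{M}})$ be an object of the source and let $Spf(B)\in\mathcal{X}_{\mr{qsyn}}$ with $B$ qrsp. That $\mathcal{M}(\prism_{B})=(\mu_{*}\mathcal{M})(B)$ is finitely generated, torsion free and saturated, and that $(\tilde{\xi}, p)$ is a regular sequence on it, is precisely the preceding Lemma. For the Frobenius height condition, observe that the structure map $\mathfrak{S}(R)\to\prism_{B}$ is a morphism of $\delta$-rings, so by the crystal property $\phi^{*}_{\prism_{B}}\mathcal{M}(\prism_{B})$ is identified with $\phi^{*}_{\mathfrak{S}(R)}\mathcal{M}(\mathfrak{S}(R))\hat{\otimes}_{\mathfrak{S}(R)}\prism_{B}$ compatibly with the Frobenius maps to $\mathcal{M}(\prism_{B})$. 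When $B$ lies over $R_{\infty}$, the map $\mathfrak{S}(R)\to\prism_{B}$ is flat (as in the Lemma's proof), so injectivity of $\phi^{*}\mathcal{M}\to\mathcal{M}$ is preserved, and its cokernel, being the base change of an $E^{r}$-torsion module along $E\mapsto\tilde{\xi}$, is $\tilde{\xi}^{r}=I_{B}^{r}$-torsion. For general $B$ one passes to $B_{\infty}:=B\hat{\otimes}_{R}R_{\infty}$ and descends both statements along the $(p, I)$-completely faithfully flat map $\prism_{B}\to\prism_{B_{\infty}}$, exactly as in the final paragraph of the Lemma's proof.

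For the reverse direction, let $\mathcal{M}$ be a completed prismatic crystal with Frobenius whose image $\mu_{*}\mathcal{M}$ lies in $CR^{[\wedge, \phi, \mr{tor-free}]}_{[0, r]}(\mathcal{X})$. Applying the hypothesis to the qrsp ring $R_{\infty}$, we learn that $\mathcal{M}(\prism_{R_{\infty}})=(\mu_{*}\mathcal{M})(R_{\infty})$ is finitely generated, torsion free and saturated over $\prism_{R_{\infty}}$, with Frobenius injective and with $\tilde{\xi}^{r}$-torsion cokernel. By the third item of the Remark following \cref{completed-crystals-pris-site}, these conditions on $\mathcal{M}(\prism_{R_{\infty}})$ are equivalent, via the faithfully flat map $\mathfrak{S}(R)\to\prism_{R_{\infty}}$, to the defining conditions of $CR^{[\wedge, \phi, \mr{tor-free}]}_{[0, r]}(\mathcal{X}_{\prism})$ imposed on $\mathcal{M}(\mathfrak{S}(R))$. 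Hence $\mathcal{M}$ lies in the source category, which gives essential surjectivity and completes the proof.

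I expect the only genuine point — as opposed to formal bookkeeping with the already-established equivalence $\mu_{*}$ — to be the transfer of the Frobenius height condition to an arbitrary qrsp $B$, since $\mathfrak{S}(R)\to\prism_{B}$ need not be flat; this is exactly why the preceding Lemma was phrased so as to supply the faithfully flat descent argument that also handles injectivity and the torsion property of the cokernel.
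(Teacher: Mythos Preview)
Your proof is correct and follows the same approach as the paper, which simply asserts that the corollary follows from the preceding Lemma without giving further detail. You have correctly supplied the argument for the Frobenius height condition on arbitrary qrsp $B$ (injectivity and $I_B^{r}$-torsion cokernel) via passage to $R_{\infty}$ and faithfully flat descent, and your use of the third item of the Remark for the reverse direction is exactly right; these are the points the paper leaves implicit.
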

We will also consider torsion objects.
\begin{defn}
We denote by $CR^{[\wedge, \phi, tor]}_{[0, r]}(\mathcal{X})$ the category (in the obvious big category of all prismatic crystals with Frobenius linear map) whose objects are direct sums of Frobenius-modules $(\mathcal{M}, \Phi)$, for which the following hold.
\begin{itemize}
\item $\mathcal{M}$ is $\Z_p/p^{n}$-flat for some $n\geq 1$.
\item For any $Spf(B)\in \mathcal{X}_{\mr{qsyn}}$, with $B$ qrsp, both $\mathcal{M}(B)$and $\mathcal{M}(B)/p$ are $I_{B}$-torsion free.  
\item The pair $(\mathcal{M}, \Phi)$ is admissible, i.e. $\phi^{*}\mathcal{N}_{M}^{r}\xrightarrow{\cong} \mathcal{I}^{r}_{\prism}\mathcal{M}$ is an isomorphism. Here $\mathcal{N}^{r}_{\mathcal{M}}$ denotes the Nygaard filtration (see below)
\item For any $Spf(B)\in \mathcal{X}_{\mr{qsyn}}$, with $B$ qrsp, the linearization of $\Phi_{\mathcal{M}(B)}$ is injective and its cokernel is $I_{B}^{r}$-torsion.
\end{itemize}
Finally, we denote by $CR^{[\wedge, \phi]}_{[0, r]}(\mathcal{X})$ the smallest category, containing both $CR^{[\wedge, \phi, tor]}_{[0, r]}(\mathcal{X})$ and $CR^{[\wedge, \phi, \mr{tor-free}]}_{[0, r]}(\mathcal{X})$ as full subcategories.
\end{defn}
\begin{rmk}
\begin{itemize}
\item The first two conditions above are equivalent to asking that $\mathcal{M}(R_{\infty})$ is flat over $A_{inf}/p^{n}$.
\item Since we are also considering torsion-objects, we deviate here a little bit from the notation of \cite{DLMS}. In loc. cit., the category $CR^{[\wedge, \phi, \mr{tor-free}]}_{[0, r]}(\mathcal{X}_{\prism})$ is denoted by $CR^{[\wedge, \phi]}_{[0, r]}(\mathcal{X}_{\prism})$.\\
Note that any object in $CR^{[\wedge, \phi, \mr{tor-free}]}_{[0, r]}(\mathcal{X})$ is automatically admissible, by \cref{prop-admissible}.
\end{itemize}
\end{rmk}
Objects in $CR^{[\wedge, \phi]}_{[0, r]}(\mathcal{X})$ are always projective away from $(I, p)$.
\begin{lem}\label{proj-away}
Let $(\mathcal{M}, \Phi_{\mathcal{M}})\in CR^{[\wedge, \phi, p-tor]}_{[0, r]}(\mathcal{X}_{\prism})$. Then $M=\mathcal{M}(\mathfrak{S}(R))$ is projective away from $E$.\\
If $(\mathcal{M}, \Phi_{\mathcal{M}})\in CR^{[\wedge, \phi, \mr{tor-free}]}_{[0, r]}(\mathcal{X}_{\prism})$, then $M=\mathcal{M}(\mathfrak{S}(R))$ is projective away from $(p, E)$, i.e. $M[p^{-1}]$ (resp. $M[E^{-1}]_{p}^{\wedge}$) is a projective $\mathfrak{S}(R)[p^{-1}]$-module (resp. a projective $\mathfrak{S}(R)[E^{-1}]_{p}^{\wedge}$-module).
\end{lem}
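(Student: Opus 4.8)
The plan is to reduce the statement to a standard fact about finitely generated modules over noetherian rings, namely that a finitely generated module is projective if and only if it is flat, together with the local criterion for flatness. First I would treat the $p$-torsion free case (the $p$-torsion case being analogous but easier, working with $M = \mathcal{M}(\mathfrak{S}(R))$ already annihilated by $p^{n}$). So let $(\mathcal{M}, \Phi_{\mathcal{M}}) \in CR^{[\wedge, \phi, \mr{tor-free}]}_{[0, r]}(\mathcal{X}_{\prism})$ and set $M = \mathcal{M}(\mathfrak{S}(R))$; by definition $M$ is finitely generated, torsion free and saturated, so by the remarks following \cref{completed-crystals-pris-site} it is flat over $\mathfrak{S} = W(k)[[u]]$, equivalently $(p, E)$ is a regular sequence for $M$.

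The key step is to invert $p$ and analyze $M[p^{-1}]$ over $\mathfrak{S}(R)[p^{-1}]$. Since $\mathfrak{S}(R)[p^{-1}]$ is noetherian, it suffices to check that the localization $M[p^{-1}]_{\mathfrak{p}}$ is free over $\mathfrak{S}(R)[p^{-1}]_{\mathfrak{p}}$ for every prime $\mathfrak{p}$. Here I would use the Frobenius structure: the linearization $\phi_{\mathfrak{S}(R)}^{*}M \to M$ is injective with $E^{r}$-torsion cokernel, so after inverting $p$ and working away from $E$ it becomes an isomorphism. The standard argument (as in \cite[Prop. 1.1.11]{Kisin-moduli}) is that a finitely generated torsion free module over a regular ring of dimension $\leq 2$ which carries such a Frobenius isomorphism is automatically projective, because the Frobenius pullback functor is exact and flat, and one can propagate freeness: at any prime not containing $E$ the module is free since $\phi^*M \cong M$ there makes $M$ a descent along $\phi$ of a module that is free in codimension small enough, and then reflexivity (saturation) upgrades "free in codimension one" to "free" over the two-dimensional regular local ring. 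More concretely: $M[p^{-1}]$ is torsion free, hence free in codimension zero; saturation gives that $M$ is reflexive, hence $M[p^{-1}]$ is reflexive over the regular ring $\mathfrak{S}(R)[p^{-1}]$ of Krull dimension $\leq 2$, and a reflexive module over a two-dimensional regular ring is projective. The same reasoning applied to $M[E^{-1}]^{\wedge}_{p}$ over $\mathfrak{S}(R)[E^{-1}]^{\wedge}_{p}$ — which is again regular of dimension $\leq 2$ — gives projectivity there; alternatively, away from $E$ the Frobenius is an isomorphism $\phi^*M \cong M$, so $M[E^{-1}]^{\wedge}_p$ is obtained from a module on a one-dimensional-in-the-$u$-direction base by Frobenius descent, and the regular sequence condition $(p,E)$ lets one conclude.

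The main obstacle I expect is being careful about which reflexivity or "free in codimension one implies projective" statement one invokes, and matching it to the precise geometry of $\Spec \mathfrak{S}(R)[p^{-1}]$ and $\Spec \mathfrak{S}(R)[E^{-1}]^{\wedge}_{p}$: these are regular but not semilocal in general (only of bounded Krull dimension $2$, since $R_0$ has relative dimension up to $n$ over $W(k)$ — wait, one must note $\mathfrak{S}(R) = R_0[[u]]$ has dimension $1 + \dim R_0$, so "dimension $\leq 2$" is false in positive relative dimension). This means the clean "reflexive over a $2$-dimensional regular local ring" shortcut does not literally apply, and instead one must argue prime-by-prime: at a prime $\mathfrak{p}$ of $\mathfrak{S}(R)[p^{-1}]$ not containing $E$, Frobenius gives $M_{\mathfrak p} \cong (\phi^* M)_{\mathfrak p}$; since $\phi$ is faithfully flat one propagates freeness from the special fiber, and the torsion-freeness plus the regular sequence $(p,E)$ handles the primes containing $E$. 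So the real content is the localization argument using the Frobenius isomorphism away from $(p,E)$; I would cite \cite[Prop. 1.1.11]{Kisin-moduli} or the analogous statement in \cite[Lemma 3.19]{DLMS} and \cite{DLMS} for the finite-height structure theory, and spell out only the reduction to those.
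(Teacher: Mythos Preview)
Your proposal has the right overall skeleton---cite \cite{DLMS} for projectivity of $M[p^{-1}]$ and exploit the Frobenius isomorphism for projectivity after inverting $E$---and you correctly catch that the ``reflexive over a two-dimensional regular ring'' shortcut fails because $\mathfrak{S}(R)=R_{0}[[u]]$ has dimension $>2$ in positive relative dimension. But the fallback you offer for the ``away from $E$'' direction is not an argument: phrases like ``propagate freeness from the special fiber'' and ``Frobenius descent'' do not explain why $\phi^{*}M\cong M$ over a regular ring of characteristic $p$ forces $M$ to be projective. Frobenius descent in the usual sense would need an identification of $M$ with a pullback from a \emph{smaller} ring, which is not what you have here; and ``free in codimension one plus reflexive'' only buys you projectivity when the ambient regular ring has dimension $\leq 2$, which you have already ruled out.

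The paper fills exactly this gap with Faltings' length argument from \cite[\S2]{Fal-MF-st}, spelled out later in the proof of \cref{frob-mod-proj}: work modulo $p$ so that $A=\mathfrak{S}(R)/p$ is regular of characteristic $p$, localise at a maximal ideal, set $N=Ext^{i}_{A}(M,A)$ for $i>0$, and localise further at a minimal prime $\mathfrak{p}$ of $\mathrm{supp}(N)$ so that $N$ has finite length. Since $\phi$ is faithfully flat on $A$ one has $\phi^{*}N\cong Ext^{i}_{A}(\phi^{*}M,A)$; after inverting $E$ the Frobenius on $M$ is an isomorphism, so $\phi^{*}N\cong N$. But $l(\phi^{*}N)=p^{\mathrm{ht}(\mathfrak{p})}\,l(N)$, which forces $N=0$. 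Note also that the paper runs the two cases in the opposite order from you: the $p$-torsion statement is the substantive one (that is where the Faltings argument lives), and the torsion-free ``away from $E$'' assertion is deduced from it by reducing modulo $p$ and lifting along the $p$-adic completion---not the other way round.
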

\begin{proof}
The second statement follows from \cite[Prop. 4.13]{DLMS} (which gives projectivity away from $p$) and from the first statement (which gives projectivity away from $E$).\\
The first statement follows from the arguments in \cite[\S2]{Fal-MF-st} (see also the proof of \cref{frob-mod-proj}), which in particular show that a module with invertible Frobenius over a regular local ring of characteristic $p$ is projective.
\end{proof}
\begin{cor}
Let $(\mathcal{M}, \Phi_{\mathcal{M}})\in CR^{[\wedge, \phi, tor]}_{[0, r]}(\mathcal{X})$ be $p$-torsion. Then for any qrsp $Spf(B)\in \mathcal{X}_{\mr{qsyn}}$, the $\prism_{B}[\frac{1}{I_{B}}]/p$-module $\mathcal{M}(B)[\frac{1}{I_{B}}]$ is projective.\\
If $(\mathcal{M}, \Phi_{\mathcal{M}})\in CR^{[\wedge, \phi, \mr{tor-free}]}_{[0, r]}(\mathcal{X}_{\prism})$, then $M=\mathcal{M}(B)$ is projective away from $(p, I_{B})$.
\end{cor}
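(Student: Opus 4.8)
The plan is to deduce both claims from \cref{proj-away} together with the crystal property, the point being that evaluation of $\mathcal{M}$ at a prism $\prism_{B}$ is, up to completion, a base change of its evaluation at the Breuil--Kisin prism $\mathfrak{S}(R)$. We may assume $\mathcal{X}=Spf(R)$ is small affine (restricting to small affine opens of $\mathcal{X}$ in general), so that the weakly initial object $(\mathfrak{S}(R),E)$ is available; regarding $\mathcal{M}$ as a prismatic object via \cref{equiv-qsyn}, \cref{proj-away} then tells us that $M:=\mathcal{M}(\mathfrak{S}(R))$ is projective away from $E$ in the $p$-torsion case and away from $(p,E)$ in the torsion-free case.

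First I would treat a qrsp $Spf(B)\in\mathcal{X}_{\mr{qsyn}}$ lying over the perfectoid cover $R_{\infty}$ of \cref{coverings}. Composing $\mathfrak{S}(R)\to\prism_{R_{\infty}}$ with $\prism_{R_{\infty}}\to\prism_{B}$ gives a map of prisms $\mathfrak{S}(R)\to\prism_{B}$ sending $E$ to a generator of $I_{B}$, along which $\prism_{B}$ is $(p,E)$-completely flat, and the crystal property gives $\mathcal{M}(B)=M\hat{\otimes}_{\mathfrak{S}(R)}\prism_{B}$. In the $p$-torsion case $M$ is a $\mathfrak{S}(R)/p$-module, projective after inverting $E$; inverting $I_{B}$ then identifies $\mathcal{M}(B)[\tfrac{1}{I_{B}}]$ with the base change of the projective module $M[\tfrac{1}{E}]$ along $\mathfrak{S}(R)[\tfrac{1}{E}]/p\to\prism_{B}[\tfrac{1}{I_{B}}]/p$, hence it is projective. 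In the torsion-free case, the preimage under $\Spec\prism_{B}\to\Spec\mathfrak{S}(R)$ of the locus where $p$ and $I_{B}$ both vanish is contained in the locus where $p$ and $E$ both vanish, so over $\{p\neq 0\}$, resp.\ over the $p$-adic neighbourhood of $\{I_{B}\neq 0\}$, the module $\mathcal{M}(B)$ is the base change of $M[\tfrac{1}{p}]$, resp.\ of $M[\tfrac{1}{E}]_{p}^{\wedge}$, which are projective by \cref{proj-away}; this is exactly projectivity of $\mathcal{M}(B)$ away from $(p,I_{B})$.

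For an arbitrary qrsp $B$ I would pass to $B_{\infty}:=B\hat{\otimes}_{R}R_{\infty}$, which lies over $R_{\infty}$. The map $\prism_{B}\to\prism_{B_{\infty}}$ is $(p,I)$-completely faithfully flat and compatible with $\mathcal{M}$, and becomes faithfully flat after inverting $I_{B}$ (and, in the torsion-free case, also after inverting $p$, resp.\ after $p$-completing the $I_{B}$-localization); since $\mathcal{M}(B_{\infty})[\tfrac{1}{I_{B}}]=\mathcal{M}(B)[\tfrac{1}{I_{B}}]\otimes_{\prism_{B}[\tfrac{1}{I_{B}}]}\prism_{B_{\infty}}[\tfrac{1}{I_{B}}]$ is projective and finitely presented by the previous step, faithfully flat descent of projectivity for finitely presented modules yields that $\mathcal{M}(B)[\tfrac{1}{I_{B}}]$ is projective, and likewise in the torsion-free case. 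I expect the only genuine subtlety to be the bookkeeping when commuting ``$\hat{\otimes}$'' past the localizations $[\tfrac{1}{p}]$ and $[\tfrac{1}{I_{B}}]$ and the intervening $p$-adic completions, which is routine because $M$ is finitely generated over the Noetherian ring $\mathfrak{S}(R)$ and $\prism_{B}$ is $(p,E)$-completely flat over it in the cases considered.
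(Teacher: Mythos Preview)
Your approach is essentially the same as the paper's: both invoke \cref{proj-away} to obtain projectivity of $\mathcal{M}(\mathfrak{S}(R))$ away from $(p,E)$ and then propagate this to arbitrary qrsp $B$ via the crystal property. The paper packages your base-change-and-descent argument (through the intermediate cover $R_{\infty}$ and then back down from $B_{\infty}$ to $B$) into a single citation of \cite[Theorem~1.6]{Mat}, whereas you spell it out by hand.
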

\begin{proof}
This follows from the previous lemma together with \cite[Theorem 1.6]{Mat}.
\end{proof}
For any completed prismatic F-crystal $(\mathcal{M}, \Phi_{\mathcal{M}})$, we have a Nygaard filtration, defined as 
\begin{center}
$\mathcal{N}_{\mathcal{M}}^{i}:=\Phi_{\mathcal{M}}^{-1}(\mathcal{I}_{\prism}^{i}\mathcal{M})$.
\end{center}
\begin{prop}\label{prop-fil-crys}
Let $(\mathcal{M}, \Phi_{\mathcal{M}})\in CR^{[\wedge, \phi, \mr{tor-free}]}_{[0, r]}(\mathcal{X})$ and let $B\to C$ be a morphism of qrsp rings in $\mathcal{X}_{\mr{qsyn}}$, satisfying one of the following conditions:
\begin{enumerate}
\item The map $B\to C$ is $p$-completely faithfully flat. 
\item $B=R_{\infty}$, where $R\to R_{\infty}$ is the perfectoid covering defined in \cref{coverings}. 
\end{enumerate}
Then we have
\begin{center}
$\mathcal{N}_{\mathcal{M}}^{k}(C)=\sum\limits_{r+s=k}\mathcal{N}_{\mathcal{M}}^{r}(B)\hat{\otimes} N_{\prism_{C}}^{s}$
\end{center}
for any $k\geq 1$, under the canonical identification $\mathcal{M}(B)\hat{\otimes}_{\prism_{B}}\prism_{C}\xrightarrow{\cong} \mathcal{M}(C)$.\\
By $\sum\limits_{r+s=k}\mathcal{N}_{\mathcal{M}}^{r}(B)\hat{\otimes} N_{\prism_{C}}^{s}$ we here denote the closure of the image of $\sum\limits_{r+s=k}\mathcal{N}_{\mathcal{M}}^{r}(B)\otimes N_{\prism_{C}}^{s}\to \mathcal{M}(B)\hat{\otimes}_{\prism_{B}}\prism_{C}$
\end{prop}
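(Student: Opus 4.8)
The plan is to reduce to the two cases by standard filtered descent, handle case (1) first and then deduce case (2) by applying case (1) to the faithfully flat map $R_\infty \to \tilde R_\infty$ (or to the map $R_\infty \to B$ for $B$ a qrsp cover through $R_\infty$) together with the known compatibility of the Nygaard filtration on $\prism_{R_\infty}$. Since $\mathcal{M}(B)$ is finitely generated, torsion free and saturated, and since $\Phi_{\mathcal{M}}$ is injective, the Nygaard filtration $\mathcal{N}^k_{\mathcal{M}}(B) = \Phi_{\mathcal{M}}^{-1}(\mathcal{I}^k_{\prism}\mathcal{M}(B))$ is itself a saturated submodule of $\mathcal{M}(B)$; in particular each graded piece is well-behaved and the filtration is $(p, I_B)$-complete. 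One inclusion is formal: if $m = \sum x_i \otimes c_i$ with $x_i \in \mathcal{N}^r_{\mathcal{M}}(B)$ and $c_i \in N^s_{\prism_C}$, $r+s=k$, then $\Phi_{\mathcal{M}}(m) = \sum \Phi_{\mathcal{M}}(x_i)\otimes \phi(c_i) \in \mathcal{I}^r_{\prism}\mathcal{M}(C)\cdot \mathcal{I}^s_{\prism} = \mathcal{I}^k_{\prism}\mathcal{M}(C)$, using that $\phi(N^s_{\prism_C}) \subset \mathcal{I}^s_{\prism_C}$ by definition of the Nygaard filtration and that $\Phi_{\mathcal{M}}$ is $\prism_B$-linearly compatible with base change. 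So $\sum_{r+s=k}\mathcal{N}^r_{\mathcal{M}}(B)\hat\otimes N^s_{\prism_C} \subseteq \mathcal{N}^k_{\mathcal{M}}(C)$.

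The reverse inclusion is the heart of the matter. The key point is admissibility, i.e. the isomorphism $\phi^*\mathcal{N}^r_{\mathcal{M}} \xrightarrow{\cong} \mathcal{I}^r_{\prism}\mathcal{M}$ (this holds here by \cref{prop-admissible}, as remarked after \cref{completed-crys}), which says that the Nygaard filtration is \emph{not} merely defined by the preimage condition but is "generated" in the appropriate sense by its lower pieces against the Nygaard filtration of the ring. Concretely, admissibility over $\prism_B$ gives a $\prism_B$-module identification $\prism_C \otimes_{\phi, \prism_B} \mathcal{N}^r_{\mathcal{M}}(B) \cong \mathcal{I}^r_{\prism_C}\mathcal{M}(C)$ after base change, and one uses the known formula for the Nygaard filtration of the ring itself, $N^k_{\prism_C} = \sum_{r+s=k} N^r_{\prism_B}\hat\otimes N^s_{\prism_C}$ for faithfully flat $B \to C$ (this is part of the structure of the Nygaard-filtered prism, cf.\ \cite{BS-pris}, \cite{ALB}), to propagate the statement from $B$ to $C$. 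In the faithfully flat case (1), after reducing mod $p^n$ for each $n$ and using that all the modules and their graded pieces are $p$-torsion free / flat (by the saturation lemmas above), the statement becomes a statement about faithfully flat descent of filtered finitely generated modules, which one checks gradedly: $\mathrm{gr}^k \mathcal{N}_{\mathcal{M}}(C) \cong \bigoplus_{r+s=k}\mathrm{gr}^r\mathcal{N}_{\mathcal{M}}(B)\otimes_{\mathrm{gr}^0}\mathrm{gr}^s N_{\prism_C}$, which follows once one knows it rationally (where $\Phi$ becomes an isomorphism after inverting $p$ and everything is transparent) and then uses saturation to descend from the rational statement to the integral one.

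For case (2), the perfectoid cover $R_\infty$, one cannot assume $B \to R_\infty$ flat, but one exploits that $R \to R_\infty$ is quasi-syntomic and that $\prism_{R_\infty} = A_{\mr{inf}}(R_\infty)$ is a perfect(oid) prism, so its Nygaard filtration is the trivial $I$-adic one, $N^s_{\prism_{R_\infty}} = \mathcal{I}^s_{\prism}$; combined with the formula $\mathcal{M}(R_\infty) = \mathcal{M}(\mathfrak{S}(R))\hat\otimes_{\mathfrak{S}(R)}\prism_{R_\infty}$ and the flatness of $\mathfrak{S}(R)\to\prism_{R_\infty}$, one reduces the claim over $R_\infty$ to the analogous Nygaard-filtration base-change statement over $\mathfrak{S}(R)$, which is essentially the content of \cite{DLMS} (the analysis of the Nygaard filtration of a Breuil–Kisin module). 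I expect the main obstacle to be bookkeeping with the $(p, I)$-completions and verifying that "closure of the image" really coincides with the naive sum — this requires knowing the relevant quotients are already $(p,I)$-complete, which in turn rests on the finite generation and saturation established in the preceding lemma; once that is in hand, the descent argument and the reduction to the rational case are routine.
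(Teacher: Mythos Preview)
Your proposal has a genuine gap: it is circular. You invoke admissibility (\cref{prop-admissible}) as the ``key point'' for the reverse inclusion, but in the paper \cref{prop-admissible} is proved \emph{using} \cref{prop-fil-crys} (specifically part (2), to show surjectivity of $\phi^*\mathcal{N}^a_{\mathcal{M}}\to\mathcal{I}^a\mathcal{M}$ over a general qrsp $B$ by pulling back from $R_\infty$). So admissibility is not available to you here. Even setting circularity aside, the sketch ``check it gradedly, which follows once one knows it rationally (where $\Phi$ becomes an isomorphism after inverting $p$)'' is not right: inverting $p$ does not make $\Phi$ an isomorphism --- it only makes the module projective, while the cokernel of $\Phi$ remains $\tilde\xi^r$-torsion --- and the graded/descent language does not by itself produce the needed inclusion.

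The paper's argument is more concrete and does not touch admissibility. It proves the equality by a saturation sandwich: both $\mathcal{N}^k_{\mathcal{M}}(C)$ and the candidate $\sum_{r+s=k}\mathcal{N}^r_{\mathcal{M}}(B)\hat\otimes N^s_{\prism_C}$ are saturated submodules of $\mathcal{M}(C)$, so it suffices to check equality after inverting $\tilde\xi$ (trivial, since then $\mathcal{N}^k=M$) and after inverting $p$. For the latter, $M[p^{-1}]$ is finite projective, so one may localize to a free module, write the Frobenius as a matrix $H$, and argue directly with $\tilde\xi$-divisibility: the crucial input is that the map $\prism_B/\tilde\xi\to\prism_C/\tilde\xi$ is injective (from $p$-complete faithful flatness), so $\tilde\xi^s\mid f(h_{ij})$ in $\prism_C$ forces $\tilde\xi^s\mid h_{ij}$ in $\prism_B$. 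This elementary divisibility step is what actually produces the reverse inclusion, and your sketch has no analogue of it. Your treatment of case (2) via $\mathfrak{S}(R)\to\prism_{R_\infty}$ is closer to the paper's, which likewise routes through $\mathfrak{S}(R)$ using that this map is injective mod $I$; but note that the paper runs the \emph{same} matrix argument there rather than appealing to a separate Breuil--Kisin statement from \cite{DLMS}.
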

\begin{proof}
First, assume that $B\to C$ is $p$-completely faithfully flat. Set $M:=\mathcal{M}(B)$. By definition $\sum\limits_{r+s=k}\mathcal{N}_{M}^{r}\hat{\otimes} N_{\prism_{C}}^{s}$ is the closure of $\sum\limits_{r+s=k}\mathcal{N}_{M}^{r}\otimes N_{\prism_{C}}^{s}$ in $\mathcal{M}(C)$. Also, $\mathcal{N}^{k}_{M_{\prism_{C}}}$ is closed in $\mathcal{M}(C)$, since it is the preimage of the closed submodule $\tilde{\xi}^{k}\mathcal{M}(C)$ under the Frobenius map, which is continuous. Therefore, we have 
\begin{center}
$\sum\limits_{r+s=k}\mathcal{N}_{M}^{r}\hat{\otimes} N_{\prism_{C}}^{s}\subset \mathcal{N}^{k}_{M_{\prism_{C}}}$
\end{center}
and want to prove that this is even an equality. To simplify notation, we write $Fil^{k}M_{\prism_{C}}:=\sum\limits_{r+s=k}\mathcal{N}_{M}^{r}\hat{\otimes} N_{\prism_{C}}^{s}$.
\begin{claim}
We have $Fil^{k}M_{\prism_{C}}[\tilde{\xi}^{-1}]_{p}^{\wedge}=\mathcal{N}^{k}_{M_{\prism_{C}}}[\tilde{\xi}^{-1}]_{p}^{\wedge}$ 
and $Fil^{k}M_{\prism_{C}}[p^{-1}]=\mathcal{N}^{k}_{M_{\prism_{C}}}[p^{-1}]$.
\end{claim}
The first part of the claim is easy, since $N_{M}^{k}[\tilde{\xi}^{-1}]=M$.\\
For the second part, note that the $\prism_{B}$-module $M[p^{-1}]$ is finite projective, so after localizing we may even assume that it is free on a basis $v_{1},\hdots , v_{n}$. Let $H$ be the matrix representing $\Phi_{M[p^{-1}]}$ (the linear extension of $\Phi_{M}$ to  $M[p^{-1}]$). Let $m=(m_{1}, \hdots, m_{n})\in M[p^{-1}]\hat{\otimes}\prism_{C}[p^{-1}]\cong \prism_{C}[p^{-1}]^{n}$ be an element such that \\
$\Phi(m)=f(H)(\phi_{\prism_{C}[p^{-1}]}(m_{1}), \hdots, \phi_{\prism_{C}[p^{-1}]}(m_{n}))^{t}=\sum\limits_{i}f(Hv_{i})\phi(m_{i})$ is divisible by $\tilde{\xi}^{k}$.\\
Since $H$ is invertible up to a power of $\tilde{\xi}$, the elements $f(Hv_{i})$ are linearly independent. Therefore $\tilde{\xi}^{k}$ must divide $f(Hv_{i})\phi(m_{i})$, for all $i$. Writing $Hv_{i}=\sum\limits_{j=1}^{n}h_{ij}v_{j}$, we see that $\tilde{\xi}^{k}$ must divide $f(h_{ij})\phi(m_{i})$, for all $i$ and $j$. We now claim that $f(h_{ij})$ is divisible by some power $\tilde{\xi}^{s}$, if and only if $\tilde{\xi}^{s}$ divides $h_{ij}$. Namely, $f$ mod $\tilde{\xi}$ is injective because $\prism_{B}/\tilde{\xi}\to \prism_{C}/\tilde{\xi}$ is $p$-completely faithfully flat (this means that the map is injective modulo $p^{n}$ for all $n\geq 1$, hence injective itself, since $\prism_{B}/p$ is classically $p$-complete because $\prism_{B}$ is bounded). Let then $r_{i}\in \N$ be the largest number for which $\tilde{\xi}^{r_{i}}$ divides $\phi(m_{i})$. If $r_{i} < k$, then $h_{ij}$ must be divisible by $\tilde{\xi}^{k-r_{i}}$, for all $j$. But this then implies that $m$ lies in $Fil^{k}M_{\prism_{C}}[p^{-1}]$. This proves the claim.\\
Now, $M_{\prism_{C}}$ is saturated, i.e. 
\begin{center}
$M_{\prism_{C}}=M_{\prism_{C}}\left[\frac{1}{\tilde{\xi}}\right]\cap M_{\prism_{C}}[p^{-1}]$.
\end{center}
Moreover, we then also have $\tilde{\xi}^{k}M_{\prism_{C}}=\tilde{\xi}^{k}M_{\prism_{C}}\left[\frac{1}{\tilde{\xi}}\right]\cap \tilde{\xi}^{k}M_{\prism_{C}}[p^{-1}]$, and hence see that $\mathcal{N}^{k}_{M_{\prism_{C}}}$ is saturated as well. Similarly, one easily checks that $Fil^{k}M_{\prism_{C}}$ is also saturated. \\
Hence, the above claim implies that
\begin{center}
$\mathcal{N}_{\mathcal{M}}^{k}(C)=\sum\limits_{r+s=k}\mathcal{N}_{\mathcal{M}}^{r}(B)\hat{\otimes} N_{\prism_{C}}^{s}$.
\end{center}
Now for (2), we first consider the morphism $g:\mathfrak{S}(R)\to \prism_{R_{\infty}}\to \prism_{C}$. Since $R$ is an integral domain and $R\to C$ is classically flat ($\mathfrak{S}(R)$ being noetherian), we again get that $g$ is injective modulo $I$. Thus, we may again prove as above that we have 
\begin{center}
$\mathcal{N}_{\mathcal{M}}^{k}(\prism_{A})=\sum\limits_{r+s=k}\mathcal{N}_{\mathcal{M}}^{r}(\mathfrak{S}(R))\hat{\otimes} N_{\prism_{A}}^{s}$,
\end{center}
for either $A=R_{\infty}$ or $A=C$ - where by abuse of notation, we denote by $\mathcal{M}(\mathfrak{S}(R))$ the value of the crystal on the prismatic site associated to $\mathcal{M}$. But then, it also follows easily that
\begin{center}
$\mathcal{N}_{\mathcal{M}}^{k}(C)=\sum\limits_{r+s=k}\mathcal{N}_{\mathcal{M}}^{r}(R_{\infty})\hat{\otimes} N_{\prism_{C}}^{s}$.
\end{center}
\end{proof}
For torsion objects, we record the following.
\begin{prop}\label{prop-fil-crys-tor}
Let $(\mathcal{M}, \Phi_{\mathcal{M}})\in CR^{[\wedge, \phi, tor]}_{[0, r]}(\mathcal{X})$ be locally free over $\mathcal{O}^{\mr{pris}}/p^{n}$, for some $n\geq 1$. Let $B\to C$ be a map of qrsp rings in $\mathcal{X}_{\mr{qsyn}}$, which is either $p$-completely faithfully flat or $B=R_{\infty}$. Then for any $k\geq 0$, we have
\begin{center}
$\mathcal{N}_{\mathcal{M}}^{k}(C)=\sum\limits_{r+s=k}\mathcal{N}_{\mathcal{M}}^{r}(B)\hat{\otimes} N_{\prism_{C}}^{s}$
\end{center}
under the canonical identification $\mathcal{M}(\prism_{B})\hat{\otimes}_{\prism_{B}}\prism_{C}\xrightarrow{\cong} \mathcal{M}(\prism_{C})$.
\end{prop}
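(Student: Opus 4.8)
The plan is to follow the proof of \cref{prop-fil-crys} closely, the essential change being that localization at $p$ — the tool that made the torsion-free argument work — is unavailable here since everything is $p^n$-torsion, and that its role is taken over by the local-freeness hypothesis. First I would reduce, exactly as in \cref{prop-fil-crys}, to the small affine case $\mathcal{X}=Spf(R)$ and split into the two situations (1) $B\to C$ is $p$-completely faithfully flat, and (2) $B=R_\infty$, the latter being bootstrapped from the case of $\mathfrak{S}(R)\to\prism_{R_\infty}\to\prism_C$ in the same way as there. Since $\mathcal{M}$ is locally free over $\mathcal{O}^{\mr{pris}}/p^n$, after localizing if necessary I may choose a frame $v_1,\dots,v_d$ of $\mathcal{M}(B)=(\prism_B/p^n)^d$, so that $\Phi_{\mathcal{M}(B)}$ is given by a matrix $H$ over $\prism_B/p^n$. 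The height condition — $\Phi^{\mr{lin}}$ injective with $\tilde\xi^r$-torsion cokernel — produces a matrix $H^{\vee}$ over $\prism_B/p^n$ with $HH^{\vee}=H^{\vee}H=\tilde\xi^r\cdot\mathrm{id}$: lifting each relation $\tilde\xi^r v_i\in\mathrm{im}\,\Phi^{\mr{lin}}$ gives $HH^{\vee}=\tilde\xi^r$, and injectivity of $H$ then forces $H^{\vee}H=\tilde\xi^r$. Thus $H$ is ``invertible up to $\tilde\xi^r$'' exactly as in the torsion-free case, and this persists after base change to $\prism_C/p^n$.

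The inclusion $\sum_{i+j=k}\mathcal{N}^{i}_{\mathcal{M}}(B)\,\hat{\otimes}\,N^{j}_{\prism_C}\subseteq\mathcal{N}^{k}_{\mathcal{M}}(C)$ is immediate as in \cref{prop-fil-crys}, since $\mathcal{N}^{k}_{\mathcal{M}(C)}=\Phi^{-1}(\tilde\xi^{k}\mathcal{M}(C))$ is closed (preimage of a closed submodule under the continuous $\Phi$). For the reverse inclusion I would run the divisibility computation of \cref{prop-fil-crys} with the frame in place of a basis of $\mathcal{M}(B)[p^{-1}]$: given $m=\sum m_i v_i\in\mathcal{N}^{k}_{\mathcal{M}(C)}$, expand $\Phi(m)$ in terms of $H$ and the $\phi(m_i)$, use that the columns of $H$ remain linearly independent after inverting $\tilde\xi$ (because $HH^{\vee}=\tilde\xi^r$) to conclude that $\tilde\xi^{k}$ divides $f(h_{ij})\phi(m_i)$ for all $i,j$, where $f\colon\prism_B/p^n\to\prism_C/p^n$ is the structure map and $h_{ij}$ are the entries of $H$; letting $\tilde\xi^{r_i}$ be the exact power of $\tilde\xi$ dividing $\phi(m_i)$, one gets $\tilde\xi^{k-r_i}\mid h_{ij}$, whence $m\in\sum_{i+j=k}\mathcal{N}^{i}_{\mathcal{M}}(B)\,\hat{\otimes}\,N^{j}_{\prism_C}$. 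Two points deserve care. First, $\tilde\xi$-divisibility in $\mathcal{M}(C)=(\prism_C/p^n)^d$ is unambiguous precisely because $\mathcal{M}(C)$ and $\mathcal{M}(C)/p$ are $I_C$-torsion free — this is part of the definition of the torsion category $CR^{[\wedge,\phi,\mr{tor}]}_{[0,r]}(\mathcal{X})$ — so that it plays the role of saturatedness. Second, the implication ``$\tilde\xi^{s}\mid f(h_{ij})\Rightarrow\tilde\xi^{s}\mid h_{ij}$'' follows from injectivity of $\prism_B/(\tilde\xi,p^n)\to\prism_C/(\tilde\xi,p^n)$: this map is injective because it is $p$-completely faithfully flat in case (1), and because it factors through $R/p^n=\mathfrak{S}(R)/(E,p^n)\hookrightarrow\prism_C/(\tilde\xi,p^n)$ — with $R$ a domain and $R\to C$ flat, $\mathfrak{S}(R)$ being noetherian — in case (2). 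The higher-$k$ and multi-coordinate bookkeeping proceeds by the same induction used in \cref{prop-cris-strat} and \cref{prop-fil-crys}.

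I expect the main obstacle to be conceptual rather than computational: because $p$ is nilpotent, the ``saturatedness gluing'' step of \cref{prop-fil-crys} — recovering $\mathcal{M}(C)$ from $\mathcal{M}(C)[\tilde\xi^{-1}]$ and $\mathcal{M}(C)[p^{-1}]$ — has no analogue, so that part of the argument must be reorganized. The local-freeness hypothesis supplies the replacement: it yields an honest Frobenius matrix $H$ with an honest quasi-inverse $H^{\vee}$, after which the whole computation of $\mathcal{N}^{k}$ reduces to tracking $\tilde\xi$-adic divisibility of the entries of $H$ and their images under $f$, controlled entirely by injectivity modulo $(\tilde\xi,p^n)$ — which is where the two hypotheses on $B\to C$ enter. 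One should also verify that all intervening modules ($\phi^{*}\mathcal{M}(B)$ and the various closures) remain of the required torsion type, but this is routine given that $\mathcal{M}$ is locally free over $\mathcal{O}^{\mr{pris}}/p^n$.
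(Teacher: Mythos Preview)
Your proposal is correct and takes essentially the same approach as the paper: the paper's one-line proof simply says that the matrix computation carried out for $\mathcal{M}[p^{-1}]$ in the claim of \cref{prop-fil-crys} goes through verbatim here because the local-freeness hypothesis supplies the frame directly, and you have spelled out exactly this adaptation (including the observation that the saturatedness-gluing step is no longer needed). The only quibble is the stray reference to \cref{prop-cris-strat}, which concerns crystalline stratifications and is not relevant here.
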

\begin{proof}
The proof of the claim in the previous proposition for the crystal $\mathcal{M}[p^{-1}]$ works also in this case.
\end{proof}
We also have the following result, which we will need later.
\begin{lem}\label{quot-flatness}
Let $(\mathcal{M}, \Phi_{\mathcal{M}})\in CR^{[\wedge, \phi, \mr{tor-free}]}_{[0, r]}(\mathcal{X})$ and let $Spf(B)\in \mathcal{X}_{\mr{qsyn}}$, with $B$ qrsp. Then $\mathcal{M}(\prism_{B})/\mathcal{N}_{\mathcal{M}}^{k}(\prism_{B})$ is $p$-torsion free, for all $k\geq 0$.\\
Moreover, let $(\mathcal{M}, \Phi_{\mathcal{M}})\in CR^{[\wedge, \phi, tor]}_{[0, r]}(\mathcal{X})$, such that $\mathcal{M}$ is $\Z_p/p^{n}$-flat, for some $n\geq 1$ and let $B$ now be a perfectoid ring, quasi-syntomic over $R$. Then $\mathcal{M}(\prism_{B})/\mathcal{N}_{\mathcal{M}}^{k}(\prism_{B})$ is $\Z_p/p^{n}$-flat as well.
\end{lem}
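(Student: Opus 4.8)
The plan is to reduce everything to an elementary dévissage along powers of the principal ideal $I_{B}$, using the very definition of the Nygaard filtration. Write $M:=\mathcal{M}(\prism_{B})$ and fix a generator $\tilde{\xi}$ of $I_{B}$. Since $\mathcal{N}_{\mathcal{M}}^{k}(\prism_{B})=\Phi_{\mathcal{M}}^{-1}(I_{B}^{k}M)$ by definition, and $\Phi_{\mathcal{M}}$ is $\Z_p$-linear even though it is only $\phi$-semilinear over $\prism_{B}$, the Frobenius induces an injection of $\Z_p$-modules
\[M/\mathcal{N}_{\mathcal{M}}^{k}(\prism_{B})\hookrightarrow M/I_{B}^{k}M.\]
So it suffices to understand the $p$-power torsion of $M/I_{B}^{k}M$, together with the way $M/\mathcal{N}_{\mathcal{M}}^{k}(\prism_{B})$ sits inside it.

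For the first statement, by the lemma proved above $(\tilde{\xi},p)$ is a regular sequence on $M$; in particular $\tilde{\xi}$ is a nonzerodivisor on $M$, so multiplication by $\tilde{\xi}^{j}$ gives isomorphisms $M/\tilde{\xi}M\xrightarrow{\cong}\tilde{\xi}^{j}M/\tilde{\xi}^{j+1}M$, while $M/\tilde{\xi}M$ is $p$-torsion free (again because $(\tilde{\xi},p)$ is regular). A finite dévissage along $M\supseteq I_{B}M\supseteq\cdots\supseteq I_{B}^{k}M$ then shows $M/I_{B}^{k}M$ is $p$-torsion free, and since a submodule of a $p$-torsion free $\Z_p$-module is $p$-torsion free, so is $M/\mathcal{N}_{\mathcal{M}}^{k}(\prism_{B})$.

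For the second statement $B$ is perfectoid and $M$ is $\Z_p/p^{n}$-flat, and I would run the same skeleton. The torsion-object axioms ($M$ and $M/p$ are $I_{B}$-torsion free, $M$ is $\Z_p/p^{n}$-flat) give, by the identical dévissage, that $M/I_{B}^{k}M$ is $\Z_p/p^{n}$-flat. The main obstacle is that a $\prism_B$-submodule of a $\Z_p/p^{n}$-flat module need not be $\Z_p/p^{n}$-flat, so the displayed injection no longer suffices by itself. I would bridge this using the elementary fact that for $T$ flat over $\Z/p^{n}$ and $S\subseteq T$ a submodule, $T/S$ is $\Z/p^{n}$-flat if and only if $S$ is (both are equivalent to $S\cap p^{j}T=p^{j}S$ for all $j$, using $p^{j}T=T[p^{n-j}]$). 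Applied with $T=M/I_{B}^{k}M$ and $S$ the image of $M/\mathcal{N}_{\mathcal{M}}^{k}(\prism_{B})$, it remains to show that $T/S=M/(\Phi_{\mathcal{M}}(M)+I_{B}^{k}M)$ is $\Z_p/p^{n}$-flat. This is where the perfectoid hypothesis enters: the Frobenius of $\prism_{B}=A_{\mr{inf}}(B)$ is an automorphism, so $\Phi_{\mathcal{M}}(M)$ is a genuine $\prism_{B}$-submodule of $M$, equal to the image of the linearization $\varphi\colon\phi^{*}M\to M$, which by hypothesis is injective with $I_{B}^{r}$-torsion cokernel. For $k\geq r$ this forces $I_{B}^{k}M\subseteq\Phi_{\mathcal{M}}(M)$, hence $T/S=\mr{coker}(\varphi)$, and $\mr{coker}(\varphi)$ is $\Z_p/p^{n}$-flat exactly when $\varphi$ remains injective after reducing mod $p$. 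This last point I would settle by a short chase: if $\varphi(x)=py$, pick $w$ with $\varphi(w)=\tilde{\xi}^{r}y$ (possible as $\tilde{\xi}^{r}M\subseteq\varphi(\phi^{*}M)$); then $\varphi(\tilde{\xi}^{r}x)=p\,\varphi(w)=\varphi(pw)$, so $\tilde{\xi}^{r}x\in p\phi^{*}M$ by injectivity of $\varphi$, and $\phi^{*}M/p=\phi^{*}(M/p)$ is $\tilde{\xi}$-torsion free because $\phi$ is an automorphism of the perfect ring $\prism_{B}/p$ (so $\tilde{\xi}$-torsion in $\phi^{*}(M/p)$ is $\phi^{-1}(\tilde{\xi})$-torsion in $M/p$, and $\phi^{-1}(\tilde{\xi})$ and $\tilde{\xi}$ cut out the same radical ideal mod $p$, so this vanishes by the $I_{B}$-torsion freeness of $M/p$); hence $x\in p\phi^{*}M$.

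For $0\leq k<r$ one reduces to the case $k=r$ by a further dévissage along $\mathcal{N}_{\mathcal{M}}^{k}(\prism_{B})\supseteq\mathcal{N}_{\mathcal{M}}^{k+1}(\prism_{B})\supseteq\cdots\supseteq\mathcal{N}_{\mathcal{M}}^{r}(\prism_{B})$, whose graded pieces $\mathcal{N}_{\mathcal{M}}^{j}/\mathcal{N}_{\mathcal{M}}^{j+1}$ embed via $\Phi_{\mathcal{M}}$ into $I_{B}^{j}M/I_{B}^{j+1}M\cong M/I_{B}M$; their $\Z_p/p^{n}$-flatness is checked in the same manner and amounts, in effect, to the $\Z_p/p^{n}$-flatness of the Hodge graded of $\mathcal{M}$ (one may also invoke \cref{prop-fil-crys-tor} to transport the conclusion between perfectoid covers). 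I expect the genuinely delicate input to be the mod-$p$ injectivity of $\varphi$ above, which is exactly the place where perfectoidness of $B$ is indispensable; everything else is formal dévissage.
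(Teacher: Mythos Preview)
For the first statement your argument and the paper's are essentially the same: the paper argues that if $px\in\mathcal N^k_M$ then saturation forces $\tilde\xi^k\mid\Phi_M(x)$, which is exactly your observation that $M/\mathcal N^k_M$ embeds (via $\Phi_{\mathcal M}$) in the $p$-torsion-free module $M/\tilde\xi^kM$.

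For the second statement the two proofs take genuinely different routes. The paper argues by contradiction: if $M/\mathcal N^k_M$ fails to be $\Z_p/p^n$-flat one produces a class $x\in M/p$ outside the image of $\mathcal N^k_M$ with $p^{n-1}\tilde x\in\mathcal N^k_M$; using that $M[\tilde\xi^{-1}]$ is projective over $\prism_B[\tilde\xi^{-1}]/p^n$ together with a saturation identity $p^{n-1}M=p^{n-1}M[\tilde\xi^{-1}]\cap M$, one shows $\bar\Phi(x)\in\tilde\xi^k(M/p)$ and then invokes that for $B$ perfectoid the Nygaard filtration of $M/p$ is the reduction of that of $M$. Your approach is more elementary: you embed $S=M/\mathcal N^k_M$ into the flat module $T=M/\tilde\xi^kM$, use the (correct, and nicely stated) lemma that for $T$ flat over $\Z/p^n$ a submodule $S$ is flat iff $T/S$ is, identify $T/S$ with $C/\tilde\xi^kC$ where $C=\mathrm{coker}(\varphi)$, and for $k\geq r$ reduce to the $\Z_p/p^n$-flatness of $C$ itself, which you prove by showing $\bar\varphi$ is injective. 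This avoids the passage through $M[\tilde\xi^{-1}]$ and is completely self-contained for $k\geq r$ (in particular for $k=a$, the only case actually used later in the paper).

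The gap is in your treatment of $k<r$. Knowing $M/\mathcal N^r_M$ is flat and that each $\mathcal N^j/\mathcal N^{j+1}$ embeds in $M/\tilde\xi M$ is not enough: to run ``the same manner'' you need the cokernel of that embedding---which one computes to be $\tilde\xi^jC/\tilde\xi^{j+1}C$---to be $\Z_p/p^n$-flat. This does not follow formally from the $\Z_p/p^n$-flatness of $C$ alone (a $\Z_p/p^n$-flat module with a nilpotent endomorphism need not have flat associated graded), so the final paragraph is only a sketch; some further input specific to the situation (e.g.\ admissibility, or flatness of the Hodge graded pieces proved independently) would be needed to close it.
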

\begin{proof}
Let $\tilde{\xi}\in I\subset \prism_{B}$ be a generator and let $M:=\mathcal{M}(\prism_{B})$. Now, if $M/\mathcal{N}_{M}^{r}$ weren't $p$-torsion free, there would exist an $x\in M\backslash \mathcal{N}_{M}^{r}$, such that $px\in \mathcal{N}_{M}^{r}$. Thus $p\Phi_{M}(x)=\tilde{\xi}^{k}y$, for some $y\in M$. But $M$ is saturated, so this implies that $\tilde{\xi}^{k}$ must divide $\Phi_{M}(x)$, a contradiction.\\
For the second part, let now $(\mathcal{M}, \Phi_{\mathcal{M}})\in CR^{[\wedge, \phi, tor]}_{[0, r]}(\mathcal{X})$ and $B$ be perfectoid, such that $\mathcal{M}$ is $\Z_p/p^{n}$-flat and assume that $M/\mathcal{N}_{M}^{k}$ is not $\Z_p/p^{n}$-flat. This means the map $M/(p, \mathcal{N}^{k}_{M})\xrightarrow{\cdot p^{n-1}}p^{n-1}M/\mathcal{N}^{k}_{M}$ is not injective. Thus there exists some non-zero $x\in M/p$ which is not in the image of $\mathcal{N}_{M}^{k}$, such that $p^{n-1}\tilde{x}\in \mathcal{N}_{M}^{k}$, for some lift $\tilde{x}\in M$. But then, as above, we get $p^{n-1}\Phi(\tilde{x})=\tilde{\xi}^{k}y$, for some $y\in M$. 

Since $M$ and $M[\tilde{\xi}^{-1}]$ are $\Z_p/p^{n}$-flat, taking $Tor^{\Z_p}(- ,\F_p)$ yields the following commutative diagram with exact rows
\begin{center}
$\xymatrix{
M\ar@{^{(}->}[r] & M[\tilde{\xi}^{-1}]\ar[r] & M[\tilde{\xi}^{-1}]/M\ar[r] & 0\\
(p^{n-1})M\ar@{^{(}->}[r]\ar@{^{(}->}[u] & (p^{n-1})M[\tilde{\xi}^{-1}]\ar[r]\ar@{^{(}->}[u] & M[\tilde{\xi}^{-1}]/M[p]\ar@{^{(}->}[u]\ar[r] & 0
}$
\end{center}
This shows that we have $(p^{n-1})M=(p^{n-1})M[\tilde{\xi}^{-1}]\cap M$. It then follows that $(p^{n-1})\frac{\Phi(\tilde{x})}{\tilde{\xi}^{k}}\in (p^{n-1})M$, so there is some $z\in M$, with $p^{n-1}z=(p^{n-1})\frac{\Phi(\tilde{x})}{\tilde{\xi}^{k}}$. But we thus have $z= \frac{\Phi(\tilde{x})}{\tilde{\xi}^{k}}$ mod $p$, which gives that $\frac{\Phi(x)}{\tilde{\xi}^{k}}\in M/p$. Now since $B$ is perfectoid, it is easy to see that the Nygaard filtration of $M/p$ is the mod $p$ reduction of the Nygaard filtration $\mathcal{N}_{M}^{k}$ of $M$. Hence, the fact that $\frac{\Phi(x)}{\tilde{\xi}^{k}}\in M/p$ contradicts the assumption $x\notin Image(\mathcal{N}_{M}^{k}\to M/p)$.
\end{proof}

\begin{lem}\label{qrsp-adm}
Let $R$ be a $p$-torsion free quasi-regular semiperfectoid $\Z_p$-algebra. Let $i\geq 0$. Then the morphism $\phi_{\prism_{R}}^{*}N_{\prism}^{i}\to I^{a}\prism_{R}$, induced by the linearization of $\phi_{\prism_{R}}$, is an isomorphism. 
\end{lem}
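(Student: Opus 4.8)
The plan is to split the claim into a surjectivity and an injectivity statement and dispatch each with a short argument. Fix a distinguished generator $d$ of $I\subset\prism_R$ and set $\phi:=\phi_{\prism_R}$. The map in question factors as $\phi^*N^i_\prism\to\phi^*\prism_R=\prism_R$, and since $N^i_\prism=\phi^{-1}(I^i\prism_R)$ its image is the ideal $\phi(N^i_\prism)\prism_R$, which is contained in $I^i\prism_R=d^i\prism_R$. So it is enough to prove: (a) $\phi(N^i_\prism)\prism_R=d^i\prism_R$; and (b) the map $\phi^*N^i_\prism\to\prism_R$ is injective, equivalently $\mathrm{Tor}_1^{\prism_R}\!\big(\prism_R/N^i_\prism,\prism_R\big)=0$, where the second factor is $\prism_R$ regarded as a $\prism_R$-algebra along $\phi$. (For perfectoid $R$ both are immediate, $\phi$ being then an automorphism of $\prism_R=A_{\mr{inf}}(R)$.)

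For (a) I would use that $R$, being quasi-regular semiperfectoid, is a quotient of a perfectoid ring $S$; functoriality of prismatic cohomology then gives a map of prisms $\prism_S=A_{\mr{inf}}(S)\to\prism_R$, under which the image $d$ of a distinguished generator of $I_{\prism_S}$ is again distinguished and lies in $I$, hence generates $I$. In particular $d\in A_{\mr{inf}}(S)\subseteq\prism_R$, and since $A_{\mr{inf}}(S)=W(S^\flat)$ is a perfect $\delta$-ring, $\phi$ restricts to an automorphism of it; thus $d_0:=\phi^{-1}(d)$ is defined in $A_{\mr{inf}}(S)$ and satisfies $\phi(d_0)=d$ in $\prism_R$. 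Therefore $d_0^{\,i}\in N^i_\prism$ with $\phi(d_0^{\,i})=d^i$, giving $d^i\prism_R\subseteq\phi(N^i_\prism)\prism_R$; together with the opposite inclusion this is (a).

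For (b) the cleanest route is to invoke that the Frobenius $\phi\colon\prism_R\to\prism_R$ is (faithfully) flat for $R$ quasi-regular semiperfectoid --- part of the foundational theory of the Nygaard filtration, see \cite{BS-pris} --- so that $\phi^*$ is exact on $0\to N^i_\prism\to\prism_R\to\prism_R/N^i_\prism\to0$ and the Tor-term vanishes. Alternatively, avoiding this input, one can prove directly that $N^i_\prism=d_0^{\,i}\prism_R$: a dévissage along the $d_0$-adic filtration on the source and the $d$-adic filtration on the target (both by nonzerodivisors, with graded pieces $\prism_R/d_0\prism_R$ and $\prism_R/d\prism_R=R$) reduces this to injectivity for $i=1$ of the map $\prism_R/d_0\prism_R\to R$ induced by $\phi$, which one extracts from the flatness of $A_{\mr{inf}}(S)\to\prism_R$ and the $p$-torsion-freeness of $\prism_R$. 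I expect this flatness input to be the only genuinely non-formal point: granting it, (a) and (b) assemble at once into the asserted isomorphism. (The statement could equally be deduced from the study of the Nygaard filtration of qrsp rings in \cite{BS-pris}.)
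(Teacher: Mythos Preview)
Your surjectivity argument (a) is correct and is essentially what the paper means by ``the surjectivity is obvious.'' The problem is entirely in (b).

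Your main route assumes that the Frobenius $\phi\colon\prism_R\to\prism_R$ is $(p,I)$-completely flat for qrsp $R$. This is \emph{false}, and it is not asserted anywhere in \cite{BS-pris}. Take $R=\mathcal{O}_C/p$, so $\prism_R=A_{\mr{cris}}$. Modulo $p$ one has $A_{\mr{cris}}/p\cong(\mathcal{O}_C^\flat/\xi^p)[\delta_i:i\ge 1]/(\delta_i^p)$ with $\delta_1$ the class of $\gamma_p(\xi)$; since $\phi(\delta_i)=\delta_i^{\,p}=0$, the Frobenius factors through the quotient by the $\delta_i$, and exactly as for $k[x]/(x^p)$ one computes $\mathrm{Tor}_1\neq 0$. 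So flatness of $\phi$ cannot be invoked.

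Your alternative route is also based on false claims. First, $\prism_R/d\prism_R=\bar\prism_R$ is \emph{not} $R$ for non-perfectoid qrsp $R$ (in the example above, $A_{\mr{cris}}/p\neq\mathcal{O}_C/p$). Second, $N^i_\prism\neq d_0^{\,i}\prism_R$ in general: with $R=\mathcal{O}_C/p$ and $d_0=\xi$, the element $\gamma_p(\xi)$ lies in $N^1$ (since $\phi(\gamma_p(\xi))=\phi(\xi)^p/p!\in pA_{\mr{cris}}=I$), but $\gamma_p(\xi)\notin\xi A_{\mr{cris}}$ because its image in $A_{\mr{cris}}/(p,\xi)\cong(\mathcal{O}_C/p)[\delta_i]/(\delta_i^p)$ is $\delta_1\neq 0$. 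So the d\'evissage you sketch does not get off the ground.

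What the paper actually does for injectivity is a genuine Tor computation after reducing mod $p$: one shows $\mathrm{Tor}_1^{\prism_R/p}(\prism_R^{\phi}/p,\prism_R/(N^i_\prism,p))=0$ by induction on $i$. The base case $i=1$ uses $\prism_R/N^1_\prism=R$ together with the faithfully flat map $R/p\to\prism_R/(p,I)$ of \cite[Lemma~7.10]{BS-pris} to reduce to the vanishing of $\mathrm{Tor}_1^{\prism_R/p}(\prism_R^{\phi}/p,\prism_R/(I,p))$, which follows from the triviality of $\phi^*I$ and $\phi(d)$-torsion-freeness. The inductive step uses the exact sequence $0\to gr^i_N/p\to\prism_R/(N^{i+1},p)\to\prism_R/(N^i,p)\to 0$ (whose exactness needs $p$-torsion-freeness of $gr^i_N\cong\mathrm{Fil}^i\bar\prism_R\{i\}$ from \cite[Thm.~12.2]{BS-pris}) and a change-of-rings spectral sequence. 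None of this is a formal consequence of a ``flatness of $\phi$'' statement; the structural input from \cite{BS-pris} is precisely the description of $gr_N^\bullet$ and the flatness of $R/p\to\bar\prism_R/p$, not flatness of Frobenius on $\prism_R$.
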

\begin{proof}
As the surjectivity is obvious, we only need to show that $\phi_{\prism_{R}}^{*}N_{\prism}^{i}\to \prism_{R}$ is injective. Let $K$ be the kernel. Then $K$ is (derived) $(p, I)$-complete, so by Nakayama's lemma, it is enough to prove that $K/p=0$. Moreover, since $\prism_{R}$ is $p$-torsion free, $K/p$ is the kernel of $\phi_{\prism_{R}}^{*}N_{\prism}^{i}/p\to \prism_{R}/p$. We now prove that $Tor_{1}^{\prism_{R}/p}(\prism_{R}^{\phi}/p, \prism_{R}/(N_{\prism}^{i}, p))=0$, where $\prism_{R}^{\phi}$ denotes $\prism_{R}$ equipped with the module structure induced by $\phi_{\prism_{R}}$. We proceed by induction on $i$. First assume $i=1$. Then $\prism_{R}/N_{\prism_{R}}^{1}=R$. Moreover, base extending along the faithfully flat map (see \cite[Lemma 7.10]{BS-pris}) $R/p\to \prism_{R}/(p, I)$ yields \begin{center}
$Tor^{\prism_{R}/p}_{1}(\prism_{R}^{\phi}/p, R/p)\otimes_{R/p}\prism_{R}/(I, p)=Tor^{\prism_{R}/p}_{1}(\prism_{R}^{\phi}/p, \prism_{R}/(I, p))$,
\end{center}
and so it is enough to prove that the latter is zero. Now $\phi_{\prism}^{*}(I)=I\otimes_{\phi}\prism$ is a trivial rank $1$ module by \cite[Lemma 3.6]{BS-pris}, and a generator is given by an element $f$ which maps to a generator $\phi(d)$ of $\phi(I)\prism_{R}$. But then, since $\prism_{R}/p$ is $\phi(d)$-torsion free, the map $\phi_{\prism}^{*}(I)/p\to \phi(I/p)\prism_{R}$ is an isomorphism, which yields $Tor^{\prism_{R}/p}_{1}(\prism_{R}^{\phi}/p, \prism_{R}/(I, p))=0$. This in turn gives $Tor^{\prism_{R}/p}_{1}(\prism_{R}^{\phi}/p, R/p)=0$ - in fact we even have $Tor^{\prism_{R}/p}_{j}(\prism_{R}^{\phi}/p, R/p)=0$, for all $j > 0$, since $\prism_{R}/(I, p)$ is of projective dimension $1$.\\
Now, for the case $i > 0$, we first remark that the sequence
\begin{center}
$0\to gr^{i}_{N}/p\to \prism_{R}/(N^{i+1}, p)\to \prism_{R}/(N^{i}, p)\to 0$
\end{center}
is exact. This is because $\prism_{R}/N^{i}$ is $p$-torsion free, which in turn follows by d\'evissage from the $p$-torsion freeness of both $\prism_{R}/N^{1}=R$ and $gr^{i}_{N}\cong Fil^{i}\bar{\prism}_{R}\{i\}$ (see \cite[Theorem 12.2]{BS-pris}).\\
Now, consider the Tor-spectral sequence associated to the projection map $\prism_{R}/p\to R/p$
\begin{center}
$Tor_{n}^{R/p}(Tor^{\prism_{R}/p}_{m}(\prism_{R}^{\phi}/p, R/p), gr_{N}^{i}/p)\Longrightarrow Tor^{\prism_{R}/p}_{n+m}(\prism_{R}^{\phi}/p, gr^{i}_{N}/p)$.
\end{center}
Since $Tor^{\prism_{R}/p}_{j}(\prism_{R}^{\phi}/p, R/p)=0$, for all $j > 0$, this collapses to 
\[Tor_{m}^{R/p}(\prism_{R}/(I, p), gr^{i}_{N}/p)\cong Tor^{\prism_{R}/p}_{m}(\prism^{\phi}/p, gr^{i}_{N}/p).\]
But since $R/p\to \prism_{R}/(I, p)$ is faithfully flat, this means that $Tor^{\prism_{R}/p}_{m}(\prism^{\phi}/p, gr^{i}_{N}/p)=0$, for all $m > 0$. By d\'evissage, one thus obtains $Tor^{\prism_{R}/p}_{1}(\prism^{\phi}/p, \prism_{R}/N^{i}_{\prism}/p)=0$, for all $i > 0$.
\end{proof}

\begin{prop}\label{prop-admissible}
Let $(\mathcal{M}, \Phi_{\mathcal{M}})\in CR^{[\wedge, \phi, \mr{tor-free}]}_{[0, a]}(\mathcal{X})$. Then the morphism $\phi^{*}\mathcal{N}_{\mathcal{M}}^{a}\to \mathcal{I}_{\prism}^{a}\mathcal{M}$ is an isomorphism.
\end{prop}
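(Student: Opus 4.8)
Since the assertion concerns an isomorphism of sheaves on $\mathcal{X}_{\mathrm{qsyn}}$ and source and target are both compatible with Zariski localization, the plan is to reduce to the case $\mathcal{X}=\mr{Spf}(R)$ small affine and then to verify, for each $p$-torsion free qrsp $B\in\mathcal{X}_{\mr{qsyn}}$, that the map
\[
g_{B}\colon\ \phi^{*}_{\prism_{B}}\bigl(\mathcal{N}^{a}_{\mathcal{M}}(\prism_{B})\bigr)\longrightarrow \tilde{\xi}_{B}^{\,a}\,\mathcal{M}(\prism_{B})
\]
induced by the linearized Frobenius is an isomorphism, where $\tilde{\xi}_{B}$ generates $I_{B}$; I will write $M=\mathcal{M}(\prism_{B})$ throughout. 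The morphism $g_{B}$ is the composite $\phi^{*}\mathcal{N}^{a}_{\mathcal{M}}(\prism_{B})\xrightarrow{\iota}\phi^{*}M\xrightarrow{\Phi_{\mr{lin}}}M$, whose image lands in $\tilde{\xi}_{B}^{a}M$ by definition of the Nygaard filtration.

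\emph{Injectivity} is the easy half. The torsion-freeness hypothesis makes $\Phi_{\mr{lin}}\colon\phi^{*}M\to M$ injective and $M$ a $\tilde{\xi}_{B}$-torsion free module (saturatedness), so it is enough to see that $\iota=\phi^{*}(\mathcal{N}^{a}_{\mathcal{M}}(\prism_{B})\hookrightarrow M)$ is injective, i.e. that $\mathrm{Tor}^{\prism_{B}}_{1}(\prism_{B}^{\phi},\,M/\mathcal{N}^{a}_{\mathcal{M}}(\prism_{B}))=0$. For the perfectoid test objects that will ultimately suffice this is immediate because $\phi$ is flat; in general it follows from the $p$-torsion freeness of $M/\mathcal{N}^{a}_{\mathcal{M}}(\prism_{B})$ (\cref{quot-flatness}) together with a $\mathrm{Tor}$-vanishing of exactly the same shape as in the proof of \cref{qrsp-adm} (reduce mod $p$, then use the faithful flatness of $R/p\to\prism_{B}/(I,p)$ and d\'evissage along the Nygaard graded pieces).

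\emph{Surjectivity} is the substance, and the clean case is $B$ perfectoid — in particular $B=R_{\infty}$ for the standard perfectoid cover of \cref{coverings}. Then $\phi$ is an automorphism of $\prism_{B}$. Because $\mathcal{M}$ has height $a$, the linearized Frobenius $\Phi_{\mr{lin}}\colon\phi^{*}M\to M$ is injective with cokernel killed by $\tilde{\xi}_{B}^{a}$, so there is $\Psi_{\mr{lin}}\colon M\to\phi^{*}M$ with $\Phi_{\mr{lin}}\Psi_{\mr{lin}}=\tilde{\xi}_{B}^{a}\cdot\mathrm{id}=\Psi_{\mr{lin}}\Phi_{\mr{lin}}$, whence $\Phi_{\mr{lin}}^{-1}(\tilde{\xi}_{B}^{a}M)=\Psi_{\mr{lin}}(M)$. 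Given $v\in M$, surjectivity of $\phi$ on $\prism_{B}$ lets us write $\Psi_{\mr{lin}}(v)=1\otimes w$ with $w\in M$; applying $\Phi_{\mr{lin}}$ gives $\Phi_{\mathcal{M}}(w)=\tilde{\xi}_{B}^{a}v\in\tilde{\xi}_{B}^{a}M$, so $w\in\mathcal{N}^{a}_{\mathcal{M}}(\prism_{B})$. Hence $\Psi_{\mr{lin}}(M)$ is contained in $\iota\bigl(\phi^{*}\mathcal{N}^{a}_{\mathcal{M}}(\prism_{B})\bigr)$, and applying $\Phi_{\mr{lin}}$ yields $\tilde{\xi}_{B}^{a}M=\Phi_{\mr{lin}}(\Psi_{\mr{lin}}(M))\subseteq\operatorname{im}(g_{B})\subseteq\tilde{\xi}_{B}^{a}M$, so $g_{B}$ is an isomorphism.

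For general qrsp $B$ I would bootstrap from the perfectoid case. Put $B'=B\,\widehat{\otimes}_{R}R_{\infty}$, a qrsp ring that is $p$-completely faithfully flat over $B$; by \cref{prop-fil-crys}(1), $\mathcal{N}^{a}_{\mathcal{M}}(\prism_{B'})=\sum_{r+s=a}\mathcal{N}^{r}_{\mathcal{M}}(\prism_{B})\,\widehat{\otimes}\,N^{s}_{\prism_{B'}}$. Expanding $\operatorname{im}(g_{B'})$ with this formula and using the semilinearity of $\Phi_{\mathcal{M}}$, the identity $\prism_{B'}\cdot\phi(N^{s}_{\prism_{B'}})=\tilde{\xi}_{B'}^{\,s}\prism_{B'}$ from \cref{qrsp-adm}, and the remark that $w\in N^{s}_{\prism_{B}}$, $x\in\mathcal{N}^{r}_{\mathcal{M}}(\prism_{B})$ with $r+s=a$ forces $wx\in\mathcal{N}^{a}_{\mathcal{M}}(\prism_{B})$, one gets $\operatorname{im}(g_{B'})=\operatorname{im}(g_{B})\,\widehat{\otimes}_{\prism_{B}}\prism_{B'}$; by faithfully flat descent along $\prism_{B}\to\prism_{B'}$, surjectivity of $g_{B}$ is then equivalent to surjectivity of $g_{B'}$. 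Combined with the perfectoid case — and with the observation that it suffices to test the statement on a basis of $\mathcal{X}_{\mr{qsyn}}$ lying over the iterated self-products $R_{\infty}^{\otimes\bullet}$ of the perfectoid cover, all of whose terms are again perfectoid — this should complete the proof. The hard part will be making this last reduction fully precise: a general qrsp ring does not admit a perfectoid faithfully flat cover, so one must argue carefully that the faithful-flat-insensitivity established above, together with the perfectoid case, genuinely pins down the sheaf-theoretic image of $g$ everywhere.
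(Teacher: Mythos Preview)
Your injectivity sketch and your perfectoid-case surjectivity are essentially the paper's arguments. The gap you flag at the end is genuine, and your proposed route to close it does not work as stated: the iterated self-products $R_{\infty}^{\otimes n}=R_{\infty}\,\widehat{\otimes}_{R}\cdots\widehat{\otimes}_{R}\,R_{\infty}$ are qrsp but \emph{not} perfectoid for $n\geq 2$ (the base $R$ is not perfectoid), so there is no basis of $\mathcal{X}_{\mr{qsyn}}$ consisting of perfectoid objects. Moreover, even granting your descent identity $\operatorname{im}(g_{B'})=\operatorname{im}(g_{B})\,\widehat{\otimes}\,\prism_{B'}$, you still need surjectivity over $B'=B\,\widehat{\otimes}_{R}R_{\infty}$ as input, and you cannot iterate the same step along $R_{\infty}\to B'$ because that map need not be $p$-completely \emph{faithfully} flat (the structure map $R\to B$ is only quasi-syntomic), so \cref{prop-fil-crys}(1) does not apply.

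The paper closes the gap without any descent by invoking part (2) of \cref{prop-fil-crys} rather than part (1): for \emph{any} qrsp $C$ receiving a map $R_{\infty}\to C$ in $\mathcal{X}_{\mr{qsyn}}$ (no faithful-flatness needed), one still has $\mathcal{N}^{a}_{\mathcal{M}}(\prism_{C})=\sum_{r+s=a}\mathcal{N}^{r}_{\mathcal{M}}(\prism_{R_{\infty}})\,\widehat{\otimes}\,N^{s}_{\prism_{C}}$. Pushing this through $g_{C}$ and combining your surjectivity of $g_{R_{\infty}}$ with $\phi^{*}N^{s}_{\prism_{C}}\cong I^{s}_{C}$ from \cref{qrsp-adm} gives $\operatorname{im}(g_{C})=I^{a}_{C}\,\mathcal{M}(\prism_{C})$ directly. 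Since every qrsp $B$ admits the cover $B\to B\,\widehat{\otimes}_{R}R_{\infty}$, the qrsp objects lying over $R_{\infty}$ form a basis of $\mathcal{X}_{\mr{qsyn}}$, so $\phi^{*}\mathcal{N}^{a}_{\mathcal{M}}\to\mathcal{I}^{a}_{\prism}\mathcal{M}$ is an epimorphism of sheaves; together with the section-wise injectivity you already have, it is an isomorphism.
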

\begin{proof}
We may assume that $\mathcal{X}=Spf(R)$ is small. We first prove that $\phi^{*}\mathcal{N}_{\mathcal{M}}^{a}\to \mathcal{I}_{\prism}^{a}\mathcal{M}$ is an epimorphism. Let $R\to R_{\infty}$ be a quasi-syntomic covering by a perfectoid ring. We will show that $\phi_{A_{\mr{inf}}(R_{\infty})}^{*}N_{M}^{r}\to \xi^{r}M$ is surjective. Since $coker(\phi^{*}M\to M)$ is $\xi^{r}$-torsion, we know that any element $m\in \xi^{r}M$ may be written as $m=\sum\limits_{i}\alpha_{i}\Phi_{M}(m_{i})$, for some $\alpha_{i}\in A_{\mr{inf}}(R_{\infty})$. But since $\phi_{A_{\mr{inf}}(R_{\infty})}$ is an isomorphism, we have 
\begin{center}
$m=\sum\phi_{A_{\mr{inf}}(R_{\infty})}^{-1}(\alpha_{i})\Phi_{M}(m_{i})=\Phi_{M}(\sum\phi_{A_{\mr{inf}}(R_{\infty})}^{-1}(\alpha_{i})m_{i})$.
\end{center}
Now let $Spf(B)\in \mathcal{X}_{\mr{qsyn}}$ be some object with $B$ a qrsp $\Z_p$-algebra. Assume that there is a map $R_{\infty}\to B$ in $\mathcal{X}_{\mr{qsyn}}$. We have $I_{R_{\infty}}M_{\prism_{B}}=(I_{A_{\mr{inf}}(R_{\infty})}M)\hat{\otimes}\prism_{B}$ and by \cref{prop-fil-crys} (2), $N^{a}_{M_{\prism_{B}}}=\sum\limits_{r+s=a}N^{r}_{M}\otimes N^{s}_{\prism_{B}}$. This gives surjectivity of $\phi^{*}N^{a}_{M_{\prism_{B}}}\to I_{\prism_{B}^{a}}M(\prism_{B})$. We thus see that the map $\phi^{*}\mathcal{N}_{\mathcal{M}}^{a}\to \mathcal{I}_{\prism}^{a}\mathcal{M}$ becomes an epimorphism after restriction to $R_{\infty}$.\\

Let now $Spf(B)\in \mathcal{X}_{\mr{qsyn}}$ be an arbitrary qrsp object. We will show that $\phi^{*}\mathcal{N}_{\mathcal{M}}^{a}(B)\to \mathcal{I}_{\prism}^{a}\mathcal{M}(B)$ is injective. Set $M:=\mathcal{M}(B)$. Let $K$ be the kernel of $\phi^{*}\mathcal{N}_{M}^{a}\to \mathcal{I}_{\prism}^{a}M$. It is a complete module, so it is enough to show that $K/p=0$. By \cref{quot-flatness}, $M/\mathcal{N}_{M}^{a}$ is $p$-torsion free, which means that $K/p$ identifies with the kernel of $\phi^{*}\mathcal{N}_{M}^{a}/p\to M/p$. 
Now $\Phi_{M}:\phi^{*}M\to M$ is injective, so that $K$ is the kernel of the map $\phi^{*}\mathcal{N}_{M}^{a}\to \phi^{*}M$. It is therefore enough to show $Tor_{1}^{\prism_{B}/p}(\prism_{B}^{\phi}/p, M/(N_{M}^{a},p ))=0$, where $\prism_{B}^{\phi}$ denotes $\prism_{B}$, equipped with the module structure given by $\phi$. The proof is then similar to the proof of \cref{qrsp-adm}. There is a spectral sequence (to ease notation, we write $\prism_{B}=\prism$).
\begin{center}
$Tor^{\prism/(N_{\prism}^{a}, p)}_{m}(Tor^{\prism/p}_{n}(\prism^{\phi}/p, \prism/(N^{a}_{\prism}, p)), M/(N_{M}^{a}, p))\Longrightarrow Tor^{\prism/p}_{n+m}(\prism^{\phi}/p, M/(N_{M}^{a}, p))$.
\end{center}
Now $\prism/p\otimes_{\phi}\prism/(N_{\prism}^{a}, p)=\prism/(I^{a}, p)$ and by the proof of \cref{qrsp-adm} we have $Tor^{\prism/p}_{n}(\prism^{\phi}/p, \prism/(N^{a}_{\prism}, p))=0$, for $n > 0$. Moreover, the map $\prism/(N_{\prism}^{a}, p)\to \prism/(I^{a}, p)$ is faithfully flat by the fiberwise flatness criterion. Therefore, the spectral sequence above yields $Tor^{\prism/p}_{m}(\prism^{\phi}/p, M/(N_{M}^{a}, p))=0$, for all $m > 0$.

\end{proof}
\subsection{Divided crystals}
In this section, we will see that the category $CR^{[\wedge, \phi ]}_{[0, r]}(\mathcal{X})$  is equivalent to a category of divided prismatic F-crystals (whose objects correspond to crystalline local systems of positive Hodge-Tate weights. We first define the following big category.
\begin{defn}\label{defn-divcrys}
A divided prismatic $F$-crystal of height $a\geq 0$ on $\mathcal{X}$ is a triple $(\mathcal{M}, Fil^{a}\mathcal{M}, \Phi_{\mathcal{M}}^{a})$, such that
\begin{itemize}
\item $\mathcal{M}$ is a finitely generated completed crystal in $\mathcal{O}^{\mr{pris}}$-modules.
\item  $Fil^{a}\mathcal{M}\subset \mathcal{M}$ is a submodule, such that $\mathcal{N}^{a}_{\prism}\cdot \mathcal{M}\subset Fil^{a}\mathcal{M}$.
\item  $\Phi_{\mathcal{M}}^{a}:Fil^{a}\mathcal{M}\to \mathcal{M}$ is a $\phi_{\mathcal{O}^{\mr{pris}}}$-linear map whose linearization $\phi^{*}Fil^{a}\mathcal{M}\xrightarrow{\cong} \mathcal{M}$ is an isomorphism and such that $Fil^{a}\mathcal{M}$ is maximal, i.e. there is no bigger subsheaf $Fil^{a}\mathcal{M}\subset \mathcal{F}\subset \mathcal{M}$, equipped with a map $\Psi:\mathcal{F}\to \mathcal{M}$, extending $\Phi_{\mathcal{M}}^{a}$.
\end{itemize}
We denote the category of divided prismatic $F$-crystals by $DivCR^{\wedge, \phi, big}_{[0, r ]}$.\\
As before, we then define the subcategories $DivCR^{\wedge, \phi, \mr{tor-free}}_{[0, r ]}(\mathcal{X})$ and $DivCR^{\wedge, \phi, tor}_{[0, r ]}(\mathcal{X})$ of $DivCR^{\wedge, \phi, big}_{[0, r ]}(\mathcal{X})$, where the first consists of divided prismatic $F$-crystals $(\mathcal{M}, Fil^{a}\mathcal{M}, \Phi^{a}_{\mathcal{M}})$, for which $\mathcal{M}(B)$ is $p$-torsion free and saturated, for any qrsp object $Spf(R)\subset \mathcal{X}_{\mr{qsyn}}$.\\
The category $DivCR^{\wedge, \phi, tor}_{[0, r ]}$ contains direct sums of objects $(\mathcal{M}, Fil^{a}\mathcal{M}, \Phi^{a}_{\mathcal{M}})$ for which $\mathcal{M}$ is $\Z_p/p^{n}$-flat and saturated, for some $n\geq 1$.\\
Finally, $DivCR^{\wedge, \phi}_{[0, r ]}(\mathcal{X})$ is the smallest subcategory of $DivCR^{\wedge, \phi, big}_{[0, r ]}(\mathcal{X})$, containing both $DivCR^{\wedge, \phi, tor}_{[0, r ]}(\mathcal{X})$ and $DivCR^{\wedge, \phi, \mr{tor-free}}_{[0, r ]}(\mathcal{X})$.
\end{defn}
\begin{ex}
\begin{itemize}
\item First we recall the Breuil-Kisin twist $\mathcal{O}_{\prism}\{1\}$ (\cite[Ex. 4.5]{BS-pris-cris}), which may informally be written as $\mathcal{O}_{\prism}\{1\}=\bigotimes\limits_{n\geq 0}(\phi^{n})^{*}\mathcal{I}$. If one defines $Fil^{1}\mathcal{O}_{\prism}\{1\}=\mathcal{N}^{1}_{\prism}\mathcal{O}_{\prism}\{1\}$, one sees that $\phi^{*}\mathcal{N}^{1}_{\prism}\mathcal{O}_{\prism}\{1\}=\mathcal{O}_{\prism}\{1\}$. Hence, we obtain the structure of a divided prismatic $F$-crystal.
\item More generally, for any $s\geq 1$, we have Frobenius maps $\mathcal{N}_{\prism}^{s}\{s\}\to \mathcal{O}_{\prism}\{s\}$.
\end{itemize}
\end{ex}

Let now $(\mathcal{M}, \Phi_{\mathcal{M}})\in CR^{[\wedge, \phi ]}_{[0, r]}(\mathcal{X})$ be a completed prismatic $F$-crystal of height $r$. Tensoring $\mathcal{N}^{a}_{\mathcal{M}}\subset \mathcal{M}$ with the invertible module $\mathcal{O}_{\prism}\{a\}$ we obtain $Fil^{a}(\mathcal{M}\{a\}):=\mathcal{N}^{a}_{\mathcal{M}}\{a\}\subset \mathcal{M}\{a\}$ and see that
\begin{align*}
\phi^{*}Fil^{a}(\mathcal{M}\{a\}) & \xrightarrow{\cong}  \mathcal{I}_{\prism}^{a}\mathcal{M}\otimes \bigotimes\limits_{n\geq 1}(\phi^{n})^{*}\mathcal{I}_{\prism}^{a} \\ & = \mathcal{M}\otimes \mathcal{I}_{\prism}^{a}\bigotimes\limits_{n\geq 1}(\phi^{n})^{*}\mathcal{I}_{\prism}^{a} \\ & = \mathcal{M}\{a\},
\end{align*}
where the first map is induced by $\Phi_{\mathcal{M}}$ (which is an isomorphism by \cref{prop-admissible}. We also used that $\mathcal{I}_{\prism}\mathcal{M}=\mathcal{I}_{\prism}\otimes \mathcal{M}$, which follows from the fact that $\mathcal{M}$ is $\mathcal{I}_{\prism}$-torsion free.\\
In this way, we obtain a divided prismatic $F$-crystal $(\mathcal{M}\{a\}, Fil^{a}(\mathcal{M}\{a\}), \Phi_{\mathcal{M}\{a\}})$.\\
To make the construction of $\Phi_{M\{a\}}$ more precise, recall from \cite[Ex. 4.5]{BS-pris-cris} that for any qrsp $Spf(B)\in \mathcal{X}_{\mr{qsyn}}$, the module $(\phi^{n})^{*}(I_{\prism_{B}})$ becomes trivial after base changing to $B/I_{n}$, where $I_{n}=\prod\limits_{i=1}^{n-1}\phi^{i}(I_{\prism_{B}})$. We have $B=\lim_{n}B/I_{n}$, so we can first define $\Phi_{M\{a\}}$ modulo $I_{n}$ in the same way as above and get the map $\Phi_{M\{a\}}$ by passing to the limit.\\[0.5cm]
In the other direction, let $(\mathcal{M}, Fil^{a}\mathcal{M}, \Phi_{\mathcal{M}}^{a})\in DivCR^{\wedge, \phi}_{[0, r ]}$ be a divided completed prismatic $F$-crystal. Then on $\mathcal{M}\{-a\}=\mathcal{M}\otimes \bigotimes\limits_{n\geq 0}(\phi^{n})^{*}\mathcal{I}_{\prism}^{-a}$, we obtain a Frobenius structure $\Phi_{\mathcal{M}\{-a\}}$ via
\begin{align*}
\phi^{*}\mathcal{M}\{-a\} & =  \phi^{*}\mathcal{M}\otimes \bigotimes\limits_{n\geq 1}(\phi^{n})^{*}\mathcal{I}_{\prism}^{-a} \\ & =  \mathcal{I}_{\prism}^{a}\phi^{*}\mathcal{M}\otimes \bigotimes\limits_{n\geq 0}(\phi^{n})^{*}\mathcal{I}_{\prism}^{-a}\rightarrow  (\phi^{*}Fil^{a}\mathcal{M})\{-a\} \xrightarrow[\Phi_{\mathcal{M}}]{\cong} & \mathcal{M}\{-a\},
\end{align*}
where the second last arrow is induced by $\mathcal{I}_{\prism}^{a}\phi^{*}\mathcal{M}\to \phi^{*}Fil^{a}\mathcal{M}$, which comes from base extending $\mathcal{N}_{\prism}^{a}\mathcal{M}\subset Fil^{a}\mathcal{M}$ along the Frobenius map. It is easy to see that $(\mathcal{M}\{-a\}, \Phi_{\mathcal{M}\{-a\}})$ is any object of $CR^{[\wedge, \phi ]}_{[0, a]}(\mathcal{X})$.
\begin{prop}\label{equ-div-crys}
The above constructions are quasi-inverse to each other and thus yield an equivalence of categories
\begin{center}
$CR^{\wedge, \phi}_{[0, a]}(\mathcal{X})\longleftrightarrow DivCR^{\wedge, \phi}_{[0, a ]}(\mathcal{X})$.
\end{center}
\end{prop}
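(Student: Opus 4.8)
\emph{Proof proposal.} Write $F$ for the construction $(\mathcal{M},\Phi_{\mathcal{M}})\mapsto(\mathcal{M}\{a\},\,Fil^{a}(\mathcal{M}\{a\})=\mathcal{N}_{\mathcal{M}}^{a}\{a\},\,\Phi_{\mathcal{M}\{a\}})$ and $G$ for $(\mathcal{M},Fil^{a}\mathcal{M},\Phi_{\mathcal{M}}^{a})\mapsto(\mathcal{M}\{-a\},\Phi_{\mathcal{M}\{-a\}})$ introduced above. The single structural input is that $\mathcal{O}_{\prism}\{a\}$ is an invertible $\mathcal{O}^{\mr{pris}}$-module with a canonical Frobenius-equivariant trivialization $\mathcal{O}_{\prism}\{a\}\otimes_{\mathcal{O}^{\mr{pris}}}\mathcal{O}_{\prism}\{-a\}\cong\mathcal{O}^{\mr{pris}}$ (\cite[Ex.~4.5]{BS-pris-cris}); consequently, on underlying completed crystals both $G\circ F$ and $F\circ G$ are canonically the identity, and the only thing to match is the remaining datum — a submodule, resp.\ a Frobenius. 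All such identities may be checked after restriction to the qrsp basis of $\mathcal{X}_{\mr{qsyn}}$, where moreover $(\phi^{n})^{*}\mathcal{I}_{\prism}$ is trivialized modulo the ideals $I_{n}$ of \cite[Ex.~4.5]{BS-pris-cris}. Finally, $CR^{\wedge,\phi}_{[0,a]}(\mathcal{X})$ and $DivCR^{\wedge,\phi}_{[0,a]}(\mathcal{X})$ are generated by their torsion-free and torsion full subcategories, and $F,G$ and the twisting operations preserve these, so it is enough to treat an object of $CR^{[\wedge,\phi,\mr{tor-free}]}_{[0,a]}(\mathcal{X})$ and of $DivCR^{\wedge,\phi,\mr{tor-free}}_{[0,a]}(\mathcal{X})$; the torsion cases are identical, using \cref{prop-fil-crys-tor} in place of \cref{prop-fil-crys} and the admissibility that is built into the definition there.

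\emph{The composite $G\circ F$.} Given $(\mathcal{M},\Phi_{\mathcal{M}})$, the object $G(F(\mathcal{M},\Phi_{\mathcal{M}}))$ has underlying crystal $\mathcal{M}\{a\}\{-a\}=\mathcal{M}$ and Frobenius the composite
\[
\phi^{*}(\mathcal{M}\{a\})\{-a\}=\mathcal{I}_{\prism}^{a}\phi^{*}(\mathcal{M}\{a\})\otimes\{-a\}\longrightarrow\bigl(\phi^{*}\mathcal{N}_{\mathcal{M}}^{a}\{a\}\bigr)\otimes\{-a\}\xrightarrow{\ \Phi_{\mathcal{M}\{a\}}\ }\mathcal{M}\{a\}\{-a\}=\mathcal{M},
\]
where the first arrow comes from $\mathcal{N}_{\prism}^{a}\mathcal{M}\subset Fil^{a}(\mathcal{M}\{a\})$ (equivalently $\mathcal{I}_{\prism}^{a}\phi^{*}\mathcal{M}\subset\phi^{*}\mathcal{N}_{\mathcal{M}}^{a}$) and the second is the isomorphism of \cref{prop-admissible}, twisted by $\{a\}$. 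By construction of $F$, the restriction of $\Phi_{\mathcal{M}\{a\}}$ to $\phi^{*}\mathcal{N}_{\mathcal{M}}^{a}\{a\}$ is nothing but the linearization of $\Phi_{\mathcal{M}}$ restricted to $\phi^{*}\mathcal{N}_{\mathcal{M}}^{a}$ (landing in $\mathcal{I}_{\prism}^{a}\mathcal{M}$); since this linearization is $\mathcal{O}^{\mr{pris}}$-linear and compatible with multiplication by $\mathcal{I}_{\prism}^{a}$, chasing through the cancelling twists $\{a\}\otimes\{-a\}\cong\mathcal{O}^{\mr{pris}}$ identifies the whole composite with the linearization of $\Phi_{\mathcal{M}}$. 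Naturality in $(\mathcal{M},\Phi_{\mathcal{M}})$ is immediate, so $G\circ F\cong\mathrm{id}$.

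\emph{The composite $F\circ G$.} Given a divided $F$-crystal $(\mathcal{M},Fil^{a}\mathcal{M},\Phi_{\mathcal{M}}^{a})$, the object $F(G(\mathcal{M},Fil^{a}\mathcal{M},\Phi_{\mathcal{M}}^{a}))$ has underlying crystal $\mathcal{M}\{-a\}\{a\}=\mathcal{M}$, filtration $\mathcal{N}_{\mathcal{M}\{-a\}}^{a}\{a\}$ with $\mathcal{N}_{\mathcal{M}\{-a\}}^{a}=\Phi_{\mathcal{M}\{-a\}}^{-1}(\mathcal{I}_{\prism}^{a}\mathcal{M}\{-a\})$, and Frobenius $\Phi''$ built from the admissibility isomorphism of $\mathcal{M}\{-a\}$. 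Unwinding the definition of $\Phi_{\mathcal{M}\{-a\}}$ — which is $\mathcal{I}_{\prism}^{a}\phi^{*}\mathcal{M}\to\phi^{*}Fil^{a}\mathcal{M}\xrightarrow{\Phi_{\mathcal{M}}^{a}}\mathcal{M}$ twisted by $\{-a\}$ — one reads off that $\Phi_{\mathcal{M}\{-a\}}$ carries $Fil^{a}\mathcal{M}\{-a\}$ into $\mathcal{I}_{\prism}^{a}\mathcal{M}\{-a\}$ and agrees there with $\Phi_{\mathcal{M}}^{a}$ up to the canonical twist; equivalently, $Fil^{a}\mathcal{M}\subset\mathcal{N}_{\mathcal{M}\{-a\}}^{a}\{a\}$ and $\Phi''$ restricts on $Fil^{a}\mathcal{M}$ to $\Phi_{\mathcal{M}}^{a}$. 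Now the maximality clause of \cref{defn-divcrys} applies: $\mathcal{N}_{\mathcal{M}\{-a\}}^{a}\{a\}$ is a subsheaf of $\mathcal{M}$ containing $Fil^{a}\mathcal{M}$ and carrying the map $\Phi''$ extending $\Phi_{\mathcal{M}}^{a}$, so maximality of $Fil^{a}\mathcal{M}$ forces $\mathcal{N}_{\mathcal{M}\{-a\}}^{a}\{a\}=Fil^{a}\mathcal{M}$, and hence $\Phi''=\Phi_{\mathcal{M}}^{a}$. Thus $F\circ G\cong\mathrm{id}$.

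The main obstacle I anticipate is precisely the twist bookkeeping in the $F\circ G$ step: cleanly establishing $Fil^{a}\mathcal{M}\subset\mathcal{N}_{\mathcal{M}\{-a\}}^{a}\{a\}$ and $\Phi''|_{Fil^{a}\mathcal{M}}=\Phi_{\mathcal{M}}^{a}$ — which is what allows the maximality clause to close the argument — requires tracking the identifications $(\phi^{n})^{*}\mathcal{I}_{\prism}$ on a qrsp cover and the compatibility of \cref{prop-fil-crys} (and \cref{quot-flatness}, which guarantees $\mathcal{N}^{a}_{\mathcal{M}\{-a\}}$ is well-behaved) with twisting; one should also note at the outset that the maximality needed for $F$ to land in $DivCR^{\wedge,\phi}_{[0,a]}(\mathcal{X})$ holds, which follows from saturatedness of $\mathcal{M}(\prism_{B})$ by the argument of \cref{cris-maximality}/\cref{Acris-maximality} applied to $\mathcal{N}_{\mathcal{M}}^{a}$. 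Everything else is a formal diagram chase with invertible modules.
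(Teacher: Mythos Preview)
Your proof is correct and follows essentially the same approach as the paper: both identify $G\circ F$ by unwinding the twist cancellations (the paper in fact leaves this direction to the reader, so you are more explicit here), and both reduce $F\circ G\cong\mathrm{id}$ to the observation $Fil^{a}\mathcal{M}\{-a\}\subset\mathcal{N}^{a}_{\mathcal{M}\{-a\}}$ together with the maximality clause in \cref{defn-divcrys}. Your extra remarks on the twist bookkeeping and on why $F$ lands in the divided category are sound but not strictly necessary for the argument.
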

\begin{proof}
Let us denote the functors by 
$\xymatrix{
F: CR^{\wedge, \phi}_{[0, a]}(\mathcal{X})\ar@<.5ex>[r]  & DivCR^{\wedge, \phi}_{[0, a ]}(\mathcal{X}) :G\ar@<.5ex>[l]}$.\\
First we check that $F\circ G=id$. Let $(\mathcal{M}, Fil^{a}\mathcal{M}, \Phi^{a}_{\mathcal{M}})\in DivCR^{\wedge, \phi}_{[0, a]}(\mathcal{X})$. Denote by $\Phi_{\mathcal{M}\{-a\}}:\mathcal{M}\{-a\}\to \mathcal{M}\{-a\}$, the Frobenius crystal constructed above. It is then easy to see that $Fil^{a}\mathcal{M}\{-a\}\subset \mathcal{M}\{-a\}$ is contained in $\mathcal{N}_{\mathcal{M}\{-a\}}^{a}=\Phi_{\mathcal{M}\{-a\}}^{-1}(\mathcal{I}^{a})$. Using the maximality of $Fil^{a}\mathcal{M}\subset \mathcal{M}$, one then deduces $Fil^{a}\mathcal{M}\{-a\}=\mathcal{N}_{\mathcal{M}\{-a\}}^{a}$ and $F(G((\mathcal{M}, Fil^{a}\mathcal{M}, \Phi^{a}_{\mathcal{M}})))=(\mathcal{M}, Fil^{a}\mathcal{M}, \Phi^{a}_{\mathcal{M}})$.\\
The proof that that $G\circ F=id$ is left to the reader.
\end{proof}
Using \cref{proj-away}, we obtain the following.
\begin{lem}
Let $(\mathcal{M}, Fil^{a}\mathcal{M}, \Phi_{\mathcal{M}}^{a})\in DivCR^{\wedge, \phi}_{[0, r ]}$. Then $\mathcal{M}(B)$ is projective away from $(I, p)$, for any qrsp $Spf(B)\in \mathcal{X}_{\mr{qsyn}}$.
\end{lem}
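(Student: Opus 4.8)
The plan is to deduce this from the projectivity statement for completed prismatic $F$-crystals, i.e. from the corollary following \cref{proj-away}, by transporting along the equivalence of \cref{equ-div-crys}. Write $(\mathcal{M}, Fil^{a}\mathcal{M}, \Phi_{\mathcal{M}}^{a})$ for the given divided prismatic $F$-crystal and let $G$ be the functor of \cref{equ-div-crys}, so that $G(\mathcal{M}, Fil^{a}\mathcal{M}, \Phi^{a}_{\mathcal{M}})=(\mathcal{M}\{-a\},\Phi_{\mathcal{M}\{-a\}})$ with $\mathcal{M}\{-a\}=\mathcal{M}\otimes\bigotimes_{n\geq 0}(\phi^{n})^{*}\mathcal{I}_{\prism}^{-a}$ and $\Phi_{\mathcal{M}\{-a\}}$ the Frobenius structure constructed just before \cref{equ-div-crys}.

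By construction $(\mathcal{M}\{-a\},\Phi_{\mathcal{M}\{-a\}})\in CR^{\wedge,\phi}_{[0,a]}(\mathcal{X})$. Since the twisting functor $-\otimes\mathcal{O}_{\prism}\{-a\}$ is exact and, being tensoring by an invertible $\mathcal{O}_{\prism}$-module, preserves $p$-torsion freeness, $\Z_p/p^{n}$-flatness, $I_{B}$-torsion freeness and saturatedness, the crystal $\mathcal{M}\{-a\}$ is torsion free (resp. $\Z_p/p^{n}$-flat) exactly when $\mathcal{M}$ is; in other words $G$ carries $DivCR^{\wedge,\phi,\mr{tor-free}}_{[0,a]}(\mathcal{X})$ into $CR^{[\wedge,\phi,\mr{tor-free}]}_{[0,a]}(\mathcal{X})$ and $DivCR^{\wedge,\phi,tor}_{[0,a]}(\mathcal{X})$ into $CR^{[\wedge,\phi,tor]}_{[0,a]}(\mathcal{X})$. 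Hence the corollary following \cref{proj-away} applies to $\mathcal{M}\{-a\}$: for every qrsp $Spf(B)\in\mathcal{X}_{\mr{qsyn}}$ the $\prism_{B}$-module $\mathcal{M}\{-a\}(B)$ is projective away from $(I_{B},p)$.

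Finally I would recover $\mathcal{M}$ from $\mathcal{M}\{-a\}$. As $\mathcal{I}_{\prism}$ is an invertible $\mathcal{O}_{\prism}$-module, each $(\phi^{n})^{*}\mathcal{I}_{\prism}^{a}$ is invertible, hence so is $\mathcal{O}_{\prism}\{a\}=\bigotimes_{n\geq 0}(\phi^{n})^{*}\mathcal{I}_{\prism}^{a}$ (\cite[Ex. 4.5]{BS-pris-cris}); evaluating at a qrsp $B$ gives an isomorphism $\mathcal{M}(B)\cong\mathcal{M}\{-a\}(B)\otimes_{\prism_{B}}\mathcal{O}_{\prism}\{a\}(B)$ with $\mathcal{O}_{\prism}\{a\}(B)$ an invertible $\prism_{B}$-module. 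Tensoring with an invertible module commutes with the localizations $(-)[\tfrac{1}{p}]$ and $(-)[\tfrac{1}{I_{B}}]_{p}^{\wedge}$ and preserves finite projectivity, so $\mathcal{M}(B)$ is projective away from $(I_{B},p)$.

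I do not expect a genuine obstacle here: the only point needing a (routine) check is precisely that $G$ respects the torsion-free and torsion subcategories, which is immediate from the invertibility of the Breuil--Kisin twist. One could alternatively avoid \cref{equ-div-crys} altogether and argue directly, using the isomorphism $\phi^{*}Fil^{a}\mathcal{M}\xrightarrow{\cong}\mathcal{M}$ together with $\mathcal{N}^{a}_{\prism}\mathcal{M}\subset Fil^{a}\mathcal{M}$ and the $p$-adic and $E$-adic summand arguments of \cite[\S2]{Fal-MF-st}, but passing through the equivalence is cleaner.
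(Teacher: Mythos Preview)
Your proposal is correct and is precisely the argument the paper has in mind: the paper's proof consists only of the one line ``Using \cref{proj-away}, we obtain the following'', which in context means exactly what you wrote --- transport along the equivalence of \cref{equ-div-crys} to land in $CR^{\wedge,\phi}_{[0,a]}(\mathcal{X})$, invoke (the corollary of) \cref{proj-away} there, and come back using that the Breuil--Kisin twist is an invertible module. Your added remark that $G$ preserves the torsion-free and torsion subcategories is the only point not made explicit in the paper, and it is indeed immediate from invertibility of $\mathcal{O}_{\prism}\{-a\}$.
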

Let $(\mathcal{M}, Fil^{a}\mathcal{M}, \Phi_{\mathcal{M}}^{a})\in DivCR^{\wedge, \phi}_{[0, r ]}$. Using \cref{equ-div-crys} we see that there is a full filtration $Fil^{\bullet}\mathcal{M}$ on $\mathcal{M}$, coming from the full Nygaard filtration of $\mathcal{M}\{-a\}$. For any qrsp $B\in \mathcal{X}_{\mr{qsyn}}$, the filtration is seen to satisfy
\begin{center}
\begin{equation*}
Fil^{i}\mathcal{M}(B):= \begin{cases}
\{m\in \mathcal{M}(B) : \mathcal{N}_{\prism}^{a-i}(B)m\subset Fil^{a}\mathcal{M}(B)\} &\text{if $i\leq a$}\\
\mathcal{N}_{\prism}^{i-a}(B)Fil^{a}\mathcal{M}(B) &\text{if $i > a$}.
\end{cases}
\end{equation*}
\end{center}
Indeed, for any completed Frobenius crystal $(\mathcal{F}, \Phi_{F})$, and any $0\leq i\leq a$, one sees that $\mathcal{N}_{\mathcal{F}}^{i}(B)=\{m\in \mathcal{F}(B): \mathcal{N}^{a-i}_{\mathcal{F}}(B)m\subset \mathcal{N}^{a}_{\mathcal{F}}(B)\}$ (using $I$-torsion freeness of $\mathcal{F}(B)$).\\
This turns $\mathcal{M}$ into a sheaf of filtered modules over the Nygaard-filtered sheaf of rings $\mathcal{O}^{\mr{pris}}$. There are also Frobenius morphisms $\Phi^{i}_{\mathcal{M}}:Fil_{\mathcal{M}, Fil^{a}\mathcal{M}}^{i}\{i-a\}\to \mathcal{M}\{i-a\}$, which for $i\leq a$ are given by
\begin{align*}
\phi^{*}Fil_{\mathcal{M}, Fil^{a}\mathcal{M}}^{i}\{i-a\} & = \mathcal{I}^{a-i}_{\prism} (\phi^{*}Fil_{\mathcal{M}, Fil^{a}\mathcal{M}}^{i})\{i-a\} \to (\phi^{*}Fil^{a}\mathcal{M})\{i-a\}\xrightarrow[\Phi_{\mathcal{M}}]{\cong} \mathcal{M}\{i-1\}.
\end{align*}
Here the second last arrow is obtained from the Frobenius pushout of $\mathcal{N}^{a-i}_{\prism}Fil_{\mathcal{M}, Fil^{a}\mathcal{M}}^{i}\to Fil^{a}\mathcal{M}$.\\
We will need that locally divided prismatic $F$-crystals may be described by prismatic stratifications.
\begin{defn}
Let $A$ be a qrsp $p$-torsion free $\Z_p$-algebra. We define $DivCR^{\wedge, \phi, \mr{tor-free}}_{[0, a]}(\prism_{A})$ as the category of triples $(M, Fil^{a}M, \Phi_{M})$, where
\begin{itemize}
\item $M$ is a finitely generated saturated $\prism_{A}$-module, which is classically $(I, p)$-complete and projective away from $(I, p)$.
\item $Fil^{a}M\subset M$ is a submodule, such that $\mathcal{N}_{\prism_{A}}^{a}M\subset Fil^{a}M$.
\item $\Phi_{M}:Fil^{a}M\to M$ is a $\phi_{A}$-linear map, whose linearization $\phi^{*}Fil^{a}M\xrightarrow{\cong}M$ is an isomorphism.
\end{itemize}
We also define the categories $DivCR^{\wedge, \phi, tor}_{[0, a]}(\prism_{A})$ and $DivCR^{\wedge, \phi}_{[0, a]}(\prism_{A})$ analogously to \cref{completed-crys}, except that we drop the maximality condition. Instead, we denote by $DivCR^{\wedge, \phi}_{[0, a]}(\prism_{A})^{max}$ the full subcategory of objects for which $Fil^{a}M$ is maximal as before.
\end{defn}
\begin{rmk}\label{perf-max}
When $A$ is perfectoid, it is easy to see that every filtration of an object in $DivCR^{\wedge, \phi, tor}_{[0, a]}(\prism_{A})$ is automatically maximal, since now $\phi_{\prism_{A}}$ is an isomorphism, so whenever $Fil^{a}M\subset \tilde{Fil}^{a}M$ is a bigger filtration, the isomorphism $\phi^{*}Fil^{a}M\cong \phi^{*}\tilde{Fil}^{a}M$ forces $Fil^{a}M=\tilde{Fil}^{a}M$.
\end{rmk}
As above, for any $(M, Fil^{a}M, \Phi_{M})\in DivCR^{\wedge, \phi}_{[0, r ]}(\prism_{A})$, we define $Fil^{\bullet}_{M, Fil^{a}M}$ to be the decreasing filtration given by
\begin{center}
\begin{equation*}
Fil_{M, Fil^{a}M}^{i}:= \begin{cases}
\{m\in M : \mathcal{N}_{\prism_{A}}^{a-i}m\subset Fil^{a}M\} &\text{if $i\leq a$}\\
\mathcal{N}_{\prism_{A}}^{i-a}Fil^{a}M &\text{if $i > a$}.
\end{cases}
\end{equation*}
\end{center}
There are Frobenius maps $\Phi^{i}_{\xi}:Fil_{M, Fil^{a}M}^{i}\to M$, defined by $\Phi_{\xi}^{i}(m)=\Phi^{a}(\xi^{a-i}m)$, where $\xi$ is the generator of the Nygaard filtration of $A_{\mr{inf}}(S)$, where $S\to A$ is some perfectoid ring mapping to $A$. Let now $A\to B$ be a morphism of $p$-torsion free qrsp $\Z_p$-algebras. Then for any $0\leq k \leq a$, the Frobenius map $\Phi_{\xi}^{a-k}\otimes \frac{\phi_{\prism_{B}}}{\tilde{\xi}^{k}}:Fil_{M, Fil^{a}M}^{a-k}\hat{\otimes} \mathcal{N}_{\prism_{B}}^{k}\to M_{\prism_{B}}$, is seen to be independent of the choice of $\xi$. Here we again write $\tilde{\xi}=\phi(\xi)$.\\
We have a base change functor
\begin{center}
$DivCR^{\wedge, \phi}_{[0, a]}(\prism_{A})\to DivCR^{\wedge, \phi}_{[0, a]}(\prism_{B})$,
\end{center}
which takes $(M, Fil^{a}M, \Phi^{a}_{M})$ to the triple $(M_{\prism_{B}}, Fil^{a}M_{\prism_{B}}, \Phi^{a}_{M_{\prism_{B}}})$, where $Fil^{a}M_{\prism_{B}}$ is defined as the image of $\bigoplus\limits_{r+s=a}Fil_{M, Fil^{r}M}^{i}\hat{\otimes} \mathcal{N}_{\prism_{B}}^{s}\to M_{\prism_{B}}$ and $\Phi^{a}_{M}$ is induced from \\ $\oplus_{r+s=a}\Phi^{r}_{\xi}\otimes \frac{\phi}{\tilde{\xi^{s}}}$.\\

Assume that $\mathcal{X}=Spf(R)$ is small affine and let $R\to \tilde{R}$ be a $p$-completely faithfully flat cover by a qrsp $\Z_p$-algebra $\tilde{R}$. We again denote by $\prism_{\tilde{R}}^{(\bullet)}$ the cosimplicial prism in $\mathcal{X}_{\prism}$, with $\prism_{\tilde{R}}^{(i)}:=\prism_{\tilde{R}^{\otimes i+1}}$. We further denote by $\xymatrix@C-=0.5cm{
\prism_{\tilde{R}}\ar@<.5ex>[r]^{p_{1}}\ar@<-.5ex>[r]_{p_{2}}  & \prism_{\tilde{R}\hat{\otimes}_{R}\tilde{R}}}$, the maps induced by the canonical inclusions $\tilde{R}\to \tilde{R}\hat{\otimes}_{R}\tilde{R}$.
\begin{defn}
The category $DivCR^{\wedge, \phi}_{[0, a]}(\prism_{\tilde{R}}^{(\bullet)})$ is the category, where objects are given by the following data
\begin{itemize}
\item A divided $F$-module $(M, Fil^{a}M, \Phi_{M}^{a})\in DivCR^{\wedge, \phi}_{[0, a]}(\prism_{\tilde{R}})$.
\item An isomorphism $\epsilon:p_{1}^{*}(M, Fil^{a}M, \Phi_{M}^{a})\xrightarrow{\cong} p_{2}^{*}(M, Fil^{a}M, \Phi_{M}^{a})$ of objects in $DivCR^{\wedge, \phi}_{[0, a]}(\prism_{R_{\infty}}^{(1)})$, satisfying the obvious cocycle condition over $\prism_{\tilde{R}}^{(2)}$.
\end{itemize}
\end{defn}
We have the following lemma.
\begin{lem}
Let $(\mathcal{M}, Fil^{a}\mathcal{M}, \Phi_{\mathcal{M}}^{a})\in DivCR^{[\wedge, \phi, \mr{tor-free}]}(\mathcal{X})$. Then for any $p$-completely faithfully flat morphism $B\to C$ of qrsp $\Z_p$-algebras in $\mathcal{X}_{\mr{qsyn}}$, we have 
\begin{align}\label{mult-div}
Fil^{a}\mathcal{M}(C)=\sum\limits_{r+s=a}Fil^{r}\mathcal{M}\{-s\}(B)\hat{\otimes} \mathcal{N}_{\prism}^{s}\{s\}(C),
\end{align}
under the canonical identification $\mathcal{M}(B)\hat{\otimes}_{B}C\xrightarrow{\cong} \mathcal{M}(C)$, where again we write 
\[\sum\limits_{r+s=a}Fil^{r}\mathcal{M}\{-s\}(B)\hat{\otimes} \mathcal{N}_{\prism}^{s}\{s\}(C)\] for the closure of the image of $\sum\limits_{r+s=a}Fil^{r}\mathcal{M}\{-s\}(B)\otimes \mathcal{N}_{\prism}^{s}\{s\}(C)$. Moreover, the Frobenius $\Phi^{a}_{\mathcal{M}(C)}$ is induced from $\sum\limits_{r+s=a}\Phi^{r}\otimes \phi^{s}$.\\
\end{lem}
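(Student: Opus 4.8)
The plan is to reduce the statement to \cref{prop-fil-crys} by means of the Breuil--Kisin twist. By \cref{equ-div-crys} the triple $(\mathcal{M}, Fil^{a}\mathcal{M}, \Phi^{a}_{\mathcal{M}})$ is the image under $G$ of a completed prismatic $F$-crystal $\mathcal{N}:=(\mathcal{M}\{-a\}, \Phi_{\mathcal{M}\{-a\}})$ which lies in $CR^{\wedge, \phi, \mr{tor-free}}_{[0, a]}(\mathcal{X})$, and the identity $F\circ G=\mr{id}$ proved there gives $Fil^{a}\mathcal{M}=\mathcal{N}^{a}_{\mathcal{N}}\{a\}$; more generally, the full filtration recalled before the lemma is $Fil^{i}\mathcal{M}=\mathcal{N}^{i}_{\mathcal{N}}\{a\}$ for $0\le i\le a$, i.e. $\mathcal{N}^{i}_{\mathcal{N}}=Fil^{i}\mathcal{M}\{-a\}$. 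So first I would apply \cref{prop-fil-crys} (case (1)) to the torsion-free completed $F$-crystal $\mathcal{N}$: for the $p$-completely faithfully flat map $B\to C$,
\begin{equation*}
\mathcal{N}^{a}_{\mathcal{N}}(C)=\sum_{r+s=a}\mathcal{N}^{r}_{\mathcal{N}}(B)\hat{\otimes}N^{s}_{\prism_{C}}
\end{equation*}
inside $\mathcal{N}(C)=\mathcal{N}(B)\hat{\otimes}_{\prism_{B}}\prism_{C}$, where $N^{\bullet}_{\prism_{C}}$ denotes the Nygaard filtration of $\prism_{C}$.

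Next I would twist this equality by the invertible $\prism_{C}$-module $\mathcal{O}_{\prism}\{a\}(C)$. Tensoring with a line bundle is exact and commutes with completed tensor products over $\prism_{B}$, with the formation of closures of images of maps of $(p, I)$-complete modules, and with the base-change identification; and since the twist is itself a crystal, $\mathcal{O}_{\prism}\{a\}(C)=\mathcal{O}_{\prism}\{r\}(B)\hat{\otimes}_{\prism_{B}}\mathcal{O}_{\prism}\{s\}(C)$ for each $r+s=a$. Distributing the twist over the $(r,s)$-summand turns the left side into $\mathcal{N}^{a}_{\mathcal{N}}(C)\{a\}=Fil^{a}\mathcal{M}(C)$ and the $(r,s)$-summand into $\bigl(\mathcal{N}^{r}_{\mathcal{N}}(B)\{r\}\bigr)\hat{\otimes}\bigl(N^{s}_{\prism_{C}}\{s\}\bigr)$. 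Finally one identifies $N^{s}_{\prism_{C}}\{s\}=\mathcal{N}^{s}_{\prism}\{s\}(C)$ by definition, and, using $r=a-s$ together with $\mathcal{N}^{r}_{\mathcal{N}}=Fil^{r}\mathcal{M}\{-a\}$, $\mathcal{N}^{r}_{\mathcal{N}}(B)\{r\}=Fil^{r}\mathcal{M}\{r-a\}(B)=Fil^{r}\mathcal{M}\{-s\}(B)$. This is exactly \cref{mult-div}.

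For the Frobenius, I would use that $\Phi_{\mathcal{N}}$, being part of the data of a crystal, is compatible with the base change $\prism_{B}\to\prism_{C}$, and that by construction (in \cref{equ-div-crys} and in the discussion of the full filtration and the partial divided Frobenii $\Phi^{r}_{\xi}$ preceding the lemma) the twist-by-$\{a\}$ of the restriction of $\Phi_{\mathcal{N}}$ to $\mathcal{N}^{a}_{\mathcal{N}}$ is precisely $\Phi^{a}_{\mathcal{M}}$, and the twist-by-$\{r\}$ of its restriction to $\mathcal{N}^{r}_{\mathcal{N}}$ is $\Phi^{r}$. Restricting to the decomposition of $Fil^{a}\mathcal{M}(C)$ just obtained and unwinding, on the summand $Fil^{r}\mathcal{M}\{-s\}(B)\hat{\otimes}\mathcal{N}^{s}_{\prism}\{s\}(C)$ the map $\Phi^{a}_{\mathcal{M}(C)}$ becomes $\Phi^{r}\otimes\phi^{s}$, where $\phi^{s}$ is the divided Frobenius $\mathcal{N}^{s}_{\prism}\{s\}\to\mathcal{O}_{\prism}\{s\}$; summing over $r+s=a$ gives the claim. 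The one genuinely fiddly point will be the bookkeeping with the Breuil--Kisin twist on qrsp rings, where $\mathcal{O}_{\prism}\{a\}(B)$ is not literally free: one either appeals to its being an invertible $\prism_{B}$-module, so that all the operations above are preserved, or works modulo $I_{n}$ and passes to the limit $\prism_{B}=\varprojlim_{n}\prism_{B}/I_{n}$, exactly as in the construction of $\Phi_{\mathcal{M}\{a\}}$ above. Everything else is formal given \cref{prop-fil-crys} and \cref{equ-div-crys}.
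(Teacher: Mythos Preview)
Your proposal is correct and follows essentially the same approach as the paper: the paper's proof is the one-liner ``This follows from \cref{prop-fil-crys}, using the equivalence of categories in \cref{equ-div-crys},'' and you have spelled out exactly that reduction via the Breuil--Kisin twist in detail.
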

\begin{proof}
This follows from \cref{prop-fil-crys}, using the equivalence of categories in \cref{equ-div-crys}.
\end{proof}
Using the lemma above, we see that there is an evaluation functor $DivCR^{\wedge, \phi}_{[0, a]}(\mathcal{X})\to DivCR^{\wedge, \phi}_{[0, a]}(\prism_{\tilde{R}}^{(\bullet)})^{\mr{max}}$. This functor is in fact an equivalence of categories, i.e. we have effective descent for divided Frobenius modules.
\begin{prop}
Let $\mathcal{X}=Spf(R)$ be small affine. Then evaluation yields an equivalence of categories
\begin{center}
$DivCR^{\wedge, \phi}_{[0, a]}(\mathcal{X})\longleftrightarrow DivCR^{\wedge, \phi}_{[0, a]}(\prism_{\tilde{R}}^{(\bullet)})^{max}$.
\end{center}
\end{prop}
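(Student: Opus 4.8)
The plan is to prove the equivalence by showing that each of the three pieces of structure making up an object of $DivCR^{\wedge,\phi}_{[0,a]}(\mathcal X)$ --- the underlying completed crystal, the subsheaf $Fil^a\mathcal M$, and the divided Frobenius $\Phi^a_{\mathcal M}$ --- together with all the pointwise conditions (finite generation, saturatedness and flatness, the inclusion $\mathcal N^a_\prism\mathcal M\subset Fil^a\mathcal M$, the linearization being an isomorphism, and maximality) satisfies effective descent along the quasi-syntomic cover $R\to\tilde R$ and can be read off from the restriction to $\prism_{\tilde R}^{(\bullet)}$. First I would invoke the descent results of \cref{sec-prel-pris}: since $R\to\tilde R$ is a quasi-syntomic cover with every $\tilde R^{\otimes n+1}$ qrsp, the assignment $\mathcal M\mapsto(\mathcal M(\prism_{\tilde R}),\epsilon)$ is already an equivalence between completed crystals of $\mathcal O^{\mr{pris}}$-modules on $\mathcal X_{\mr{qsyn}}$ and the stratification category over $\prism_{\tilde R}^{(\bullet)}$, by \cref{equiv-qsyn} and effective descent for finitely generated modules along $(I,p)$-completely faithfully flat maps. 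So the underlying crystal is dealt with, and the content is to descend the extra structure.

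The filtration requires care because $Fil^a\mathcal M$ is not a crystal but a subsheaf whose base change is twisted by the Nygaard filtration, as in \cref{mult-div}: $Fil^a\mathcal M(C)=\sum_{r+s=a}Fil^r\mathcal M\{-s\}(B)\hat{\otimes}\mathcal N^s_\prism\{s\}(C)$. Accordingly I would descend it with the Rees-algebra technique used in the proof of \cref{left-inverse}. Given an object over $\prism_{\tilde R}^{(\bullet)}$, at a qrsp $B$ equipped with a $p$-completely faithfully flat map from $\tilde R$ one forms the full filtration $Fil^\bullet M$ of $M=\mathcal M(\prism_{\tilde R})$ attached to $Fil^aM$ and sets $Fil^a\mathcal M(B)$ to be the twisted base change of $Fil^\bullet M$ along $\prism_{\tilde R}\to\prism_B$; passing to the Rees algebra of the Nygaard-filtered ring $\prism_B$ (a filtered ring since $\mathcal N^i_\prism\mathcal N^j_\prism\subset\mathcal N^{i+j}_\prism$), the hypothesis that $\epsilon$ preserve the full filtered pullbacks supplies a descent datum for the associated graded module, which descends by fpqc descent for graded modules over graded rings exactly as in \cref{left-inverse}. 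For an arbitrary qrsp $B$ one applies this after the further cover $B\to B\hat{\otimes}_R\tilde R$ and checks independence of all choices using \cref{prop-fil-crys}, which simultaneously yields the base-change formula of \cref{mult-div} and hence that $Fil^a\mathcal M$ is a genuine subsheaf. The divided Frobenius $\Phi^a_{\mathcal M}:Fil^a\mathcal M\to\mathcal M$ is a morphism of sheaves, so it descends from an $\epsilon$-compatible morphism over $\prism_{\tilde R}$; the inclusion $\mathcal N^a_\prism\mathcal M\subset Fil^a\mathcal M$ and the condition that the linearization $\phi^*Fil^a\mathcal M\to\mathcal M$ be an isomorphism are inclusions, respectively isomorphisms, of sheaves, hence hold globally iff they hold after the faithfully flat restriction to $\prism_{\tilde R}$, using that this restriction commutes with $\phi^*$ and with the Nygaard filtration. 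The saturatedness, $p$-torsion-freeness and $\Z_p/p^n$-flatness conditions imposed at every qrsp object in the source are recovered from the value over $\prism_{\tilde R}$ via \cref{prop-fil-crys}, \cref{quot-flatness}, \cref{prop-admissible} and the saturatedness results earlier in this subsection, together with faithfully flat descent of these properties.

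Finally, the maximality conditions on the two sides match via \cref{equ-div-crys}: for an object $(\mathcal M,Fil^a\mathcal M,\Phi^a_{\mathcal M})$ the filtration is maximal exactly when $Fil^a\mathcal M\{-a\}$ equals the $a$-th Nygaard filtration $\mathcal N^a_{\mathcal M\{-a\}}=\Phi^{-1}_{\mathcal M\{-a\}}(\mathcal I^a_\prism\mathcal M\{-a\})$ of the associated completed $F$-crystal, as is extracted in the proof of \cref{equ-div-crys}; since the Nygaard filtration is functorially determined by the crystal-with-Frobenius and commutes with restriction to $\prism_{\tilde R}$, $Fil^a\mathcal M$ is maximal iff $\mathcal M(\prism_{\tilde R})$ is, which is precisely the $(-)^{\mr{max}}$ condition on the target. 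Assembling these steps shows that evaluation at $\prism_{\tilde R}^{(\bullet)}$ is an equivalence. The step I expect to be the main obstacle is making the descent of the filtration genuinely effective at the filtered level for an arbitrary qrsp $B$: showing that the $\epsilon$-stable filtered module produced over $\prism_{\tilde R}$ glues to a subsheaf of $\mathcal M$ with exactly the base-change behaviour of \cref{mult-div}, independently of the auxiliary covers --- this is the only point going beyond formal descent, and is where one must combine the Rees-algebra argument with the explicit compatibilities of \cref{prop-fil-crys}.
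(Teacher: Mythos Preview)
Your approach is a direct descent of all three pieces of structure (crystal, filtration, divided Frobenius) separately along the qrsp cover. The paper does something much more economical: it simply transports the known descent for $CR^{\wedge,\phi}_{[0,a]}$ through the equivalence of \cref{equ-div-crys}. Concretely, the Breuil--Kisin twist construction $(\mathcal M,\Phi_{\mathcal M})\mapsto(\mathcal M\{a\},\mathcal N^a_{\mathcal M}\{a\},\Phi_{\mathcal M\{a\}})$ and its inverse make sense verbatim on the level of stratifications, giving $DivCR^{\wedge,\phi}_{[0,a]}(\prism_{\tilde R}^{(\bullet)})^{\mr{max}}\simeq CR^{\wedge,\phi}_{[0,a]}(\prism_{\tilde R}^{(\bullet)})$; since effective descent for completed $F$-crystals $CR^{\wedge,\phi}_{[0,a]}(\mathcal X)\simeq CR^{\wedge,\phi}_{[0,a]}(\prism_{\tilde R}^{(\bullet)})$ is already in hand (no filtration to worry about, only a crystal with a Frobenius), composing the three equivalences finishes.

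The practical difference is exactly the step you flag as the main obstacle. Your Rees-algebra argument, imported from \cref{left-inverse}, relied there on the identity $J_C=JC$, which makes the Rees algebra of $(C,J_C^{[\bullet]})$ a base change of that of $(B,J_B^{[\bullet]})$ and hence faithfully flat. For Nygaard filtrations on $\prism_B\to\prism_C$ this base-change property of the Rees algebras is not available in general, so the ``fpqc descent for graded modules'' step is not yet justified as stated. The paper's route sidesteps this completely: by passing to $CR^{\wedge,\phi}_{[0,a]}$ the filtration is \emph{reconstructed} as the Nygaard filtration of the descended $F$-crystal rather than descended directly, and maximality is then automatic. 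Your treatment of maximality via the identification $Fil^a\mathcal M\{-a\}=\mathcal N^a_{\mathcal M\{-a\}}$ is correct and is in fact the same observation the paper uses, but in the paper it does all the work, not just the last step.
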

\begin{proof}
The construction of the equivalence of categories between $DivCR^{\wedge, \phi}_{[0, a]}$ and $CR^{\wedge, \phi}_{[0, a]}$ above may also be carried out on the level of descent data, which yields an equivalence of categories
\begin{center}
$DivCR^{\wedge, \phi}_{[0, a]}(\prism_{\tilde{R}}^{(\bullet)})^{max}\longleftrightarrow CR^{\wedge, \phi}_{[0, a]}(\prism_{\tilde{R}}^{(\bullet)})$.
\end{center}
On the other hand, for completed F-crystals we have effective descent, meaning that evaluating yields an equivalence of categories
\begin{center}
$CR^{\wedge, \phi}_{[0, a]}(\mathcal{X})\longleftrightarrow CR^{\wedge, \phi}_{[0, a]}(\prism_{\tilde{R}}^{(\bullet)})$.
\end{center}
Putting the equivalences above together yields the result.
\end{proof}
\begin{rmk}\label{full-filt}
Let $(M, Fil^{a}M, \phi^{a}_{M}, \epsilon)\in DivCR^{\wedge, \phi}_{[0, a]}(\prism_{\tilde{R}}^{(\bullet)})^{max}$. Then the full filtration $Fil^{\bullet}_{M, Fil^{a}M}$ is induced by the Nygaard-filtraiton of the associated object in $CR^{\wedge, \phi}_{[0, a]}$. It thus follows that $\epsilon$ is in fact automatically compatible with the full product filtration, i.e. it takes $p_{1}^{*}Fil^{k}M$ to $p_{2}^{*}Fil^{k}M$, for all $k\geq 0$.
\end{rmk}
We finish this section by discussing the \'etale realization functor for divided prismatic crystals. 
\begin{defn}
We denote by $\Z_p-Loc(\mathcal{X}_{K})$ the category of finite locally free $\hat{\Z}_{p}$-sheaves on the pro-\'etale site of the generic fiber $\mathcal{X}_{K}$.\\
We also denote by $\Z_p-Loc_{\mr{[0, a]}}^{\mr{cris}}(\mathcal{X}_{K})$ the category of crystalline local systems with Hodge-Tate weights in $[0, a]$ on $\mathcal{X}_{K}$ (see \cite[Def. A.11.]{DLMS} or \cite[Def. 2.25]{GR} for the definition of crystalline local systems).
\end{defn}
We remark here that we use the convention that $\Z_p(1)$ has Hodge-Tate weight equal to $1$.\\
We denote by $LF(\mathcal{X}, \mathcal{O}^{\mr{pris}}[\frac{1}{\mathcal{I}_{\prism}}]^{\wedge}_{p})^{\phi=1}$ the category of finite locally free $\mathcal{O}^{\mr{pris}}[\frac{1}{\mathcal{I}_{\prism}}]^{\wedge}_{p}$-crystals $\mathcal{E}$, equipped with a Frobenius, whose linearization $\phi^{*}\mathcal{E}\xrightarrow{\cong} \mathcal{E}$ is an isomorphism.
\begin{prop}
The category $LF(\mathcal{X}, \mathcal{O}^{\mr{pris}}[\frac{1}{\mathcal{I}_{\prism}}]^{\wedge}_{p})^{\phi=1}$ is canonically identified with $\Z_p-Loc(\mathcal{X}_{K})$.
\end{prop}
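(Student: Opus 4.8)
The plan is to identify the left-hand category with the category of Laurent $F$-crystals (a.k.a. analytic prismatic $F$-crystals) on the absolute prismatic site $\mathcal{X}_{\prism}$, and then to invoke the prismatic--\'etale realization equivalence of Bhatt--Scholze. For the first step, observe that $\mathcal{O}^{\mr{pris}}[\frac{1}{\mathcal{I}_{\prism}}]^{\wedge}_{p}$ is the pushforward $\mu_{*}(\mathcal{O}_{\prism}[\frac{1}{\mathcal{I}_{\prism}}]^{\wedge}_{p})$, and the argument of \cref{equiv-qsyn} applies after inverting the ideal sheaf and $p$-completing: by the crystal property together with restriction to the basis of qrsp objects, a finite locally free $\mathcal{O}^{\mr{pris}}[\frac{1}{\mathcal{I}_{\prism}}]^{\wedge}_{p}$-crystal with invertible Frobenius is the same datum as a compatible system of finite locally free $\prism_{B}[\frac{1}{I_{B}}]^{\wedge}_{p}$-modules $M_{B}$ equipped with isomorphisms $\phi^{*}M_{B}\xrightarrow{\cong}M_{B}$; unwinding, this is precisely a Laurent $F$-crystal on $\mathcal{X}_{\prism}$ in the sense of \cite{BS-pris-cris}.

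Granting this, the proposition is the \'etale realization equivalence for Laurent $F$-crystals: there is a natural (symmetric monoidal) equivalence between Laurent $F$-crystals on $\mathcal{X}_{\prism}$ and finite locally free $\hat{\Z}_{p}$-sheaves on the pro-\'etale site of $\mathcal{X}_{K}$, valid for any bounded $p$-adic formal scheme and in particular for smooth $\mathcal{X}/\mathcal{O}_{K}$; this is \cite[\S 3]{BS-pris-cris} (see also \cite{GR} and \cite{DLMS}). Composing the two identifications gives the asserted canonical equivalence.

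For completeness I would also spell out the underlying construction, which makes canonicity transparent. Both sides satisfy \'etale descent on $\mathcal{X}$ (crystals glue; $\hat{\Z}_{p}$-local systems glue for the analytic topology), so one reduces to $\mathcal{X}=Spf(R)$ small affine. Using the perfectoid covering $R\to R_{\infty}$ of \cref{coverings}, the prism $\prism_{R_{\infty}}=A_{\mr{inf}}(R_{\infty})$ is weakly initial and the self-products $R_{\infty}^{\otimes \bullet+1}$ are again perfectoid (since $R_{K}\to R_{\infty,K}$ is a pro-(finite \'etale) covering of adic spaces, its \v{C}ech nerve consists of perfectoid affinoids). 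Descent along $\prism_{R_{\infty}}^{(\bullet)}$ then identifies the left-hand category with finite locally free $\prism_{R_{\infty}}[\frac{1}{I}]^{\wedge}_{p}$-modules $M$ with $\phi^{*}M\cong M$ together with a descent datum over the cosimplicial ring $\prism_{R_{\infty}}^{(\bullet)}[\frac{1}{I}]^{\wedge}_{p}$. Over a perfectoid ring $S$, the category of such $\phi$-modules over $\prism_{S}[\frac{1}{I}]^{\wedge}_{p}$ is equivalent to $\hat{\Z}_{p}$-local systems on the adic space $Spa(S[\frac{1}{p}],S)$ (the affinoid-perfectoid case of the comparison, cf. \cite[\S 2]{BS-pris-cris} and ultimately Kedlaya--Liu's description of \'etale $\phi$-modules). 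Applying this functorially to the cosimplicial perfectoid ring $R_{\infty}^{\otimes \bullet+1}$ turns the prismatic descent datum into a descent datum for $\hat{\Z}_{p}$-local systems along the pro-\'etale cover $Spa(R_{\infty,K})\to \mathcal{X}_{K}$, and since $\hat{\Z}_{p}$-sheaves satisfy pro-\'etale descent this recovers $\Z_p-Loc(\mathcal{X}_{K})$.

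The main obstacle is the last compatibility: one must check that under the perfectoid comparison the \v{C}ech nerve $\prism_{R_{\infty}^{\otimes\bullet+1}}[\frac{1}{I}]^{\wedge}_{p}$ on the prismatic side goes over to the \v{C}ech nerve of the pro-\'etale covering $Spa(R_{\infty,K})\to\mathcal{X}_{K}$ --- equivalently, that the analytic generic fibre of $\prism_{R_{\infty}^{\otimes n+1}}[\frac{1}{I}]^{\wedge}_{p}$ is $Spa(R_{\infty,K}^{\otimes n+1})$, which comes from base-change compatibility of prismatic cohomology together with the behaviour of tilting --- so that the two descent problems coincide on the nose; the perfectoid-case comparison itself is the deepest input but is a known theorem. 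Functoriality in $\mathcal{X}$, hence the canonicity asserted in the proposition, is then automatic, since every step above is functorial and compatible with the \'etale descent used in the reduction.
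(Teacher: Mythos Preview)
Your proposal is correct and takes essentially the same approach as the paper: the paper's proof is simply the one-line citation ``See \cite[\S 3]{BS-pris-cris}'', and your argument likewise reduces everything to Bhatt--Scholze's \'etale realization equivalence for Laurent $F$-crystals, after first making explicit the passage from the quasi-syntomic to the prismatic formulation (which the paper leaves implicit via \cref{equiv-qsyn}). Your additional sketch via the perfectoid cover $R_{\infty}$ is a reasonable unpacking of how the Bhatt--Scholze argument proceeds, though note that the integral self-products $R_{\infty}^{\hat\otimes_R(n+1)}$ are only qrsp, not perfectoid, so the ``affinoid-perfectoid case'' does not apply to them directly; one really does need the general statement from \cite[\S 3]{BS-pris-cris} rather than only its perfectoid special case.
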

\begin{proof}
See \cite[\S 3]{BS-pris-cris}.
\end{proof}
For any $k\geq 0$, we have $\mathcal{N}^{k}_{\prism}[\mathcal{I}_{\prism}^{-1}]=\mathcal{O}_{\prism}[\mathcal{I}_{\prism}^{-1}]$. Therefore, for any $(\mathcal{M}, Fil^{a}\mathcal{M}, \Phi^{a}_{\mathcal{M}})$, we also have $Fil^{a}\mathcal{M}[\mathcal{I}_{\prism}^{-1}]=\mathcal{M}[\mathcal{I}_{\prism}^{-1}]$. Hence, there is an \'etale realization functor 
\begin{center}
$T_{\acute{e}t}:DivCR^{\wedge, \phi}_{[0, a]}(\mathcal{X})\to \Z_p-Loc(\mathcal{X}_{K})$.
\end{center} 
\begin{prop}
Let $F_{a}:CR^{\wedge, \phi}_{[0, a]}(\mathcal{X})\to DivCR^{\wedge, \phi}_{[0, a]}(\mathcal{X})$ be the equivalence from \cref{equ-div-crys}. For any $\mathcal{M}\in CR^{\wedge, \phi}_{[0, a]}(\mathcal{X})$,  we then have $T_{\acute{e}t}(F_{a}(\mathcal{M}))=T_{\acute{e}t}(\mathcal{M})(a)$.\\
In particular, the \'etale realization functor restricts to an equivalence of categories between $DivCR^{\wedge, \phi, \mr{tor-free}}_{[0, a]}(\mathcal{X})$ and $\Z_p-Loc_{\mr{[0, a]}}^{\mr{cris}}(\mathcal{X})$ the category of crystalline $\Z_p$ local systems on $\mathcal{X}_{K}$ with Hodge-Tate weights in $[0, a]$.
\end{prop}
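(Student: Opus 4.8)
The plan is to reduce everything to the single identity $T_{\acute{e}t}(F_{a}(\mathcal{M}))=T_{\acute{e}t}(\mathcal{M})(a)$, and to deduce that identity from the monoidality of the \'etale realization together with the computation of $T_{\acute{e}t}$ on the Breuil-Kisin twist.

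First I would recall that $T_{\acute{e}t}$, on both $CR^{\wedge,\phi}_{[0,a]}(\mathcal{X})$ and $DivCR^{\wedge,\phi}_{[0,a]}(\mathcal{X})$, factors through base change to $\mathcal{O}^{\mr{pris}}[\frac{1}{\mathcal{I}_{\prism}}]^{\wedge}_{p}$: since $\mathcal{N}^{k}_{\prism}[\frac{1}{\mathcal{I}_{\prism}}]=\mathcal{O}_{\prism}[\frac{1}{\mathcal{I}_{\prism}}]$, for a divided crystal one has $Fil^{a}\mathcal{M}[\frac{1}{\mathcal{I}_{\prism}}]=\mathcal{M}[\frac{1}{\mathcal{I}_{\prism}}]$, the map $\Phi^{a}_{\mathcal{M}}$ extends to a Frobenius isomorphism on $\mathcal{M}[\frac{1}{\mathcal{I}_{\prism}}]^{\wedge}_{p}$, and under the monoidal equivalence $LF(\mathcal{X},\mathcal{O}^{\mr{pris}}[\frac{1}{\mathcal{I}_{\prism}}]^{\wedge}_{p})^{\phi=1}\cong \Z_p-Loc(\mathcal{X}_{K})$ this recovers $T_{\acute{e}t}$. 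Next I would unwind the construction of $F_{a}$ from \cref{equ-div-crys}: the module underlying $F_{a}(\mathcal{M})$ is $\mathcal{M}\{a\}=\mathcal{M}\otimes\bigotimes_{n\geq 0}(\phi^{n})^{*}\mathcal{I}_{\prism}^{a}$, and after inverting $\mathcal{I}_{\prism}$ the Frobenius $\Phi_{\mathcal{M}\{a\}}$ is, by its construction in \cref{equ-div-crys}, the tensor product of $\Phi_{\mathcal{M}}$ (already an isomorphism on $\mathcal{M}[\frac{1}{\mathcal{I}_{\prism}}]^{\wedge}_{p}$, its cokernel being $\mathcal{I}_{\prism}^{a}$-torsion) with the canonical Frobenius isomorphism of $\mathcal{O}_{\prism}\{a\}[\frac{1}{\mathcal{I}_{\prism}}]^{\wedge}_{p}$. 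Monoidality then yields $T_{\acute{e}t}(F_{a}(\mathcal{M}))=T_{\acute{e}t}(\mathcal{M})\otimes_{\Z_p}T_{\acute{e}t}(\mathcal{O}_{\prism}\{a\})$, and it remains to identify $T_{\acute{e}t}(\mathcal{O}_{\prism}\{a\})=\Z_p(a)$, which is the prismatic-\'etale comparison for the Breuil-Kisin twist (cf. \cite[\S 3, Ex.~4.5]{BS-pris-cris}), the Tate twist being normalized so that $\Z_p(1)$ has Hodge-Tate weight $1$.

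For the ``in particular'' statement I would argue as follows. By \cref{equ-div-crys}, $F_{a}$ is an equivalence, and it restricts to an equivalence $DivCR^{\wedge,\phi,\mr{tor-free}}_{[0,a]}(\mathcal{X})\xrightarrow{\cong}CR^{\wedge,\phi,\mr{tor-free}}_{[0,a]}(\mathcal{X})$: the defining conditions ($p$-torsion free and saturated value at qrsp objects) only involve the underlying crystal up to tensoring with the invertible module $\mathcal{O}_{\prism}\{a\}$, hence are preserved. Then I would invoke \cite{DLMS} --- transported to $\mathcal{X}_{\mr{qsyn}}$ along the equivalence $CR^{[\wedge,\phi,\mr{tor-free}]}_{[0,a]}(\mathcal{X}_{\prism})\cong CR^{[\wedge,\phi,\mr{tor-free}]}_{[0,a]}(\mathcal{X})$ recorded above, and with the covariant normalization of $T_{\acute{e}t}$ used in this paper --- to the effect that $T_{\acute{e}t}$ restricts to an equivalence of $CR^{\wedge,\phi,\mr{tor-free}}_{[0,a]}(\mathcal{X})$ with the category of crystalline $\Z_p$-local systems on $\mathcal{X}_{K}$ having Hodge-Tate weights in $[-a,0]$. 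Since tensoring with $\Z_p(a)$ is an auto-equivalence of $\Z_p-Loc(\mathcal{X}_{K})$ which preserves crystallinity and shifts Hodge-Tate weights by $+a$, it carries that subcategory onto $\Z_p-Loc_{\mr{[0, a]}}^{\mr{cris}}(\mathcal{X}_{K})$; combining this with the natural isomorphism $T_{\acute{e}t}\circ F_{a}\cong (-)(a)\circ T_{\acute{e}t}$ from the first part shows that $T_{\acute{e}t}: DivCR^{\wedge,\phi,\mr{tor-free}}_{[0,a]}(\mathcal{X})\to \Z_p-Loc_{\mr{[0, a]}}^{\mr{cris}}(\mathcal{X}_{K})$ is an equivalence.

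The step I expect to be most delicate is the bookkeeping with normalizations: pinning down that the Breuil-Kisin twist realizes $\Z_p(a)$ and not $\Z_p(-a)$ under the covariant convention, and correspondingly that \cite{DLMS}'s height-$a$ crystals land in Hodge-Tate weights $[-a,0]$ after passing to the covariant realization --- the whole point of the divided formalism being precisely that the twist built into $F_{a}$ moves this range into the ``Fontaine-Laffaille window'' $[0,a]$ where $\MF$ lives. Granting the monoidality of $T_{\acute{e}t}$ and the explicit description of $F_{a}$ at the level of Frobenius modules over $\mathcal{O}^{\mr{pris}}[\frac{1}{\mathcal{I}_{\prism}}]^{\wedge}_{p}$, the rest is formal.
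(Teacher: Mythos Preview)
Your proposal is correct and follows essentially the same approach as the paper. The paper's own proof is extremely terse --- it says only that the first statement ``is easily verified'' and that the second follows from \cite[Theorem 1.3]{DLMS} together with the covariant-versus-contravariant bookkeeping --- and your argument (monoidality of $T_{\acute{e}t}$, the computation $T_{\acute{e}t}(\mathcal{O}_{\prism}\{a\})\cong\Z_p(a)$, and the Hodge--Tate weight shift) is precisely the natural unpacking of what that easy verification amounts to.
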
 
\begin{proof}
The first statement is easily varified. The statement about crystalline local systems then follows from \cite[Theorem 1.3]{DLMS}. We only remark here that we use the covariant \'etale realization functor. I.e. by $T_{\acute{e}t}:CR^{\wedge, \phi}_{[0, a]}(\mathcal{X})\to \Z_p-Loc(\mathcal{X})$ we mean the functor $(\mathcal{M}, \Phi_{\mathcal{M}})\mapsto (\mathcal{M}[\frac{1}{\mathcal{I}_{\prism}}]_{p}^{\wedge})^{\Phi=id}$, whereas in \cite{DLMS} the authors take the dual of this local system. In particular the \'etale realization functor used here takes $CR^{\wedge, \phi}_{[0, a]}(\mathcal{X})$ to crystalline local systems with negative Hodge-Tate weights.
\end{proof}
\subsubsection{Duality}
The aim of this section is to provide an alternative way to pass from completed F-crystals to divided F-crystals by means of duality. We will only sketch this for locally free objects - possibly this could be extended to cover all completed F-crystals.\\ Consider the following construction: First, let $(\mathcal{M}, \phi_{\mathcal{M}})\in CR^{\wedge, \phi}_{\mr{[0, a]}}(\mathcal{X})$ be finite locally free of height $a$. On the dual crystal, we then have a filtration $Fil^{a}\mathcal{M}^{\vee}\subset \mathcal{M}^{\vee}:=\mathcal{H}om(\mathcal{M}, \mathcal{O}^{\mr{pris}})$, defined by $Fil^{a}\mathcal{M}^{\vee}=\{f\in \mathcal{M}^{\vee}:f(\mathcal{N}^{a}_{\mathcal{M}})\subset \mathcal{N}_{\prism}^{a}\}$. For any $f\in Fil^{a}\mathcal{M}^{\vee}$, we then define $F:\mathcal{I}^{a}\mathcal{M}\to \mathcal{I}^{a}\mathcal{O}_{\prism}$, making the following diagram commutative.
\begin{center}
$\xymatrix{
\phi_{\mathcal{M}}:\phi^{*}\mathcal{N}^{a}_{\mathcal{M}}\ar[rr]^{\cong}\ar[dr]_{f} && \mathcal{I}^{a}\mathcal{M}\ar[dl]^{F} \\
& \mathcal{I}^{a}\mathcal{O}^{pris} &
}$
\end{center}
We can thus define a $\phi$-linear map $\phi_{\mathcal{M}^{\vee}}:Fil^{a}\mathcal{M}^{\vee}\to \mathcal{M}^{\vee}$, by setting $\phi_{\mathcal{M}^{\vee}}(f)=\mathcal{I}^{-a}F$. It is easy to see that this defines an object $(\mathcal{M}^{\vee}, Fil^{a}\mathcal{M}^{\vee}, \phi_{\mathcal{M}^{\vee}})\in DivCR^{\wedge, \phi}_{[0, a]}(\mathcal{X})$.\\
In the other direction, let $(\mathcal{M}, \overbar{\mathcal{M}}, \phi_{\mathcal{M}})\in DivCR^{\wedge, \phi}_{[0, a]}(\mathcal{X})$ be a locally free divided crystal. For a section $f\in \mathcal{M}^{\vee}$, we define $\phi_{\mathcal{M}^{\vee}}(f)$ to be the unique morphism, making the following diagram commutative.
\begin{center}
$\xymatrix{
\overbar{\mathcal{M}}\ar@{^{(}->}[r]\ar[d]^{\phi_{\mathcal{M}}} & \mathcal{M} \ar[r]^{f} & \mathcal{O}^{pris}\ar[d]^{\phi} \\
\mathcal{M}\ar[rr]^{\phi_{\mathcal{M}^{\vee}}(f)} && \mathcal{O}^{pris}
}$
\end{center}
Concretely, $\phi_{\mathcal{M}^{\vee}}(f)$ is the morphism
\begin{center}
$\mathcal{M}\xrightarrow{\cong}\phi^{*}\overbar{\mathcal{M}}\hookrightarrow \phi^{*}\mathcal{M}\xhookrightarrow{f\otimes \phi}\mathcal{O}$.
\end{center}
We see that $(\mathcal{M}^{\vee}, \phi_{\mathcal{M}^{\vee}})$ is a completed prismatic F-crystal of height $r$. One then has the following result, whose proof is left to the reader.
\begin{prop}
The duality functor described above yields an equivalence of categories
\begin{center}
$
\mathbb{D}:CR^{\wedge, \phi}_{[0, a]}(\mathcal{X})\xrightarrow{\cong}DivCR^{\wedge, \phi}_{[0, a]}(\mathcal{X})^{opp}.
$
\end{center}
Moreover, the duality functor is compatible with the \'etale realization functor, i.e. the canonical map 
\begin{center}
$ T_{\acute{e}t}(\mathbb{D}(\mathcal{M}, \Phi_{\mathcal{M}}))\to T_{\acute{e}t}(\mathcal{M}, \Phi_{\mathcal{M}})^{\vee}$
\end{center}
is an isomorphism of $\Z_p$-local systems on $\mathcal{X}_{K}$.
\end{prop}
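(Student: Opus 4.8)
The plan is to realize $\mathbb{D}$ as the composite $F_a\circ\mathbb{D}_0$, where $F_a\colon CR^{\wedge,\phi}_{[0,a]}(\mathcal{X})\to DivCR^{\wedge,\phi}_{[0,a]}(\mathcal{X})$ is the equivalence of \cref{equ-div-crys} and $\mathbb{D}_0$ is ``naive twisted duality'' on completed $F$-crystals. Since the construction of $\mathbb{D}$ was only given for finite locally free objects, I work throughout with the corresponding full subcategories; on these $(-)^\vee=\mathcal{H}om(-,\mathcal{O}^{\mr{pris}})$ is reflexive and commutes with the relevant base changes, and since crystals are determined by their values on qrsp objects it suffices to make all comparisons after evaluation at $\prism_B$ with $B$ a qrsp $\Z_p$-algebra. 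The functor $\mathbb{D}_0$ sends $(\mathcal{M},\Phi_{\mathcal{M}})$ to $(\mathcal{M}^\vee\{-a\},\Phi_{\mathcal{M}^\vee\{-a\}})$, the Frobenius being obtained by dualizing the isomorphism $\Phi_{\mathcal{M}}\colon\phi^*\mathcal{M}[\mathcal{I}_{\prism}^{-1}]\xrightarrow{\cong}\mathcal{M}[\mathcal{I}_{\prism}^{-1}]$ and twisting by $\{-a\}$. The first thing to check is that $\mathbb{D}_0$ is well defined: $\mathcal{M}^\vee\{-a\}$ is again finite locally free and saturated in the sense of \cref{completed-crys}, and a direct duality computation shows that the twisted dual Frobenius is injective with $\mathcal{I}_{\prism}^a$-torsion cokernel --- morally, the $\{-a\}$-twist exactly compensates the ``negative height'' of the plain dual, so the dual of a height-$a$ object is again of height $a$. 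Moreover $(\mathcal{N}\{-a\})^\vee\{-a\}\cong\mathcal{N}$ canonically, so $\mathbb{D}_0$ is an anti-autoequivalence with $\mathbb{D}_0\circ\mathbb{D}_0\cong\mr{id}$.

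Next I would check that $\mathbb{D}=F_a\circ\mathbb{D}_0$ on the nose, by unwinding the construction of $F_a$. Writing $\mathcal{P}=\mathcal{M}^\vee\{-a\}$, the object $F_a(\mathcal{P})$ has underlying crystal $\mathcal{P}\{a\}=\mathcal{M}^\vee$, filtration $\mathcal{N}^a_{\mathcal{P}}\{a\}$, and divided Frobenius induced (via \cref{prop-admissible}) from $\Phi_{\mathcal{P}}$. Using \cref{prop-admissible} and \cref{qrsp-adm} (the isomorphisms $\phi^*\mathcal{N}^j_{\prism}\xrightarrow{\cong}\mathcal{I}^j_{\prism}$), the $\mathcal{I}_{\prism}$-torsion freeness of $\prism_B$, and the multiplicativity $\mathcal{N}^i_{\prism}\mathcal{N}^j_{\prism}\subseteq\mathcal{N}^{i+j}_{\prism}$, one computes on $\prism_B$ that both $\mathcal{N}^a_{\mathcal{P}}\{a\}$ and $Fil^a\mathcal{M}^\vee=\{f:f(\mathcal{N}^a_{\mathcal{M}})\subseteq\mathcal{N}^a_{\prism}\}$ equal the same submodule of $\mathcal{M}^\vee$ --- in a basis $\{\epsilon_i\}$ putting $\Phi_{\mathcal{M}}$ into Smith normal form with invariant factors $\mathcal{I}^{a_i}_{\prism}$, both are $\bigoplus_i\mathcal{N}^{a_i}_{\prism}\,\epsilon_i^{*}$ --- and that the divided Frobenius of $F_a(\mathcal{P})$ agrees with the map $f\mapsto\mathcal{I}_{\prism}^{-a}F$ of the construction. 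Granting this, the isomorphism $\phi^*Fil^a\mathcal{M}^\vee\xrightarrow{\cong}\mathcal{M}^\vee$ and the maximality of $Fil^a\mathcal{M}^\vee$ are inherited from \cref{equ-div-crys}, so $\mathbb{D}$ indeed lands in $DivCR^{\wedge,\phi}_{[0,a]}(\mathcal{X})^{opp}$ and is an equivalence because $F_a$ is and $\mathbb{D}_0$ is. An entirely parallel unwinding identifies the second functor of the construction, $(\mathcal{M},Fil^a\mathcal{M},\phi_{\mathcal{M}})\mapsto(\mathcal{M}^\vee,\phi_{\mathcal{M}^\vee})$, with $\mathbb{D}_0\circ F_a^{-1}$, and hence as the quasi-inverse of $\mathbb{D}$; the one non-formal input here is that the a priori larger filtration of a double dual coincides with the original one, which is precisely where the maximality axiom in \cref{defn-divcrys} is used.

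Finally, for the compatibility with $T_{\acute{e}t}$ I would invoke $\mathbb{D}=F_a\circ\mathbb{D}_0$ together with the two facts already at hand: the identity $T_{\acute{e}t}(F_a(\mathcal{N}))=T_{\acute{e}t}(\mathcal{N})(a)$ proved above, and the fact that on finite locally free $\mathcal{O}^{\mr{pris}}[\mathcal{I}_{\prism}^{-1}]^\wedge_p$-crystals $T_{\acute{e}t}$ is a tensor functor with $T_{\acute{e}t}(\mathcal{N}^\vee)=T_{\acute{e}t}(\mathcal{N})^\vee$ and, with our normalization, $T_{\acute{e}t}(\mathcal{O}_{\prism}\{-a\})=\Z_p(-a)$. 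These give
\[
T_{\acute{e}t}(\mathbb{D}(\mathcal{M},\Phi_{\mathcal{M}}))=T_{\acute{e}t}(\mathcal{M}^\vee\{-a\})(a)=\bigl(T_{\acute{e}t}(\mathcal{M},\Phi_{\mathcal{M}})^\vee(-a)\bigr)(a)=T_{\acute{e}t}(\mathcal{M},\Phi_{\mathcal{M}})^\vee,
\]
and one checks that the isomorphism so obtained is the one induced by the evaluation pairing $\mathcal{M}^\vee\otimes\mathcal{M}\to\mathcal{O}^{\mr{pris}}$ (after inverting $\mathcal{I}_{\prism}$ and $p$-completing), which is fiberwise perfect and therefore an isomorphism of $\Z_p$-local systems. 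The step I expect to be the main obstacle is the middle paragraph: keeping track of the Breuil-Kisin twists carefully enough to see that the pairing-theoretic $Fil^a\mathcal{M}^\vee$ is exactly the $a$-th Nygaard step of the twisted dual $\mathcal{M}^\vee\{-a\}$, and that maximality is preserved under double duality.
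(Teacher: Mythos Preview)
The paper explicitly leaves this proof to the reader, so there is no argument in the text to compare against. Your strategy of factoring $\mathbb{D}=F_a\circ\mathbb{D}_0$, with $\mathbb{D}_0\colon(\mathcal{M},\Phi_{\mathcal{M}})\mapsto(\mathcal{M}^\vee\{-a\},\Phi_{\mathcal{M}^\vee\{-a\}})$ the twisted-duality anti-autoequivalence on locally free objects of $CR^{\wedge,\phi}_{[0,a]}$, is natural and correct; the \'etale compatibility then drops out exactly as you say from $T_{\acute{e}t}(F_a(-))=T_{\acute{e}t}(-)(a)$ together with the tensor-compatibility of $T_{\acute{e}t}$.

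The one genuine soft spot is your appeal to Smith normal form to identify $Fil^a\mathcal{M}^\vee=\{f:f(\mathcal{N}^a_{\mathcal{M}})\subset\mathcal{N}^a_{\prism}\}$ with $\mathcal{N}^a_{\mathcal{M}^\vee\{-a\}}\{a\}$: neither $\prism_B$ for general qrsp $B$ nor $\mathfrak{S}(R)$ is an elementary divisor ring, so a diagonalizing basis need not exist. Fortunately the identification can be checked directly without any such basis. Working over a qrsp $B$ with generator $\tilde\xi$ of $I$, unwinding your definition of $\Phi_{\mathcal{M}^\vee\{-a\}}$ shows that for $f\in\mathcal{M}^\vee(B)$ one has $f\in\mathcal{N}^a_{\mathcal{M}^\vee\{-a\}}\{a\}(B)$ if and only if the element $g\in\mathcal{M}^\vee(B)$ characterised by $g\circ\Phi_{\mathcal{M}}^{\mr{lin}}=\tilde\xi^a\cdot(1\otimes f)$ on $\phi^*\mathcal{M}(B)$ satisfies $g(\mathcal{M}(B))\subset(\tilde\xi^a)$. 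Now use the admissibility isomorphism $\Phi_{\mathcal{M}}^{\mr{lin}}\colon\phi^*\mathcal{N}^a_{\mathcal{M}}\xrightarrow{\cong}\mathcal{I}^a_{\prism}\mathcal{M}$ of \cref{prop-admissible}: for any $m'\in\mathcal{M}(B)$ write $\tilde\xi^a m'=\Phi_{\mathcal{M}}^{\mr{lin}}(n)$ with $n\in\phi^*\mathcal{N}^a_{\mathcal{M}}(B)$, so that $\tilde\xi^a g(m')=\tilde\xi^a(1\otimes f)(n)$; since $(1\otimes f)(\phi^*\mathcal{N}^a_{\mathcal{M}})\subset(\tilde\xi^a)$ is exactly the condition $f(\mathcal{N}^a_{\mathcal{M}})\subset\mathcal{N}^a_{\prism}$, the two submodules agree. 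The same computation identifies the two divided Frobenii. With this fix in place your sketch goes through, including the double-duality step where maximality is invoked.
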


\section{The category $\MF$ and prismatic F-crystals}

\subsection{The functor from divided prismatic F-crystals to $\MF$}\label{sec-constr}
We now explain how one associates an object in the category $\MFa$ to an object in $\DPa$. Recall the notation $C=\hat{\overbar{K}}$ and $(A_{\mr{inf}}=\prism_{\mathcal{O}_{C}}, \tilde{\xi})$ for the universal prism over $\mathcal{O}_{C}$, where we use the convention $\tilde{\xi}=\phi(\xi)$, where $\xi$ is a generator of the kernel of Fontaine's map $\theta:A_{\mr{inf}}\to \mathcal{O}_{C}$. We further denote by $A_{\mr{cris}}=A_{\mr{inf}}\{\frac{\tilde{\xi}}{p}\}^{\wedge}$ the $p$-adically completed PD-envelope of $\xi$ in $A_{\mr{inf}}$, and by $J_{A_{\mr{cris}}}$ the ideal generated by divided powers of $\xi$. Let $B$ be a qrsp algebra over $\mathcal{O}_{C}$, which is $p$-torsion free. By the universal property of $\prism_{B}$, the canonical projection $B\to B/p$ induces a canonical map $c:\prism_{B}\to \prism_{B/p}=\mathbb{A}_{\mr{cris}}(B/p)$. Since prismatic cohomology is compatible with base change of the base prism, this map is simply the pushout of the natural map $A_{\mr{inf}}=\prism_{\mathcal{O}_{C}}\to \prism_{\mathcal{O}_{C}/p}=A_{\mr{cris}}$, along $\prism_{\mathcal{O}_{C}}\to \prism_{B}$.\\
There is a surjective map $\prism_{B/p}=\prism_{B}\hat{\otimes}_{A_{\mr{inf}}}A_{\mr{cris}}\twoheadrightarrow (\prism_{B}/N^{1}_{\prism_{B}})\hat{\otimes}(A_{\mr{cris}}/J_{A_{\mr{cris}}})=B$, whose kernel is a PD-ideal, which we denote by $J_{B}\subset \prism_{B/p}$ (this is the universal map already considered in \cref{sec-MF}).\\
The following lemma follows then easily from the fact that $\mathbb{A}_{\mr{cris}}(B/p)=\prism_{B}\hat{\otimes}_{A_{\mr{inf}}}A_{\mr{cris}}$.
\begin{lem}\label{ideals}
Let $Spf(B)\in \mathcal{X}_{\mr{qsyn}}$, where $B$ is a qrsp $\mathcal{O}_{C}$-algebra. Then for $i\leq p-1$, we have $J_{B}^{[i]}=c(N_{\prism_{B}}^{i})+Fil^{p}A_{\mr{cris}}\cdot B$.
\end{lem}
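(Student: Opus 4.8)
The plan is to reduce to the perfectoid case by base change and then do an explicit computation inside $A_{\mr{cris}}$. First I would recall the identification $\prism_{B/p}=\prism_{B}\hat{\otimes}_{A_{\mr{inf}}}A_{\mr{cris}}$ and the fact that $J_B=\ker(\prism_{B/p}\twoheadrightarrow B)$ is generated, as a PD-ideal, by the image of $N^1_{\prism_B}$ together with the PD-ideal $J_{A_{\mr{cris}}}$ pushed forward to $\prism_{B/p}$. Concretely, if $\xi$ generates the kernel of $\theta\colon A_{\mr{inf}}\to\mathcal O_C$ then $J_{A_{\mr{cris}}}$ is the PD-ideal generated by $\xi$, and $J_B=c(N^1_{\prism_B})\cdot\prism_{B/p}+J_{A_{\mr{cris}}}\cdot\prism_{B/p}$. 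The point of the lemma is to compute the divided powers $J_B^{[i]}$ for $i\le p-1$ in terms of the (non-divided!) Nygaard powers $N^i_{\prism_B}$, the content being that the only divided-power contributions come from the $A_{\mr{cris}}$-direction and are swallowed by the term $\mathrm{Fil}^p A_{\mr{cris}}\cdot B$.

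The key computation is as follows. Since $J_B$ is generated by $c(N^1_{\prism_B})$ and $\xi$ (with its divided powers), the $i$-th divided power $J_B^{[i]}$ is the sum over $r+s=i$ of $\big(c(N^1_{\prism_B})\big)^r\cdot \xi^{[s]}\cdot\prism_{B/p}$. For the $r$-part: $c(N^1_{\prism_B})^r=c(N^r_{\prism_B})$ because the Nygaard filtration is multiplicative, $N^1\cdot N^{r-1}\subseteq N^r$ with equality on a qrsp ring up to the relevant saturation (this is where I would invoke the structure of the Nygaard filtration on prismatic cohomology of a qrsp ring, cf. \cite[Theorem 12.2]{BS-pris}; more precisely $N^r_{\prism_B}$ is generated by products of $r$ elements of $N^1$). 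For the $s$-part: $\xi^{[s]}=\xi^s/s!$; when $s\le p-1$ this is a unit multiple of $\xi^s$ and contributes nothing new beyond $c(N^s_{\prism_B})$ since $\xi\in N^1_{\prism_B}\hat\otimes A_{\mr{cris}}$ maps into $c(N^1_{\prism_B})+J_{A_{\mr{cris}}}$; and when $s\ge p$ the element $\xi^{[s]}$ lies in $\mathrm{Fil}^p A_{\mr{cris}}$ by definition of the PD-filtration on $A_{\mr{cris}}$. Collecting these, all cross terms with $r+s=i\le p-1$ and $s\le p-1$ land in $c(N^i_{\prism_B})$, while the leftover divided-power tails in the $A_{\mr{cris}}$-direction are exactly $\mathrm{Fil}^p A_{\mr{cris}}\cdot B$. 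This gives $J_B^{[i]}=c(N^i_{\prism_B})+\mathrm{Fil}^p A_{\mr{cris}}\cdot B$.

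To make the "generated by products" step rigorous it is cleanest to first treat $B=R_\infty$ perfectoid (or any perfectoid $\mathcal O_C$-algebra), where $\prism_B=A_{\mr{inf}}(B)$, $\phi$ is an automorphism, $N^i_{\prism_B}=\phi^{-1}(I^i)=\tilde\xi'^i$-type principal ideals, and everything is an explicit computation with the PD-envelope $A_{\mr{cris}}(B)$ of $(\xi)\subset A_{\mr{inf}}(B)$: there $J_B$ is literally the PD-ideal generated by $\xi$, its divided powers are spanned by $\xi^{[s]}$, and for $s\le p-1$ one has $\xi^{[s]}A_{\mr{cris}}(B)=\xi^s A_{\mr{cris}}(B)$ up to units while $\xi^{[s]}\in\mathrm{Fil}^pA_{\mr{cris}}(B)$ for $s\ge p$, giving the formula directly. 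For general qrsp $B$ over $R$ one base changes along a $p$-completely faithfully flat map $R_\infty\to B\hat\otimes_R R_\infty$, uses that $\mathbb A_{\mr{cris}}(-/p)$ and the Nygaard filtration are compatible with such base change (Propositions in \cref{sec-prel-cris} and the results on $\mathcal N^\bullet$ from \cref{prop-fil-crys}), and descends the identity of ideals by faithful flatness.

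The main obstacle will be the multiplicativity/generation statement for the Nygaard filtration on a general (non-perfectoid) qrsp ring — i.e.\ that $N^i_{\prism_B}$ is generated by $i$-fold products from $N^1_{\prism_B}$, so that $c(N^1_{\prism_B})^i=c(N^i_{\prism_B})$ inside $\prism_{B/p}$ — together with keeping careful track of which divided-power terms genuinely land in $\mathrm{Fil}^pA_{\mr{cris}}\cdot B$ versus which collapse onto $c(N^i_{\prism_B})$. Once the perfectoid base case is pinned down explicitly, the descent is routine given the base-change compatibilities already established in \cref{sec-prel-pris} and \cref{sec-prel-cris}.
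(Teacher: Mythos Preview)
The paper does not actually supply a proof of this lemma: it merely records, in the sentence immediately preceding the statement, that it ``follows then easily from the fact that $\mathbb{A}_{\mr{cris}}(B/p)=\prism_{B}\hat{\otimes}_{A_{\mr{inf}}}A_{\mr{cris}}$.'' So there is nothing to compare against beyond this one structural hint. What the author presumably has in mind is the very direct observation that $A_{\mr{cris}}=A_{\mr{inf}}+Fil^{p}A_{\mr{cris}}$ (the extra PD--generators $\gamma_{n}(\xi)$ with $n\ge p$ all lie in $Fil^{p}$, while those with $n<p$ are unit multiples of $\xi^{n}\in A_{\mr{inf}}$), so that after tensoring one has $\prism_{B/p}=c(\prism_{B})+Fil^{p}A_{\mr{cris}}\cdot\prism_{B/p}$ and can read off the PD--filtration of $J_{B}$ modulo the $Fil^{p}A_{\mr{cris}}$--tail.

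Your route is considerably more elaborate and, more importantly, contains a genuine gap. The decomposition $J_{B}^{[i]}=\sum_{r+s=i}c(N^{1}_{\prism_{B}})^{r}\cdot\xi^{[s]}$ is not how the PD--filtration of a PD--ideal works: you must also account for divided powers $\gamma_{n}(c(x))$ with $n\ge p$ of elements $x\in N^{1}_{\prism_{B}}$, and these need not lie in $Fil^{p}A_{\mr{cris}}\cdot\prism_{B/p}$ unless $x$ happens to be a multiple of $\xi$ (which is exactly what makes the perfectoid case easy). Moreover, the identity $c(N^{1}_{\prism_{B}})^{r}=c(N^{r}_{\prism_{B}})$ you invoke is \emph{false} for a general qrsp ring --- the Nygaard filtration is not multiplicatively generated by its first step --- and you rightly flag this as your main obstacle. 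Your proposed fix, descent from the perfectoid case, does not close the gap as written: base--changing a general qrsp $B$ along $R_{\infty}\to B\hat{\otimes}_{R}R_{\infty}$ lands you in another qrsp (not perfectoid) ring, so you never actually reduce to the case you have computed; and you would additionally need to justify that both the PD--filtration $J_{(-)}^{[i]}$ and the image $c(N^{i}_{\prism_{(-)}})$ are compatible with such base change, which is not covered by the results you cite. The tensor--product identification already contains everything needed; the detour through multiplicativity and descent is both unnecessary and incomplete.
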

We now come to our main result.
\begin{thm}\label{main-thm}
Let $\mathcal{X}$ be a smooth $p$-adic formal scheme over $\mathcal{O}_{K}$. Let $a\leq p-2$. Then there is a natural equivalence of categories 
\begin{align}\label{functor}
DivCR^{\wedge, \phi, \mr{tor-free}}_{[0, a]}(\mathcal{X})\longleftrightarrow \MFa^{\mr{tor-free}}(\mathcal{X}).
\end{align}
\end{thm}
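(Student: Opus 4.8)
The plan is to reduce the statement to the local case $\mathcal{X} = \Spf(R)$ small affine, where both categories have been described via descent data, and then to compare these descent data with respect to the perfectoid covering $R \to R_\infty$. Concretely, by \cref{mf-descent} the assignment $\mathcal{X} \mapsto \MFa^{\mr{tor-free}}(\mathcal{X})$ is an \'etale sheaf, and the category $DivCR^{\wedge, \phi, \mr{tor-free}}_{[0,a]}(\mathcal{X})$ is by construction a Zariski (indeed quasi-syntomic) sheaf, so it suffices to construct a canonical equivalence in the small affine case which is compatible with localization. In that case \cref{left-inverse-perf} identifies $\MFa^{\mr{tor-free}}(\Spf(R))$ with $\MFa^{\mr{tor-free}}(\mathbb{A}_{\mr{cris}}(R_\infty/p)^{(\bullet)})^{\mr{full}}$, and the descent results of Section 4 identify $DivCR^{\wedge, \phi, \mr{tor-free}}_{[0,a]}(\Spf(R))$ with a category of divided prismatic $F$-modules over the cosimplicial prism $\prism_{R_\infty}^{(\bullet)}$ equipped with the full Nygaard filtration (using \cref{prop-fil-crys}(2) and \cref{full-filt} to see that the descent datum automatically respects the full filtration).

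The core of the argument is then a purely local comparison theorem: for each term $n$ of the cosimplicial diagram, passing from a divided prismatic $F$-module $(M, Fil^a M, \Phi^a_M)$ over $\prism_{R_\infty^{\otimes n+1}}$ to its base change along the canonical PD-map $c : \prism_{R_\infty^{\otimes n+1}} \to \mathbb{A}_{\mr{cris}}(R_\infty^{\otimes n+1}/p) = \prism_{R_\infty^{\otimes n+1}} \hat\otimes_{A_{\mr{inf}}} A_{\mr{cris}}$ should be an equivalence onto the corresponding category of divided Frobenius modules over $\mathbb{A}_{\mr{cris}}$, for $a \le p-2$. The functor takes $Fil^a M$ to the closure of $\sum_{r+s=a} Fil^r_{M, Fil^a M} \hat\otimes J_B^{[s]}$ and $\Phi^a_M$ to $\sum_{r+s=a} \Phi^r_\xi \otimes \phi^s$; here \cref{ideals} is the crucial input, since it says that for $i \le p-1$ the PD-ideal $J_B^{[i]}$ differs from $c(\mathcal{N}^i_{\prism_B})$ only by the term $Fil^p A_{\mr{cris}} \cdot B$, which is where the hypothesis $a \le p-2$ enters. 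To build a quasi-inverse one has to descend a divided Frobenius module over $A_{\mr{cris}}$ back along $A_{\mr{inf}} \to A_{\mr{cris}}$: the module underlying a completed prismatic $F$-crystal descends because $\mathbb{A}_{\mr{cris}}(R_\infty/p) = \prism_{R_\infty} \hat\otimes_{A_{\mr{inf}}} A_{\mr{cris}}$ and one recovers $M$ from $M_{A_{\mr{cris}}}$ as the saturation, using that $M$ is $p$-torsion free and saturated (\cref{completed-crys}); the filtration descends because, after twisting by $\mathcal{O}_\prism\{-a\}$ and using \cref{equ-div-crys}, the full filtration is just the Nygaard filtration, and \cref{prop-fil-crys} together with the maximality statements (\cref{perf-max}, \cref{Acris-maximality}) pin it down; and the Frobenius descends because it is determined by $\Phi^0$ on $M[1/p]$ once the Nygaard filtration is known. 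One then checks that the $E^a$-torsion-cokernel / admissibility conditions on the prismatic side correspond exactly to the isomorphism condition \eqref{Frob-acris-iso} and condition \eqref{cond-frob-2} on the $\MF$ side, via \cref{prop-admissible} and \cref{qrsp-adm}.

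Having the equivalence termwise and compatibly with the cosimplicial structure maps (and with the full-filtration-respecting isomorphisms $\epsilon$ on both sides, which match up by the compatibility of $c$ with Frobenius and with the $p$-completely faithfully flat maps $D^{(\bullet)} \to \mathbb{A}_{\mr{cris}}(R_\infty/p)^{(\bullet)}$ from \cref{coverings}), one obtains an equivalence between the two categories of descent data, hence between $DivCR^{\wedge, \phi, \mr{tor-free}}_{[0,a]}(\Spf(R))$ and $\MFa^{\mr{tor-free}}(\Spf(R))$. Finally, one verifies that this local equivalence is canonical and compatible with \'etale localization — for this the key point is that the functor $\mathbb{D}_{R_\infty}$ is canonical (independent of the choice of \'etale coordinates), which is the content of the remark following \cref{left-inverse}, so that gluing along \cref{mf-descent} yields the desired equivalence for arbitrary $\mathcal{X}$.

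The main obstacle I expect is the local comparison theorem over $A_{\mr{inf}}$ versus $A_{\mr{cris}}$, and in particular controlling the filtration: one must show that the naive filtration $\sum_{r+s=a} Fil^r M \hat\otimes J_B^{[s]}$ over $\mathbb{A}_{\mr{cris}}$ recovers exactly the given $Fil^a M$ over $\prism_B$ after taking the appropriate intersection/saturation, and that no information is lost in the divided powers of order $\ge p$. This is precisely where the bound $a \le p-2$ is indispensable — it guarantees via \cref{ideals} that $J_B^{[i]} = c(\mathcal{N}^i_{\prism_B}) + Fil^p A_{\mr{cris}} \cdot B$ for all relevant $i$, so the extra $Fil^p$-term sits in a part of the filtration that does not interact with $Fil^a$ — and it is the higher-dimensional analogue of the classical Breuil-Kisin comparisons cited in the introduction (\cite{Kisin-moduli}, \cite{CarLiu}, \cite{Fal-MF-st}). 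The bookkeeping for the Frobenius, in particular checking that $\sum_{r+s=a}\Phi^r_\xi \otimes \phi^s$ is well-defined and that the isomorphism condition is preserved in both directions, will require the same kind of $p$-torsion-freeness and saturatedness arguments already used in \cref{quot-flatness} and \cref{prop-admissible}, but should be routine once the filtration comparison is in hand.
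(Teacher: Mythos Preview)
Your global strategy---reduce to small affine, describe both sides as descent data for the perfectoid cover $R\to R_\infty$, and compare termwise along $\prism_{R_\infty^{\otimes n+1}}\to \mathbb{A}_{\mr{cris}}(R_\infty^{\otimes n+1}/p)$---is exactly the paper's. The difference, and the gap, is in how you handle the local comparison itself.

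First, a structural point: you assert a termwise \emph{equivalence} for every $n$, but the higher \v{C}ech terms $R_\infty^{\otimes n+1}$ for $n\ge 1$ are qrsp, not perfectoid, and the paper only proves (and only needs) full faithfulness there (\cref{prop-full-faith}); essential surjectivity is established only at level $0$ where $R_\infty$ is perfectoid (\cref{perf-equiv-free}). This is enough for descent data, but your sketch does not distinguish the two cases.

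Second, and more seriously, your proposed quasi-inverse is not the paper's and is not clearly viable. You write that one ``recovers $M$ from $M_{A_{\mr{cris}}}$ as the saturation'' and then recovers the filtration via \cref{equ-div-crys} and the Nygaard characterization. But you cannot invoke \cref{equ-div-crys} or \cref{prop-fil-crys} before you have produced a prismatic object, so this is circular; and ``the saturation'' of $M_{A_{\mr{cris}}}$ inside what ambient module is never specified. The paper instead proceeds by d\'evissage through torsion objects: it first proves the comparison for $p$-torsion modules by an explicit inverse-limit construction in the style of Faltings (iteratively applying $\phi^{*}$ to $Fil^{a}$ and taking kernels, \cref{lem-mod-p} and \cref{perf-mod-p}), then bootstraps to $A_{\mr{inf}}/p^k$-flat objects via a compatible inverse system and $\Z_p/p^k$-flatness arguments, and finally passes to the $p$-torsion-free case by taking limits. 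The bound $a\le p-2$ enters precisely in the $p$-torsion step, where one needs $\xi^{a}M_0\subset Fil^{a}M_0$ with $a\le p-2$ to control $\xi$-torsion in the inverse system. None of this is a saturation argument; the Frobenius is used constructively, not just to pin down a filtration after the fact.

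In short: the scaffolding is right, but the engine---the actual construction of a divided $\prism_B$-module from a divided $\mathbb{A}_{\mr{cris}}(B/p)$-module---needs to be the inverse-system/d\'evissage argument of Section~5.2, not a saturation.
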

\begin{proof}
To describe the functor, it is enough, by Zariski descent, to define the functor for affine $\mathcal{X}=Spf(R)$ and check that it is natural in $R$.\\
So assume that $\mathcal{X}=Spf(R)$ and let $R\to \tilde{R}$ be the quasi-syntomic covering discussed in \cref{coverings}. Recall that $DivCR^{\wedge, \phi, \mr{tor-free}}_{[0, a]}(\mathcal{X})$ is naturally equivalent to the category $DivCR^{\wedge, \phi, \mr{tor-free}}_{[0, a]}(\prism_{\tilde{R}}^{(\bullet)})^\mr{{max}}$. On the other hand, recall that by \cref{left-inverse}, we have a functor $\mathbb{D}_{\tilde{R}}:\MFa^{\mr{big}}(\mathbb{A}_{\mr{cris}}(\tilde{R}/p)^{(\bullet)})^{\mr{full}}\to \MFa^{\mr{big}}(\mathcal{X})$ which is natural in $R$. Here again we write $\mathbb{A}_{\mr{cris}}(\tilde{R}/p)^{(n)}:=\mathbb{A}_{\mr{cris}}(\tilde{R}^{\otimes n+1}/p)$. Recall that $\prism_{\tilde{R}^{\otimes n}/p}\cong \mathbb{A}_{\mr{cris}}(\tilde{R}^{\otimes n}/p)$ canonically and that we have a natural map 
\begin{center}
$c:\prism_{\tilde{R}}^{(n)}\to \prism_{\tilde{R}/p}^{(n)}=\mathbb{A}_{\mr{cris}}(\tilde{R}/p)^{(n)}$,
\end{center}
such that $c(\mathcal{N}_{\prism_{\tilde{R}}}^{a})\subset J_{\tilde{R}}^{[a]}$, for $a\leq p-1$. \\

Now, if $(M, Fil^{a}M, \Phi^{a}_{M})\in \DPa(\prism_{\tilde{R}}^{(n)})$, we can construct a triple \\
$(M_{\mathbb{A}_{\mr{cris}}(\tilde{R}/p)^{(n)}}, Fil^{a}M_{\mathbb{A}_{\mr{cris}}(\tilde{R}/p)^{(n)}}, \Phi^{a}_{M_{\mathbb{A}_{\mr{cris}}(\tilde{R}/p)^{(n)}}})$, where $M_{\mathbb{A}_{\mr{cris}}(\tilde{R}/p)^{(n)}}=M\hat{\otimes}_{\prism_{\tilde{R}}^{(n)}} \mathbb{A}_{\mr{cris}}(\tilde{R}/p)^{(n)}=M\hat{\otimes}_{A_{\mr{inf}}}A_{\mr{cris}}$ and $Fil^{a}M_{\mathbb{A}_{\mr{cris}}(\tilde{R}/p)^{(n)}}\subset M_{\mathbb{A}_{\mr{cris}}(\tilde{R}/p)^{(n)}}$ is the submodule generated by 
\begin{align*}
& Im(Fil^{a}M\otimes \mathbb{A}_{\mr{cris}}(\tilde{R}/p)^{(n)})+ J_{\tilde{R}^{\otimes n}}^{[p]}\cdot M_{\mathbb{A}_{\mr{cris}}(\tilde{R}/p)^{(n)}}\\
= & Im(Fil^{a}M\otimes \mathbb{A}_{\mr{cris}}(\tilde{R}/p)^{(n)})+  J_{\tilde{R}^{\otimes n+1}}^{[a]}\cdot M_{\mathbb{A}_{\mr{cris}}(\tilde{R}/p)^{(n)}}\\
= & Im(Fil^{a}M\otimes \mathbb{A}_{\mr{cris}}(\tilde{R}/p)^{(n)})+  Fil^{a}A_{\mr{cris}}\cdot M_{\mathbb{A}_{\mr{cris}}(\tilde{R}/p)^{(n)}}. 
\end{align*}
The Frobenius on $Fil^{a}M_{\mathbb{A}_{\mr{cris}}(\tilde{R}/p)^{(n)}}$ is defined as follows:
First, note that the Frobenius on $Fil^{a}M$ may be written as 
\begin{center}
$Fil^{a}M\hookrightarrow M\xrightarrow{\Phi^{0}}M\xrightarrow{\frac{1}{\tilde{\xi}^{a}}} M$,
\end{center}
where $\Phi^{0}:M\to M$ is defined as $M\xrightarrow{\cdot \xi^{a}}Fil^{a}M\xrightarrow{\Phi^{a}}M$. This shows that $Fil^{a}M\otimes A_{\mr{cris}}\xrightarrow{\Phi^{a}\otimes \phi}M_{\mathbb{A}_{\mr{cris}}(\tilde{R}/p)^{(n)}}$ factors over $Im(Fil^{a}M\otimes \mathbb{A}_{\mr{cris}}(\tilde{R}/p)^{(n)})$.\\
Then, we remark that $Fil^{a}A_{\mr{cris}}\cdot M_{\mathbb{A}_{\mr{cris}}(\tilde{R}/p)^{(n)}}=M\otimes Fil^{a}A_{\mr{cris}}\subset M\hat{\otimes}_{A_{\mr{inf}}}A_{\mr{cris}}$, using $\xi$-torsion freeness of $M$. We then define the Frobenius on $M\otimes Fil^{a}A_{\mr{cris}}$ as $\Phi^{0}\otimes \frac{\phi}{\phi(\xi)^{a}}$. It is then easy to see that the so defined Frobenius maps agree on the intersection 
\begin{center}
$Im(Fil^{a}M\otimes \mathbb{A}_{\mr{cris}}(\tilde{R}/p)^{(n)})\cap Fil^{a}A_{\mr{cris}}\cdot M_{\mathbb{A}_{\mr{cris}}(\tilde{R}/p)^{(n)}}$,
\end{center}
so that we get a well-defined Frobenius map $Fil^{a}M_{\mathbb{A}_{\mr{cris}}(\tilde{R}/p)^{(n)}}\to M_{\mathbb{A}_{\mr{cris}}(\tilde{R}/p)^{(n)}}$. This Frobenius is seen to satisfy the conditions \ref{Frob-acris-iso} and \ref{cond-frob-2}. Also, the Frobenius induces an isomorphism
\begin{center}
$Fil^{a}M_{\mathbb{A}_{\mr{cris}}(\tilde{R}/p)^{(n)}}/J_{\tilde{R}^{\otimes n+1}}\otimes_{\phi}\mathbb{A}_{\mr{cris}}(\tilde{R}/p)^{(n)}/p=Fil^{a}M/\mathcal{N}^{1}_{\prism}\otimes_{\phi}\prism_{\tilde{R}}^{(n)}/p\otimes_{\prism_{\tilde{R}}/p}\mathbb{A}_{\mr{cris}}(\tilde{R}/p)^{(n)}/p\cong M_{\mathbb{A}_{\mr{cris}}(\tilde{R}/p)^{(n)}}/pM_{\mathbb{A}_{\mr{cris}}(\tilde{R}/p)^{(n)}}$.
\end{center}
Moreover, the definition of Frobenius is canonical (i.e. independent of $\xi$), as it may be checked to be induced from 
\begin{center}
$\sum\limits_{r+s=a}\Phi_{M}^{r}\otimes \phi^{s}:\bigoplus\limits_{r+s=a}Fil^{r}M\{-s\}\otimes J_{\tilde{R}^{\otimes n+1}}^{[s]}(\mathbb{A}_{\mr{cris}}(\tilde{R}/p)^{(n)})\{s\}\to M_{\mathbb{A}_{\mr{cris}}(\tilde{R}/p)^{(n)}}$.
\end{center}
We remark that $\mathcal{O}\{1\}$ is globally trivial on $(\mathcal{X}/p)_{\prism}$ and $\phi^{s}$ is canonically identified with the restriction to $\mathcal{I}_{\mathcal{X}_{\mr{cris}}}$ of the the divided Frobenius $\frac{\phi}{p^{s}}:\mathcal{N}^{s}_{(\mathcal{X}/p)_{\prism}}\to \mathcal{O}_{(\mathcal{X}/p)_{\prism}}$, evaluated at $\mathbb{A}_{\mr{cris}}(\tilde{R}/p)^{(n)}$.\\[0.5cm]
Now if $(M, Fil^{a}M, \Phi^{a}_{M})\in DivCR^{\wedge, \phi, \mr{tor-free}}_{\mr{[0, a]}}(\prism_{\tilde{R}})$, we may also base change the full filtration $Fil^{i}M$, by setting $Fil^{i}M_{\mathbb{A}}:=Im(Fil^{i}M\hat{\otimes} \mathbb{A})+Fil^{p}A_{cris}\cdot M_{\mathbb{A}}$. \\
We claim that the full filtrations are compatible.
\begin{lem}\label{comp-filt}
We have $Fil^{i}M_{\mathbb{A}}=\{m\in M_{\mathbb{A}}: J_{\tilde{R}^{\otimes n+1}}^{[i-a]}m\subset Fil^{a}M_{\mathbb{A}}\}=Fil^{i}_{M_{\mathbb{A},Fil^{a}M_{\mathbb{A}}}}$ for $i\leq a$. 
\end{lem}
\begin{proof}
First note that the map $M\to M_{\mathbb{A}}$ is injective and for any $m\in M_{\mathbb{A}}$, we may write $m=x+y$, with $x\in M$ and $y\in Fil^{p}A_{cris}\cdot M_{\mathbb{A}}$. Assume now that $um\in Fil^{a}M_{\mathbb{A}}$, for all $u\in J_{\tilde{R}^{\otimes n+1}}^{[i-a]}$. Let $u\in N^{i-a}_{\prism}$. Modding out by $Fil^{p}A_{cris}$, we then get that $ux$ is congruent to an element in $Fil^{a}M$ modulo $N^{p}_{\prism}\cdot M$. But since $N^{p}_{\prism}\cdot M\subset Fil^{a}M$, this means that $ux\in Fil^{a}M$, hence $x\in Fil^{i}M$.
\end{proof}
Let $p_{1}, p_{2}:\prism_{\tilde{R}}\to \prism_{\tilde{R}}^{(1)}$ be the maps induced by the inclusions $\tilde{R}\to \tilde{R}\hat{\otimes}\tilde{R}$. Using the previous lemma, one then sees that the $p_{j}^{*}(M, Fil^{a}M, \Phi^{a}_{M})\in \DPa(\prism_{\tilde{R}}^{(1)})$ maps to $(p^{pd}_{j})^{*}(M_{\mathbb{A}}, Fil^{a}M_{\mathbb{A}}, \Phi^{a}_{M_{\mathbb{A}}})\in \MFa(\mathbb{A}^{(1)})$, where we write $p_{1}^{pd}$ (resp. $p_{2}^{pd}$) for the map $\mathbb{A}\to \mathbb{A}^{(1)}$, induced by mapping $\tilde{R}$ to the first (resp. second) factor of $\tilde{R}\hat{\otimes} \tilde{R}$.\\
Note also that by \cref{full-filt}, for any object $(M, Fil^{a}M, \phi^{a}_{M}, \epsilon)\in DivCR_{\mr{[0, a]}}^{\wedge, \phi, \mr{tor-free}}(\prism_{R_{\infty}}^{(\bullet)})$, the descent datum $\epsilon$ always fixes the full product filtration. Hence, \cref{comp-filt} then yields that this also holds for the induced object over $A_{cris}$.\\

In this way, we obtain a canonical functor 
\begin{center}
$DivCR^{\wedge, \phi}_{\mr{[0, a]}}(\prism_{\tilde{R}}^{(\bullet)})\to \MF(\mathbb{A}_{\mr{cris}}(\tilde{R}/p)^{(\bullet)})$
\end{center}
which restricts to a canonical functor.
\begin{center}
$DivCR^{\wedge, \phi, \mr{tor-free}}_{\mr{[0, a]}}(\prism_{\tilde{R}}^{(\bullet)})\to \MF^{\mr{tor-free}}(\mathbb{A}_{\mr{cris}}(\tilde{R}/p)^{(\bullet)})^{\mr{full}}$.
\end{center}
Restricting this functor to objects with maximal filtrations and composing with the functor $\mathbb{D}_{\tilde{R}}:\MFa^{big}(\mathbb{A}_{\mr{cris}}(\tilde{R}/p)^{(\bullet)})\to \MFa^{big}(\mathcal{X})$ (see \cref{left-inverse}) yields the desired functor
\begin{center}
$DivCR^{\wedge, \phi, \mr{tor-free}}_{\mr{[0, a]}}(\mathcal{X})\to \MFa^{\mr{tor-free}}(\mathcal{X})$.
\end{center}
This construction is natural in $R$. We remark here that a priori the composed functor only maps into $\MFa^{big}(\mathcal{X})$. However, using that for a torsion free prismatic divided F-crystal $(\mathcal{M}, Fil^{a}\mathcal{M}, \Phi^{a})$ the quotients $\mathcal{M}(B)/Fil^{a}\mathcal{M}(B)$ are $p$-torsion free for all qrsp $Spf(B)\in \mathcal{X}_{qsyn}$, one sees that the object it maps to in $\MFa^{\mr{big}}(\mathcal{X})$ has $p$-torsion free quotient as well.\\
We will now give the proof of \cref{main-thm} modulo the results of the following section. Assume that $\mathcal{X}=Spf(R)$ is small affine. To show that the functor \ref{functor} is an equivalence of categories, we may in fact work with any cover of $R$. So instead of the canonical covering $R\to \tilde{R}$ considered above, we instead use the perfectoid covering $R\to R_{\infty}$. Then consider the cosimplicial object $\prism_{R_{\infty}}^{(\bullet)}$ in $\mathcal{X}_{\prism}$, obtained by taking the \v{C}ech-nerve of the covering $R\to R_{\infty}$. Note that in the perfectoid case filtrations are automatically maximal by \cref{perf-max}. \\
Now the base change functor $\DPa(\prism_{R_{\infty}})\to \MFa(\mathbb{A}_{\mr{cris}}(R_{\infty}/p))$, defined in the same way as for the covering $\tilde{R}$, considered above, is an equivalence of categories by \cref{perf-equiv-free}. Also, for $i>0$, one has that the base change functor $\DPa(\prism_{R_{\infty}}^{(i)})\to \MFa(\mathbb{A}_{\mr{cris}}(R_{\infty}/p)^{(i)})$ is fully faithful, by \cref{prop-full-faith}. Combining these two statements then gives that $\DPa(\prism_{R_{\infty}}^{(\bullet)})\to \MFa(\mathbb{A}_{\mr{cris}}(R_{\infty}/p)^{(\bullet)})$ is an equivalence of categories. We note here again that by \cref{full-filt}, for any object $(M, Fil^{a}M, \phi^{a}_{M}, \epsilon)\in DivCR_{\mr{[0, a]}}^{\wedge, \phi, \mr{tor-free}}(\prism_{R_{\infty}}^{(\bullet)})$, the descent datum $\epsilon$ always fixes the full product filtration. Thus, we see here that this is also automatically true for any object in $\MFa(\mathbb{A}_{\mr{cris}}(R_{\infty}/p)^{(\bullet)})$, i.e. we have $\MFa(\mathbb{A}_{\mr{cris}}(R_{\infty}/p)^{(\bullet)})=\MFa(\mathbb{A}_{\mr{cris}}(R_{\infty}/p)^{(\bullet)})^{\mr{full}}$. \\

We thus have a commutative diagram of functors
\begin{center}
$\xymatrix{
DivCR_{\mr{[0, a]}}^{\wedge, \phi, \mr{tor-free}}(\mathcal{X})\ar[rr]\ar[d]^{\cong} && \MFa^{\mr{tor-free}}(\mathcal{X})\ar[d]^{\cong}\\
DivCR_{\mr{[0, a]}}^{\wedge, \phi, \mr{tor-free}}(\prism_{R_{\infty}}^{(\bullet)})\ar[rr]^{\cong} && \MFa^{\mr{tor-free}}(\mathbb{A}_{\mr{cris}}(R_{\infty}/p)^{(\bullet)})
}$
\end{center}
where the upper functor is the functor \ref{functor} and the lower functor is the base change functor along $\prism_{R^{\otimes n}_{\infty}}\to \prism_{R^{\otimes n}_{\infty}/p}$. The vertical functor on the right is the equivalence of categories from \cref{left-inverse-perf}.\\
This finishes the proof.
\end{proof}
We now define the \'etale realization functor
\begin{center}
$T_{\acute{e}t}^{\MF}:\MFa(\mathcal{X})\to Loc_{\Z_p}(\mathcal{X}_{K})$
\end{center}
as the composition of the functor $\MFa(\mathcal{X})\to DivCR^{\wedge, \phi}_{[0, a]}(\mathcal{X})$ with the \'etale realization functor $T_{\acute{e}t}:DivCR^{\wedge, \phi}_{[0, a]}(\mathcal{X})\to Loc_{\Z_p}(\mathcal{X}_{K})$ for divided prismatic Frobenius crystals.\\ 
Using the results from \cite{GR} and \cite{DLMS} (see in particular \cite[Thm. 1.3]{DLMS}), we then obtain the following result for $p$-torsion free objects.
\begin{thm}
Let $\mathcal{X}$ be a smooth $p$-adic formal scheme over $\mathcal{O}_{K}$. Let $a\leq p-2$. Then the \'etale realization functor 
\begin{center}
$T_{\acute{e}t}^{\MF}:\MFa^{\mr{tor-free}}(\mathcal{X})\to Loc^{crys, \mr{tor-free}}_{\Z_p, [0, a]}(\mathcal{X}_{K})$
\end{center}
is an equivalence of categories.
\end{thm}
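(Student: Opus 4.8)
The plan is to obtain the statement by composing two equivalences of categories that have already been established, so that no substantial new argument is needed. Recall that $T_{\acute{e}t}^{\MF}$ was defined precisely as the composite of the functor $\MFa(\mathcal{X})\to DivCR^{\wedge,\phi}_{[0,a]}(\mathcal{X})$ furnished by \cref{main-thm} with the \'etale realization functor $T_{\acute{e}t}\colon DivCR^{\wedge,\phi}_{[0,a]}(\mathcal{X})\to \Z_p-Loc(\mathcal{X}_K)$ for divided prismatic F-crystals. Thus the first step is simply to restrict \cref{main-thm} to $p$-torsion free objects, obtaining an equivalence $\MFa^{\mr{tor-free}}(\mathcal{X})\xrightarrow{\ \sim\ } DivCR^{\wedge,\phi,\mr{tor-free}}_{[0,a]}(\mathcal{X})$.

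For the second step I would invoke the proposition on the \'etale realization of divided prismatic F-crystals established above (the one building on \cref{equ-div-crys} and the identity $T_{\acute{e}t}(F_a(\mathcal{M}))=T_{\acute{e}t}(\mathcal{M})(a)$), which asserts that $T_{\acute{e}t}$ restricts to an equivalence $DivCR^{\wedge,\phi,\mr{tor-free}}_{[0,a]}(\mathcal{X})\xrightarrow{\ \sim\ }\Z_p-Loc_{\mr{[0, a]}}^{\mr{cris}}(\mathcal{X}_K)$; this in turn relies on \cite[Thm.~1.3]{DLMS} (together with \cite{GR}) identifying $p$-torsion free completed prismatic F-crystals with lattices in crystalline local systems. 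Composing the two equivalences, and using that $T_{\acute{e}t}^{\MF}$ is by construction this composite, yields an equivalence $\MFa^{\mr{tor-free}}(\mathcal{X})\xrightarrow{\ \sim\ }\Z_p-Loc_{\mr{[0, a]}}^{\mr{cris}}(\mathcal{X}_K)$. It then only remains to observe that $\Z_p-Loc_{\mr{[0, a]}}^{\mr{cris}}(\mathcal{X}_K)$ is the category $Loc^{crys,\mr{tor-free}}_{\Z_p,[0,a]}(\mathcal{X}_K)$ appearing in the statement: since $\Z_p-Loc(\mathcal{X}_K)$ was defined to consist of finite locally free $\hat{\Z}_p$-sheaves, the superscript ``tor-free'' records nothing more than that we are considering genuine $\Z_p$-lattices, so the two notations denote the same category.

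The point that needs genuine care --- as opposed to being purely formal --- is the consistency of conventions with the references. The present paper uses the \emph{covariant} \'etale realization, whereas \cite{DLMS} and the classical theory of Fontaine-Laffaille, Faltings and Breuil use the contravariant one, and here $\Z_p(1)$ is normalized to have Hodge-Tate weight $1$. I would therefore spell out once more how the Breuil-Kisin twist $\{a\}$ occurring in \cref{equ-div-crys} interacts with the identity $T_{\acute{e}t}(F_a(\mathcal{M}))=T_{\acute{e}t}(\mathcal{M})(a)$, so as to confirm that the essential image of $T_{\acute{e}t}^{\MF}$ is exactly the crystalline local systems with Hodge-Tate weights in $\{0,\ldots,a\}$, rather than some shift of this range. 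Everything else is formal: a composite of equivalences of categories is an equivalence.
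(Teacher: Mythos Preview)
Your proposal is correct and matches the paper's approach: the paper states this theorem immediately after defining $T_{\acute{e}t}^{\MF}$ as the composite of the equivalence from \cref{main-thm} with $T_{\acute{e}t}$, and simply remarks that the result then follows from \cite{GR} and \cite[Thm.~1.3]{DLMS}. You have spelled out the same argument in slightly more detail, including the convention check on Hodge--Tate weights already handled earlier in the paper.
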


\subsection{Comparing triples over $A_{inf}$ with triples over $A_{\mr{cris}}$}
This section contains the main technical results one needs to show that the functor \ref{functor} is an equivalence of categories. For this we need to compare objects over $A_{\mr{inf}}$ with the objects over $A_{\mr{cris}}$. Similar results have been obtained in the literature before (see for example \cite{Breuil-constr}, \cite{Kisin-moduli}, \cite{CarLiu}, \cite{Cais-Lau}, \cite{Fal-MF-st}, \cite{Tsuji-mf}), though not in the full generality necessary for us. The proofs are however often similar and inspired by the above mentioned works. Throughout the section we will denote by $B$ a qrsp $p$-torsion free $\mathcal{O}_{C}$-algebra. Recall from the previous section that we have a canonical map $c:\prism_{B}\to \prism_{B/p}=\prism_{B}\hat{\otimes}_{A_{\mr{inf}}}A_{\mr{cris}}=\mathbb{A}_{\mr{cris}}(B/p)$ and we consider the base change functor of divided Frobenius modules, which was constructed in the previous section. For a triple $(M, Fil^{a}M, \Phi^{a}_{M})$ over $\prism_{B}$ we will then denote by $(M, Fil^{a}M, \Phi^{a}_{M})_{\mr{cris}}=(M_{\mr{cris}}, Fil^{a}M_{\mr{cris}}, \Phi^{a}_{M_{\mr{cris}}})$ its essential image under the base change functor. We start by treating the case of $p$-torsion objects.

\begin{lem}\label{lem-mod-p}
Base changing along $\prism_{B}\to \prism_{B/p}$ induces a fully faithful functor
\begin{align*}
DivCR_{[0, a]}^{\wedge, \phi, p-tor}(\prism_{B}) & \longrightarrow   \MFa^{ p-tor}(\prism_{B/p})
\end{align*}
of objects annihilated by $p$.
\end{lem}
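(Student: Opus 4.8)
The plan is to prove fully faithfulness by producing an explicit quasi-inverse on the image, or rather by directly showing that the functor induces bijections on Hom-sets. Since both sides are $\mathbb{Z}_p/p$-modules (everything is killed by $p$), a morphism on either side is a $\prism_B/p$-linear (resp.\ $\prism_{B/p}/p = \mathbb{A}_{\mathrm{cris}}(B/p)/p$-linear) map compatible with filtrations and divided Frobenii, and the content is that such data can be descended along $c:\prism_B/p\to \mathbb{A}_{\mathrm{cris}}(B/p)/p$. First I would record the structure of this map modulo $p$: since $\mathbb{A}_{\mathrm{cris}}(B/p) = \prism_B\hat\otimes_{A_{\mathrm{inf}}}A_{\mathrm{cris}}$ and $A_{\mathrm{cris}}/p$ is the PD-envelope of $(\xi)$ reduced mod $p$, the map $c$ mod $p$ is faithfully flat (it is the base change of $\prism_B/p\to \prism_B/p\otimes_{A_{\mathrm{inf}}/p}A_{\mathrm{cris}}/p$, and $A_{\mathrm{inf}}/p = \mathcal{O}_C^\flat\to A_{\mathrm{cris}}/p$ is faithfully flat). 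By \cref{ideals} we also have precise control: $J_B^{[i]}$ for $i\le p-1$ is generated over $\mathbb{A}_{\mathrm{cris}}(B/p)$ by $c(N^i_{\prism_B})$ together with $Fil^p A_{\mathrm{cris}}\cdot B$, and the latter vanishes mod $p$ once we are in the appropriate range (here $a\le p-2$), so modulo $p$ the filtration $Fil^a M_{\mathrm{cris}}$ is exactly the image of $Fil^a M\otimes \mathbb{A}_{\mathrm{cris}}(B/p)$.

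Next, for faithfulness: if $f:(M,\dots)\to (N,\dots)$ is a morphism in $DivCR^{\wedge,\phi,p\text{-}tor}_{[0,a]}(\prism_B)$ whose base change $f_{\mathrm{cris}}$ is zero, then since $c$ mod $p$ is faithfully flat and $M,N$ are finitely generated, $f\otimes \mathrm{id} = 0$ forces $f=0$. For fullness: given a morphism $g: M_{\mathrm{cris}}\to N_{\mathrm{cris}}$ of the $\MFa^{p\text{-}tor}$ objects, I would first descend $g$ as a $\prism_B/p$-module map using faithfully flat descent — one checks $p_1^*g = p_2^*g$ over the two-fold base change $\mathbb{A}_{\mathrm{cris}}(B/p)/p\otimes_{\prism_B/p}\mathbb{A}_{\mathrm{cris}}(B/p)/p$; this is automatic because $g$ is already defined over a single copy. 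This yields $g_0:M\to N$ over $\prism_B/p$. Then one must check $g_0$ respects $Fil^a$ and intertwines $\Phi^a$. Compatibility with $Fil^a$ follows because $Fil^a M_{\mathrm{cris}}$ mod $p$ is generated by $Fil^a M$ (as noted), so $g(Fil^a M\otimes 1)\subset Fil^a N_{\mathrm{cris}}$, and intersecting back with $N\subset N_{\mathrm{cris}}$ (which is a pure submodule mod $p$, again by faithful flatness of $c$) gives $g_0(Fil^a M)\subset Fil^a N$. Compatibility with the divided Frobenius is similar: the Frobenius on the base-changed object is, by construction in the previous section, induced from $\sum_{r+s=a}\Phi^r_M\otimes\phi^s$, so on the image of $Fil^a M\otimes 1$ it agrees with $\Phi^a_M\otimes \phi$; hence $g_{\mathrm{cris}}\circ \Phi^a_{M_{\mathrm{cris}}} = \Phi^a_{N_{\mathrm{cris}}}\circ g_{\mathrm{cris}}$ restricted to this generating set reads off exactly $g_0\circ \Phi^a_M = \Phi^a_N\circ g_0$ after the faithfully flat descent.

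The main obstacle I expect is the verification that descending along $c$ mod $p$ genuinely preserves the filtration data in both directions — i.e.\ that $Fil^a M_{\mathrm{cris}}\cap (M\otimes 1) = Fil^a M$ inside $M_{\mathrm{cris}}$ mod $p$, and not something larger. This is where the hypothesis $a\le p-2$ and the precise statement of \cref{ideals} are essential: without the control on $J_B^{[i]}$ in the PD range, the base-changed filtration could a priori pick up extra elements coming from $Fil^p A_{\mathrm{cris}}$, and the clean identification $Fil^a M_{\mathrm{cris}} = \mathrm{Im}(Fil^a M\otimes \mathbb{A}_{\mathrm{cris}}(B/p)) + Fil^a A_{\mathrm{cris}}\cdot M_{\mathrm{cris}}$ from the previous section's construction would not simplify mod $p$. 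I would therefore isolate as a preliminary claim the statement that mod $p$ one has $Fil^a M_{\mathrm{cris}}/p = (Fil^a M/p)\otimes_{\prism_B/p}\mathbb{A}_{\mathrm{cris}}(B/p)/p$ as submodules of $M_{\mathrm{cris}}/p$, deduce purity of $Fil^a M/p\hookrightarrow Fil^a M_{\mathrm{cris}}/p$ from faithful flatness, and then the Hom-bijection statements become formal consequences of faithfully flat descent for modules and for their submodules. Finally one notes that the same base change construction manifestly preserves the flatness and torsion-freeness conditions defining the $p$-torsion subcategories, so the functor indeed lands in $\MFa^{p\text{-}tor}(\prism_{B/p})$.
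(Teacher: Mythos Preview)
Your argument rests on the claim that $c:\prism_B/p\to \mathbb{A}_{\mr{cris}}(B/p)/p$ is faithfully flat, deduced from the assertion that $A_{\mr{inf}}/p=\mathcal{O}_C^\flat\to A_{\mr{cris}}/p$ is faithfully flat. This is false: in $A_{\mr{cris}}$ one has $\xi^p=p!\,\gamma_p(\xi)$, so $\xi^p\mapsto 0$ in $A_{\mr{cris}}/p$ and the map $\mathcal{O}_C^\flat\to A_{\mr{cris}}/p$ is not even injective. (The paper itself notes that $A_{\mr{cris}}/p$ is free over $\mathcal{O}_C^\flat/\xi^p$, not over $\mathcal{O}_C^\flat$.) Consequently faithfully flat descent along $c$ mod $p$ is unavailable, and both your faithfulness and fullness arguments collapse. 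Relatedly, your claim that ``$Fil^p A_{\mr{cris}}\cdot B$ vanishes mod $p$'' is also incorrect: $\gamma_p(\xi)$ is a nonzero element of $A_{\mr{cris}}/p$, so the extra summand $Fil^pA_{\mr{cris}}\cdot M_{\mr{cris}}$ in the base-changed filtration does not disappear.

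The paper's proof confronts exactly this obstruction. It factors $c$ mod $p$ as
\[
\prism_B/p \xtwoheadrightarrow{f} \prism_B/(p,\xi^p) \xrightarrow{g} \mathbb{A}_{\mr{cris}}(B/p)/p.
\]
The map $g$ is genuinely faithfully flat (even split by the projection $h$ killing the higher divided powers), and base change along $g$ is shown to be an equivalence. The map $f$, however, is a non-flat quotient, and no descent is possible here by general nonsense. The substitute is a construction that exploits the divided Frobenius: starting from a triple $(M_0,Fil^aM_0,\Phi_0)$ over $\prism_B/(p,\xi^p)$ one builds an inverse system $M_{n+1}=\phi^*Fil^aM_n$ and takes the limit, using that $\Phi^a$ provides a compatible system of isomorphisms. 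The condition $a\le p-2$ enters to ensure $\Phi^a_M(\xi^pM)\subset\xi^pM$, so the Frobenius descends to the truncation, and to control the $\xi$-torsion in the limit. This Frobenius-amplification idea is the missing ingredient; without it there is no way to reconstruct $M$ from $M/\xi^p$.
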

\begin{proof}
Modulo $p$, the natural map $\prism_{B}\to \prism_{B/p}$ factors as
\begin{center}
$(\prism_{B})/p\xtwoheadrightarrow{f} (\prism_{B})/(p, \xi^{p})\xlongrightarrow{g} A_{\mr{cris}}(B/p)/p$.
\end{center}
We will show that base changing along $f$ is fully faithful and base changing along $g$ induces an equivalence of categories.\\
To ease notation, we write $P:=(\prism_{B})/p$ and denote the absolute Frobenius of $P$ by $\sigma$. We then equip the ring $P/\xi^{p}$ with the filtration $F^{i}_{P}$, given by the image of the Nygaard filtration. We define the category $DivCR_{[0, a]}^{\wedge, \phi}(P/\xi^{p})$ of triples $(M, Fil^{a}M, \Phi^{a}_{M})$, where 
\begin{itemize}
\item $M$ is a finitely generated $P/\xi^{p}$-module, which is flat over $\mathcal{O}_{C}^{\flat}/\xi^{p}$. 
\item $Fil^{a}M\subset M$ a submodule, such that $F_{P}^{a}\cdot M\subset Fil^{a}M$. \item $\Phi^{a}_{M}:Fil^{a}M\to M$ is a Frobenius linear map, such that $\phi_{P/\xi^{p}}^{*}Fil^{a}M\xrightarrow{\cong} M$ is an isomorphism.
\end{itemize}
We also have a base change functor along $P\to P/\xi^{p}$. Namely, if $(M, Fil^{a}M, \Phi^{a}_{M})$ is a triple over $P$, we define $Fil^{a}(M/\xi^{p})=Im(Fil^{a}M/\xi^{p})\to M/\xi^{p}$. Also, since $\xi^{a}M\subset Fil^{a}M$ and $a<p$, we see that $\Phi^{a}_{M}(\xi^{p}M)\subset \xi^{p}M$. From this, it follows that $\Phi^{a}_{M}\otimes \phi:Fil^{a}M\otimes P/\xi^{p}\to M_{P/\xi^{p}}$ factors over $Fil^{a}(M_{P/\xi^{p}})$, which gives us the Frobenius $\Phi^{a}_{P/\xi^{p}}:Fil^{a}(M_{P/\xi^{p}})\to M$, and one immediately sees that its linearization is again an isomorphism. 
\begin{claim}
The base change functor $DivCR_{[0, a]}^{\wedge, \phi}(P)\to DivCR_{[0, a]}^{\wedge, \phi}(P/\xi^{p})$ is an equivalence of categories onto its essential image.
\end{claim}
We will construct a quasi-inverse. The construction is similar to \cite[\S 2]{Fal-MF-st}. Let $(M_{0}, Fil^{a}M_{0}, \Phi_{0})\in DivCR_{[0, a]}^{\wedge, \phi}(P/\xi^{p})$. We then construct a projective system $(M_{n}, Fil^{a}M_{n})$ by inductively defining 
\begin{align*}
M_{n+1}= & Fil^{a}M_{n}\otimes_{P, \sigma} P\\
Fil^{a}M_{n+1}= & ker(M_{n+1}\to M_{n}\to M_{n}/Fil^{a}M_{n}).
\end{align*}
We get inductively that 
\begin{align}\label{star}
M_{n+1}/Fil^{a}M_{n+1}=M_{n}/Fil^{a}M_{n}=M/Fil^{a}M\\
 Fil^{a}M_{n+1}/\xi=Fil^{a}M_{n}/\xi \\ M/\xi^{p}=M_{n}/\xi^{p}.
\end{align}
From this one then gets (using that $\xi\in Rad(P)$) that the transition maps are surjective.
We then define $(M, Fil^{a}M):=\varprojlim_{n}(M_{n}, Fil^{a}M_{n})$. One may then easily check that $\mathcal{N}_{P}^{a}M\subset Fil^{a}M$. We also have a morphism 
\begin{equation}\label{frob}
P\otimes_{\sigma} Fil^{a}M= P\otimes_{\sigma}(\varprojlim\limits_{n}Fil^{a}M_{n})\to \varprojlim\limits_{n}(P\otimes_{\sigma}Fil^{a}M_{n})=M. 
\end{equation}
Now assume that $(M, Fil^{a}M, \Phi^{a}_{M})\in DivCR_{[0, a]}^{\wedge, \phi}(P)$. We then claim that applying the construction above to $(M/\xi^{p}, Fil^{a}(M_{P/\xi^{p}}), \Phi^{a}_{M/\xi^{p}})$ gives back the original triple. This follows from condition (5.4).
\\[0.5cm]
Now the morphism $g$ is obtained by base changing along the morphism $\mathcal{O}_{C}^{\flat}/(\xi^{p})\to A_{\mr{cris}}/p$, which is faithfully flat - as $A_{\mr{cris}}/p$ is free as an $\mathcal{O}_{C}^{\flat}/\xi^{p}$-module. Base changing along the canonical projection $A_{\mr{cris}}/p\to \mathcal{O}_{C}^{\flat}/\xi^{p}$ shows that $g$ is a section of the canonical surjective map $A_{\mr{cris}}(R/p)/p\xtwoheadrightarrow{h} (\prism_{B})/(p, \xi^{p})$.\\
The maps $h$ and $g$  induce base change functors, where for a triple $(M, Fil^{a}M, \Phi^{a})\in DivCR_{[0, a]}^{\wedge, \phi}(P/\xi^{p})$, we define $(M_{\mr{cris}}, Fil^{a}M_{\mr{cris}}, \Phi^{a})$, where $M_{\mr{cris}}$ is just the base change along $g$, $Fil^{a}M_{\mr{cris}}=(Fil^{a}M)\otimes A_{\mr{cris}} + Fil^{p}A_{\mr{cris}}\cdot M_{\mr{cris}}$ and the Frobenius is just induced from the base extension of $\Phi^{a}$ to $A_{\mr{cris}}$ (acting by zero on $Fil^{p}A_{\mr{cris}}\cdot M_{\mr{cris}}$).\\
On the other hand, $h^{*}: \MFa(A_{\mr{cris}}(R/p)/p)\to DivCR_{[0, a]}^{\wedge, \phi}(P/\xi^{p})$ is defined just like the base change functor along $P\to P/\xi^{p}$, considered before.
\begin{claim}
The functors $g^{*}$ and $h^{*}$ are quasi-inverse to each other.
\end{claim}
Since $h\circ g=id$, it is easy to see that $h^{*}g^{*}$ is the identity functor. So let $(M, Fil^{a}M, \Phi^{a}_{M})\in \MFa(A_{\mr{cris}}(R/p)/p)$. Note that $\Phi^{a}$ factors over $Fil^{a}M\otimes P/\xi^{p}$ and the Frobenius on $A_{\mr{cris}}(R/p)/p$ factors as $A_{\mr{cris}}(R/p)/p\xrightarrow{h} P/\xi^{p}\xrightarrow{\phi}P/\xi^{p} \xrightarrow{g} A_{\mr{cris}}(R/p)/p$. This shows that $M\cong \phi_{A_{\mr{cris}}(R/p)/p}^{*}Fil^{a}M \cong \phi^{*}_{P/\xi^{p}}Fil^{a}(M_{P/\xi^{p}})\otimes_{g} A_{\mr{cris}}(R/p)/p \cong g^{*}h^{*}M$. One checks that this also fixes $Fil^{a}M$. 
\end{proof}
In the case when $B$ is perfectoid, we can improve the above result.
\begin{prop}\label{perf-mod-p}
Assume that $B$ is perfectoid. Then the functor
\begin{align*}
DivCR_{[0, a]}^{\wedge, \phi, p-tor}(\prism_{B}) & \longleftrightarrow  \MFa^{ p-tor}(\prism_{B/p})
\end{align*}
is an equivalence of categories. Moreover, it restricts to an equivalence
\begin{align*}
DivVect_{[0, a]}^{\wedge, \phi, p-tor}(\prism_{B}) & \longleftrightarrow  \MFa^{p-tor, lf}(\prism_{B/p})
\end{align*}
of the full subcategories, consisting of locally free objects.
\end{prop}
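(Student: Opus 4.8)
The base change functor is fully faithful by \cref{lem-mod-p}, so it remains to prove essential surjectivity (and the statement about locally free objects). Recall from the proof of \cref{lem-mod-p} that, modulo $p$, the map $\prism_{B}\to \prism_{B/p}$ factors as
\[P:=\prism_{B}/p\xtwoheadrightarrow{f}P/\xi^{p}\xlongrightarrow{g}(\prism_{B/p})/p,\]
that $g^{*}$ is already an equivalence of categories, and that $f^{*}$ is fully faithful with one-sided inverse the ``unrolling'' $M_{n+1}=\sigma^{*}(Fil^{a}M_{n})$, $Fil^{a}M_{n+1}=\ker(M_{n+1}\to M_{n}\to M_{n}/Fil^{a}M_{n})$, $(M,Fil^{a}M):=\varprojlim_{n}(M_{n},Fil^{a}M_{n})$, with Frobenius \eqref{frob}; here $\sigma$ denotes the absolute Frobenius of $P$. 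It therefore suffices to show that, when $B$ is perfectoid, this unrolling lands in $DivCR^{\wedge,\phi,p\text{-tor}}_{[0,a]}(\prism_{B})$ for \emph{every} object of $DivCR^{\wedge,\phi}_{[0,a]}(P/\xi^{p})$ and returns it after reduction modulo $\xi^{p}$; this makes the unrolling a genuine quasi-inverse of $f^{*}$, so that $f^{*}$, and hence the base change functor $g^{*}\circ f^{*}$, is an equivalence.

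The point is that when $B$ is perfectoid, $P=\prism_{B}/p$ is the tilt $B^{\flat}$, a \emph{perfect} $\F_{p}$-algebra, so $\sigma$ is an automorphism of $P$ and $\sigma^{*}=(-)\otimes_{P,\sigma}P$ is an exact autoequivalence of $P$-modules; in particular it commutes with inverse limits. Using this, the transition maps of $(M_{n})_{n}$ and $(Fil^{a}M_{n})_{n}$ are surjective by \eqref{star} and Nakayama (observe $\xi\in\mr{Rad}(P)$, since $\xi$ is topologically nilpotent in $B^{\flat}$), so both systems are Mittag--Leffler; since the input is flat over $\mathcal{O}_{C}^{\flat}/\xi^{p}$ and $B^{\flat}$ is $\xi$-adically complete, an induction (exactness of $\sigma^{*}$ being essential here) shows that $M=\varprojlim_{n}M_{n}$ is finitely generated and $\xi$-torsion free over $B^{\flat}$. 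Moreover $\sigma^{*}Fil^{a}M=\sigma^{*}(\varprojlim_{n}Fil^{a}M_{n})=\varprojlim_{n}\sigma^{*}Fil^{a}M_{n}=\varprojlim_{n}M_{n+1}=M$, the identification being exactly the linearization of $\Phi$, so $\Phi$ has invertible linearization; and $M_{n}/\xi^{p}\cong M_{0}$ compatibly in $n$ by the defining isomorphism of $DivCR^{\wedge,\phi}_{[0,a]}(P/\xi^{p})$, exactly as in the proof of \cref{lem-mod-p}, so reduction modulo $\xi^{p}$ recovers the input. This is precisely the step where perfectness cannot be dispensed with: over $P/\xi^{p}$ the Frobenius is not an automorphism, $\sigma^{*}$ is not exact, and the unrolling does not close up inside $P/\xi^{p}$ but instead $\xi$-adically builds up the corresponding object over $B^{\flat}$. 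The main obstacle is exactly this verification — finite generation and $\xi$-torsion freeness of the inverse limit, and persistence of the isomorphism condition on $\Phi$ after passing to the limit — all of which hinge on $B^{\flat}$ being perfect.

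For the locally free refinement, base change along $\prism_{B}\to\prism_{B/p}$ evidently takes a finite locally free object to a finite locally free one. Conversely, given $(M_{\mr{cris}},Fil^{a}M_{\mr{cris}},\Phi^{a})\in\MFa^{p\text{-tor},\mr{lf}}(\prism_{B/p})$, apply $h^{*}$ and use that $g$ is faithfully flat (as in \cref{lem-mod-p}) to deduce that the corresponding $(M_{0},Fil^{a}M_{0},\Phi_{0})$ over $P/\xi^{p}=B^{\flat}/\xi^{p}$ has $M_{0}$ finite locally free of some rank $r$. Running the unrolling, each $M_{n}$ is then finite locally free of rank $r$ over $B^{\flat}/\xi^{p^{n+1}}$ — again because $\sigma$ is an automorphism of $B^{\flat}$, so $\sigma^{*}$ preserves ``finite locally free of rank $r$'' while raising the $\xi$-adic level by a factor of $p$ at each step — and therefore $M=\varprojlim_{n}M_{n}$ is finite locally free of rank $r$ over the $\xi$-adically complete ring $B^{\flat}$. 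Hence the equivalence restricts as claimed.
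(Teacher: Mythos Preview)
Your strategic outline matches the paper's exactly: reduce to showing that the unrolling from the proof of \cref{lem-mod-p} is a genuine quasi-inverse to $f^{*}$ when $B$ is perfectoid. The problems are in the execution of the two key verifications.

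\textbf{$\xi$-torsion freeness of $M$.} This is the heart of the argument, and ``an induction (exactness of $\sigma^{*}$ being essential here)'' does not capture it. Consider the simplest example $M_{0}=P/\xi^{p}$, $Fil^{a}M_{0}=\xi^{a}M_{0}$: one computes $M_{n}=P/\xi^{e_{n}}$ with $e_{0}=p$ and $e_{n+1}=p(e_{n}-a)$, so $e_{n}\to\infty$ (and hence $\varprojlim M_{n}=P$ is $\xi$-torsion free) \emph{precisely} when $a\le p-2$. The paper's argument makes this quantitative: from flatness of $M_{0}$ over $\mathcal{O}_{C}^{\flat}/\xi^{p}$ one gets $Tor_{1}^{P}(Fil^{a}M_{0},P/\xi)\hookrightarrow\xi^{p-1}M_{0}$, and then $a\le p-2$ forces $\xi^{p-1}M_{0}\subset\xi\,Fil^{a}M_{0}$; pulling back along the faithfully flat $\sigma$ and iterating shows that the transition maps $M_{n+1}[\xi]\to M_{n}[\xi]$ are all zero, whence $M[\xi]=0$. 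You should either supply this argument or an equivalent one; as written, the step where $a\le p-2$ enters is invisible.

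\textbf{The locally free refinement.} Your claim that ``each $M_{n}$ is finite locally free of rank $r$ over $B^{\flat}/\xi^{p^{n+1}}$'' is false. The recursion is $M_{n+1}=\sigma^{*}Fil^{a}M_{n}$, not $\sigma^{*}M_{n}$, and $Fil^{a}M_{n}$ is \emph{not} locally free over $P/\xi^{p^{n+1}}$ in general: already for $M_{0}=P/\xi^{p}$ with $Fil^{a}M_{0}=\xi^{a}M_{0}\cong P/\xi^{p-a}$ one gets $M_{1}=\sigma^{*}(P/\xi^{p-a})=P/\xi^{p(p-a)}$, which is a proper quotient of $P/\xi^{p^{2}}$ whenever $a>0$. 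The paper proceeds differently: once $\xi$-torsion freeness of $M$ is in hand, it gives $Tor_{1}^{P/\xi^{p^{n}}}(M/\xi^{p^{n}},P/\xi^{p})=0$ for all $n$, and then the local flatness criterion (with $M/\xi^{p}=M_{0}$ finite projective over $P/\xi^{p}$) shows each $M/\xi^{p^{n}}$ is finite flat, hence finite projective, over $P/\xi^{p^{n}}$; finally \cite[Tag 0D4B]{stacks} gives that $M$ is finite projective over $P$. So local freeness is deduced \emph{from} $\xi$-torsion freeness, not established independently level by level.
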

\begin{proof}
To prove the first statement, we need to show that the base-change functor $DivCR_{[0, a]}^{\wedge, \phi}(P)\to DivCR_{[0, a]}^{\wedge, \phi}(P/\xi^{p})$, considerd in the proof of the previous proposition, is now an equivalence of categories. Recall that for any triple $(M_{0}, Fil^{a}M_{0}, \Phi_{0})\in DivCR_{[0, a]}^{\wedge, \phi}(P/\xi^{p})$, we could construct a triple $(M, Fil^{a}M, \Phi^{a}_{M})$ over $P$, where $\Phi_{M}^{a}$ is given by \ref{frob}. We now want to show that this triple always lies in $DivCR_{[0, a]}^{\wedge, \phi}(P)$. First note that $\Phi^{a}_{M}$ is now an isomorphism, since Frobenius is an isomorphism on $P$, as $B$ is perfectoid. We thus only need to show that $M$ is saturated, which for $p$-torsion objects simply means $\xi$-torsion freeness.\\
To show that $M$ is $\xi$-torsion free, we prove the following: For any $n\geq 1$, the map $M_{n+1}[\xi]\to M_{n}[\xi]$, which is induced by the transition map $M_{n+1}\to M_{n}$, is the zero map.\\
Namely, first we apply  $Tor_{1}^{P}( - , P/\xi)$ to the exact sequence 
\begin{center}
$0\to Fil^{a}M_{0}\to M_{0}\to Q_{0}\to 0$,
\end{center}
to obtain the exact sequence
\begin{center}
$0\to Tor_{1}^{P}(Fil^{a}M_{0}, P/\xi)\hookrightarrow (\xi^{p-1})M_{0}\to \ldots  $,
\end{center}
using that $M_{0}$ is flat over $\mathcal{O}_{C}^{\flat}/\xi^{p}$. Using that $\xi^{a}M_{0}\subset Fil^{a}M_{0}$ and $a\leq p-2$, we then get $Tor_{1}^{P}(Fil^{a}M_{0}, P/\xi)\subset \xi Fil^{a}M_{0}$. Base changing with the faithfully flat Frobenius $\phi_{P}$ then gives $Tor_{1}^{P}(M_{1}, P/\xi^{p})\subset \xi^{p}M_{1}$. In particular, this also means $M_{1}[\xi]\subset \xi^{p}M_{1}$. This means that $Tor_{1}^{P}(\xi^{p}M_{1}, P/\xi)\to Tor_{1}^{P}(M_{1}, P/\xi)$ is surjective, which in turn means that the natural map $Tor_{1}^{P}(M_{1}, P/\xi)\to Tor_{1}^{P}(M_{1}/\xi^{p}, P/\xi)$ is the zero map. But the latter map is just the map $Tor_{1}^{P}(M_{1}, P/\xi)\to Tor_{1}^{P}(M_{0}, P/\xi)$, which is induced by the transition map (as $M_{1}/\xi^{p}=\phi^{*}_{P}Fil^{a}M_{0}\otimes P/\xi^{p}=\phi^{*}_{P/\xi^{p}}Fil^{a}M_{0}=M_{0}$). For $n>1$, we now have commutative diagrams 
\begin{center}
$\xymatrix{
Tor_{1}^{P}(Fil^{a}M_{n}, P/\xi)\ar@{^{(}->}[r]\ar[d] & Tor_{1}^{P}(M_{n}, P/\xi)\ar[d]\\
Tor_{1}^{P}(Fil^{a}M_{n-1}, P/\xi) \ar@{^{(}->}[r] & Tor_{1}^{P}(M_{n-1}, P/\xi)
}$
\end{center}
where by induction we may assume that the right vertical arrow is the zero map. Base changing the left vertical map along $\phi_{P}$ yields that $Tor_{1}^{P}(M_{n+1}, P/\xi^{p})\to Tor_{1}^{P}(M_{n}, P/\xi^{p})$ is the zero map. But then, as the $\xi$-torsion is contained in the $\xi^{p}$-torsion, we see that also $M_{n+1}[\xi]\to M_{n}[\xi]$ is the zero map.\\
For the second part of the proposition, assume that $M_{0}$ is finite projective over $P/\xi^{p}$. From the $\xi$-torsion freeness of $M$, it follows that $Tor_{1}^{P/\xi^{p^{n}}}(M/\xi^{p^{n}}, P/\xi^{p})=0$, for all $n\geq 1$. Thus, since $M/\xi^{p}=M_{0}$ is flat over $P/\xi^{p}$, the local flatness criterion yields that $M/\xi^{p^{n}}$ is finite flat over $P/\xi^{p^{n}}$, for all $n\geq 1$. Since $M_{0}$ is even finite projective, (\cite[Tag 0D4B]{stacks}) then  yields that $M=\varprojlim_{n}M/\xi^{p^{n}}$ is finite projective over $P$. Since $\sigma$ is faithfully flat, we also get that $Fil^{a}M$ is finite projective.\\
\end{proof}
\begin{prop}\label{prop-full-faith}
Let $B$ be a quasi-regular semiperfectoid $\mathcal{O}_{C}$-algebra. The functors 
\begin{align*}
DivCR_{[0, a]}^{\wedge, \phi, tor}(\prism_{B}) & \longrightarrow  \MFa^{ \mr{tor}}(\prism_{B/p})\\
DivCR_{[0, a]}^{\wedge, \phi}(\prism_{B}) & \longrightarrow  \MFa(\prism_{B/p})
\end{align*}
induced by base extending along $\prism_{B}\to \prism_{B/p}$ are exact and fully faithful.
\end{prop}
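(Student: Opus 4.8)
The plan is to promote the mod-$p$ analysis of \cref{lem-mod-p} to arbitrary torsion coefficients and then to pass to the $p$-adic limit. First I would reduce to the torsion case: the underlying module $M$ of a torsion-free object is $p$-complete and $p$-torsion free, so a morphism of triples over $\prism_B$ is determined by, and can be reassembled from, its reductions modulo $p^n$; the same holds over $\prism_{B/p}$, and the base-change functor $F$ commutes with reduction mod $p^n$ (since $M_{\mr{cris}}=M\hat\otimes_{A_{\mr{inf}}}A_{\mr{cris}}$, while the filtration correction $Fil^{p}A_{\mr{cris}}\cdot M_{\mr{cris}}$ and the divided Frobenius are visibly compatible with it). Hence it suffices to prove exactness and full faithfulness on objects killed by $p^n$, and then the general statement follows from the definition of $\MFa$ and $\DPa$ (resp.\ the torsion versions) as the smallest categories containing the torsion and torsion-free subcategories.

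For the torsion case I would follow the factorization of \cref{lem-mod-p}. The ring $A_{\mr{cris}}/p^n$ is killed by $\xi^{N}$ for $N=p^n$, because $v_p\big((p^n)!\big)=1+p+\dots+p^{n-1}\ge n$ and $\xi^{p^n}=(p^n)!\,\gamma_{p^n}(\xi)$ in $A_{\mr{cris}}$. Consequently the canonical map $\prism_B/p^n\to\prism_{B/p}/p^n=\prism_B/p^n\otimes_{A_{\mr{inf}}/p^n}A_{\mr{cris}}/p^n$ factors as
\[
\prism_B/p^n\ \twoheadrightarrow\ \prism_B/(p^n,\xi^{N})\ \xrightarrow{\ g\ }\ \prism_{B/p}/p^n ,
\]
where $g$ is base change along the faithfully flat map $A_{\mr{inf}}/(p^n,\xi^{N})\to A_{\mr{cris}}/p^n$ (freeness of $A_{\mr{cris}}/p^n$ over $A_{\mr{inf}}/(p^n,\xi^{p^n})$ follows from the explicit PD-presentation of $A_{\mr{cris}}$, exactly as for $n=1$), and $g$ is split by the base change of the canonical retraction $A_{\mr{cris}}/p^n\to A_{\mr{inf}}/(p^n,\xi^{N})$ sending $\gamma_k(\xi)$ to $\xi^k/k!$ for $k<p$ and to $0$ for $k\ge p$. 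Just as in \cref{lem-mod-p}, base change along $g$ is then an equivalence of the corresponding $\DPa$ and $\MFa$ categories (transport the filtration $Fil^{a}M_{\mr{cris}}=Fil^{a}M\otimes A_{\mr{cris}}+Fil^{p}A_{\mr{cris}}\cdot M_{\mr{cris}}$ along the section and check $g^{*}h^{*}=\mathrm{id}$ and $h^{*}g^{*}=\mathrm{id}$ by unwinding the Frobenius). Thus everything reduces to showing that base change along the surjection $\prism_B/p^n\twoheadrightarrow\prism_B/(p^n,\xi^{N})$ is an equivalence onto its essential image.

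This last step is the technical heart, and I would treat it by the inverse-limit construction from the proof of \cref{lem-mod-p}: given a triple $(M_0,Fil^{a}M_0,\Phi_0)$ over $\prism_B/(p^n,\xi^{N})$, set inductively $M_{k+1}:=Fil^{a}M_k\otimes_{\prism_B/p^n,\,\sigma}\prism_B/p^n$ and $Fil^{a}M_{k+1}:=\ker\!\big(M_{k+1}\to M_k\to M_k/Fil^{a}M_k\big)$, and put $(M,Fil^{a}M):=\varprojlim_k(M_k,Fil^{a}M_k)$. One checks that $M_k/\xi^{N}$, $M_k/Fil^{a}M_k$ and $Fil^{a}M_k/\xi$ stabilise, that the transition maps are surjective (as $\xi\in\mathrm{Rad}(\prism_B/p^n)$), that $\mc{N}^{a}_{\prism_B}\cdot M\subset Fil^{a}M$, and that the maps $\sigma^{*}Fil^{a}M_k\xrightarrow{\sim}M_k$ assemble to an isomorphism $\sigma^{*}Fil^{a}M\xrightarrow{\sim}M$, so the triple lies in $\DPa(\prism_B/p^n)$. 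That this is a two-sided quasi-inverse on the essential image of the forward functor is where $a\le p-2$ is used: it forces $\Phi^{a}_M(\xi^{N}M)\subset\xi^{N}M$ via the compatibility \ref{cond-frob-2} (the analogue of the condition used in the proof of \cref{lem-mod-p}), so that running the construction on the reduction $(M/\xi^{N},\dots)$ of an object of $\DPa(\prism_B/p^n)$ returns the original object. Exactness of $F$ comes out along the way: the only non-flat step, base change along $\prism_B/p^n\twoheadrightarrow\prism_B/(p^n,\xi^{N})$, preserves short exact sequences of divided $F$-crystals because the filtrations are strict and the Frobenius structure forces the relevant higher $\mathrm{Tor}$'s against such an object to vanish, by a Frobenius-twisting argument as in \cref{qrsp-adm} and \cref{prop-admissible} (alternatively, read exactness off the exact inverse-limit quasi-inverse). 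Finally, exactness together with full faithfulness on objects killed by $p$ (\cref{lem-mod-p}) upgrades, by the usual dévissage along $0\to p^{n-1}\mc{T}\to\mc{T}\to\mc{T}/p^{n-1}\to0$ using $\Z_p/p^n$-flatness and the five lemma, to full faithfulness on all torsion objects.

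I expect the main obstacle to be precisely the inverse-limit construction over $\prism_B/p^n$, namely verifying that it is a genuine quasi-inverse on the essential image: one must track carefully how the divided Frobenius interacts with the $\xi^{N}$-adic structure modulo $p^n$ (where the bound $a\le p-2$ is essential), alongside establishing the freeness of $A_{\mr{cris}}/p^n$ over $A_{\mr{inf}}/(p^n,\xi^{p^n})$; the dévissage and the reduction to the torsion case are then routine.
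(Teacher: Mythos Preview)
Your overall strategy---reduce to torsion, establish exactness, then d\'evissage from \cref{lem-mod-p}---is exactly the paper's, and the final paragraph is correct. The problem is everything in the middle.

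The factorisation you propose at level $p^n$ collapses for $n\geq 2$: the ring retraction $A_{\mr{cris}}/p^n\to A_{\mr{inf}}/(p^n,\xi^{p^n})$ that you describe does not exist. You want to send $\gamma_k(\xi)\mapsto 0$ for $k\geq p$, but in $A_{\mr{cris}}$ one has $\xi^p=p!\,\gamma_p(\xi)$, so your map sends $\xi^p\mapsto 0$; yet a retraction must be the identity on the image of $A_{\mr{inf}}$, hence must send $\xi^p\mapsto\xi^p$, and $\xi^p\neq 0$ in $A_{\mr{inf}}/(p^n,\xi^{p^n})$ once $n\geq 2$. No choice of $N$ fixes this: if $N\leq p$ the forward map $A_{\mr{inf}}/(p^n,\xi^N)\to A_{\mr{cris}}/p^n$ fails (since $\xi^p\notin p^2A_{\mr{cris}}$), and if $N>p$ the retraction fails as above. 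For the same reason the asserted freeness of $A_{\mr{cris}}/p^n$ over $A_{\mr{inf}}/(p^n,\xi^{p^n})$ is not the straightforward generalisation of the $n=1$ case you suggest. Consequently the inverse-limit construction you build on top of this factorisation has no foundation, and your derivation of exactness ``along the way'' is unsupported.

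The paper avoids all of this: exactness is a two-line Tor computation. Since $A_{\mr{cris}}$ is $p$-torsion free, $A_{\mr{cris}}\otimes^{L}_{A_{\mr{inf}}}A_{\mr{inf}}/p$ sits in degree $0$; since a $p$-torsion object $N$ in $DivCR^{\wedge,\phi,\mr{tor}}_{[0,a]}(\prism_B)$ is flat over $A_{\mr{inf}}/p=\mathcal{O}_C^\flat$, one gets $\mathrm{Tor}_1^{A_{\mr{inf}}}(A_{\mr{cris}},N)=0$, and by d\'evissage the same holds for any torsion object. Thus $0\to M'_{\mr{cris}}\to M_{\mr{cris}}\to M''_{\mr{cris}}\to 0$ is exact. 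For the filtrations, note $M/Fil^{a}M$ is $\xi^{a}$-torsion (because $\mathcal{N}^{a}_{\prism_B}M\subset Fil^{a}M$), hence unchanged by $-\otimes_{A_{\mr{inf}}}A_{\mr{cris}}$; the snake lemma then gives exactness on the $Fil^{a}$'s. With exactness in hand, your own final d\'evissage paragraph finishes the proof. The inverse-limit construction you sketch is close in spirit to what the paper does \emph{later} to prove essential surjectivity over perfectoid $B$, but it is neither needed nor available in the form you state for the present proposition.
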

\begin{proof}
Let
\begin{center}
$0\to (M', Fil^{a}M', \Phi^{a}_{M'})\to (M, Fil^{a}M, \Phi^{a}_{M})\to (M'', Fil^{a}M'', \Phi^{a}_{M''})\to 0$
\end{center}
be an exact sequence in $DivCR_{[0, a]}^{\wedge, \phi, tor}(\prism_{B})$. Since $M''$ is a successive extension of $p$-torsion objects, we get $Tor_{1}^{A_{\mr{inf}}}(A_{\mr{cris}}, M'')=0$. Namely, any object $N$, killed by $p$, is flat over $\mathcal{O}_{C}^{\flat}$, from which we get $Tor_{1}^{A_{\mr{inf}}}(A_{\mr{cris}}, N)=0$, using that $A_{\mr{cris}}$ is $p$-torsion free. Therefore, the sequence 
\begin{center}
$0\to M'_{\mr{cris}}\to M_{\mr{cris}}\to M_{\mr{cris}}''\to 0$
\end{center}
is exact. Now, since $\mathcal{N}^{\leq a}\prism_{B}M\subset Fil^{a}M$, we have that $M/Fil^{a}M$ is $\xi^{a}$-torsion. From this, it follows that $M_{\mr{cris}}/Fil^{a}M_{\mr{cris}}=M/Fil^{a}M$, and likewise that $M'_{\mr{cris}}/Fil^{a}M'_{\mr{cris}}=M'/Fil^{a}M'$ and $M''_{\mr{cris}}/Fil^{a}M''_{\mr{cris}}=M''/Fil^{a}M''$. Hence, the sequence
\begin{center}
$0\to M'_{\mr{cris}}/Fil^{a}M'_{\mr{cris}}\to M_{\mr{cris}}/Fil^{a}M_{\mr{cris}}\to M''_{\mr{cris}}/Fil^{a}M''_{\mr{cris}}\to 0$
\end{center}
is exact, which then also yields the exactness of the sequence 
\begin{center}
$0\to Fil^{a}M'_{\mr{cris}}\to Fil^{a}M_{\mr{cris}}\to Fil^{a}M_{\mr{cris}}''\to 0$.
\end{center}

Full faithfulness now follows by d\'evissage from \cref{lem-mod-p}.
\end{proof}
Let $B$ be perfectoid and let $n>0$. An object $(H, Fil^{a}H, \Phi^{a})\in DivCR_{[0, a]}^{\wedge, \phi, \mr{tor}}(\prism_{B})$ is called $A_{\mr{inf}}/p^{n}$-flat, if $H$ is $A_{\mr{inf}}/p^{n}$-flat. It then follows from \cref{quot-flatness} that $H/Fil^{a}H$ is $\Z_p/p^{n}$-flat. An object $(H, Fil^{a}H, \Phi^{a})\in \MFa^{\mr{tor}}(\prism_{B/p})$ is called $A_{\mr{cris}}/p^{n}$-flat, if $H$ is $A_{\mr{cris}}/p^{n}$-flat and $H/Fil^{a}H$ is $\Z_p/p^{n}$-flat. \\

\begin{constr}
The following construction generalizes the one for mod $p$ objects from the proof of \cref{lem-mod-p}. Assume that $B$ is perfectoid and let $(H, Fil^{a}H, \Phi^{a})\in \MFa^{\mr{tor}}(\prism_{B/p})$ be $A_{\mr{cris}}/p^{k}$-flat. We want to show that it comes from an object over $\prism_{B}$. We construct a $\prism_{B}$-linear inverse system as follows:\\
First set $M_{-1}=M_{0}:=H$ and $F^{a}M_{-1}=F^{a}M_{0}=Fil^{a}H$. For $n\geq 1$, we then inductively define $M_{n}:=\phi_{\prism_{B}}^{*}F^{a}M_{n-1}$ and define $F^{a}M_{n}$ to be the kernel of the following map
\[M_{n}=\phi_{\prism_{B}}^{*}F^{a}M_{n-1}\to \phi_{\prism_{B}}^{*}F^{a}M_{0}\xrightarrow[Frob]{rel} \phi_{\prism_{B/p}}^{*}F^{a}M_{0}\xrightarrow{\Phi^{a}}M_{0}\to M_{0}/F^{a}M_{0}, \]
where the second map is just induced from the relative Frobenius of $\prism_{B/p}$ over $\prism_{B}$ and the first map is the Frobenius pullback of the transition map $F^{a}M_{n-1}\to F^{a}M_{0}$. The transition maps $M_{n}\to M_{n-1}$ are defined by
\[M_{1}=\phi_{\prism_{B}}^{*}F^{a}M_{0}\xrightarrow[Frob]{rel} \phi_{\prism_{B/p}}^{*}F^{a}M_{0}\xrightarrow{\Phi^{a}}M_{0}\]
and as the pullback of $F^{a}M_{n-1}\to F^{a}M_{n-2}$ along $\phi_{\prism_{B}}$, for $n>1$. We then set $M:=\varprojlim\limits_{n}M_{n}$ and $F^{a}M:=\varprojlim\limits_{n}F^{a}M_{n}$. We also obtain a map $\Phi^{a}_{M}:\phi_{\prism_{B}}^{*}F^{a}M\to M$.\\
For any $A_{\mr{cris}}/p^{k}$-flat $(H, Fil^{a}H, \Phi^{a})\in \MFa^{\mr{tor}}(\prism_{B/p})$ we also denote the associated triple over $\prism_{B}$ by $\mathcal{F}((H, Fil^{a}H, \Phi^{a}))$.
\begin{claim}
\begin{enumerate}
\item The triple $\mathcal{F}((H, Fil^{a}H, \Phi^{a}))$ is a $A_{\mr{inf}}/p^{k}$-flat object in $DivCR_{[0, a]}^{\wedge, \phi, \mr{tor}}(\prism_{B})$.
\item There is a canonical isomorphism $\mathcal{F}((H, Fil^{a}H, \Phi^{a}))_{\mr{cris}}\xrightarrow{\cong} (H, Fil^{a}H, \Phi^{a})$.
\item The natural map 
\[\mathcal{F}((H, Fil^{a}H, \Phi^{a}))/p^{k-1}\xrightarrow{\cong} \mathcal{F}((H/p^{k-1}, Fil^{a}H/p^{k-1}, \Phi^{a}/p^{k-1}))\]
is an isomorphism.

\end{enumerate}
\end{claim}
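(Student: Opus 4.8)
\emph{Proof strategy.} I would prove the three assertions simultaneously by induction on $k$. The base case $k=1$ is contained in the proof of \cref{perf-mod-p}: there the quasi-inverse to base change modulo $p$ is built by exactly this inverse-limit recipe, and it is shown that the output is $\xi$-torsion free, i.e. flat over $A_{\mr{inf}}/p=\mathcal{O}_C^\flat$, and that base changing back recovers the input. So assume $k\ge 2$ and that the claim holds for $k-1$. As a preliminary step I would establish, by induction on $n$, the structural properties of the system $\{(M_n,F^aM_n)\}$: each $M_n$ is finitely generated over $\prism_B$ (as $H$ is, and $\phi^\ast$ is exact and preserves finite generation since $\phi$ is an isomorphism on $\prism_B=A_{\mr{inf}}(B)$ for $B$ perfectoid); the composite $M_n\to M_{n-1}\to\dots\to M_0\twoheadrightarrow M_0/F^aM_0$ is surjective, because its factorization through the relative Frobenius and $\Phi^a$ rests on the defining isomorphism $\phi^\ast_{\prism_{B/p}}Fil^aH\xrightarrow{\cong}H$, whence $M_n/F^aM_n\cong H/Fil^aH=:Q$ (which is $\Z_p/p^k$-flat by hypothesis) and $0\to F^aM_n\to M_n\to Q\to 0$ is exact; and, reducing that same isomorphism modulo $(p,\xi)$, the transition maps $M_{n+1}\to M_n$ become isomorphisms mod $(p,\xi)$, hence are surjective by $(p,\xi)$-completeness and Nakayama. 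In particular $M=\varprojlim_n M_n$ is finitely generated and the system is Mittag-Leffler, so $\varprojlim_n$ is exact on it.

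Next I would dispatch the formal parts. For (3): since $Q$ is $\Z_p/p^k$-flat it is $\Z/p^{k-1}$-flat, so the sequences above stay exact after $-\otimes_\Z \Z/p^{k-1}$, giving $F^aM_n/p^{k-1}=F^a(M_n/p^{k-1})$; together with the fact that $\phi^\ast$ commutes with $-\otimes_\Z \Z/p^{k-1}$ ($\phi$ being flat), the entire recursion defining the system is compatible with reduction mod $p^{k-1}$, and passing to the (exact) limit yields $\mathcal{F}(H)/p^{k-1}\xrightarrow{\cong}\mathcal{F}(H/p^{k-1})$. The identical argument with $p$ in place of $p^{k-1}$ gives $\mathcal{F}(H)/p\cong\mathcal{F}(H/p)$, which is $\mathcal{O}_C^\flat$-flat by the base case. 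In (1), the linearization $\phi^\ast F^aM\to M$ is an isomorphism because $\phi^\ast$ commutes with the limit and $\phi^\ast F^aM=\varprojlim_n\phi^\ast F^aM_n=\varprojlim_n M_{n+1}=M$ via the transition-shift, under which $\Phi^a_M$ is precisely this identification; and the cokernel of the linearization of $\Phi^a_M$ over $\prism_B$ (hence over any $\prism_C$ by base change) is $I^a$-torsion because $\xi^aH\subset Fil^aH$.

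The remaining, and hardest, point of (1) is that $M$ is flat over $A_{\mr{inf}}/p^k$; saturatedness for torsion objects ($I_B$-torsion freeness of $M$ and of $M/p$) then follows from this together with $\xi$-torsion freeness. I would first prove $M[\xi]=0$ by generalizing the argument in the proof of \cref{perf-mod-p}: using flatness of $\phi$, flatness of $Q$ over $\Z/p^k$ (which controls how the $\xi$-power torsion of $F^aM_{n-1}$ sits inside that of $M_{n-1}$) and the inclusion $\xi^aM_n\subset F^aM_n$ with $a\le p-2$, one shows by induction on $n$ that the $\xi$-power torsion of $M_{n+1}$ maps to $0$ in $M_n$. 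One also computes, by the same propagation through $\phi^\ast$ together with the base identity $H[p]=p^{k-1}H$, that $M_n[p]=p^{k-1}M_n$ for all $n$; via the Mittag-Leffler property this gives $M[p]=p^{k-1}M$, i.e. $Tor_1^{A_{\mr{inf}}/p^k}(M,A_{\mr{inf}}/p)=0$. Combined with $M/p\cong\mathcal{F}(H/p)$ being $\mathcal{O}_C^\flat$-flat, the local flatness criterion then yields that $M$ is flat over $A_{\mr{inf}}/p^k$; equivalently one runs the dévissage $0\to p^{k-1}M\cong M/p\to M\to M/p^{k-1}\cong\mathcal{F}(H/p^{k-1})\to 0$, whose outer terms are flat over $A_{\mr{inf}}/p$ and $A_{\mr{inf}}/p^{k-1}$ respectively by the base case and by the inductive hypothesis together with (3).

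Finally, for (2): base changing the system to $A_{\mr{cris}}$ gives $\mathcal{F}(H)_{\mr{cris}}=\varprojlim_n\big(M_n\hat{\otimes}_{A_{\mr{inf}}}A_{\mr{cris}}\big)$ with $M_n\hat{\otimes}A_{\mr{cris}}\cong\phi^\ast_{A_{\mr{cris}}}\big(F^aM_{n-1}\hat{\otimes}A_{\mr{cris}}\big)$, and, by the bookkeeping of \cref{comp-filt}, $F^a(M_{n-1,\mr{cris}})=Im(F^aM_{n-1}\otimes A_{\mr{cris}})+Fil^pA_{\mr{cris}}\cdot M_{n-1,\mr{cris}}$; since $\xi^a\cdot(-)\subset Fil^a$ and $a\le p-2$, the divided Frobenius preserves $Fil^pA_{\mr{cris}}\cdot(-)$, so over $A_{\mr{cris}}$ the recursion is exactly the stabilizing one of \cref{lem-mod-p} whose limit is the input, producing a canonical isomorphism $\mathcal{F}(H)_{\mr{cris}}\xrightarrow{\cong}(H,Fil^aH,\Phi^a)$ compatible with filtrations and Frobenius. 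The main obstacle throughout is the flatness statement in (1): controlling the interaction between the Frobenius twists $\phi^\ast$, the simultaneous $p$- and $\xi$-torsion, and the inverse limit (exactness of $\varprojlim$ and vanishing of $\varprojlim^1$), which is precisely why the three statements have to be bootstrapped together by induction on $k$.
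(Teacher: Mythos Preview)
Your inductive scaffold on $k$ and the overall plan (base case $k=1$ handled by the earlier equivalence, dévissage for flatness, Nakayama for (2)) match the paper. But there is a genuine gap in your treatment of the inverse system that breaks several of your steps.

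You assert that each $M_n$ is finitely generated over $\prism_B$ (``as $H$ is''), and then use Nakayama to conclude that the transition maps $M_{n+1}\to M_n$ are surjective. Neither statement is correct. The module $H=M_0$ is finitely generated over $\prism_{B/p}=\prism_B\hat\otimes_{A_{\mr{inf}}}A_{\mr{cris}}$, not over $\prism_B$; modulo $p$ one has $\prism_{B/p}/p\cong(\prism_B/(p,\xi^p))[\delta_i:i\in\N]/(\delta_i^p)$, which is of infinite rank over $\prism_B/p$. Consequently the $M_n$ are not finitely generated over $\prism_B$, the Nakayama argument is unavailable, and in fact the transition maps are \emph{not} surjective: already $M_1\to M_0$ factors through the relative Frobenius $\phi_{\prism_B}^*Fil^aH\to\phi_{\prism_{B/p}}^*Fil^aH$, and modulo $p$ this relative Frobenius kills all the $\delta_i$, so its image misses anything in $H/p$ involving a nonzero $\delta$-monomial. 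The paper does not claim surjectivity; instead, in the $k=1$ case it compares $(M_n)_n$ to the earlier system $(M'_n)_n$ built over $P=\prism_B/p$ (from the proof of \cref{lem-mod-p}), shows that the image of $M_n$ in $M_{n-1}$ is exactly $\iota(M'_{n-1})$, and deduces Mittag--Leffler (hence $R^1\varprojlim M_n^{(1)}=0$) from the surjectivity of the transition maps of $(M'_n)_n$. This is what makes the passage to the limit in (3) exact.

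Your argument for (2) is also different and, as written, problematic. The identity you use, $M_n\hat\otimes_{A_{\mr{inf}}}A_{\mr{cris}}\cong\phi_{A_{\mr{cris}}}^*\bigl(F^aM_{n-1}\hat\otimes A_{\mr{cris}}\bigr)$, does not hold: base change along $\prism_B\to\prism_{B/p}$ does not turn $\phi_{\prism_B}^*$ into $\phi_{\prism_{B/p}}^*$, again because of the relative Frobenius of $\prism_{B/p}$ over $\prism_B$. The paper instead produces the comparison map directly: the transition map $f_0:M\to M_0=H$ is $\prism_B$-linear, hence induces $M_{\mr{cris}}\to H$; one checks it respects filtrations (via $M/F^aM\cong H/Fil^aH$) and Frobenius (using condition~\eqref{cond-frob-2} for the $Fil^aA_{\mr{cris}}\cdot M_{\mr{cris}}$-part), sees that it is an isomorphism modulo $p$ by the $k=1$ case, and concludes by Nakayama. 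This is the step you should replace your limit-of-systems argument with.
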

The claim will be proved by induction on $k$. We first assume $k=1$ and show that the construction in fact simply gives the equivalence of categories 
\begin{center}
$G:\MFa^{ p-tor}(\prism_{B/p})\longrightarrow DivCR_{[0, a]}^{\wedge, \phi, p-tor}(\prism_{B})$,
\end{center}
which was constructed before. Namely, recall that we have $\prism_{B/p}/p=\prism_{B}/(p, \xi^{p})[\delta_{i}, i\in \N]/(\delta_{i}^{p})$. The functor $G$ was then given by the construction from the proof of \cref{lem-mod-p} applied to the triple $(H, Fil^{a}H, \Phi^{a})/Fil^{p}A_{\mr{cris}}$ over $P/\xi^{p}$. Let $(M'_{n}, F^{a}M_{n}')$ be the inverse system associated to $(H, Fil^{a}H, \Phi^{a})/Fil^{p}A_{\mr{cris}}$. It is then easy to see that there is a morphism $(M_{n}')_{n}\to (M_{n})_{n}$ of inverse systems, such that all the maps $M'_{n}\to M_{n}$ are injective. Namely, recall that $(M'_{n})$ was defined by setting $M'_{0}=H/Fil^{p}A_{\mr{cris}}\cdot H$ and $F^{a}M'_{0}=Fil^{a}H/Fil^{p}A_{\mr{cris}}\cdot H$ and then inductively defining $M'_{n}:=\phi_{P}^{*}F^{a}M'_{n-1}$ and $F^{a}M'_{n}:=ker(M'_{n}\to H/Fil^{a}H)$. Moreover, we have $M_{0}=M'_{0}\otimes_{P/\xi^{p}}P_{\mr{cris}}$ and $F^{a}M_{0}=F^{a}M'_{0}\otimes P_{\mr{cris}}+Fil^{p}A_{\mr{cris}}\cdot M_{0}$. We then inductively obtain commutative diagrams
\begin{center}
$\xymatrix{
M'_{n}=\phi_{P}^{*}F^{a}M'_{n-1}\ar[d]\ar[rr] && \phi_{P}^{*}F^{a}M'_{0}\ar[d]\ar[r]^-{\cong} & M'_{0}\ar[r]\ar[d] & M'_{0}/F^{a}M'_{0} \ar[d]^{=}\\
M_{n}=\phi_{P}^{*}F^{a}M_{n-1}\ar[r] & \phi^{*}_{P}F^{a}M_{0}\ar[r]_-{Frob}^-{rel} & \phi_{P_{\mr{cris}}}^{*}F^{a}M_{0}\ar[r]^-{\Phi^{a}} & M_{0}\ar[r] & M_{0}/F^{a}M_{0}
}$
\end{center}
which then show that the map $M'_{n}\to M_{n}$ takes $F^{a}M'_{n}$ into $F^{a}M_{n}$. Passing to inverse limits, we thus obtain a map $\iota:(M', F^{a}M')\to (M, F^{a}M)$, which is seen to be compatible with the Frobenius maps. Moreover, one sees inductively that $M'_{n}\to M_{n}$ is injective, for all $n\geq 1$. Thus, the map $M'\to M$ is injective as well, by the left exactness of the inverse limit functor. To see that it is also surjective, we note that the relative Frobenius map $\phi_{P}^{*}P_{\mr{cris}}\to P_{\mr{cris}}$ factors as 
\[\phi_{P}^{*}P_{\mr{cris}}=P/\xi^{p^{2}}[\delta_{i}; i\in \N]/(\delta_{i}^{p})\to P/\xi^{p}\to P_{\mr{cris}},\]
where the first map sends all $\delta_{i}$ to zero and is the canonical projection on $P/\xi^{p^{2}}$. Using this, one inductively sees that the image of $M_{n}$ in $M_{n-1}$ is equal to $\iota(M_{n-1}')$. This shows that the map $\iota:(M', F^{a}M')\to (M, F^{a}M)$ is an isomorphism, which proves part (1) and (2) of the claim in the $p$-torsion case. Moreover, we remark that this also shows (since the transition maps of $(M'_{n})_{n}$ are surjective) that the inverse system $(M_{n})_{n}$ satisfies the Mittag-Leffler condition, so that $R^{1}\varprojlim\limits_{n}M_{n}=0$.\\
Assume now that $k>1$. As before, we denote the triple $\mathcal{F}((H, Fil^{a}H, \Phi^{a}))$ by $(M, F^{a}M, \Phi^{a}_{M})$. We first show that $M$ is flat over $\Z_p/p^{k}$. Since $H$ and $H/Fil^{a}H$ (and hence also $Fil^{a}H$) are flat over $\Z_p/p^{k}$, one sees inductively that $M_{n}$ is flat, for all $n\geq 0$. Thus, for all $n$, we have compatible exact sequences 
\[0\to M_{n}^{(1)}\to M_{n}\to M_{n}^{(k-1)}\to 0,\]
where we write $M^{(r)}_{n}:=M_{n}/p^{r}$. We remark that the construction of the inverse system is clearly compatible with reduction modulo $p^{r}$, so that $M_{n}^{(r)}$ is just the inverse system associated to $(H/p^{r}, Fil^{a}H/p^{r}, \Phi^{a}/p^{r})$. The maps on the left $M_{n}^{(1)}\to M_{n}$ are given by multiplication with $p^{k-1}$. In particular, the image is identified with $p^{k-1}M_{n}$. We clearly have $\varprojlim\limits_{n}(p^{k-1}M_{n})=p^{k-1}(\varprojlim\limits_{n}M_{n})$. Applying the inverse limit functor to the compatible exact sequences above and using $R^{1}\varprojlim M_{n}^{(1)}=0$, we get an exact sequence
\[0\to p^{k-1}M\to M\to \varprojlim\limits_{n} M^{(k-1)}_{n}\to 0.\]
This shows that $M/p^{k-1}=\varprojlim\limits_{n} M^{(k)}_{n}=\mathcal{F}((H/p^{k-1}, Fil^{a}H/p^{k-1}, \Phi^{a}/p^{k-1})$. Inductively, we see that $M/p^{k-r}=\mathcal{F}((H/p^{k-r}, Fil^{a}H/p^{k-r}, \Phi^{a}/p^{k-r}))$, for all $r<k$, which proves part (3) of the claim. This in turn also shows that the kernel $p^{k-1}M$ is identified with the map $M/p\xrightarrow{\cdot p^{k-1}} M$, which shows that $M$ is flat over $\Z_p/p^{k}$. The fiberwise flatness criterion then gives that $M$ is flat over $A_{inf}/p^{k}$. Since $(M, F^{a}M, \Phi^{a}M)/p=\mathcal{F}((H/p, Fil^{a}H/p, \Phi^{a}/p))$ is an object in $DivCR_{[0, a]}^{\wedge, \phi, \mr{tor}}(\prism_{B})$, this is enough to see that $(M, F^{a}M, \Phi^{a}M)$ is indeed an object in $DivCR_{[0, a]}^{\wedge, \phi, \mr{tor}}(\prism_{B})$ as well.\\
We still need to prove part (2) of the claim. The transition map $f_{0}:M\to H$ induces a map $M_{\mr{cris}}\to H$, which takes $F^{a}M_{\mr{cris}}=Im(F^{a}M\otimes A_{\mr{cris}})+Fil^{a}A_{\mr{cris}}\cdot M_{\mr{cris}}$ into $Fil^{a}H$, since $f_{0}$ fits into the commutative diagram 
\begin{center}
$\xymatrix{
M\ar[rr]^{f_{0}}\ar[dr] && H\ar[dl] \\
 & M/F^{a}M=H/Fil^{a}H &.
}$
\end{center}
Recall from \cref{sec-constr} that the Frobenius on $F^{a}M_{\mr{cris}}$ is defined as follows: On $Im(F^{a}M\otimes A_{\mr{cris}})$ it is induced from $\Phi^{a}_{M}\otimes \phi$, which is easily seen to be compatible with the Frobenius on $Fil^{a}H$. On $Fil^{a}A_{\mr{cris}}\cdot M_{\mr{cris}}=Fil^{a}A_{\mr{cris}}\otimes_{A_{inf}}M$, it is defined as $\frac{\phi_{A_{\mr{cris}}}}{\phi(\xi)^{a}}\otimes \Phi^{0}_{M}$. But this is also compatible with the Frobenius on $Fil^{a}H$ because $\Phi^{a}_{H}$ satisfies \cref{cond-frob-2}.\\
All in all we get a map $g:(M, F^{a}M, \Phi^{a}_{M})_{\mr{cris}}\to (H, Fil^{a}H, \Phi_{H}^{a})$ in $\MFa(\prism_{B/p})$. Using the compatibility of the construction $\mathcal{F}$ with reduction mod $p$ and also \cref{lem-mod-p}, we get that $g$ is an isomorphism mod $p$. Hence, $g$ is an isomorphism by Nakayama's lemma.
\end{constr}

\begin{prop}\label{perf-equiv-tor}
Let $B$ be a perfectoid ring. Then the functor
\begin{align*}
DivCR_{[0, a]}^{\wedge, \phi, tor}(\prism_{B}) & \longleftrightarrow  \MFa^{ tor}(\prism_{B/p})
\end{align*}
is an equivalence of categories.
\end{prop}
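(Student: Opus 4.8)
The plan is to prove the two halves of an equivalence separately; almost all of the genuine work is already packaged into the results preceding the statement. First, full faithfulness: a perfectoid $\mathcal{O}_{C}$-algebra is in particular a quasi-regular semiperfectoid $\mathcal{O}_{C}$-algebra, so \cref{prop-full-faith} applies verbatim and tells us that base change along $\prism_{B}\to \prism_{B/p}$ is exact and fully faithful on the torsion categories. Hence the only thing left is essential surjectivity.

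For essential surjectivity I would use the construction $\mathcal{F}$ introduced just above the statement. By definition (see \cref{MF-tor}), every object of $\MFa^{\mr{tor}}(\prism_{B/p})$ is a direct sum of triples $(H, Fil^{a}H, \Phi^{a})$, each $A_{\mr{cris}}/p^{k}$-flat for some $k\geq 1$. For such a triple, the Claim attached to that construction gives exactly what is needed: part (1) says $\mathcal{F}((H, Fil^{a}H, \Phi^{a}))$ is an $A_{\mr{inf}}/p^{k}$-flat object of $DivCR^{\wedge,\phi,tor}_{[0,a]}(\prism_{B})$, and part (2) gives a canonical isomorphism $\mathcal{F}((H, Fil^{a}H, \Phi^{a}))_{\mr{cris}} \xrightarrow{\cong} (H, Fil^{a}H, \Phi^{a})$. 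Since both $\mathcal{F}$ and base change commute with direct sums, extending $\mathcal{F}$ additively yields a functor $\MFa^{\mr{tor}}(\prism_{B/p}) \to DivCR^{\wedge,\phi,tor}_{[0,a]}(\prism_{B})$ whose composite with base change is naturally isomorphic to the identity; in particular base change is essentially surjective. Together with full faithfulness this proves the equivalence, and moreover exhibits this additive $\mathcal{F}$ as a quasi-inverse, since full faithfulness upgrades the isomorphisms of part (2) to a natural isomorphism $\mathcal{F}\big((-)_{\mr{cris}}\big)\cong \mathrm{id}$ on $DivCR^{\wedge,\phi,tor}_{[0,a]}(\prism_{B})$ as well.

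So from the present vantage point there is no serious obstacle left: the difficulty has been front-loaded into the construction $\mathcal{F}$ and its Claim. There, the essential input --- and the place where perfectoidness of $B$ is really used, through surjectivity of $\phi_{\prism_{B}}$ and the freeness of $A_{\mr{cris}}/p$ over $\mathcal{O}_{C}^{\flat}/\xi^{p}$ --- is that the inverse system $(M_{n})_{n}$ one builds is Mittag--Leffler, so that $R^{1}\varprojlim_{n} M_{n}=0$; this is what forces $\mathcal{F}((H,\ldots))$ to be flat over $A_{\mr{inf}}/p^{k}$ and compatible with reduction modulo $p^{k-1}$, and it is proved by induction on $k$ with base case $k=1$ being the equivalence for $p$-torsion objects of \cref{perf-mod-p}. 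Were that construction not available, reconstructing it --- and carrying out the saturatedness and flatness bookkeeping, e.g.\ via \cref{quot-flatness} --- would be the main obstacle.
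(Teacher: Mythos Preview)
Your proposal is correct and follows essentially the same approach as the paper: the paper's proof is the single sentence ``The construction above gives a quasi-inverse,'' which packages exactly the ingredients you name (full faithfulness from \cref{prop-full-faith} and the Claim about $\mathcal{F}$). Your write-up simply unpacks this, and your final remark that full faithfulness upgrades part (2) of the Claim to a two-sided natural isomorphism is precisely what justifies calling $\mathcal{F}$ a quasi-inverse.
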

\begin{proof}
The construction above gives a quasi-inverse.
\end{proof}

By d\'evissage, we then also obtain the equivalence for $p$-torsion free objects.
\begin{cor}\label{perf-equiv-free}
Let $B$ be a perfectoid ring. Then the base-change functor
\begin{align*}
DivCR_{[0, a]}^{\wedge, \phi}(\prism_{B}) & \longleftrightarrow  \MFa(\prism_{B/p})
\end{align*}
is an equivalence of categories.
\end{cor}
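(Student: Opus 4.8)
The plan is to deduce this from the torsion case \cref{perf-equiv-tor} by a $p$-adic d\'evissage. Recall that $DivCR_{[0, a]}^{\wedge, \phi}(\prism_{B})$ and $\MFa(\prism_{B/p})$ are by definition the smallest categories containing their $p$-torsion and $p$-torsion free full subcategories. Since $\mr{Hom}(M, N) = 0$ whenever $M$ is killed by a power of $p$ and $N$ is $p$-torsion free, and $\mr{Hom}(N, M) = \mr{Hom}(N/p^{n}, M)$ when $M$ is killed by $p^{n}$ (so that morphisms from a torsion free object to a torsion object are controlled by the torsion subcategory), it is enough to check that the base-change functor is fully faithful and essentially surjective on the full subcategories of $p$-torsion free objects; the torsion part is \cref{perf-equiv-tor}, and full faithfulness there (needed for the mixed morphisms) is \cref{prop-full-faith}.

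The key observation is that the base-change functor is compatible with reduction modulo $p^{n}$: by its very construction $(M_{\mr{cris}})/p^{n}$ and $(M/p^{n})_{\mr{cris}}$ agree as modules (completed base change along $\prism_{B}\to\prism_{B/p}$ commutes with $(-)/p^{n}$ on finitely generated complete modules), and the explicit formulas for $Fil^{a}M_{\mr{cris}}$ and $\Phi^{a}_{M_{\mr{cris}}}$ also reduce correctly, using \cref{quot-flatness} to see that $Fil^{a}M/p^{n}\hookrightarrow M/p^{n}$. For full faithfulness, given $p$-torsion free saturated objects $(M, Fil^{a}M, \Phi^{a}_{M})$, $(N, Fil^{a}N, \Phi^{a}_{N})$ over $\prism_{B}$, the morphism group in $DivCR_{[0, a]}^{\wedge, \phi}(\prism_{B})$ is $\varprojlim_{n}\mr{Hom}(M/p^{n}, N/p^{n})$ since $N$ is classically $p$-complete, and likewise on the $\MFa$-side; these limits agree termwise by \cref{perf-equiv-tor} and \cref{prop-full-faith}.

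For essential surjectivity, let $(H, Fil^{a}H, \Phi^{a}_{H})$ be a $p$-torsion free object of $\MFa(\prism_{B/p})$; it is in particular $A_{\mr{cris}}$-flat with $p$-torsion free quotient, so each reduction $(H/p^{n}, Fil^{a}H/p^{n}, \Phi^{a}_{H}/p^{n})$ is an $A_{\mr{cris}}/p^{n}$-flat object of $\MFa^{\mr{tor}}(\prism_{B/p})$. By \cref{perf-equiv-tor} it is the base change of an essentially unique $A_{\mr{inf}}/p^{n}$-flat object $M_{n}=\mathcal{F}((H/p^{n}, Fil^{a}H/p^{n}, \Phi^{a}_{H}/p^{n}))$ of $DivCR_{[0, a]}^{\wedge, \phi, \mr{tor}}(\prism_{B})$, and by part (3) of the Claim preceding \cref{perf-equiv-tor} these fit into an inverse system with $M_{n+1}/p^{n}\cong M_{n}$ and surjective transition maps. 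Setting $(M, Fil^{a}M, \Phi^{a}_{M}):=\varprojlim_{n}(M_{n}, Fil^{a}M_{n}, \Phi^{a}_{M_{n}})$, one checks that $M$ is $A_{\mr{inf}}$-flat — hence $p$-torsion free and saturated — with $M/p^{n}\cong M_{n}$, that $\Phi^{a}_{M}$ is an isomorphism after linearization (checked modulo $p$), and that $M_{\mr{cris}}=\varprojlim_{n}(M_{n})_{\mr{cris}}=\varprojlim_{n}H/p^{n}=H$ compatibly with filtrations and Frobenius. Thus $(M, Fil^{a}M, \Phi^{a}_{M})\in DivCR_{[0, a]}^{\wedge, \phi, \mr{tor-free}}(\prism_{B})$ maps to $(H, Fil^{a}H, \Phi^{a}_{H})$.

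The one genuinely delicate point is the passage to the limit: one must verify that the inverse limit of the $A_{\mr{inf}}/p^{n}$-flat objects $M_{n}$ with compatible reductions is again flat — so that the limit lies in $DivCR_{[0, a]}^{\wedge, \phi, \mr{tor-free}}(\prism_{B})$ and not merely in the ``big'' category — and that forming $(-)_{\mr{cris}}$ commutes with this limit. Both follow from the arguments already in the construction preceding \cref{perf-equiv-tor} (surjectivity of transition maps, vanishing of $R^{1}\varprojlim$ via Mittag--Leffler, and the fiberwise flatness criterion), so this step amounts to reassembling that construction rather than proving anything new.
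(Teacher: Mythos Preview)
Your proposal is correct and follows essentially the same route as the paper: the paper's proof is just the single sentence ``By d\'evissage, we then also obtain the equivalence for $p$-torsion free objects,'' and what you have written is a careful unpacking of that d\'evissage, using \cref{prop-full-faith} for full faithfulness and the construction $\mathcal{F}$ preceding \cref{perf-equiv-tor} (applied to the inverse system $H/p^{n}$ and passed to the limit) for essential surjectivity on torsion-free objects.
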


\subsection{The case of small ramification}
In this section we will briefly consider the case where the ramification degree $e$ of $K$ is small, i.e. we assume that $ea<p-1$. The results here follow immediately from various previous works (in particular using arguments from \cite{CarLiu} and \cite{Fal-MF-st}), but we still record them for the reader's convenience.\\
We first show that in this case every completed prismatic $F$-crystal is projective. This is also remarked in \cite[Rmk. 3.37]{DLMS}. We will give a simple direct proof of this, using an argument from \cite[\S 2]{Fal-MF-st}.
\begin{prop}\label{frob-mod-proj}
Let $\mathcal{X}=Spf(R)$ be small affine. Assume that $ea<p-1$ and let $(\mathcal{M}, \Phi_{\mathcal{M}})\in CR^{\wedge, \phi}_{[0, a]}(\mathcal{X}_{\prism})$, such that $M=\mathcal{M}(\mathfrak{S}(R))$ is $\Z_p/p^{n}$-flat for some $n\geq 0$. Then $M$ is a projective module over $\mathfrak{S}(R)/p^{n}$. In particular $\mathcal{M}$ is a crystal in finite locally free $\mathcal{O}_{\prism}/p^{n}$-modules.
\end{prop}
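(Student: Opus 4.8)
The plan is to reduce to a purely characteristic-$p$ statement about modules with an isogeny Frobenius over a regular local ring, following the argument of Faltings in \cite[\S 2]{Fal-MF-st}. Since we are dealing with $\mathbb{Z}_p/p^n$-flat objects, a standard d\'evissage in $n$ — using that projectivity of $M/p^n$ follows once each graded piece $p^i M/p^{i+1}M \cong M/p$ is projective, together with \cite[Tag 0D4B]{stacks} to assemble the finite projective modules $M/p^n = \varprojlim_i M/p^i$ — lets us assume $n=1$, i.e. $M$ is a finitely generated module over the regular ring $\mathfrak{S}(R)/p = R_0[[u]]/p$, equipped with a $\phi$-linear map $\Phi_M$ whose linearization $\phi^* M \to M$ is injective with cokernel killed by $E(u)^a = u^{ea}$ (mod $p$, $E(u)$ is a unit times $u^e$). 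The hypothesis $ea < p-1$ is what guarantees that the Nygaard-type constructions stay within the PD-range, but for the projectivity statement itself the key point is just that the cokernel is killed by a fixed power of $u$.

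First I would recall that $\mathfrak{S}(R)/p$ is a regular ring: it is $(p$-completely$)$ smooth over $k[[u]]$, hence Noetherian regular of finite Krull dimension, and $u$-adically separated. Over such a ring, a finitely generated module $M$ carrying a $\phi$-linear isogeny $\Phi_M$ (injective linearization, cokernel $u^N$-torsion for some $N$) is automatically projective. The argument is local: at any prime, one shows $M$ is free by an induction on the dimension, exploiting that Frobenius is faithfully flat on a regular $\mathbb{F}_p$-algebra (this is Kunz's theorem), so that $\phi^* M$ has the same depth/projective-dimension profile as $M$; the isogeny relation $u^N \cdot (\text{coker}) = 0$ then forces $\mathrm{pd}_{\mathfrak{S}(R)/p}(M) = \mathrm{pd}(\phi^* M)$, and since $\phi^* M = M \otimes_{\phi} \mathfrak{S}(R)/p$ with $\phi$ faithfully flat we get $\mathrm{pd}(M) = \mathrm{pd}(\phi^* M) = \mathrm{pd}(M)$ only consistently if this common value is $0$ — more precisely one runs the Auslander–Buchsbaum argument to conclude $M$ is maximal Cohen–Macaulay and then, by regularity, free. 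I would cite \cite[\S 2]{Fal-MF-st} and the discussion around \cref{proj-away}/\cref{frob-mod-proj}'s companion statements for the precise form, rather than reproducing it.

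Having established projectivity of $M = \mathcal{M}(\mathfrak{S}(R))/p^n$ over $\mathfrak{S}(R)/p^n$, the crystal property propagates it: for any bounded prism $(B,I) \in \mathcal{X}_\prism$ (resp.\ any qrsp $\mathrm{Spf}(B) \in \mathcal{X}_{\mathrm{qsyn}}$) one has $\mathcal{M}(B) = \mathcal{M}(\mathfrak{S}(R)) \,\hat\otimes_{\mathfrak{S}(R)} B$ by the base-change/crystal axiom, and a completed base change of a finite projective module is finite projective, so $\mathcal{M}$ is a crystal in finite locally free $\mathcal{O}_\prism/p^n$-modules. The main obstacle is really just the characteristic-$p$ regularity argument of the second paragraph: one has to be slightly careful that $\mathfrak{S}(R)/p$ is not local but only regular of finite dimension, so the "projective $=$ locally free" passage needs the statement applied at all localizations and then glued; and one must check the isogeny hypothesis survives localization, which it does since $E(u)$ (a power of $u$ up to unit mod $p$) remains a nonzerodivisor and the cokernel remains $u^{ea}$-torsion. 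No genuinely new input beyond Kunz's theorem and Auslander–Buchsbaum is needed, which is why the proposition is elementary once set up correctly.
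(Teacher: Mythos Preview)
Your d\'evissage to $n=1$ is fine, but the characteristic-$p$ step is where the argument breaks down, and it breaks down precisely because you have dismissed the hypothesis $ea<p-1$ as irrelevant to projectivity. It is not: the paper's proof (following Faltings) uses it in an essential quantitative way. Your sketch says that faithful flatness of Frobenius gives $\mathrm{pd}(\phi^*M)=\mathrm{pd}(M)$, and that this equality is ``only consistent if this common value is $0$''. But $\mathrm{pd}(\phi^*M)=\mathrm{pd}(M)$ is a tautology under Kunz's theorem and imposes no constraint whatsoever; the isogeny relation $u^{ea}\cdot\mathrm{coker}(\Phi_M)=0$ does not by itself pin down the projective dimension. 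The vague appeal to Auslander--Buchsbaum does not repair this: you have given no mechanism to conclude that $M$ is maximal Cohen--Macaulay.

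The actual argument is a length count on $N=\mathrm{Ext}^i_A(M,A)$ for $i>0$, after localizing so that $N$ has finite length. Faithful flatness of $\phi$ gives $\mathrm{Ext}^i(\phi^*M,A)\cong\phi^*N$, and the long exact sequence together with $u^{ea}$-torsion of the cokernel shows $u^{ea}N$ and $u^{ea}\phi^*N$ have the same length. Faltings' inequality then yields $\ell(u^{ea}\phi^*N)\geq \ell(N)(p-ea)^{\mathrm{ht}(\mathfrak{p})}$, whence $\ell(N)\geq \ell(N)(p-ea)^{\mathrm{ht}(\mathfrak{p})}$. This forces $N=0$ \emph{only because} $p-ea>1$; with an arbitrary $u^N$-torsion cokernel you would get no contradiction. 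So your assertion that ``the key point is just that the cokernel is killed by a fixed power of $u$'' is false, and the proof as written has a genuine gap at its core.

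A smaller point: in your last paragraph you write $\mathcal{M}(B)=\mathcal{M}(\mathfrak{S}(R))\hat\otimes_{\mathfrak{S}(R)}B$ for an arbitrary prism $(B,I)$, but $\mathfrak{S}(R)$ is only weakly initial, so there need not be a map $\mathfrak{S}(R)\to B$. The conclusion still holds, but via faithfully flat descent after passing to a cover of $(B,I)$ that does receive such a map.
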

\begin{proof}
We recall the argument of Faltings'. First note that we may assume that $M$ is $p$-torsion. So we have a module $M$ over $\mathfrak{S}(R)/p$, with an injective map $\Phi_{M}:\phi^{*}M\to M$, such that $Q:=coker(\Phi_{M})$ is $u^{ea}$-torsion. Localizing at a maximal ideal $\mathfrak{m}$ of $A:=\mathfrak{S}(R)/p$ we then further assume that $A$ is a regular local ring. Now, to show that $M$ is projective over $A$, it is then enough to check that $N=Ext^{i}_{A}(M, A)=0$, for all $i>0$. Localizing further at a minimal prime $\mathfrak{p}$ in $supp(N)$, we may assume that $N$ has finite length. Furthermore, since the Frobenius $\phi$ on $A$ is faithfully flat, we have $Ext^{i}(\phi^{*}M, A)\cong \phi^{*}Ext^{i}(M, A)$. Taking the long exact sequence for $Ext$ and using that $Q$ is $u^{ea}$-torsion, we get $u^{ea}N=u^{ea}\phi^{*}N$. On the other hand, using that $u$ is a regular element in $A$ and the argument given in \cite[\S 2]{Fal-MF-st}, we get that for the length we have $l(u^{ae}\phi^{*}N)\geq l(N)(p-ae)^{ht(\mathfrak{p})}$. But then we get $l(N)\geq l(u^{ea}N)=l(u^{ea}\phi^{*}N)\geq l(N)(p-ae)^{ht(\mathfrak{p})}$. This forces $N=0$.
\end{proof}
\begin{cor}
Let $\mathcal{M}\in \MFa^{\mr{tor-free}}(\mathcal{X})$. Then $\mathcal{M}$ is a crystal in finite locally free $\mathcal{O}_{\mathcal{X}_{\mr{cris}}}$-modules.
\end{cor}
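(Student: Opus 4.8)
The plan is to reduce, via the equivalences proved above, to the projectivity statement for completed prismatic $F$-crystals recorded in \cref{frob-mod-proj}. Since finite local freeness of a crystal may be checked étale-locally on $\mathcal{X}$ (it amounts to finite projectivity of the value on a weakly initial object), and since $\MFa^{\mr{tor-free}}$ satisfies étale descent by \cref{mf-descent}, I would first assume $\mathcal{X}=Spf(R)$ is small affine. Then $\mathcal{M}$ corresponds under \cref{main-thm} to a $p$-torsion free divided prismatic $F$-crystal $(\mc{N},Fil^{a}\mc{N},\Phi^{a}_{\mc{N}})\in DivCR^{\wedge,\phi,\mr{tor-free}}_{[0,a]}(\mathcal{X})$, and by \cref{equ-div-crys} (its restriction to torsion free objects) the twist $(\mc{N}\{-a\},\Phi)$ is an object of $CR^{\wedge,\phi,\mr{tor-free}}_{[0,a]}(\mathcal{X})$.

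Next, because $ea<p-1$, I would apply \cref{frob-mod-proj} to the reductions $\mc{N}\{-a\}(\mathfrak{S}(R))/p^{n}$. These are $\Z_p/p^{n}$-flat since $\mc{N}\{-a\}(\mathfrak{S}(R))$ is $p$-torsion free, so \cref{frob-mod-proj} gives that each $\mc{N}\{-a\}(\mathfrak{S}(R))/p^{n}$ is finite projective over $\mathfrak{S}(R)/p^{n}$; as the module is finitely generated and $p$-adically complete, \cite[Tag 0D4B]{stacks} then yields that $\mc{N}\{-a\}(\mathfrak{S}(R))$ is finite projective over $\mathfrak{S}(R)$ (this is the same completion argument as in the proof of \cref{perf-mod-p}). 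Base changing along the faithfully flat map $\mathfrak{S}(R)\to A_{\mr{inf}}(R_{\infty})$ shows $\mc{N}\{-a\}(\prism_{R_{\infty}})$ is finite projective over $\prism_{R_{\infty}}$, and since $\mc{N}\{-a\}\{a\}\cong\mc{N}$ with the twist invertible, the same holds for $\mc{N}(\prism_{R_{\infty}})$.

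Then I would trace through the functor constructed in the proof of \cref{main-thm}. There the value of $\mathcal{M}$ on the weakly initial crystalline object $D$ is obtained by descending, along the $p$-completely faithfully flat map $D\to\mathbb{A}_{\mr{cris}}(R_{\infty}/p)$, the module $M_{\mathbb{A}_{\mr{cris}}(R_{\infty}/p)}=\mc{N}(\prism_{R_{\infty}})\hat{\otimes}_{A_{\mr{inf}}}A_{\mr{cris}}$, which is finite projective over $\mathbb{A}_{\mr{cris}}(R_{\infty}/p)$ by the previous paragraph. Since $\mathcal{M}(D)$ is finitely generated and $p$-adically complete, faithfully flat descent of finite projectivity (checked modulo each $p^{n}$) gives that $\mathcal{M}(D)$ is finite projective over $D$. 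As $D$ covers the final object of $(\mathcal{X}_{n}/\Z_p)_{\mr{cris}}$ for every $n$, every value of $\mathcal{M}_{n}$ is a base change of $\mathcal{M}(D)/p^{n}$ and hence finite projective, so $\mathcal{M}$ is a crystal in finite locally free $\mc{O}_{\mathcal{X}_{\mr{cris}}}$-modules, which gives the global statement by \cref{mf-descent}.

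I expect the main obstacle to be purely organisational: carrying finite projectivity cleanly through the whole chain of base-change (along $\mathfrak{S}(R)\to A_{\mr{inf}}(R_{\infty})\to \mathbb{A}_{\mr{cris}}(R_{\infty}/p)$) and descent (along $D\to\mathbb{A}_{\mr{cris}}(R_{\infty}/p)$) functors while keeping track of the $p$-adic completions and of the Breuil--Kisin twist identifying the underlying $D$-module of the $\MF$-object with a twist of $\mc{N}$. A possibly cleaner variant avoiding the twist bookkeeping is to first combine \cref{proj-away} and \cref{frob-mod-proj} to conclude directly that $\mc{N}\{-a\}$ is a crystal in finite locally free $\mc{O}_{\prism}$-modules, and then note that each base-change and descent step occurring in \cref{main-thm} (in particular \cref{perf-equiv-free} and \cref{left-inverse-perf}) preserves finite local freeness; I would choose whichever of the two presentations reads more smoothly.
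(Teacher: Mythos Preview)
Your proposal is correct and is exactly the argument the paper intends: the corollary is stated without proof, as it follows immediately from \cref{frob-mod-proj} combined with the equivalence \cref{main-thm} (and \cref{equ-div-crys}), and you have simply unwound this composition carefully. The only remark is that the paper's \cref{frob-mod-proj} already records the conclusion ``$\mathcal{M}$ is a crystal in finite locally free $\mathcal{O}_{\prism}/p^{n}$-modules,'' so your cleaner variant in the last paragraph---deducing directly that the prismatic crystal $\mc{N}$ is locally free and then observing that base change along $\prism_{R_\infty}\to\mathbb{A}_{\mr{cris}}(R_\infty/p)$ and descent along $D\to\mathbb{A}_{\mr{cris}}(R_\infty/p)$ preserve finite projectivity---is the more direct write-up and is presumably what the authors had in mind.
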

We will also briefly discuss the \'etale realization functor for torsion objects. Denote by $\Z_p-Mod^{tor}(\mathcal{X})$, the category of $p$-torsion $\hat{\Z}_p$-modules on $(\mathcal{X}_{K})_{pro\acute{e}t}$.
\begin{prop}
Assume that $ea<p-1$. Let $\mathcal{X}$ be a smooth $p$-adic formal scheme over $\mathcal{O}_{K}$. Then the \'etale realization functor 
\begin{center}
$T_{\acute{e}t}:CR^{\wedge, \phi, tor}_{[0, a]}(\mathcal{X}_{\prism})\to \Z_p-Mod^{tor}(\mathcal{X})$
\end{center}
is fully faithful.
\end{prop}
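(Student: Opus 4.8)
The plan is to reduce, via internal Homs, to a single elementary estimate on $\varphi$-modules over $A_{\mr{inf}}(R_\infty)/p^n$, which is the torsion analogue of the comparisons in \cite{CarLiu} and \cite{Fal-MF-st}. First I would note that both $CR^{[\wedge, \phi, tor]}_{[0, a]}(-_{\prism})$ and $\Z_p$-$Mod^{tor}(-)$ satisfy Zariski descent, so one may assume $\mathcal{X}=Spf(R)$ is small affine. Since $ea < p-1$, \cref{frob-mod-proj} shows that every object $(\mathcal{M},\Phi_{\mathcal{M}})\in CR^{[\wedge,\phi,tor]}_{[0,a]}(\mathcal{X}_{\prism})$ is a crystal in finite locally free $\mathcal{O}_{\prism}/p^{n}$-modules for some $n\geq 1$; in particular $\mathcal{M}$ is $\mathcal{I}_{\prism}$-torsion free, so $\mathcal{M}\hookrightarrow \mathcal{M}[\tfrac{1}{\mathcal{I}_{\prism}}]^{\wedge}_{p}$, and such objects admit internal Homs. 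The functor $T_{\acute{e}t}$ factors as $(\mathcal{M},\Phi_{\mathcal{M}})\mapsto (\mathcal{M}[\tfrac{1}{\mathcal{I}_{\prism}}]^{\wedge}_{p},\Phi_{\mathcal{M}})$ followed by the (mod $p^{n}$) étale comparison equivalence of \cite{BS-pris-cris}, which is symmetric monoidal; so full faithfulness reduces, using $\mathrm{Hom}(\mathcal{M},\mathcal{N})=\mathrm{Hom}(\mathcal{O}_{\prism},\mathcal{H}om(\mathcal{M},\mathcal{N}))$ and the analogous identity for lisse $\Z_{p}/p^{n}$-sheaves, to the following statement for $\mathcal{H}:=\mathcal{H}om(\mathcal{M},\mathcal{N})$: the natural map $H^{0}(\mathcal{X}_{\prism},\mathcal{H})^{\Phi=1}\to H^{0}(\mathcal{X}_{\prism},\mathcal{H}[\tfrac{1}{\mathcal{I}_{\prism}}]^{\wedge}_{p})^{\Phi=1}$ is bijective (the target being $H^{0}_{\mr{pro\acute{e}t}}(\mathcal{X}_{K},T_{\acute{e}t}(\mathcal{H}))$ by \cite{BS-pris-cris}). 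Injectivity is immediate from $\mathcal{H}\hookrightarrow \mathcal{H}[\tfrac{1}{\mathcal{I}_{\prism}}]^{\wedge}_{p}$, so only surjectivity remains.

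For surjectivity, let $x$ be a $\Phi$-fixed global section of $\mathcal{H}[\tfrac{1}{\mathcal{I}_{\prism}}]^{\wedge}_{p}$. Since $\mathcal{H}$ and $\mathcal{H}[\tfrac{1}{\mathcal{I}_{\prism}}]^{\wedge}_{p}$ are sheaves and the terms $\prism_{R_{\infty}^{\otimes \bullet}}$ of the \v{C}ech nerve of the perfectoid covering $R\to R_{\infty}$ from \cref{coverings} are (perfectoid, hence) qrsp, it suffices to check that the restriction of $x$ to $\prism_{R_{\infty}}$ lies in $\mathcal{H}(\prism_{R_{\infty}})$ (the same argument applies over $\prism_{R_{\infty}^{\otimes 2}}$ to glue). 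Write $H:=\mathcal{H}(\prism_{R_{\infty}})$, a finite locally free module over $A:=A_{\mr{inf}}(R_{\infty})/p^{n}$, on which $\tilde{\xi}=E(u)$ generates $\mathcal{I}$, equipped with a semilinear Frobenius $\varphi$ whose linearization $\Phi_{H}\colon \phi^{*}H\to H[\tfrac{1}{\tilde{\xi}}]$ is an isomorphism after inverting $\tilde{\xi}$. Because $\mathcal{M}$ and $\mathcal{N}$ have height $\leq a$ (so $\Phi_{\mathcal{M}}^{-1}(\mathcal{M})\subset \tilde{\xi}^{-a}\phi^{*}\mathcal{M}$, similarly for $\mathcal{N}$, and $\mathcal{N}$ has integral Frobenius), one gets $\Phi_{H}^{-1}(H)\subset \tilde{\xi}^{-a}\phi^{*}H$; equivalently there is an $A$-linear map $\Psi_{H}\colon H\to \phi^{*}H$ with $\Psi_{H}\circ \Phi_{H}=\tilde{\xi}^{a}\cdot\mathrm{id}$ and $\Phi_{H}\circ\Psi_{H}=\tilde{\xi}^{a}\cdot\mathrm{id}$.

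The crux is then the estimate. By dévissage along $0\to p^{n-1}\mathcal{H}\to \mathcal{H}\to \mathcal{H}/p^{n-1}\to 0$ (compatibly with $\varphi$ and with $x$), one reduces to the case $p\mathcal{H}=0$; I would treat that case directly, remarking that the passage to general torsion is the one carried out in \cite{CarLiu} and \cite{Fal-MF-st}. Now $A/p=R_{\infty}^{\flat}$ is perfect, so $\phi$ is the Frobenius and $\phi(\tilde{\xi})\equiv \tilde{\xi}^{p}$, and $\tilde{\xi}$ is a non-zero-divisor on $H$ (since $R_{\infty}^{\flat}$ is flat over the valuation ring $\mathcal{O}_{C}^{\flat}$, where $\tilde{\xi}$ is a non-zero-divisor). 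Suppose $x\notin H$, and write $x=y/\tilde{\xi}^{k}$ with $y\in H$ and $k\geq 1$ minimal, so $y\notin \tilde{\xi}H$. From $\varphi(x)=x$ and semilinearity one gets $\varphi(y)=\tilde{\xi}^{(p-1)k}y$; applying $\Psi_{H}$ and using $\Psi_{H}(\varphi(v))=\tilde{\xi}^{a}\cdot v$ gives $\tilde{\xi}^{(p-1)k}\Psi_{H}(y)=\tilde{\xi}^{a}\,(1\otimes y)$ in $\phi^{*}H$. Since $\tilde{\xi}$ is a non-zero-divisor and $(p-1)k\geq p-1>a$ (here $a\leq p-2$), cancellation yields $1\otimes y=\tilde{\xi}^{(p-1)k-a}\Psi_{H}(y)\in \tilde{\xi}\,\phi^{*}H$, hence $y\in \tilde{\xi}H$, a contradiction. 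Therefore $x\in H$, and the proof is complete. The main obstacle is this last step: one must make sure the relevant integral bound on $\mathcal{H}om(\mathcal{M},\mathcal{N})$ is really $\tilde{\xi}^{a}$ rather than $\tilde{\xi}^{2a}$ (it is, since only the pole order of $\varphi_{\mathcal{H}}^{-1}$, which is $\leq a$, enters), and one must handle the $p^{n}$-torsion case and the sheaf-theoretic descent to the perfectoid cover with appropriate care.
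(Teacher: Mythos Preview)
Your reduction via internal Homs and the passage to a single $\varphi$-fixed element is clean, but the final estimate fails precisely because you evaluate at the \emph{perfectoid} cover $R_{\infty}$. Over $A=R_{\infty}^{\flat}$ the Frobenius is bijective, so $\tilde{\xi}$ admits a $p$-th root, and the implication ``$1\otimes y\in\tilde{\xi}\,\phi^{*}H\Rightarrow y\in\tilde{\xi}H$'' is false. Concretely, writing $y=\sum c_i e_i$ in a local basis, one has $y\otimes 1=\sum (e_i\otimes 1)\,\phi(c_i)$, so $y\otimes 1\in\tilde{\xi}\,\phi^{*}H$ only says $\phi(c_i)=c_i^{p}\in(\tilde{\xi})$, i.e.\ $c_i\in(\tilde{\xi}^{1/p})$. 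Thus you obtain merely $y\in\tilde{\xi}^{1/p}H$, which does not contradict $y\notin\tilde{\xi}H$. Iterating does not rescue the argument: the map $s\mapsto (s+a)/p$ on ``pole orders'' contracts toward the fixed point $a/(p-1)>0$, so you never reach $s=0$.

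The paper avoids this by evaluating at the Breuil--Kisin prism $\mathfrak{S}(R)$ and working with $u$ rather than $\tilde{\xi}$. Over $\mathfrak{S}(R)/p=R_0/p[[u]]$ the element $u$ is \emph{not} a $p$-th power (the ring is reduced but imperfect), so $z\otimes 1\in u\,\phi^{*}N$ genuinely forces $z\in uN$. This is exactly where the hypothesis $ea<p-1$ enters: the cokernel of $\Phi_N$ is $u^{ea}$-torsion, and one needs $(p-1)\geq ea+1$. Your internal Hom reduction would still work if you evaluated $\mathcal{H}$ at $\mathfrak{S}(R)$ instead of $\prism_{R_{\infty}}$; but then the estimate should be carried out exactly as in the paper (or \cite[Lem.~3.2.4]{CarLiu}), not via the $\Psi_H$ trick over a perfect ring.
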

\begin{proof}
We may assume that $\mathcal{X}=Spf(R)$ is small affine. By d\'evissage, it is moreover enough to prove the statement for $p$-torsion objects. So let $(\mathcal{M}, \Phi_{\mathcal{M}}), (\mathcal{N}, \Phi_{\mathcal{N}})\in CR^{\wedge, \phi, p-tor}_{[0, a]}(\mathcal{X}_{\prism})$ and assume that there is a map $f:\mathcal{M}[\frac{1}{\mathcal{I}_{\prism}}]\to \mathcal{N}[\frac{1}{\mathcal{I}_{\prism}}]$, compatible with the Frobenius maps. We need to show that $f(\mathcal{M})\subset \mathcal{N}$. For this it is enough to consider the evaluations at $\mathfrak{S}(R)$. Write $M=\mathcal{M}(\mathfrak{S}(R))$ and $N=\mathcal{N}(\mathfrak{S}(R))$. Now there exists an $r\geq 0$, such that $u^{r}f(M)\subset N$. Assume that $f(M)\not\subset N$ and let $r$ be minimal. Then $r>0$ and there exists an $x\in f(M)$, such that $u^{r-1}x\not\in N$. In particular $u^{r}x\not\in  uN$, from which one gets (using that $u^{ea}N\subset Im(\Phi_{N})$) 
\begin{center}
$\Phi_{N}(u^{r}x)\not\in u^{ea+1}N$
\end{center}
(see the proof of \cite[Lem. 3.2.4]{CarLiu}).\\
On the other hand, writing $x=f(y)$ for some $y\in M$, we have 
\begin{center}
$
\Phi_{N[\frac{1}{u}]}(u^{r}x)  =  u^{pr}f(\Phi_{M[\frac{1}{u}]}(y))= u^{(p-1)r}u^{r}f(\Phi_{M[\frac{1}{u}]}(y))\in u^{ea+1}N$
 \end{center}
 since $ea+1\leq p-1$. This gives a contradiction. Hence, we must have $f(M)\subset N$.
\end{proof}

\addcontentsline{toc}{section}{References}
\bibliographystyle{plain}
\bibliography{bib}
Institut f\"ur Mathematik, 
    Goethe-Universit\"at Frankfurt, 
    60325 Frankfurt am Main, 
    Germany\\
    wuerthen@math.uni-frankfurt.de
\end{document}